\renewcommand{\added}[1]{#1}
\newcommand{\MLE}{\text{MLE}}
\newcommand{\Expect}{\mathbb{E}}
\newcommand{\Prob}{\mathbb{P}}
\newcommand{\prob}[1]{\mathbb{P}\left[#1\right]}
\newcommand{\diff}{{\rm d}}
\newcommand{\eg}{e.g.\xspace}
\newcommand{\iid}{i.i.d.\xspace}
\newcommand{\iiddistr}{{\stackrel{\text{\iid}}{\sim}}}
\newcommand{\stepa}[1]{\overset{\rm (a)}{#1}}
\newcommand{\stepb}[1]{\overset{\rm (b)}{#1}}
\newcommand{\stepc}[1]{\overset{\rm (c)}{#1}}
\newcommand{\stepd}[1]{\overset{\rm (d)}{#1}}
\newcommand{\TV}{{\sf TV}}
\newcommand{\KL}{{\sf KL}}
\newcommand{\bfM}{{\mathbf{M}}}
\newcommand{\bfm}{{\mathbf{m}}}
\newcommand{\supp}{{\mathrm{supp}}}
\newcommand{\pth}[1]{\left( #1 \right)}
\newcommand{\qth}[1]{\left[ #1 \right]}
\newcommand{\sth}[1]{\left\{ #1 \right\}}
\newcommand{\abs}[1]{\left| #1 \right|}
\newcommand{\naturals}{\mathbb{N}}
\newcommand{\integers}{\mathbb{Z}}
\newcommand{\reals}{{\mathbb{R}}} %% mine
\newcommand{\rank}{\mathrm{rank}} %% mine
\newcommand{\Var}{\mathrm{Var}}
\newcommand{\bfj}{\mathbf{j}}
\newcommand{\symmetric}{\mathbb{S}}
\newcommand{\lnorm}[2]{\left\|{#1} \right\|_{{#2}}}
\newcommand{\norm}[1]{\left\|{#1} \right\|}
\newcommand{\Fnorm}[1]{\lnorm{#1}{\rm F}}
\newcommand{\inprod}[2]{\ensuremath{\langle #1 , \, #2 \rangle}}
\newcommand{\iprod}[2]{\left \langle #1, #2 \right\rangle}
\newcommand{\Iprod}[2]{\langle #1, #2 \rangle}
\newcommand{\indc}[1]{{\mathbf{1}_{\left\{{#1}\right\}}}}
\def\bone{{\mathbf 1}}
\newcommand{\Norm}[1]{\|{#1} \|}
\newcommand{\calA}{{\mathcal{A}}}
\newcommand{\calF}{{\mathcal{F}}}
\newcommand{\calG}{{\mathcal{G}}}
\newcommand{\calL}{{\mathcal{L}}}
\newcommand{\calM}{{\mathcal{M}}}
\newcommand{\calN}{{\mathcal{N}}}
\newcommand{\calP}{{\mathcal{P}}}
\newcommand{\calS}{{\mathcal{S}}}
\newcommand{\calW}{{\mathcal{W}}}
\newcommand{\calX}{{\mathcal{X}}}
\newcommand{\Wsliced}{W^{\sf sliced}}
\numberwithin{equation}{section}
\theoremstyle{plain}
\newtheorem{thm}{Theorem}[section]
\newtheorem{lem}[thm]{Lemma}
\newtheorem{theorem}[thm]{Theorem}
\newtheorem{lemma}[thm]{Lemma}
\theoremstyle{definition}
\newtheorem{rem}{Remark}
\newcommand{\mc}{\mathcal}
\newcommand{\mb}{\mathbb}
\def\({\left(}
\def\){\right)}
\DeclareMathOperator*{\argmin}{argmin}
\DeclareMathOperator*{\argmax}{argmax}
\DeclareMathOperator*{\trace}{tr}
\newcommand{\DMM}{\mathsf{DMM}}
\newcommand\numberthis{\addtocounter{equation}{1}\tag{\theequation}}
\title{Optimal estimation in the high-dimensional Gaussian mixture model}
\title{Optimal estimation of high-dimensional Gaussian location mixtures}
\author{Natalie Doss, Yihong Wu, Pengkun Yang, and Harrison H.~Zhou
\thanks{
N.\ Doss, Y.\ Wu, and H.~H.\ Zhou are with Department of Statistics and Data Science, Yale University, New Haven, CT, USA, 
\texttt{\{natalie.doss,yihong.wu,huibin.zhou\}@yale.edu}. P.\ Yang is with the Center for Statistical Science at Tsinghua University, Beijing, China, \texttt{yangpengkun@tsinghua.edu.cn}.
Y.~Wu is supported in part by the NSF Grants CCF-1527105, CCF-1900507, an NSF CAREER award CCF-1651588, and an Alfred Sloan fellowship.
H.~H.~Zhou is supported in part by NSF grants DMS 1811740, DMS 1918925, and NIH grant 1P50MH115716.
}}
\date{\today}
\begin{document}

\maketitle

\begin{abstract}

This paper studies the optimal rate of estimation in a finite Gaussian location mixture model in high dimensions without separation conditions. 
We assume that the number of components $k$ is bounded and that the centers lie in a ball of bounded radius, while allowing the dimension $d$ to be as large as the sample size $n$.
Extending the one-dimensional result of Heinrich and Kahn \cite{HK2015}, we show that the minimax rate of estimating the mixing distribution in Wasserstein distance 
is $\Theta((d/n)^{1/4} + n^{-1/(4k-2)})$, achieved by an estimator computable in time $O(nd^2+n^{5/4})$.
Furthermore, we show that the mixture density can be estimated at the optimal parametric rate $\Theta(\sqrt{d/n})$ in Hellinger distance and provide a computationally efficient algorithm to achieve this rate in the special case of $k=2$. 
%; however, no computationally efficient algorithm is known to achieve the optimal rate except for the special case of $k=2$. 

%, which can be attained an estimator in polynomial time for the special case of $k=2$; however, no computationally efficient algorithm is known to achieve the optimal parametric rate for $k\geq 3$.
%While this rate can be achieved by a computationally efficient estimator for the special case of $k=2$, no such algorithm is known for $k\geq 3$.
%For density estimation we show that the optimal Hellinger rate is $\Theta(\sqrt{d/n})$; however, no computationally efficient algorithm is known to achieve the optimal parametric rate.

Both the theoretical and methodological development rely on a careful application of the method of moments. Central to our results is the observation that the information geometry of finite Gaussian mixtures is characterized by the moment tensors of the mixing distribution, whose low-rank structure can be exploited to obtain a sharp local entropy bound. 

%Both the theoretical and methodological development rely on a careful application of method of moments, and central to our results is the idea that the moments tensor characterizes the information geometry of finite Gaussian mixtures and provides a good parametrization for determining the sharp metric entropy structure. 
\end{abstract}

\tableofcontents

%!TEX root = gmm.tex

\section{Introduction} \label{sec:intro}

Mixture models are useful tools for dealing with heterogeneous data. A mixture model posits that the data are generated from a collection of sub-populations, each governed by a different distribution. The Gaussian mixture model is one of the most widely studied mixture models because of its simplicity and wide applicability; however, optimal rates of both parameter and density estimation in this model are not well understood in high dimensions.
%when the mixing distribution centers are both high dimensional and allowed to be arbitrarily close to each other. 
Consider the $k$-component Gaussian location mixture model 
%(which we shall refer as $k$-GM) 
in $d$ dimensions:
\begin{equation} 
\begin{aligned}  \label{eq:gmm_model}
X_1,\ldots,X_n \iiddistr \sum_{j = 1}^k w_j N(\mu_j,\sigma^2 I_d),
\end{aligned}
\end{equation}
where $\mu_j\in\reals^d$ and $w_j\geq 0$ are the center and the weight of the $j$th component, respectively, with $\sum_{j=1}^k w_j=1$.
Here the scale parameter $\sigma^2$ and \added{$k$ as an upper bound on the number of components}
%the number of components $k$} 
are assumed to be known; 
for simplicity, we assume that $\sigma^2 = 1$.  
Equivalently, we can view the Gaussian location mixture \prettyref{eq:gmm_model} as the convolution
\begin{equation} 
\begin{aligned}  \label{eq:gmm_model1}
%X_1,\ldots,X_n \iiddistr 
P_\Gamma \triangleq \Gamma * N(0,I_d)
\end{aligned}
\end{equation}
between the standard normal distribution and the \emph{mixing distribution}
\begin{equation}
\Gamma = \sum_{j = 1}^k w_j \delta_{\mu_j},
\label{eq:Gamma}
\end{equation}
which is a $k$-atomic distribution on $\reals^d$.

%Extending the previous work of \cite{Chen_1995,HK2015,WY18} for Gaussian mixture models to high dimensions, the goal of the present paper is to understand the fundamental limit of parameter estimation 
%(in the sense of mixing distributions) 
%and density estimation.
 %%(as opposed to classification).
%%with the emphasis on high dimensions.
%%, as opposed to classification. 
%To this end, the most interesting regime is where the centers lie in a ball of bounded radius, in which case 
%consistent clustering is impossible but consistent estimation of the mixing distribution and the mixture density is nevertheless possible.
%We will make no other assumptions such as separation or non-colinearity between centers or lower bounds on the weights that are prevelant in the literature of Gaussian mixtures, as none of them is statistically necessary for estimation. 
%
%\nb{In~\prettyref{eq:gmm_model}, the most interesting regime is one of arbitrary separation, i.e., one in which the model centers may overlap and the model weights may not be bounded away from zero. In this case, clustering is impossible, but consistent estimation of the mixing distribution and mixture density is nevertheless possible. A line of research begun by~\cite{Chen_1995} and culminating in the works of~\cite{HK2015,WY18} provide optimal convergence rates and fast algorithms in this setting when $d=1$. The goal of this paper is to understand the fundamental limits of parameter and density estimation in Gaussian mixtures of both arbitrary separation and arbitrary dimension, i.e., to extend the above works to general $d$.}

For the purpose of estimation, the most interesting regime is one in which the centers lie in a ball of bounded radius and are allowed to overlap arbitrarily. In this case, consistent clustering is impossible but the mixing distribution and the mixture density can nonetheless be accurately estimated. In this setting, a line of research including~\cite{Chen_1995,HK2015,WY18}
%{begun by~\cite{Chen_1995} and culminating in the works of ~\cite{HK2015},~\cite{WY18}}
obtained optimal convergence rates and practical algorithms for one-dimensional Gaussian mixtures.
%Extending these works to high dimensions, the goal of this paper is to further the statistical and algorithmic understanding of parameter and density estimation in Gaussian mixtures in the same assumption-free framework, without imposing other conditions (such as separation or non-colinearity between centers, lower bounds on the weights) that are prevalent in the literature of Gaussian mixtures, as none of them is statistically necessary for estimation. 
The goal of this paper is to extend these works to high dimension. That is, we seek to further the statistical and algorithmic understanding of parameter and density estimation in high-dimensional Gaussian mixtures in an assumption-free framework, without imposing conditions such as separation or non-collinearity between centers or lower bounds on the weights that are prevalent in the literature on Gaussian mixtures but that are not statistically necessary for estimation.

\subsection{Main results}
\label{sec:main}

We start by defining the relevant parameter space. 
Let $\mc G_{k, d}$ denote the collection of $k$-atomic distributions supported on a ball of radius $R$ in $d$ dimensions,\footnote{If the mixing distributions have unbounded support, the minimax risk under the Wasserstein distance is infinite (see \cite[Sec.~4.4]{WY18}).} i.e., 
\begin{equation}
\calG_{k,d} \triangleq 
\sth{ \Gamma = \sum_{j =1}^k w_j \delta_{\mu_j}: \mu_j \in \reals^d, \norm{\mu_j}_2 \le R, w_j\geq 0,\sum_{j=1}^k w_j=1},
\label{eq:Gkd}
\end{equation}
where $\norm{\cdot}_2$ denotes the Euclidean norm. Throughout the paper, $R$ is assumed to be an absolute constant. 
%This is the most interesting regime for parameter and density estimation (as opposed to clustering). 
The corresponding collection of $k$-Gaussian mixtures ($k$-GMs) is denoted by  
\begin{equation}
\mc P_{k, d} = \{ P_{\Gamma} : \Gamma \in \mc G_{k, d} \}, \qquad P_{\Gamma} = \Gamma * N(0,I_d).
\label{eq:Pkd}
\end{equation}
Let $\phi_d(x) = (2\pi)^{-d/2} e^{-\|x\|_2^2/2}$ denote the standard normal density in $d$ dimensions. Then the density of $P_\Gamma$ is given by
\begin{equation}
p_\Gamma(x) = \sum_{j = 1}^k w_j  \phi_d\(x - \mu_j\).
\label{eq:pGamma}
\end{equation}
%which is a parametric distribution with the $2k - 1$ parameters $\mu_1, \hdots, \mu_k$ and $w_1, \hdots, w_k$. In many scenarios, it is possible to estimate the individual weights and location parameters with small error. Under our assumptions, we may be able to estimate some of the components or weights with large error but still have small error in the estimation of the entire mixing distribution. Therefore, it is natural for us to frame the model as in~\prettyref{eq:gmm_model}. 

%We first discuss the problem of parameter estimation in the sense of mixing distributions. This view allows for the development of a meaningful statistical theory in an assumption-free framework~\cite{HK2015,WY18} as the mixture model is always uniquely identified by the mixing distribution. 

We first discuss the problem of parameter estimation. The distribution \prettyref{eq:gmm_model} has $kd + k - 1$ parameters: $\mu_1, \hdots, \mu_k\in\reals^d$ and $w_1, \hdots, w_{k}$ that sum up to one. Without extra assumptions such as separation between centers or a lower bound on the weights, estimating individual parameters is clearly impossible; nevertheless, estimation of the mixing distribution $\Gamma=\sum w_i \delta_{\mu_i}$ is always well-defined. Reframing the parameter estimation problem in terms of estimating the mixing distribution allows for the development of a meaningful statistical theory in an assumption-free framework~\cite{HK2015,WY18} since the mixture model is uniquely identified through the mixing distribution.

For mixture models and deconvolution problems, the Wasserstein distance is a natural and commonly-used loss function (\cite{Chen_1995,Nguyen2013,Ho_Nguyen_2016_annals,Ho_Nguyen_2016_ejs,HK2015,WY18}). For $q \geq 1$, the $q$-Wasserstein distance (with respect to the Euclidean distance) is defined as
\begin{equation}
\label{eq:wq-def}
W_q(\Gamma, \Gamma') 
\triangleq  \pth{\inf \mb E \norm{U - U'}_2^q}^{\frac{1}{q}},
\end{equation}
where the infimum is taken over all couplings of $\Gamma$ and $\Gamma'$, i.e., joint distributions of random vectors $U$ and $U'$ with marginals $\Gamma$ and $\Gamma'$, respectively. We will mostly be concerned with the case of $q=1$, although the $W_2$-distance will make a brief appearance in the proofs. In one dimension, the $W_1$-distance coincides with the $L_1$-distance between the cumulative distribution functions~\cite{villani.topics}. For multivariate distributions, there is no closed-form expression, and the $W_1$-distance can be computed by linear programming. In the widely-studied case of the symmetric $2$-GM in which 
\begin{equation}
P_\mu = \frac{1}{2} N(\mu, I_d)+\frac{1}{2} N(-\mu, I_d),
\label{eq:sym2GM}
\end{equation}
the mixing distribution is $\Gamma_\mu = \frac{1}{2}(\delta_{-\mu}+\delta_{\mu})$, and the Wasserstein distance coincides with the commonly-used loss function 
$W_1(\Gamma_\mu,\Gamma_{\mu'}) = \min\{\|\mu-\mu'\|_2,\|\mu+\mu'\|_2\}$.
In this paper we do not postulate any separation conditions or any lower bound on the mixing weights; nevertheless, given such assumptions, statistical guarantees in $W_1$-distance can be translated into those for the individual parameters (\cite[Lemma 1]{WY18}).

%It is invariant under permutation of the distributions it compares, so it sidesteps the unidentifiability of mixing distributions. 
% unidentifiability of estimating mixing distributions (permutation invariance) and allows one to develop a meaningful statistical theory in an assumption-free framework. 
%Note that if $\Gamma$ and $\Gamma'$ have the same number of atoms and those atoms are well separated and the weights are bounded away from zero, estimation in $W_1$ distance translates to recovery of the individual parameters up to permutation; see \cite[Lemma  1]{WY18}. 
%\footnote{This can be seen directly from the definition of $W_1$ from~\prettyref{eq:wq-def}; simply take the minimum over all couplings and note that any coupling in this case is a distribution on at most four points. Then $W_1(\Gamma_{\mu},\Gamma_{\mu'})=\min_{0\le w\le 1}w\Norm{\mu-\mu'}_2+(1-w)\Norm{\mu+\mu'}_2.$}
%i.e., $W_1(\Gamma_\theta,\Gamma_{\hat\theta})=\min\{\|\theta-\hat\theta\|,\|\theta+\hat\theta\|\}$.
% \nbwu{Pengkun pls doublecheck}

For general $k$-GMs in one dimension where $k\geq 2$ is a constant, the minimax $W_1$-rate of estimating the mixing distribution is $n^{-1/(4k-2)}$, achieved by a minimum $W_1$-distance estimator \cite{HK2015} or the Denoised Method of Moments (DMM) approach \cite{WY18}.
 %\added{or even by the maximum likelihood estimator, though the MLE only achieves the rate up to undesirable logarithmic factors. The latter result can be seen by combining the $\sqrt{\log n/n}$ rate in Hellinger distance of~\cite[Lemma $2.1$ combined with Theorem $2.1$]{Ho_Nguyen_2016_annals} or~\cite[Theorem $4.1$]{Ho_Nguyen_2016_ejs} with a one-dimensional variant of~\prettyref{thm:mixing_est_density}}. 
%\nbwu{I did not get the logic here. The MLE result from \cite{Ho_Nguyen_2016_annals,Ho_Nguyen_2016_ejs} or other papers has extra logs such as the aforementioned $\sqrt{\log n/n}$, so applying \prettyref{thm:mixing_est_density} will inherit the same logs, and fails to achieve $n^{-1/(4k-2)}$. BTW, does the referee asks us to add this sentence?}
This is the worst-case rate in the absence of any separation assumptions. In the case where the centers can be grouped into $k_0$ clusters each separated by a constant, the optimal rate improves to $n^{-1/(4(k - k_0) + 2)}$, which reduces to the parametric rate $n^{-1/2}$ in the fully separated case.

Given the one-dimensional result, it is reasonable to expect that the $d$-dimensional rate is given by $(d/n)^{1/(4k-2)}$. This conjecture turns out to be incorrect, as the following result shows.
 %result of this paper on the minimax rate for estimating $\Gamma$ in setting \prettyref{eq:gmm_model} is the following theorem.

\begin{thm}[Estimating the mixing distribution] \label{thm:mixing_est}
Let \added{$k\ge 2$ and} $P_\Gamma$ be the $k$-GM defined in \prettyref{eq:gmm_model1}. Given $n$ i.i.d.~observations from $P_\Gamma$, the minimax risk of estimating $\Gamma$ over the class $\mc G_{k, d}$ satisfies
%\footnote{The notation $a \asymp_k b$ means that $c_k a \leq b \leq C_k a$ for some constants $c_k, C_k$ depending only on $k$. }
\begin{equation}
\inf_{\hat \Gamma} \sup_{\Gamma \in \mc G_{k, d}} \mb E_\Gamma W_1(\hat \Gamma, \Gamma) 
~ \asymp_k ~ \(\frac{d}{n} \)^{1/4} \wedge 1 + \(\frac{1}{n}\)^{1/(4k-2)}, \label{eq:mixing_est_avg_risk}
\end{equation}
where the notation $\asymp_k$ means that both sides agree up to constant factors depending only on $k$.
Furthermore, if $n\geq d$, there exists an estimator $\hat \Gamma$, computable in $O(nd^2) + O_k(n^{5/4})$ time, and a positive constant \added{C}, such that 
for any $\Gamma \in \mc G_{k, d}$ and any $0<\delta<\frac{1}{2}$, with probability at least $1 - \delta$, 
\begin{equation}
W_1(\hat \Gamma, \Gamma)  \le \added{C} \( \added{\sqrt{k}}\(\frac{d}{n} \)^{1/4} + \added{k^5}\(\frac{1}{n}\)^{1/(4k-2)}\sqrt{\log \frac{1}{\delta}} \). 
%\sup_{\Gamma \in \mc G_{k, d}} \Prob_\Gamma \sth{W_1(\hat \Gamma, \Gamma)  \ge C_k \( \(\frac{d}{n} \)^{1/4} + \(\frac{1}{n}\)^{1/(4k-2)}\sqrt{\log \frac{1}{\delta}} \)} \leq \delta. 
\label{eq:mixing_est_tail_bound}
\end{equation}
\end{thm}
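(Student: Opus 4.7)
The bound in \prettyref{eq:mixing_est_avg_risk} decomposes into two additive rates, which I would match by separate lower and upper bounds.

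For the lower bound, the rate $n^{-1/(4k-2)}$ is inherited from the one-dimensional construction of Heinrich and Kahn \cite{HK2015}: any 1D mixing distribution $\Gamma$ embeds into $\reals^d$ as $\Gamma \otimes \delta_{\mathbf 0}$, preserving both the sample law and the $W_1$ distance. The $(d/n)^{1/4}$ rate comes from a separate Fano argument on the symmetric 2-GMM family $\Gamma_v = \frac12(\delta_v + \delta_{-v})$: expanding $p_{\Gamma_v}$ in powers of $v$ around $v = 0$, all odd-order terms cancel by symmetry, giving $\chi^2(P_{\Gamma_v}\,\|\,P_{\Gamma_0}) = O(\|v\|^4)$ uniformly in $d$; combining with a constant-angle antipodal packing of $2^{\Omega(d)}$ unit vectors on $S^{d-1}$ (with pairwise $W_1$-distance $\Omega(\|v\|)$) and applying Fano's inequality yields $\|v\| \gtrsim (d/n)^{1/4}$.

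For the upper bound and tail bound in \prettyref{eq:mixing_est_tail_bound}, I would construct a two-stage estimator. First, form the sample mean $\bar X$ and the centered empirical second-moment matrix $\hat M = \frac1n \sum_i (X_i - \bar X)(X_i - \bar X)^\top - I_d$. Matrix Bernstein gives $\|\hat M - M\|_{\mathrm{op}} \lesssim \sqrt{d \log(1/\delta)/n}$ with probability at least $1 - \delta$, where $M = \sum_j w_j (\mu_j - \bar\mu)(\mu_j - \bar\mu)^\top$ has rank at most $k - 1$. Let $\hat V$ be the top-$(k-1)$ eigenspace of $\hat M$. Second, project each $X_i$ onto the affine subspace $\bar X + \hat V$ and apply a multivariate generalization of the Denoised Method of Moments of \cite{WY18}, using Hermite-polynomial unbiased estimators of the projected moment tensors up to degree $2k - 1$, to produce a $k$-atomic $\hat\Gamma$ supported in $\bar X + \hat V$.

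The main obstacle is controlling the $W_1$ error from projecting onto an imperfectly estimated $\hat V$ without any eigen-gap hypothesis on $M$, since a naive Davis-Kahan bound would scale inversely with $\lambda_{k-1}(M)$, which may vanish when atoms coincide. Instead, I would bound the averaged squared displacement directly: coupling each $\mu_j$ to its projection, $W_2^2(\Gamma, \Pi_{\hat V}\Gamma) \le \sum_j w_j \|(I - \Pi_{\hat V})(\mu_j - \bar\mu)\|_2^2 = \trace(\Pi_{\hat V^\perp} M)$. Since $\hat V$ maximizes $\trace(\Pi \hat M)$ over rank-$(k-1)$ projections while $\rank(M) \le k - 1$, a two-term Weyl argument yields $\trace(\Pi_{\hat V^\perp} M) \le 2(k-1) \|\hat M - M\|_{\mathrm{op}}$ regardless of any eigengap. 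The resulting projection error contributes $O(\sqrt{k}\,(d/n)^{1/4})$ to $W_1$, matching the first term in \prettyref{eq:mixing_est_tail_bound}. Inside the $(k-1)$-dimensional $\hat V$, the multivariate DMM analysis (which, as the ambient dimension is now a constant, reduces to a sliced 1D argument of \cite{WY18} together with a stability analysis of the moment-map inversion) contributes the second term $k^5 n^{-1/(4k-2)}$, with $\sqrt{\log(1/\delta)}$ tail dependence supplied by Bernstein bounds on the Hermite empirical moments. The total runtime is $O(nd^2)$ for $\hat M$ and its eigendecomposition, plus $O_k(n^{5/4})$ for the low-dimensional estimation.
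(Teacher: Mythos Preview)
Your proposal follows essentially the same two-stage architecture as the paper: PCA for subspace estimation with a gap-free perturbation argument, followed by low-dimensional moment-based estimation via one-dimensional projections. The lower bound sources and the trace/Weyl argument for $\trace(\Pi_{\hat V^\perp} M)$ match the paper's approach (the paper's Lemma~3.4 is your bound in slightly different notation, without the centering step).

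Two points where your sketch needs tightening relative to the paper. First, invoking ``Matrix Bernstein'' for the covariance error will typically yield $\sqrt{d\log(d/\delta)/n}$ rather than the additive form $\sqrt{d/n} + \sqrt{\log(1/\delta)/n}$; the latter is what is needed to get the clean $(d/n)^{1/4}$ in \prettyref{eq:mixing_est_tail_bound} without a logarithmic factor. The paper handles this by decomposing $X_i = U_i + Z_i$ and bounding the $UU^\top$, $ZZ^\top$, and cross terms separately. Second, your description of the low-dimensional step (``reduces to a sliced 1D argument of \cite{WY18} together with a stability analysis of the moment-map inversion'') elides most of the work: the paper needs (i) a quantitative comparison $W_1 \le k^2\sqrt{k}\, W_1^{\mathsf{sliced}}$ between the full and sliced Wasserstein distances for $k$-atomic measures in $\reals^k$, (ii) a chaining argument over $\theta\in S^{k-1}$ using hypercontractivity of Hermite polynomials (not Bernstein, since degree-$r$ Hermite polynomials of Gaussians have tails of order $\exp(-t^{2/r})$) to get uniform control without $\log n$ factors, and (iii) a specific trick---first estimating the $k$ marginals to pin down a finite candidate atom set---to bring the runtime of the min-max search down from $n^{O(k)}$ to $O_k(n^{5/4})$. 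None of these are in \cite{WY18} as stated; they are developed in the paper itself.
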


We now explain the intuition behind the minimax rate \prettyref{eq:mixing_est_avg_risk}. The atoms $\mu_1, \hdots, \mu_k$ of $\Gamma$ span a subspace $V$ in $\mb R^d$ of dimension at most $k$. We can identify $\Gamma$ with this subspace and its projection therein, which is a $k$-atomic mixing distribution in $k$ dimensions. 
This decomposition motivates a two-stage procedure which achieves the optimal rate \prettyref{eq:mixing_est_avg_risk}: 
\begin{itemize}
	\item First, estimate the subspace $V$ by principal component analysis (PCA), then project the $d$-dimensional data onto the learned subspace. 
	Since we do not impose any spectral gap assumptions, standard perturbation theory cannot be directly applied; instead, one needs to control the Wasserstein loss incurred by the subspace estimation error, which turns out to be $(d/n)^{1/4}$.
	
	%The subspace estimation is done by principal component analysis (PCA). 
	%Since we do not impose any spectral gap assumptions, standard perturbation theory cannot be directly applied. We instead control the Wasserstein loss between $\Gamma$ and its projection onto the estimated subspace. It 

	\item Having reduced the problem to $k$ dimensions, a relevant notion is the \emph{sliced Wasserstein distance} \added{~\cite{Paty_Cuturi_2019,Deshpande_etal_2019,Niles-Weed_Rigollet_2019}}, which measures the distance of multivariate distributions by the maximal $W_1$-distance of their one-dimensional projections. We show that for $k$-atomic distributions in $\reals^k$, the ordinary and the sliced Wasserstein distance are comparable up to constant factors depending only on $k$. 
	This allows us to construct an estimator for a $k$-dimensional mixing distribution whose one-dimensional projections are simultaneously close to their estimates. 
	We shall see that the resulting error is $n^{-1/(4k-2)}$, exactly as in the one-dimensional case. 
\end{itemize}

Overall, optimal estimation in the general case is as hard as the special cases of $d$-dimensional symmetric $2$-GM \cite{Wu_Zhou_2019} and $1$-dimensional $k$-GM \cite{HK2015,WY18}.
From~\prettyref{eq:mixing_est_avg_risk}, we see that there is a threshold $d^* = n^{(2k-3)/(2k-1)}$ (e.g.,~$d^* =n^{1/3}$ for $k=2$). For $d > d^*$, the rate is governed by the subspace estimation error; otherwise, the rate is dominated by the error of estimating the low-dimensional mixing distribution.
Note that Theorem~\ref{thm:mixing_est} pertains to the optimal rate in the worst case. A faster rate is expected when the components are better separated, such as a parametric rate when the centers are separated by a constant, which, in one dimension, can be adaptively achieved by the estimators in \cite{HK2015,WY18}. However, adaptation to the separation between components in $d$ dimensions remains an open problem; see the discussion in Section~\ref{sec:discussion}. 

We note that the idea of using linear projections to reduce a multivariate Gaussian mixture to a univariate one has been previously explored 
in the context of parameter and density estimation (e.g.,~\cite{Moitra_Valiant_2010,Hardt_Price_2015,Acharya_etal_2014,Li_Schmidt_2017,WY18}); nevertheless, none of these results achieves the precision needed for attaining the optimal rate in \prettyref{thm:mixing_est}.
In particular, to avoid the unnecessary logarithmic factors, we use the denoised method of moments (DMM) algorithm introduced in \cite{WY18} to simultaneously estimate many one-dimensional projections, which is amenable to sharp analysis via chaining techniques.

%In addition to a minimax rate of convergence, we seek an algorithm providing an estimator that achieves this rate. There are two broad categories of approaches to estimation for mixtures of Gaussians: method-of-moments and maximum likelihood. This paper first provides a polynomial time algorithm based on the former approach. It is derived from the denoised-method-of-moments (DMM) method of~\cite{WY18}, which does the usual method-of-moments procedure, but projects the moments to a true moment space. 
%
%
%
%
%There is a vast literature on the method of moments for Gaussian mixtures; see~\cite{WY18},~\cite{Lindsay_1989}, and references therein for an overview. Several works in the theoretical computer science literature have provided algorithms for estimation of multivariate location-scale Gaussian mixtures under fairly weak separation conditions; see, e.g.,~\cite{Kalai_Moitra_Valiant_2010},~\cite{Moitra_Valiant_2010},~\cite{Hsu_Kakade_2013}, and~\cite{Hardt_Price_2015}. All of these use moments-based procedures to estimate the parameters of the model. For instance,~\cite{Hardt_Price_2015} consider a two-component Gaussian mixture and rely on random projections, followed by a method-of-moments algorithm on the univariate data. Their method is difficult to extend to mixtures with more than two components.~\cite{WY18} also provide a random projection-based approach to the multivariate mixture estimation problem, but the resulting estimator attains only a suboptimal rate of convergence. 

Next we discuss the optimal rate of density estimation for high-dimensional Gaussian mixtures, measured in the Hellinger distance. For distributions $P$ and $Q$, let $p$ and $q$ denote their respective densities with respect to some dominating measure $\mu$. The squared Hellinger distance between $P$ and $Q$ is $H^2(P, Q) \triangleq \int \( \sqrt{p(x)} - \sqrt{q(x)}\)^2 \mu(dx)$. In this work, we focus on \emph{proper learning}, in which the estimated density is required to be a $k$-GM. While there is no difference in the minimax rates for proper and improper density estimators, computationally the former is more challenging as it is not straightforward to find the best $k$-GM approximation to an improper estimate.

\begin{thm}[Density estimation] \label{thm:density_est}
Let $P_\Gamma$ be as in~\prettyref{eq:gmm_model1}. Then the minimax risk of estimating $P_\Gamma$ over the class $\mc P_{k, d}$ satisfies:
\begin{align}
\inf_{\hat P} \sup_{\Gamma \in \mc G_{k, d}} \mb E_{\Gamma} H(\hat P, P_\Gamma) \asymp_k \sqrt{\frac{d}{n}} \wedge 1. \label{eq:result_density}
\end{align}
%where $C_k$ is a positive constant.
Furthermore, there exists a proper density estimate $P_{\hat\Gamma}$ 
%$\hat P \in \calP_{k,d}$ 
and a positive constant $C$, such that for any $\Gamma \in \mc G_{k, d}$ and any $0<\delta<\frac{1}{2}$, with probability at least $1 - \delta$, 
\begin{equation}
%\sup_{\Gamma \in \mc G_{k, d}} \Prob_\Gamma \sth{H(P_{\hat\Gamma}, P_\Gamma) \geq C_k \sqrt{\frac{d}{n} \log \frac{1}{\delta}}} \leq \delta.
H(P_{\hat\Gamma}, P_\Gamma) \leq \added{C \sqrt{\frac{k^4d + (2k)^{2k+2}}{n} \log \frac{1}{\delta}}.}
\label{eq:density_est-highprob}
\end{equation}
%holds with probability at least $1-\delta$.
\end{thm}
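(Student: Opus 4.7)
The plan is to derive the lower bound from a one-atomic sub-family that reduces to Gaussian mean estimation, and to obtain the upper bound via a minimum Hellinger distance estimator whose rate is controlled by a sharp \emph{local} entropy bound on $\mc P_{k,d}$ that exploits the low-rank moment-tensor structure highlighted in the abstract.

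For the lower bound, since $\{\delta_\mu:\|\mu\|_2\le R\}\subset\mc G_{k,d}$ for every $k\ge 1$, it suffices to lower bound the minimax Hellinger risk of estimating a Gaussian mean in $d$ dimensions. Using the closed form $H^2(N(\mu,I_d),N(\mu',I_d))=2(1-e^{-\|\mu-\mu'\|_2^2/8})\asymp\min(\|\mu-\mu'\|_2^2,1)$, Assouad's lemma applied to the hypercube $\{0,c/\sqrt n\}^d$, combined with the trivial bound $H\le\sqrt{2}$ in the regime $d\gtrsim n$, yields the lower bound $\sqrt{d/n}\wedge 1$.

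For the upper bound I would use a two-stage proper estimator: first recover the subspace $V=\Span\{\mu_j\}$ (of dimension $\le k$) by PCA with operator-norm error $\lesssim\sqrt{d/n}$, as in the proof of \prettyref{thm:mixing_est}; then perform a minimum Hellinger distance fit over $k$-atomic mixing distributions supported on the estimated subspace $\hat V$. The technical heart is a local Hellinger entropy bound of the form
\begin{equation}
\log N\bigl(\epsilon,\ \mc P_{k,d}\cap\{P:H(P,P_\Gamma)\le 2\epsilon\},\ H\bigr)\;\lesssim\; k^4 d+(2k)^{2k+2},
\label{eq:local-entropy-sketch}
\end{equation}
uniform in $P_\Gamma\in\mc P_{k,d}$ and $\epsilon>0$. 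This decomposes as (a) localizing the span $V$ in the Grassmannian $\mathrm{Gr}(k,d)$, which contributes $\lesssim k^4 d$ rather than $kd\log(1/\epsilon)$, and (b) specifying the mixing distribution within that $k$-dimensional span, which contributes only $(2k)^{2k+2}$ and is independent of $d$. The low-rank moment tensors $M_m(\Gamma)=\int\mu^{\otimes m}\,\Gamma(d\mu)$ for $m\le 2k-1$ are central here: each has symmetric rank at most $k$, and its principal subspace coincides with $V$, so Hellinger-closeness of two mixtures forces closeness of their spanning subspaces via a Davis--Kahan-type perturbation bound.

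Given~\eqref{eq:local-entropy-sketch}, a standard basic-inequality/peeling argument for the minimum Hellinger distance estimator (or a cover-based Yang--Barron style estimator) delivers the high-probability bound~\eqref{eq:density_est-highprob}. The main obstacle is~\eqref{eq:local-entropy-sketch} itself: converting the naive $kd\log(1/\epsilon)$ Grassmannian count into the parametric $k^4 d$ localized count is precisely where the moment-tensor viewpoint of the paper comes in, and carefully tracking the $k$-dependence of the in-subspace moment-matching argument is what produces the $(2k)^{2k+2}$ factor.
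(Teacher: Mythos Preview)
Your lower bound matches the paper's.

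For the upper bound there is a genuine gap. You propose first estimating the subspace $V$ by PCA and then fitting within $\hat V$. But the PCA projection step alone incurs Hellinger error $(d/n)^{1/4}$, not $\sqrt{d/n}$: by \eqref{eq:H_sq_cov},
\[
H^2(P_\Gamma, P_{\Gamma_{\hat H}}) \le \tfrac12 W_2^2(\Gamma,\Gamma_{\hat H}) \le k\|\Sigma-\hat\Sigma\|_2 \lesssim k\sqrt{d/n},
\]
so $H(P_\Gamma,P_{\Gamma_{\hat H}})\lesssim (d/n)^{1/4}$. The paper in fact proves exactly this (\prettyref{thm:density_est_efficient}) and states that closing the gap between $(d/n)^{1/4}$ and $\sqrt{d/n}$ for PCA-type procedures is open for $k\ge 3$. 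If instead you mean to use the local entropy bound~\eqref{eq:local-entropy-sketch} on the full class $\mc P_{k,d}$ with a minimum-distance or Yang--Barron estimator, then the PCA step is irrelevant and should be dropped from the argument; that is essentially the paper's route, which applies the Le Cam--Birg\'e construction directly on $\mc P_{k,d}$.

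Your sketch for proving the local entropy bound also diverges from the paper and has its own difficulty. You propose localizing the span $V$ in the Grassmannian via a Davis--Kahan-type bound and then covering in-subspace. But without lower bounds on the weights or on $\|\mu_j\|_2$ there is no spectral gap, and the subspace is not stably determined by the mixture (consider atoms $\mu_j\to 0$). The paper sidesteps this by working not with $V$ but with the moment tensors $\bfM_{2k-1}(\Gamma)$ themselves. \prettyref{thm:H-M} gives the two-sided comparison $H(P_\Gamma,P_{\Gamma'})\asymp_k \max_{\ell\le 2k-1}\Fnorm{M_\ell(\Gamma)-M_\ell(\Gamma')}$, so a Hellinger ball becomes a Frobenius ball of symmetric tensors of rank at most $2k$. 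The covering number of that set is bounded via the Tucker decomposition (\prettyref{lem:N-tensor-low-rank}): an orthonormal frame in $\reals^d$ contributes the $O(dk)$ exponent and the core tensor in $(\reals^{2k})^{\otimes\ell}$ contributes the $(2k)^\ell$ exponent. This yields the same two pieces you anticipated, but at the tensor level, where no gap condition is needed.
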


%To the best of our knowledge, this result, following a long line of research on density estimation in Gaussian mixture models, is the first that shows the sharp parameteric rate without any logarithmic slack. The rate in~\prettyref{eq:result_density} follows from a local cover number calculation in the relevant parameter space. 

%To the best of our knowledge,~\prettyref{thm:density_est}, following a long line of research, is the first result that establishes the sharp rate without logarithmic factors. 
\prettyref{thm:density_est}, which follows a long line of research, is the first result we know of that establishes the sharp rate without logarithmic factors.
%\prettyref{thm:density_est} shows that the parametric rate (i.e., the ratio between the number of parameters and the sample size) can be achieved for Gaussian mixtures.
 %which is the typical behavior of smooth parametric families. For the $k$-GM, 
The parametric rate $O_k(\sqrt{d/n})$ can be anticipated by noting that the model \prettyref{eq:gmm_model} is a smooth parametric family with $k(d+1)-1$ parameters. 
Justifying this heuristic, however, is not trivial, especially in high dimensions. To this end, we apply the Le Cam-Birg\'e construction of estimators from pairwise tests, which, as opposed to the analysis of the maximum likelihood estimator (MLE) based on bracketing entropy~\cite{VdV_Wellner_1996,maugis2011non,Ghosal_VdV_2001,Ho_Nguyen_2016_annals}, relies on bounding the local Hellinger entropy without brackets. 
By doing so, we also avoid the logarithmic slack in the existing result for the MLE; see~\prettyref{sec:discussion} for more discussion.
%\added{The best bracketing entropy result in the current literature is that of~\cite{maugis2011non}, which yields a Hellinger rate of $\sqrt{d \log(nd)/n}$ for the MLE; removing this logarithmic slack is open. See~\prettyref{sec:discussion} for more discussion.}

%, i.e., the minimum number of Hellinger-balls of radius $\delta$ that cover the Hellinger-ball of radius $\epsilon$ centered at the true density.
The celebrated result of Le Cam-Birg\'e \cite{LeCam73,Birge83,birge1986estimating} shows that if the local covering number 
(the minimum number of Hellinger-balls of radius $\delta$ that cover any Hellinger-ball of radius $\epsilon$) 
 is at most $(\frac{\epsilon}{\delta})^{O(D)}$, then there exists a density estimate that achieves a squared Hellinger risk $O\(\frac{D}{n}\)$. Here the crucial parameter $D$ is known as the \emph{doubling dimension} (or the Le Cam dimension \cite{vdv02}), which serves as the effective number of parameters. In order to apply the theory of Le Cam-Birg\'e, we need to show that the doubling dimension of Gaussian mixtures is at most $O_k(d)$.

%\nb{\prettyref{thm:density_est} shows that the sharp parametric rate (i.e., the ratio between the number of parameters and the sample size) can be achieved for Gaussian mixtures. This can be anticipated by noting that~\prettyref{eq:gmm_model1} is a smooth parametric family with $k(d+1)-1$ parameters. But justifying the rate~\prettyref{eq:result_density} is not trivial, especially in high dimensions. 
%
%To justify~\prettyref{eq:result_density}, we use the Le Cam-Birg\'e construction of estimators from pairwise tests, which relies on bounding the \emph{local Hellinger covering number}, i.e., the minimum number of Hellinger balls of radius $\delta$ that cover the Hellinger ball of radius $\epsilon$ centered at the true density. The celebrated result of Le Cam-Birg\'e \cite{LeCam73,Birge83,birge1986estimating} shows that if the local entropy is at most $(\frac{\epsilon}{\delta})^{O(D)}$, then there exists a density estimate that achieves the squared Hellinger risk of $O(\frac{D}{n})$. The crucial parameter $D$ is known as the \emph{doubling dimension} (or the \emph{Le Cam dimension}~\cite{vdv02}), which is the effective number of parameters. In order to apply the theory of Le Cam-Birg\'e, we need to show that the doubling dimension of Gaussian mixtures is at most $O_k(d)$.}

%To explain the role of moment tensors in characterizing the information geometry of Gaussian mixtures, 

Bounding the local entropy requires a sharp characterization of the information geometry of Gaussian mixtures, for which the \emph{moment tensors} play a crucial role. To explain this, we begin with an abstract setting: 
Consider a parametric model $\{P_\theta:\theta\in\Theta\}$, where the parameter space $\Theta$ is a subset of the $D$-dimensional Euclidean space. We say a parameterization is \emph{good} if the Hellinger distance satisfies the following \emph{dimension-free} bound:
%Suppose that the Hellinger distance of this model satisfies the following sandwich bound:
\begin{equation}
C_0 \|\theta-\theta'\| \leq H(P_\theta,P_{\theta'}) \leq C_1 \|\theta-\theta'\|,
\label{eq:good-param}
\end{equation}
for some norm $\|\cdot\|$ and constants $C_0,C_1$.
%We refer to \prettyref{eq:good-param} as a good parametrization. 
The two-sided bound \prettyref{eq:good-param} leads to the desired result on the local entropy in the following way.
First, given any $P_\theta$ in an $\epsilon$-Hellinger neighborhood of the true density $P_{\theta_*}$, the lower bound in \prettyref{eq:good-param} localizes the parameter $\theta$ in an $O(\epsilon)$-neighborhood (in $\|\cdot\|$-norm) of the true parameter $\theta_*$, which, thanks to the finite dimensionality, can be covered by at most $(\frac{\epsilon}{\delta})^{O(D)}$ $\delta$-balls. Then the upper bound in \prettyref{eq:good-param} shows that this covering constitutes an $O(\delta)$-covering for the Hellinger ball.
%In this argument we see the crucial role of \emph{both} directions of \prettyref{eq:good-param}.

While satisfied by many parametric families, notably the Gaussian location model, 
\prettyref{eq:good-param} fails for their mixtures if we adopt the natural parametrization (in terms of the centers and weights), as shown by the simple counterexample of the
%Unfortunately, for mixture models, \prettyref{eq:good-param} fails for the natural parametrization (in terms of the centers and weights). 
%To see this, consider the simple example of 
symmetric 2-GM where $P_\theta=\frac{1}{2} N(-\theta,1)+\frac{1}{2} N(\theta,1)$, with $|\theta|\leq 1$.
 %where $\theta\in\Theta=[-1,1]$.
%; this is arguably the simplest non-trivial mixture model. 
Indeed, it is easy to show that \cite{Wu_Zhou_2019}: 
\[
|\theta-\theta'|^2 \lesssim H(P_\theta,P_{\theta'}) \lesssim |\theta-\theta'|,
\]
which is tight since the lower and upper bound are achieved when $\theta'\to\theta$ and for $\theta=0$ and say $\theta=0.1$, respectively.
The behavior of the lower bound can be attributed to the zero Fisher information at $\theta=0$. 
The importance of a two-sided comparison result like \prettyref{eq:good-param} and the difficulty in Gaussian mixtures
were recognized by \cite{gassiat2014local,gassiat2012consistent} in their study of the local entropy of mixture models. See \prettyref{sec:density_estimation} for detailed discussion.

It turns out that for Gaussian mixture model \prettyref{eq:gmm_model1}, a good parametrization satisfying~\prettyref{eq:good-param} is provided by the moment tensors. The degree-$\ell$ moment tensor of the mixing distribution $\Gamma$ is the symmetric tensor 
\begin{equation}
M_\ell(\Gamma) \triangleq \Expect_{U\sim\Gamma}[U^{\otimes \ell}] = \sum_{j=1}^k w_j \mu_j^{\otimes \ell}.
\label{eq:momenttensor-intro}
\end{equation} 
It can be shown that any $k$-atomic distribution is uniquely determined by its first $2k-1$ moment tensors $\bfM_{2k-1}(\Gamma)=[M_1(\Gamma),\ldots,M_{2k-1}(\Gamma)]$.
Consequently, moment tensors provides a valid parametrization of the $k$-GM in the sense that $\bfM_{2k-1}(\Gamma) = \bfM_{2k-1}(\Gamma')$ if and only if $P_\Gamma=P_{\Gamma'}$. At the heart of our proof of \prettyref{thm:density_est}
is the following robust version of this identifiability result:
 %by showing that the Hellinger distance of the $k$-GM satisfies
\begin{equation}
\label{eq:H-M-intro}
H^2(P_\Gamma, P_{\Gamma'}) \asymp_k \Fnorm{\bfM_{2k-1}(\Gamma)-\bfM_{2k-1}(\Gamma')}^2
%\max_{\ell \le 2k-1} \Fnorm{M_{\ell}(\Gamma)-M_{\ell}(\Gamma')}^2.
\end{equation}
which shows that the Hellinger distance between $k$-GMs are characterized by the Euclidean distance of their moment tensors up to dimension-free constant factors. Furthermore, the same result also holds for the Kullback-Leibler (KL) and the $\chi^2$ divergences. See \prettyref{sec:moment-tensor} for details.

Note that moment tensors appear to be a gross overparameterization of $\mc G_{k, d}$ since the original number of parameters is only $k d+ k-1$ as compared to the size $d^{\Theta(k)}$ of moment tensors. The key observation is that the moment tensors \prettyref{eq:momenttensor-intro} for $k$-atomic distributions are naturally low rank, so that the effective dimension remains $\Theta(kd)$. This observation underlies tensor decomposition methods for learning mixture models \cite{Anandkumar_etal_2012,Hsu_Kakade_2013}; here we use it for the information-theoretic purpose of bounding the local metric entropy of Gaussian mixtures. 

Results similar to~\prettyref{eq:H-M-intro} were previously shown in~\cite{bandeira2017optimal} for the problem of multiple-reference alignment, a special case of Gaussian mixtures with mixing distribution being uniform over the cyclic shifts of a given vector. The crucial difference is that the characterization~\prettyref{eq:H-M-intro} involves moments tensors of degree at most $2k-1$, while~\cite[Theorem 9]{bandeira2017optimal} involves all moments.

\added{The Le Cam-Birg\'e construction used to show \prettyref{thm:density_est} does not result in a computationally efficient estimator. In~\prettyref{ssec:density_est_efficient}, we provide a variant of the algorithm in~\prettyref{sec:mixing_estimation} that runs in $n^{O(k)}$ time which achieves the suboptimal rate of $O_k((d/n)^{1/4})$ for general $k$-GM (\prettyref{thm:density_est_efficient}). Though not optimal, this result nonetheless improves the state of the art of~\cite{Acharya_etal_2014} by logarithmic factors.
Furthermore, in the special case of $2$-GM, a slightly modified estimator is shown to achieve the optimal rate of $O(\sqrt{d/n})$ (\prettyref{thm:rate_density_2gm}). 
%(see Theorems \ref{thm:density_est_efficient} and \ref{thm:rate_density_2gm}). 
Finding a polynomial-time algorithm achieving the optimal rate in~\prettyref{thm:density_est} for all $k$ is an open problem.
}

\subsection{Related work}
\label{sec:related}

There is a vast literature on Gaussian mixtures; see~\cite{Lindsay_1989,HK2015,WY18} and the references therein for an overview. In one dimension, fast algorithms and optimal rates of convergence have already been achieved for both parameter and density estimation by, e.g.,~\cite{WY18}. We therefore focus the following discussion on multivariate Gaussian mixtures, in both low and high dimensions. 

\paragraph{Parameter estimation}

For statistical rates,~\cite[Theorem 1.1]{Ho_Nguyen_2016_annals} \added{and~\cite[Theorem $4.3$]{Ho_Nguyen_2016_ejs}} obtained convergence rates for mixing distribution estimation in Wasserstein distances for low-dimensional location-scale Gaussian mixtures, both over- and exact-fitted. Their rates for over-fitted mixtures are determined by algebraic dependencies among a set of polynomial equations whose order depends on the level of overfitting \added{and identifiability of the model}; the rates are potentially much slower than $n^{-1/2}$. The estimator analyzed in~\cite{Ho_Nguyen_2016_annals,Ho_Nguyen_2016_ejs} is the MLE, which involves non-convex optimization and is typically approximated by the Expectation-Maximization (EM) algorithm.

In the computer science literature, a long line of research starting with~\cite{Dasgupta_1999} has developed fast algorithms for individual parameter estimation in multivariate Gaussian mixtures under fairly weak separation conditions, see, e.g.,~\cite{Vempala_Wang_2004,Arora_Kannan_2005,Belkin_Sinha_2009,Kalai_Moitra_Valiant_2010,Moitra_Valiant_2010,Hsu_Kakade_2013,Hardt_Price_2015,Hopkins_Li_2018}. Since these works focus on individual parameter estimation, some separation assumption on the mixing distribution is necessary.

\paragraph{Density estimation}

Computational issues aside, there are several recent works addressing the minimax rate of density estimation for Gaussian mixtures. 
%The Hellinger MLE result in Ho and Nguyen annals is actually part of the proof of Theorem $1.1$. 
%In low dimensions,~\cite[Theorem $1.1$]{Ho_Nguyen_2016_annals} \added{and~\cite[Theorem $4.3$]{Ho_Nguyen_2016_ejs}} obtained an $O(\sqrt{\log n/n})$-Hellinger guarantee for the MLE for Gaussian location-scale and location mixtures, respectively. 
In low dimensions, an $O(\sqrt{\log n/n})$-Hellinger guarantee for the MLE is obtained for finite Gaussian mixtures \cite{Ho_Nguyen_2016_annals,Ho_Nguyen_2016_ejs}. 
The near-optimal rate for high-dimensional location-scale mixtures was obtained recently in~\cite{Ashtiani_etal_2018}. This work also provides a total variation guarantee of $\tilde O(\sqrt{kd/n})$ for location mixtures, where $\tilde O$ hides polylogarithmic factors, as compared to the sharp result in~\prettyref{thm:density_est}. 
%This result is the best rate for a high-dimensional Gaussian location mixture density estimation until the sharp result found in \prettyref{thm:density_est}. 
The algorithm in~\cite{Ashtiani_etal_2018} runs in time that is exponential in $d$.

To our knowledge, there is no polynomial-time algorithm that achieves the sharp density estimation guarantee in~\prettyref{thm:density_est} (or the slightly suboptimal rate in \cite{Ashtiani_etal_2018}), even for constant $k$. The works of~\cite{Kalai_Moitra_Valiant_2010,Moitra_Valiant_2010} showed that their polynomial-time parameter estimation algorithms also provide density estimators without separation conditions, but the resulting rates of convergence are far from optimal.~\cite{Feldman_etal_2006,Acharya_etal_2014,Li_Schmidt_2017} provided polynomial-time algorithms for density estimation with improved statistical performance. 
In particular, \cite{Acharya_etal_2014} obtained an algorithm that runs in time $\tilde O_k(n^2d + d^2(n/d)^{3k^2/4})$
%\replaced{$\tilde O_k(n^2d + d^2(n/d)^{3k^2/4})$}{$\tilde O(n^2d + (d/n)^{k^2})$} 
and achieves a total variation error of $\tilde O((d/n)^{1/4})$. 
%(Here and below $\tilde O(\cdot)$ hides logarithmic factors.) % Theorem 11 of NeurIPS paper. 
% \nb{This is suspicious: Since the result is in TV, it is trivial if $d>n$, so $d\leq n$. But what does $(d/n)^{k^2}$ mean then...} \added{Fixed now. Also note: this is my translation of their Theorem 8, but note that they typically simplify it to be $\tilde O_{k, \epsilon}(d^3)$, where the $\epsilon$ depends on $n$.} 
The running time was further improved in \cite{Li_Schmidt_2017}, which achieves the rate $\tilde O((d/n)^{1/6})$ for $2$-GM.

\paragraph{Nonparametric mixtures}

The above-mentioned works all focus on finite mixtures, which is also the scenario considered in this paper. A related strain of research (e.g.,~\cite{Genovese_Wasserman_2000,Ghosal_VdV_2001,Zhang_2009,Saha_2017})
 studies the so-called \emph{nonparametric mixture model}, in which the mixing distribution $\Gamma$ may be an arbitrary probability measure.

In this case, the nonparametric maximum likelihood estimator (known as the NPMLE) entails solving a convex (but infinite-dimensional) optimization problem, which, in principle, can be solved by discretization \cite{koenker2014convex}. For statistical rates, it is known that in one dimension, the optimal $L_2$-rate for density estimation is $\Theta((\log n)^{1/4}/\sqrt{n})$  and the Hellinger rate is at least $\Omega(\sqrt{\log n/n})$ \cite{Ibragimov_2001,Kim_2014}, which shows that the parametric rate \prettyref{eq:result_density} is only achievable for finite mixture models.
%which coincides with that of the class of analytic densities \cite{Ibragimov_2001}.
%which coincides with that of the class of analytic densities \cite{Ibragimov_2001}.
For the NPMLE, \cite{Zhang_2009} proved the Hellinger rate of $O(\log n/\sqrt{n})$ in one dimension; this was extended to the multivariate case by \cite{Saha_2017}. 
In particular,~\cite[Theorem 2.3]{Saha_2017} obtained a Hellinger rate of 
$ C_d \sqrt{k (\log n)^{d+1}/n}$ for the NPMLE when the true model is a $k$-GM. 
In high dimensions, this is highly suboptimal compared to the parametric rate in \prettyref{eq:result_density}, although the dependency on $k$ is optimal.

\subsection{Organization}
\label{sec:org}

The rest of the paper is organized as follows. \prettyref{sec:mixing_estimation} presents an efficient algorithm for estimating the mixing distribution and provides the theoretical justification for~\prettyref{thm:mixing_est}. \prettyref{sec:density_estimation} introduces the necessary background on moment tensors and proves the optimal rate of density estimation in \prettyref{thm:density_est}. \prettyref{sec:numerical} provides simulations that support the theoretical results. \prettyref{sec:discussion} provides further discussion on the connections between this work and the Gaussian mixture literature.

\section{Notation} \label{sec:notation}

%We now introduce some notation that will be used throughout the paper. 
Let $[n]\triangleq \{1, \hdots, n\}$. 
Let $S^{d-1}$ and $\Delta^{d-1}$ denote the unit sphere and the probability simplex in $\reals^d$, respectively.
Let $e_j$ be the vector with a $1$ in the $j$th coordinate and zeros elsewhere. 
For a matrix $A$, let $\norm{A}_2 = \sup_{x: \norm{x}_2 = 1} \norm{Ax}_2$ and $\norm{A}_F^2 = \trace(A^{\top}A)$. 
For two positive sequences $\{a_{n}\}, \{b_{n}\}$, we write $a_n \lesssim b_n$ or $a_{n} = O(b_{n})$ if there exists a constant $C$ such that $a_n\le C b_n$ and we write $a_{n} \lesssim_k b_{n}$ and $a_{n} = O_k(b_{n})$ to emphasize that $C$ may depend on a parameter $k$. 
%\added{Since we use $\Gamma$ mixing distributions, we denote the Gamma function by $\mathbf{\Gamma}(t) \triangleq \int_0^\infty x^{t-1} e^{-x} \, dx$ for positive real $t$.} \nbwu{Can you please change $G(t)$ to be $\mathbf{\Gamma}(t)$?}

For $\epsilon > 0$, an $\epsilon$-covering of a set $A$ with respect to a metric $\rho$ is a set $\calN$ such that for all $a \in A$, there exists $b \in \calN$ such that $\rho(a, b) \le \epsilon$; denote by $N(\epsilon, A, \rho)$ the minimum cardinality of $\epsilon$-covering sets of $A$.
An $\epsilon$-packing in $A$ with respect to the metric $\rho$ 
is a set $\mc M \subset A$ such that $\rho(a,b) > \epsilon$ for any distinct $a,b$ in $\calM$; denote by $M(\epsilon, A, \rho)$ the largest cardinality of $\epsilon$-packing sets in $A$.

For distributions $P$ and $Q$, let $p$ and $q$ denote their relative densities with respect to some dominating measure $\mu$, respectively. The total variation distance is defined as $\TV(P, Q) \triangleq  \frac{1}{2} \int |p(x) - q(x) | \mu(dx)$.
If $P\ll Q$, the KL divergence and the $\chi^2$-divergence are defined as 
$\KL\(P || Q \) \triangleq  \int p(x) \log \frac{p(x)}{q(x)} \mu(dx)$ and 
$\chi^2\(P\| Q\) \triangleq  \int \frac{(p(x) - q(x))^2}{q(x)} \mu(dx)$, respectively.
Let $\supp(P)$ denote the support set of a distribution $P$.
Let $\calL(U)$ denote the distribution of a random variable $U$.
For a one-dimensional distribution $\nu$, denote the $r$th moment of $\nu$ by $m_r(\nu) \triangleq \mb E_{U\sim \nu}[U^r]$
and its moment vector ${\bf m}_{r}(\nu) \triangleq (m_1(\nu),\ldots,m_{r}(\nu))$. Given a $d$-dimensional distribution $\Gamma$, for each $\theta\in\reals^d$, we denote 
\begin{equation}
\Gamma_\theta \triangleq \mc L(\theta^\top U), \quad U \sim \Gamma; 
\label{eq:gamma_theta}
\end{equation}
in other words, $\Gamma_\theta$ is the pushforward of $\Gamma$ by the projection $u\mapsto\theta^\top u$; 
in particular, 
the $i$th marginal of $\Gamma$ is denoted by $\Gamma_i \triangleq \Gamma_{e_i}$, with $e_i$ being the $i$th coordinate vector.
 Similarly, for $V\in\reals^{d\times k}$, denote 
 %we use $\Gamma_V$ to indicate the distribution $\mc L(V^\top U)$ where $U \sim \Gamma$.
\begin{equation}
\Gamma_V \triangleq \mc L(V^\top U), \quad U \sim \Gamma. 
\label{eq:gamma_V}
\end{equation}

\section{Mixing distribution estimation} \label{sec:mixing_estimation}

In this section we present the algorithm that achieves the optimal rate for estimating the mixing distribution in \prettyref{thm:mixing_est}.
The procedure is described in Sections \ref{sec:dtok} and \ref{sec:kto1}. The proof of correctness is given in Sections \ref{ssec:proof_main}, with supporting lemmas proved in \prettyref{app:proofs_supporting}.
Throughout this section we assume that $n \geq d$.

\subsection{Dimension reduction via PCA}
	\label{sec:dtok}
\added{
In this section we assume $d> k$ and reduces the dimension from $d$ to $k$.
For $d\le k$, we will directly apply the procedure in \prettyref{sec:kto1}.
}
%We start by introducing some helpful notation. 
Recall that the atoms $\Gamma$ are $\mu_1, \hdots, \mu_k$; they span a subspace of $\mb R^d$ of dimension at most $k$. 
Therefore, there exists $V=[v_1,\dots,v_k]$ consisting of orthonormal columns, such that 
for each $j = 1, \hdots, k$, we have
$\mu_j = V\psi_j$, where $\psi_j=V^\top \mu_j \in \reals^k$ encodes the coefficients of $\mu_j$ in the basis vectors in $V$. 
Therefore, we can identify a $k$-atomic distribution $\Gamma$ on $\reals^d$ 
with a pair $\(V, \gamma \)$, where $\gamma = \sum_{j \in [k]} w_j \delta_{\psi_j}$ is a $k$-atomic distribution on $\mb R^{k}$. 
This perspective motivates the following two-step procedure.
First, we estimate the subspace $V$ using PCA, relying on the fact that the covariance matrix satisfies $\Expect[XX^\top] = I_d + \sum_{j=1}^k w_j \mu_j\mu_j^\top$.
We then project the data onto the estimated subspace, reducing the dimension from $d$ to $k$, and apply an estimator of $k$-GM in $k$ dimensions. 
The precise execution of this idea is described below.

%An optimal procedure is to first project the data onto a subspace that is ``close enough'' to the space spanned by the $v_j$, then to estimate the mixing distribution $\gamma$ using the low-dimensional data. We would like to project the data onto the subspace spanned by the columns of $V$, but since we cannot access it, we use the noisy version. 

For simplicity, consider a sample of $2n$ observations $X_1,\ldots,X_{2n} \iiddistr P_\Gamma$.
%The procedure for estimating $\Gamma$ as follows,
We construct an estimator $\hat \Gamma$ of $\Gamma$ in the following way:
\begin{enumerate}[(a)]
	\item Estimate the subspace $V$ using the first half of the sample. 
	%Given $\{ X_1, \ldots,X_n \}$, let 
	%\begin{equation}
		%\hat \Sigma = \frac{1}{n} \sum_{i=1}^n X_iX_i^\top - I_d.
	%\label{eq:hatSigma}
	%\end{equation}
	Let $\hat V=[\hat v_1,\dots,\hat v_k]\in\reals^{d\times k}$ be the matrix whose columns are the top $k$ orthonormal left singular vector of $[X_1,\ldots,X_n]$.
	%Given $\{ X_1, \ldots,X_n \}$, let 
	%\begin{equation}
		%\hat \Sigma = \frac{1}{n} \sum_{i=1}^n X_iX_i^\top - I_d.
	%\label{eq:hatSigma}
	%\end{equation}
	%Let $\hat V=[\hat v_1,\dots,\hat v_k]\in\reals^{d\times k}$ be the matrix whose columns are the top $k$ orthonormal eigenvectors of $\hat \Sigma$.
	
\item Project the second half of the sample onto the learned subspace $\hat V$: 
\begin{equation}
x_i \triangleq \hat V^{\top} X_{i+n}, \quad i =1,\ldots,n. 
\label{eq:xi}
\end{equation}
Thanks to independence, conditioned on $\hat V$, $x_1,\ldots,x_n$ are \iid~observations  from a $k$-GM in $k$ dimensions, with mixing distribution
\begin{equation}
\gamma \triangleq \Gamma_{\hat V} = \sum_{j=1}^k w_j \delta_{\hat V^\top \mu_j}
\label{eq:gamma}
\end{equation}
obtained by projecting the original $d$-dimensional mixing distribution $\Gamma$ onto $\hat V$.

\item To estimate $\hat \gamma$, we apply a multivariate version of the denoised method of moments to $x_1,\ldots,x_n$ to obtain a $k$-atomic distribution on $\reals^k$:
\begin{equation}
\hat \gamma = \sum_{j = 1}^k \hat w_j \delta_{\hat \psi_j}.
\label{eq:hat-gamma}
\end{equation}
This procedure is explained next and detailed in \prettyref{algo:gmm_est}.

\item Lastly, we report
\begin{equation}
\hat \Gamma = \hat\gamma_{\hat V^\top} = \sum_{j = 1}^k \hat w_j \delta_{\hat V \hat \psi_j} \label{eq:estimator}
\end{equation}
as the final estimate of $\Gamma$.
\end{enumerate}

%\added{We now briefly discuss possible ways to improve on the dimension reduction step above. The independence of $\hat V$ and $\{X_{i+n} \}_{i \in [n]}$ in~\prettyref{eq:xi} is due to sample splitting, which we have presented here as dividing the full sample of $2n$ points into two equal parts. In fact, the sample sizes do not need to be equal and can be optimized with respect to $d$ and $k$ to balance the two terms of error, depending on which step is harder. But the improvement in the final estimation rate from optimizing the sample splitting is at most a constant factor. Since the main focus of this paper is on the theoretical minimax rate within constant factors, we choose equal splits for the ease of presentation. } 

A slightly better dimension reduction can be achieved by first centering the data by subtracting the sample mean, then projecting to a subspace of dimension $k-1$ rather than $k$, and finally  adding back the sample mean after obtaining the final estimator. As this only affect constant factors, we forgo the centering step in this section. 
 %To simplify the presentation in this section. 
Later in \prettyref{ssec:density_est_efficient}, it turns out that centering is important for achieving the optimal density estimation for $2$-GM (see \prettyref{thm:rate_density_2gm}).

The usefulness of dimension reduction has long been recognized in the literature of mixture models \cite{Dasgupta_1999,Vempala_Wang_2004,Kannan_Salmasian_2005,Achlioptas_McSherry,HP15,Loffler_etal_2019}, where the mixture data is projected to a good low-dimensional subspace (by either random projection or spectral methods) and parameter estimation or clustering are carried out subsequently in the low-dimensional mixture model. For such methods, the error bound typically depends on those of these two steps, 
%the subspace estimation error and the error of the clustering method performed on the low-dimensional data, 
analogously to the analysis of mixing distribution estimation in~\prettyref{thm:mixing_est}. 
%In particular, our error bound for the projection part (\prettyref{lem:perturbation_psd}) is closely connected to~\cite[Theorem $3$]{Vempala_Wang_2004}.

%\nbwu{For reference, below are original:
%
%\added{The idea of reducing mixture data to a good low-dimensional subspace and then performing an algorithm on the low-dimensional data is the basis of much of the spectral clustering literature \cite{Dasgupta_1999,Vempala_Wang_2004,Kannan_Salmasian_2005,Achlioptas_McSherry,HP15,Loffler_etal_2019}. For such methods, the error depends on the error in the subspace estimation and the error of the clustering method performed on the low-dimensional data, analogously to the analysis of mixing distribution estimation in~\prettyref{thm:mixing_est}. Our statement on the error from the projection,~\prettyref{lem:perturbation_psd}, is closely connected to~\cite[Theorem $3$]{Vempala_Wang_2004}.}
%}

%Note that~\cite{Vempala_Wang_2004} use as their best subspace the top singular values of the data matrix $X$ directly, which is slightly different from our approach.} Specifically, they use the top $r$ singular values, where $r \ge C \max(k, \log (n/n_{\min})$). It is important to note that since the above-mentioned works focus on clustering, they make stronger separation assumptions than we do, and thus their subspace estimation error rates may be faster.  But actually, ~\cite[Theorem 3]{Vempala_Wang_2004} translates to an individual location estimation error rate of $\(\frac{d^3}{n}\)^{1/4}$, which is much worse than ours. This is because their analysis isn't sharp.

\subsection{Estimating the mixing distribution in low dimensions}
\label{sec:kto1}
We now explain how we estimate a $k$-GM in $k$ dimensions from \iid~observations. 
%Recall that the optimal $W_1$-rate for $k$-atomic one-dimensional mixing distribution is $O(n^{-\frac{1}{2k-1}})$. Our goal is to extend this result to $k$ dimensions.
As mentioned in \prettyref{sec:intro}, the idea is to use many projections to reduce the problem to one dimension. We first present a conceptually simple estimator $\hat\gamma^{\circ}$ with an optimal statistical performance but unfavorable run time $n^{O(k)}$. We then describe an improved estimator $\hat\gamma$ that retains the statistical optimality and can be executed in time $n^{O(1)}$. \added{These procedures are also applicable to estimating a $k$-GM in $d< k$ dimensions using fewer projections.}

To make precise the reduction to one dimension, a relevant metric is the \emph{sliced Wasserstein distance} \added{~\cite{Paty_Cuturi_2019,Deshpande_etal_2019,Niles-Weed_Rigollet_2019}}, which measures the distance of two $d$-dimensional distributions by the maximal $W_1$-distance of their one-dimensional projections:
\begin{equation}
\Wsliced_1(\Gamma, \Gamma') \triangleq \sup_{\theta \in S^{d-1}} W_1(\Gamma_\theta, \Gamma'_\theta).
\label{eq:W1sliced}
\end{equation}
Here we recall that $\Gamma_\theta$ defined in \prettyref{eq:gamma_theta} denotes the projection, or pushforward, of $\Gamma$ onto the direction $\theta$. 
\added{A related definition was introduced earlier by~\cite{rabin2011wasserstein}, where the supremum over $\theta$ in~\prettyref{eq:W1sliced} is replaced by the average.} 
%\added{Several related definitions exist in the literature. For example, a similar notion was introduced earlier by~\cite{rabin2011wasserstein}, where the supremum over $\theta$ in~\prettyref{eq:W1sliced} is replaced by the average; a min-max type quantity (as opposed to $\Wsliced_1$ which is a max-min) is studied in \cite{Paty_Cuturi_2019}.} 
Computing the sliced Wasserstein distance can be difficult and in practice is handled by gradient descent heuristics \cite{Deshpande_etal_2019}; 
we will, however, only rely on its theoretical properties.
The following result, which is proved in \prettyref{app:proofs_supporting}, shows that for low-dimensional distributions with few atoms, the full Wasserstein distance and the sliced one are comparable up to constant factors. \added{Related results are obtained in \cite{Paty_Cuturi_2019,Bayraktar_Guo_2019}. For instance, \cite[Theorem 2.1(ii)]{Bayraktar_Guo_2019} showed that 
$W_1(\Gamma, \Gamma') \le C_d \cdot \Wsliced_1(\Gamma, \Gamma')$ holds for all distributions $\Gamma,\Gamma'$ for some non-explicit constant $C_d$.}

%\added{To our knowledge,~\prettyref{lem:w1_dim_reduction} is novel, but related characterizations of the connection between Wasserstein and sliced Wasserstein distances are obtained in~\cite[Theorem $2.1$]{Bayraktar_Guo_2019} and~\cite[Proposition $2$]{Paty_Cuturi_2019}.}
\begin{lem}[Sliced Wasserstein distance] \label{lem:w1_dim_reduction}
For any $k$-atomic distributions $\Gamma, \Gamma'$ on $\mb R^d$, 
\begin{align*}
%\sup_{\theta \in S^{d-1}} W_1(\Gamma_\theta, \Gamma'_\theta) \le W_1(\Gamma, \Gamma') \le k^2 \sqrt{d} \sup_{\theta \in S^{d-1}} W_1(\Gamma_\theta, \Gamma'_\theta).
\Wsliced_1(\Gamma, \Gamma') \le W_1(\Gamma, \Gamma') \le k^2 \sqrt{d} \cdot \Wsliced_1(\Gamma, \Gamma').
\end{align*}
\end{lem}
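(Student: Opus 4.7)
The lower bound $\Wsliced_1(\Gamma,\Gamma') \le W_1(\Gamma,\Gamma')$ is immediate from Kantorovich--Rubinstein duality. For any unit $\theta \in S^{d-1}$ and any $1$-Lipschitz $g:\mathbb{R}\to\mathbb{R}$, Cauchy--Schwarz gives $|g(\theta^\top x)-g(\theta^\top y)| \le |\theta^\top(x-y)| \le \|x-y\|_2$, so $x \mapsto g(\theta^\top x)$ is $1$-Lipschitz on $(\mathbb{R}^d,\|\cdot\|_2)$. Hence
\[
W_1(\Gamma_\theta,\Gamma'_\theta) \;=\; \sup_{g\text{ 1-Lip}}\int g(\theta^\top x)\,d(\Gamma-\Gamma')(x)\;\le\; W_1(\Gamma,\Gamma'),
\]
and taking the supremum over $\theta$ yields the bound.

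For the upper bound, set $\epsilon = \Wsliced_1(\Gamma,\Gamma')$ and note that $\Gamma - \Gamma'$ is supported on at most $N \le 2k$ atoms $a_1,\dots,a_N \in \mathbb{R}^d$. My plan is to split the factor $k^2\sqrt{d}$ into its two natural pieces by passing through the $\ell_\infty$-Wasserstein distance $W_1^{\ell_\infty}$ (defined with ground metric $\|\cdot\|_\infty$). The first, purely metric step uses the pointwise inequality $\|x\|_2\le\sqrt{d}\,\|x\|_\infty$: for any coupling $\pi$, $\mathbb{E}_\pi\|X-Y\|_2 \le \sqrt{d}\,\mathbb{E}_\pi\|X-Y\|_\infty$, yielding
\[
W_1(\Gamma,\Gamma') \;\le\; \sqrt{d}\cdot W_1^{\ell_\infty}(\Gamma,\Gamma').
\]
The second, combinatorial step is to prove that for $k$-atomic $\Gamma,\Gamma'$,
\[
W_1^{\ell_\infty}(\Gamma,\Gamma') \;\le\; k^2\,\Wsliced_1(\Gamma,\Gamma').
\]
For this I would invoke that an optimal $\ell_\infty$-coupling is an LP-extreme point supported on at most $2k-1$ pairs $(\mu_i,\mu'_j)$, and that $\|\mu_i-\mu'_j\|_\infty$ is realized at some coordinate. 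For each active pair I would produce a direction $\theta\in S^{d-1}$ that "isolates" that pair in the sense that the projected $W_1(\Gamma_\theta,\Gamma'_\theta)$ reveals a constant fraction of $\|\mu_i-\mu'_j\|_\infty$ despite the presence of the other $O(k^2)$ atom pairs; summing these pairwise contributions then produces the factor $k^2$.

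The main obstacle is the second step. A naive reduction to coordinate marginals is provably insufficient: for $\Gamma=\tfrac12(\delta_{(0,0)}+\delta_{(1,1)})$ and $\Gamma'=\tfrac12(\delta_{(0,1)}+\delta_{(1,0)})$ every coordinate marginal is identical, yet $W_1(\Gamma,\Gamma')>0$. Thus one must genuinely exploit the supremum over all of $S^{d-1}$ in $\Wsliced_1$ and select non-coordinate directions adapted to the atomic geometry of $\Gamma,\Gamma'$. Quantifying the "contamination" in the one-dimensional projection from atoms other than the target pair, and showing this contamination costs no more than a $k^2$ factor, is the technical crux; this is where $k$-atomicity is essential, since for general atomic distributions the ratio $W_1^{\ell_\infty}/\Wsliced_1$ on $\mathbb{R}^d$ cannot be bounded by any function of $d$ alone. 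Combining the two steps gives $W_1(\Gamma,\Gamma')\le k^2\sqrt{d}\,\epsilon$.
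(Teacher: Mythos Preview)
Your lower bound argument via Kantorovich--Rubinstein duality is correct and equivalent to the paper's coupling-based proof.

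The upper bound, however, is only a sketch with a genuine gap. Your decomposition through $W_1^{\ell_\infty}$ and the plan to find, for each active pair in an optimal $\ell_\infty$-coupling, a direction $\theta$ that ``isolates'' that pair, is not completed --- and it is not clear it can be made to work. The difficulty is that summing contributions over pairs, each with its own $\theta_{ij}$, does not yield a bound on $W_1^{\ell_\infty}$: the one-dimensional quantity $W_1(\Gamma_{\theta_{ij}},\Gamma'_{\theta_{ij}})$ is computed with its own optimal coupling, which need not put mass $\pi_{ij}$ on the projected pair $(\theta_{ij}^\top\mu_i,\theta_{ij}^\top\mu'_j)$ or reveal the displacement $\|\mu_i-\mu'_j\|_\infty$. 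You correctly note that coordinate directions can fail entirely, but you neither construct the non-coordinate directions you need nor quantify the ``contamination'' you mention; as written, the crucial step is a restatement of the claim rather than a proof of it.

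The paper's argument sidesteps all of this by producing a \emph{single} direction that works for every pair simultaneously, via the probabilistic method. For $\theta$ uniform on $S^{d-1}$ and any fixed $x\in\reals^d$, one has $\mathbb{P}\{|\theta^\top x|<t\,\|x\|_2\}<t\sqrt{d}$. Taking $t=1/(k^2\sqrt{d})$ and a union bound over the at most $k^2$ differences $y-y'$ with $y\in\supp(\Gamma)$, $y'\in\supp(\Gamma')$, there exists $\theta\in S^{d-1}$ with $\|y-y'\|_2\le k^2\sqrt{d}\,|\theta^\top(y-y')|$ for every such pair, and in particular the projection is injective on $\supp(\Gamma)\cup\supp(\Gamma')$. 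Injectivity means any coupling of $\Gamma_\theta$ and $\Gamma'_\theta$ lifts to a coupling of $\Gamma$ and $\Gamma'$; applying the pointwise inequality to the optimal one-dimensional coupling gives $W_1(\Gamma,\Gamma')\le k^2\sqrt{d}\,W_1(\Gamma_\theta,\Gamma'_\theta)\le k^2\sqrt{d}\,\Wsliced_1(\Gamma,\Gamma')$ directly, with no detour through $\ell_\infty$ and no pair-by-pair summation.
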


%Since we have used PCA to reduced the dimension from $d$ to $k$ and 
Having obtained via PCA the reduced sample $x_1,\ldots,x_n \sim \gamma * N(0,I_k)$ in \prettyref{eq:xi}, \prettyref{lem:w1_dim_reduction} suggests the following ``meta-procedure'':
%The main point of \prettyref{lem:w1_dim_reduction} is that for low-dimensional distributions with few atoms, the full Wasserstein distance is preserved by one-dimensional projection up to constant factors. This suggests the following ``meta-procedure'':
Suppose we have an algorithm (call it a 1-D algorithm) that estimates the mixing distribution based on $n$ \iid~observations drawn from a $k$-GM in one dimension.
Then
\begin{enumerate}
	\item For each $\theta \in S^{k-1}$, since 
	$\iprod{\theta}{x_i} \iiddistr \gamma_\theta *N(0,1)$, we can apply the 1-D algorithm to obtain an estimate $\widehat{\gamma_\theta} \in \calG_{k,1}$;
	
	\item We obtain an estimate of the multivariate distribution by minimizing a proxy of the sliced Wasserstein distance: 
	%\nb{previously you had, "by minimizing a proxy of the sliced Wasserstein distance." I assume you meant that in practice we minimize $\max_{\theta \in \mc N} W_1(\gamma_\theta', \widehat{\gamma_\theta})$, but since here we are just writing a summary and using the real sliced $W_1$, I got rid of the "a proxy of" part. }
	%let $\hat\gamma^{\circ }$ be an $\epsilon$-minimizer of 
	%Let $\hat\gamma^{\circ }$ be an minimizer of 
	\begin{equation}
\hat\gamma^{\circ }=	\argmin_{\gamma'\in\calG_{k,k}} \sup_{\theta\in S^{k-1}}W_1(\gamma'_\theta, \widehat{\gamma_\theta}).
%\min_{\gamma'\in\calG_{k,k}} \sup_{\theta\in S^{k-1}}W_1(\hat\gamma_\theta, \widehat{\gamma_\theta}).
	\label{eq:ideal-estimator}
	\end{equation}
	
\end{enumerate}
Then by \prettyref{lem:w1_dim_reduction} (with $d=k$) and the optimality of $\hat\gamma^{\circ}$, we have
\begin{align*}
W_1(\hat\gamma^{\circ},\gamma)
\lesssim_k & ~ \Wsliced_1(\hat\gamma^{\circ},\gamma) = \sup_{\theta \in S^{k-1}} W_1(\hat\gamma^{\circ}_\theta,\gamma_\theta) \\
\leq & ~ 	\sup_{\theta \in S^{k-1}} W_1(\widehat{\gamma_\theta},\gamma_\theta) + \sup_{\theta \in S^{k-1}} W_1(\widehat{\gamma_\theta},\hat\gamma^{\circ}_\theta) \\
{\leq} & ~ 	2 \sup_{\theta \in S^{k-1}} W_1(\widehat{\gamma_\theta},\gamma_\theta).\numberthis \label{eq:ideal-estimator-bound}
\end{align*}
Recall that the optimal $W_1$-rate for $k$-atomic one-dimensional mixing distribution is $O(n^{-\frac{1}{4k-2}})$. 
Suppose there is a 1-D algorithm that achieves the optimal rate \emph{simultaneously} for all projections, in the sense that
\begin{equation}
\Expect\qth{\sup_{\theta\in S^{k-1}} W_1(\widehat{\gamma_\theta},\gamma_\theta)} \lesssim_k n^{-\frac{1}{4k-2}}.
\label{eq:optimal-allprojection}
\end{equation}
This immediately implies the desired 
\begin{equation}
\Expect[W_1(\hat\gamma^{\circ},\gamma)] \lesssim_k n^{-\frac{1}{4k-2}}.
\label{eq:optima-kdim}
\end{equation}
%which concludes the optimality of $\hat \gamma^{\circ}$.
However, it is unclear how to solve the min-max problem in \prettyref{eq:ideal-estimator} where the feasible sets for $\gamma$ and $\theta$ are both non-convex. 
The remaining tasks are two-fold:
(a) provide a 1-D algorithm that achieves \prettyref{eq:optimal-allprojection};
(b) replace $\hat{\gamma}^{\circ}$ by a computationally feasible version.

%where
%(a) follows from \prettyref{lem:w1_dim_reduction} with $d=k$;
%(b) follows from the zeroth-order $\epsilon$-optimality.
%\nbwu{More specifically, let $\calN$ be an $\epsilon$-covering (in $\ell_2$) of $S^{k-1}$, and let $\calG$ be a $\epsilon$-covering (in $W_1$) of $\calG_{k,k}$. 
%Let $\hat \gamma=\argmin_{\gamma'\in\calG} \sup_{\theta\in \calN}W_1(\hat\gamma_\theta, \widehat{\gamma_\theta})$.
%Then 
%\begin{align*}
%W_1(\hat\gamma,\gamma)
%\lesssim_k & ~ \Wsliced_1(\hat\gamma,\gamma) = \sup_{\theta \in S^{k-1}} W_1(\hat\gamma_\theta,\gamma_\theta) \\
%\leq & ~\sup_{\theta \in \calN} W_1(\hat\gamma_\theta,\gamma_\theta) + 2 R \epsilon \\
%\leq & ~ 	\sup_{\theta \in \calN} W_1(\widehat{\gamma_\theta},\gamma_\theta) + \sup_{\theta \in \calN} W_1(\widehat{\gamma_\theta},\hat\gamma_\theta) \\
%\stepb{\leq} & ~ 	2 \sup_{\theta \in \calN} W_1(\widehat{\gamma_\theta},\gamma_\theta) + \epsilon,
%\end{align*}
%This concludes the statistical guarantee.
%However, the time complexity is not good: $|\calW| \leq (\frac{1}{\epsilon})^{Ck}$ and $|\calN| \leq (\frac{1}{\epsilon})^{Ck^2}$, so this gives an $n^{O(k)}$ result.
%}

\paragraph{Achieving \prettyref{eq:optimal-allprojection} by denoised method of moments}
In principle, any estimator for a one-dimensional mixing distribution with exponential concentration can be used as a black box; this achieves \prettyref{eq:optimal-allprojection} up to logarithmic factors by a standard covering and union bound argument. In order to attain the sharp rate in \prettyref{eq:optimal-allprojection}, we consider the Denoised Method of Moments (DMM) algorithm introduced in \cite{WY18}, which allows us to use the chaining technique to obtain a tight control of the fluctuation over the sphere (see \prettyref{lem:w1_sup_bound}).

DMM is an optimization-based approach that introduces a denoising step before solving the method of moments equations. 
For location mixtures, it provides an exact solver to the  
non-convex optimization problem arising in 
generalized method of moments~\cite{Hansen1982}.
For Gaussian location mixtures with unit variance, the DMM algorithm proceeds as follows:
\begin{enumerate}[(a)]
    \item 
		Given $Y_1,\ldots,Y_n \iiddistr \nu * N(0,1)$ for some $k$-atomic distribution $\nu$ supported on $[-R,R]$, we first estimate the moment vector  ${\bf m}_{2k-1}(\nu) = (m_1(\nu),\ldots,m_{2k-1}(\nu))$ by their unique unbiased estimator $\tilde m = (\tilde m_1,\dots,\tilde m_{2k-1})$,		
		where $\tilde{m}_r= \frac{1}{n} \sum_{i = 1}^n H_r(Y_i)$, and $H_r$ is the degree-$r$ Hermite polynomial defined via
		\begin{align}
H_r(x) \triangleq  r! \sum_{i = 0}^{\lfloor{r/2}\rfloor} \frac{(-1/2)^i}{i!(r - 2i)!} x^{r - 2i}. \label{eq:hermites_def}
\end{align}
Then $\Expect[\tilde m_r]=m_r(\nu)$ for all $r$. This step is common to all approaches based on the method of moments.
		
    \item 
		In general the unbiased estimate $\tilde m$ is not a valid moment vector, in which case the method-of-moment-equation lacks a meaningful solution. 
		The key idea of the DMM method is to denoise $\tilde m$ by its projection onto the space of moments:		
        \begin{equation}
            \label{eq:project}
\hat m    \triangleq        \argmin\{\Norm{\tilde m - m}: m\in \calM_r\},
        \end{equation} 
        where the moment space 
				\begin{equation}
\calM_r\triangleq\{\bfm_r(\pi):\pi \text{ supported on }[-R,R]\}
\label{eq:momentspace}
\end{equation}
consists of the first $r$ moments of all probability measures on $[-R,R]$.
The moment space is a convex set and characterized by positive semidefinite constraints (of the associated Hankel matrix); we refer the reader to the monograph \cite{ST1943} or \cite[Sec.~2.1]{WY18} for details. 
This means that the optimization problem~\prettyref{eq:project} can be solved efficiently as a semidefinite program (SDP); see~\cite[Algorithm 1]{WY18}.

    \item Use Gauss quadrature to find the unique $k$-atomic distribution $\hat \nu$ such that ${\bf m}_{2k-1}(\hat \nu)=\hat m$.
		We denote the final output $\hat\nu$ by $\DMM(Y_1,\ldots,Y_n)$. 
\end{enumerate}

The following result shows the DMM estimator achieves the optimal rate in \prettyref{eq:optimal-allprojection} simultaneously for all one-dimensional projections. (For a single $\theta$, this is shown in \cite[Theorem 1]{WY18}.) %\nb{But that concentration was slightly different, no? We also show this here, in the second statement in Lemma A.3, combined with the standard reduction of moments to Wasserstein.}
\begin{lem} \label{lem:w1_sup_bound}
For each $\theta \in S^{k-1}$, let $\widehat{\gamma_\theta} = \DMM(\Iprod{\theta}{x_1},\ldots,\Iprod{\theta}{x_n})$ where $x_1,\ldots,x_n \iiddistr \gamma * N(0,I_k)$ as in \prettyref{eq:xi}.
There is a positive constant \added{$C$} such that, for any $\delta\in(0,\frac{1}{2})$, with probability at least $1 - \delta$, 
\begin{align*}
\max_{\theta \in S^{k-1}} W_1(\widehat{\gamma_\theta}, \gamma_\theta) \le \added{C k^{7/2}} n^{-1/(4k-2)} \sqrt{\log \frac{1}{\delta}}.
%\( \frac{\(\log(1/\delta)\)^{2k-1}}{n}\)^{1/(4k-2)}.
\end{align*}
%\nbwu{Pengkun: There is a small problem in the proof. The statement as currently stated cannot hold for all $\delta\in(0,1)$ so it need to be updated, either assume $\delta \leq \delta_k$, or replace RHS by $\sqrt{\log \frac{C_k}{\delta}}$ (which is the same as saying $C_k + \sqrt{\log \frac{1}{\delta}})$ for instance. Please double check (a) how this propagates elsewhere (b) if there is similar problem in other lemma/theorems, e.g. \prettyref{lmm:DMM-kdim} -- essentially all results states in the format of ``w.p. $1-\delta$...''.}
%\nbpy{I agree it cannot hold for all $\delta<1$. 
%But I think it suffices to assume $\delta<\frac{1}{2}$ (or replace RHS by $\sqrt{\log \frac{2}{\delta}}$) to avoid further complication, since we have a factor $C_k$ in the front anyway.
%To be more precise, when $\delta<\frac{1}{2}$, we have $C_k(C_k'+\sqrt{\log \frac{1}{\delta}})\le C_kC_k'(1+\sqrt{\log \frac{1}{\delta}})\le 3C_kC_k'\sqrt{\log \frac{1}{\delta}}$.}
\end{lem}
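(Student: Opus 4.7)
The plan is to combine the one-dimensional $W_1$-to-moment-error bound from the DMM analysis of \cite{WY18} with a chaining argument over the sphere $S^{k-1}$ to suppress the logarithmic factor that a naive $\epsilon$-net would introduce. First, I would invoke the key deterministic estimate underlying DMM: for any $k$-atomic $\nu$ supported on $[-R,R]$, the DMM output $\hat\nu$ computed from empirical Hermite moments $\tilde m_1,\ldots,\tilde m_{2k-1}$ satisfies $W_1(\hat\nu,\nu)\lesssim_k \max_{1\le r\le 2k-1}|\tilde m_r-m_r(\nu)|^{1/(2k-1)}$. Applied pointwise to $\nu=\gamma_\theta$, this reduces the claim to bounding, for each fixed $r\le 2k-1$, the supremum of the empirical process
\begin{equation*}
Z_r(\theta) \triangleq \tilde m_r(\theta) - m_r(\gamma_\theta), \qquad \tilde m_r(\theta) \triangleq \frac{1}{n}\sum_{i=1}^n H_r(\iprod{\theta}{x_i}),
\end{equation*}
over $\theta\in S^{k-1}$; raising the resulting bound to the $1/(2k-1)$-th power and using $(\log(1/\delta))^{1/(4k-2)}\le \sqrt{\log(1/\delta)}$ then delivers the stated rate after a union bound over $r$.

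For the uniform moment bound I would treat $Z_r$ as an empirical process indexed by $S^{k-1}$ and apply generic chaining. Each summand $\xi_i(\theta)\triangleq H_r(\iprod{\theta}{x_i})-\Expect[H_r(\iprod{\theta}{x_i})]$ is centered, and since $x_i$ is a projection of a Gaussian mixture with bounded centers, $\iprod{\theta}{x_i}$ is sub-Gaussian uniformly in $\theta\in S^{k-1}$. The mean value theorem for $H_r$ yields the pointwise Lipschitz bound
\begin{equation*}
|\xi_i(\theta)-\xi_i(\theta')| \lesssim_k \|\theta-\theta'\|\cdot\(1+|\iprod{\theta}{x_i}|^{r-1}+|\iprod{\theta'}{x_i}|^{r-1}\),
\end{equation*}
so the increments have sub-Weibull tails that scale linearly in $\|\theta-\theta'\|$. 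Combined with the entropy estimate $\log N(\epsilon,S^{k-1},\|\cdot\|)\le k\log(3/\epsilon)$, a generic-chaining bound yields $\Expect[\sup_\theta|Z_r(\theta)|]\lesssim_k \sqrt{k/n}$ without any logarithmic inflation, and a Talagrand-type concentration inequality upgrades this to the high-probability bound $\sup_\theta|Z_r(\theta)|\lesssim_k \sqrt{(k+\log(1/\delta))/n}$.

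The main obstacle is the non-sub-Gaussian tails of $\xi_i(\theta)$: as a degree-$r$ polynomial in a sub-Gaussian variable, it is only sub-Weibull with index $2/r$, so the usual bounded or sub-Gaussian chaining toolkit does not apply directly. The cleanest route is to first truncate at $\|x_i\|\le T$ for a suitable $T$, apply bounded-difference chaining to the truncated process, and control the truncation remainder via the Gaussian tails of $\|x_i\|$; alternatively one may invoke a polynomial-process chaining inequality of Adamczak--Wolff type. Careful tracking of the $k$-dependence through the Lipschitz constants of $H_r$ (polynomial in $k$ on $O(\sqrt{k})$-neighborhoods) and the DMM prefactor (polynomial in $k$) should produce the $k^{7/2}$ prefactor in the final bound.
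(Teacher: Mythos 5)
Your overall blueprint matches the paper's: reduce the $W_1$ error of the DMM output to the sup over $\theta$ of the empirical Hermite-moment errors via the moment comparison inequality of \cite[Proposition 1]{WY18}, then control that sup by chaining over $S^{k-1}$. The paper's engine for the chaining is a hypercontractivity inequality for low-degree polynomials of Gaussians (Schudy--Sviridenko, recorded as \prettyref{lem:hypercontractivity}), used in \prettyref{lem:moment_increments_sg} to get increment tails of the form $\Prob\{|f_r(\theta_1)-f_r(\theta_2)|\ge\|\theta_1-\theta_2\|\lambda\}\le C\exp(-c\lambda^{2/r}/kr)$, followed by a chaining lemma (\prettyref{lem:chain_tail_probs}) adapted to exactly those sub-Weibull tails. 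Your ``Adamczak--Wolff type'' alternative is in the same spirit and would also work.

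Two points in your proposal are off. First, the claimed high-probability bound $\sup_\theta|Z_r(\theta)|\lesssim_k\sqrt{(k+\log(1/\delta))/n}$ is not correct: $H_r(\iprod{\theta}{x_i})$ is a degree-$r$ polynomial of a Gaussian-like vector, so even for a single $\theta$ the tail is only sub-Weibull with index $2/r$, giving a deviation of order $(\log(1/\delta))^{r/2}/\sqrt{n}$ (plus $k^{\Theta(r)}$ constants). ``Talagrand-type concentration'' does not upgrade a sub-Weibull empirical process to Gaussian-type $\sqrt{\log(1/\delta)}$ tails. You are saved by the fact that the final bound raises this to the power $1/(2k-1)$ with $r\le 2k-1$, so $((\log(1/\delta))^{(2k-1)/2})^{1/(2k-1)}=\sqrt{\log(1/\delta)}$ and $(k^{\Theta(k)})^{1/(2k-1)}=k^{\Theta(1)}$ -- the exponent blow-up collapses exactly to the stated form -- but your intermediate reasoning as written conceals rather than exploits this, and the line ``using $(\log(1/\delta))^{1/(4k-2)}\le\sqrt{\log(1/\delta)}$'' suggests you believe you already have Gaussian tails at the moment level, which you do not. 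Second, the truncation route is problematic for the stated purpose of removing logarithms: to make the truncated process uniformly bounded you would need $T\gtrsim\sqrt{\log(n/\delta)}$, which feeds a $(\log n)^{(r-1)/2}$ factor into the Lipschitz constant of $H_r$ on the truncated range and hence back into the rate -- precisely the $\polylog n$ slack that the hypercontractivity/chaining approach of the paper (and the Adamczak--Wolff alternative) is designed to avoid. If you commit to the polynomial-concentration route and carry the $(\log(1/\delta))^{r/2}$ and $k^{\Theta(r)}$ factors correctly through the $1/(2k-1)$-th power, the argument closes.
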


\paragraph{Solving \prettyref{eq:ideal-estimator} efficiently using marginal estimates}
%\paragraph{Improving the Solving \prettyref{eq:ideal-estimator} efficiently}

We first note that in order to achieve the optimal rate in \prettyref{eq:optima-kdim}, 
it is sufficient to consider any approximate minimizer of \prettyref{eq:ideal-estimator} up to an additive error of $\epsilon$, as long as $\epsilon = O(n^{-\frac{1}{4k-2}})$.
%an $\epsilon$-net (in $W_1$) of $\calG_{k,k}$ and an $\epsilon$-net (in $\ell_2$) of the sphere, one can 
Therefore, to find an $\epsilon$-optimizer, it suffices to maximize over $\theta$ in an $\epsilon$-net (in $\ell_2$) of the sphere, which has cardinality $(\frac{1}{\epsilon})^k=n^{O(1)}$, and, likewise, minimize $\gamma$ over an $\epsilon$-net (in $W_1$) of $\calG_{k,k}$. The $W_1$-net can be constructed by combining an $\epsilon$-net (in $\ell_2$) for each of the $k$ centers and an $\epsilon$-net (in $\ell_1$) for the weights, resulting in a set of cardinality $(\frac{1}{\epsilon})^{O(k^2)}=n^{O(k)}$. This na\"ive discretization scheme leads to an estimator of $\gamma$ with optimal rate but time complexity $n^{O(k)}$. We next improve it to $n^{O(1)}$.

The key idea is to first estimate the marginals of $\gamma$, which narrows down its support set. It is clear that a $k$-atomic joint distribution is not determined by its marginal distributions, as shown by the example of $\frac{1}{2}\delta_{(-1,-1)}+\frac{1}{2}\delta_{(1,1)}$ and $\frac{1}{2}\delta_{(-1,1)}+\frac{1}{2}\delta_{(1,-1)}$, which have identical marginal distributions. Nevertheless, the support of the joint distribution must be a $k$-subset of the Cartesian product of the marginal support sets. This suggests that we can select the atoms from this Cartesian product and weights by fitting all one-dimensional projections, as in~\prettyref{eq:ideal-estimator}. 

Specifically, 
for each $j\in[k]$, we estimate the $j$th marginal distribution of $\gamma$ by $\widehat{\gamma_j}$, obtained by applying the DMM algorithm on the coordinate projections $\Iprod{e_j}{x_1}, \ldots, \Iprod{e_j}{x_n}$. Consider the Cartesian product of the support of each estimated marginal as the candidate set of atoms:
\[
%\calA\triangleq \{\psi:e_i^\top\psi\in \supp(\widehat{\gamma_i}), i=1,\dots,k\};
\calA\triangleq \supp(\widehat{\gamma_1}) \times \cdots \times \supp(\widehat{\gamma_k}).
\]
 Throughout this section, let 
\begin{equation}
\epsilon_{n, k}\triangleq n^{-\frac{1}{4k-2}},
\label{eq:gridsize-mixing}
\end{equation}
and fix an $(\epsilon_{n, k},\norm{\cdot}_2)$-covering $\calN$ for the unit sphere $S^{k-1}$
and an $(\epsilon_{n, k},\norm{\cdot}_1)$-covering $\calW$ for the probability simplex $\Delta^{k-1}$, 
such that\footnote{This is possible by, e.g.,  \cite[Prop.~2.1]{RV09} and \cite[Lemma A.4]{Ghosal_VdV_2001} for the sphere and probability simplex, respectively.}
\begin{equation}
\max\{|\calN|, |\calW|\} \lesssim \pth{\frac{C}{\epsilon_{n, k}}}^{k-1}.
\label{eq:netsize}
\end{equation}
Define the following set of candidate $k$-atomic distributions on $\mb R^{k}$:
\begin{equation}
\mc S \triangleq \sth{\sum_{j \in [k]} w_{j} \delta_{\psi_{j}}: (w_{1}, \hdots, w_{k})\in \calW, \psi_{j}\in \calA}.
\label{eq:candidates}
\end{equation}
Note that $\mc S$ is a random set which depends on the sample; furthermore, each $\psi_j \in \mc A$ has coordinates lying in $[-R, R]$ by virtue of the DMM algorithm.

The next lemma shows that with high probability there exists a good approximation of $\gamma$ in the set $\calS$.
 %which \prettyref{algo:gmm_est} aims to find.
%Thuswhich boils down to the statistical guarantee of \prettyref{algo:gmm_est} in $k$ dimensions. 
%\prettyref{algo:gmm_est} is a grid search over discretized mixing distributions.
%\prettyref{lem:existence_solution} guarantees that with high probability there exists a good solution in the candidate set $\calS$ formed in \prettyref{eq:candidates}.

\begin{lem} \label{lem:existence_solution}
%Let $\gamma \in \mc G_{k, k}$. 
Let $\mc S$ be given in~\prettyref{eq:candidates}. \added{There is a positive constant $C$} such that, for any $\delta\in(0,\frac{1}{2})$, with probability $1 - \delta$,
\begin{align}
\min_{\gamma'\in\calS}W_1\(\gamma', \gamma \) \le 
% C_k \(\frac{\log(1/\delta)}{n}\)^{1/(4k-2)}. 
%C_k n^{-1/(4k-2)} \sqrt{\log \frac{1}{\delta}}.
\added{C  k^5} n^{-1/(4k-2)} \sqrt{\log \frac{1}{\delta}}.
\label{eq:existence_solution}
\end{align}
% \nbwu{Pengkun: This statement needs correction. See proof.}
\end{lem}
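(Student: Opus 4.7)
The plan is to specialize \prettyref{lem:w1_sup_bound} to the $k$ coordinate directions and then build a candidate in $\calS$ by combining coordinate-wise rounding of the atoms of $\gamma$ with covering-net approximation of the weights.

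First, I would apply \prettyref{lem:w1_sup_bound} at the unit vectors $e_1,\ldots,e_k \in S^{k-1}$ to obtain, on an event of probability at least $1-\delta$, the uniform one-dimensional bound $\max_{j \in [k]} W_1(\widehat{\gamma_j}, \gamma_j) \le \eta$, where $\eta := C\, k^{7/2}\, n^{-1/(4k-2)} \sqrt{\log(1/\delta)}$. All subsequent work is deterministic on this event. The key lemma I would then establish (via Kantorovich--Rubinstein duality against the $1$-Lipschitz test function $x \mapsto (r - |x - s|)_+$ at $r = 2\eta/p$) is the following support-proximity fact: if $\mu,\nu$ are probability measures on $\reals$ with $W_1(\mu,\nu) \le \eta$ and $\mu$ has an atom of weight at least $p$ at $s$, then $\nu((s-r, s+r)) \ge p - \eta/r = p/2 > 0$ and consequently $\supp(\nu)$ meets the interval $(s - 2\eta/p,\, s + 2\eta/p)$. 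Applied to each marginal: for every atom $\psi_l = ((\psi_l)_1, \ldots, (\psi_l)_k)$ of $\gamma$ and each coordinate $j$, the point $(\psi_l)_j$ is an atom of $\gamma_j$ of weight at least $w_l$, so I can select $t_l^j \in \supp(\widehat{\gamma_j})$ with $|t_l^j - (\psi_l)_j| \le 2\eta/w_l$.

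Next, set $\tilde\psi_l := (t_l^1, \ldots, t_l^k) \in \calA$ and form the auxiliary distribution $\gamma'' := \sum_l w_l \delta_{\tilde\psi_l}$. The explicit coupling matching $\psi_l \leftrightarrow \tilde\psi_l$ with mass $w_l$, together with $\|\tilde\psi_l - \psi_l\|_2 \le \sqrt{k}\,\max_j |t_l^j - (\psi_l)_j|$, gives
\begin{equation*}
W_1(\gamma'', \gamma) \;\le\; \sum_{l=1}^k w_l\, \|\tilde\psi_l - \psi_l\|_2 \;\le\; \sum_{l=1}^k w_l \cdot \sqrt{k} \cdot \frac{2\eta}{w_l} \;=\; 2 k^{3/2} \eta,
\end{equation*}
where the pivotal cancellation of $w_l$ is precisely what removes any dependence on a weight lower bound. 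Finally, choose $\tilde w \in \calW$ with $\|\tilde w - w\|_1 \le \epsilon_{n,k}$ and set $\gamma' := \sum_l \tilde w_l \delta_{\tilde\psi_l} \in \calS$; since all atoms lie in the box $[-R,R]^k$ of diameter $2R\sqrt{k}$, the coupling that keeps atoms in place yields $W_1(\gamma', \gamma'') \le R\sqrt{k}\, \epsilon_{n,k}$. Combining via triangle inequality and inserting the values of $\eta$ and $\epsilon_{n,k} = n^{-1/(4k-2)}$ produces
\begin{equation*}
W_1(\gamma', \gamma) \;\le\; 2 k^{3/2} \eta + R\sqrt{k}\, \epsilon_{n,k} \;\le\; C'\, k^5\, n^{-1/(4k-2)} \sqrt{\log(1/\delta)},
\end{equation*}
which is exactly \prettyref{eq:existence_solution}.

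The main obstacle is the support-proximity step: any naive bound locating an atom of $\widehat{\gamma_j}$ near $(\psi_l)_j$ pays a $1/w_l$ factor that threatens to produce a sub-optimal rate (of order $n^{-1/(8k-4)}$) whenever one tries to threshold out light atoms and balance errors. The rescue is structural: since $W_1$ itself weights displacements by $w_l$, the $2\eta/w_l$ coordinate error and the $w_l$ coupling weight cancel exactly, but this cancellation is only available because $\calA$ is defined as a Cartesian product, which lets me assemble $\tilde\psi_l$ coordinate by coordinate using only marginal information.
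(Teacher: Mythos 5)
Your proof is correct and follows essentially the same route as the paper: apply \prettyref{lem:w1_sup_bound} to the coordinate directions, locate for each atom $\psi_l$ and each coordinate $j$ a nearby point of $\supp(\widehat{\gamma_j})$ within distance $O(\epsilon/w_l)$, assemble these into $\tilde\psi_l\in\calA$ so that the natural coupling cancels the $1/w_l$ against the coupling weight $w_l$, and then round the weights in $\calW$. The only cosmetic difference is in the support-proximity step, where you invoke Kantorovich--Rubinstein duality with the hat test function; the paper obtains the same conclusion (with a factor-$2$ improvement) directly from the inequality $W_1(\widehat{\gamma_j},\gamma_j) \ge w_l \cdot \min_{x\in\supp(\widehat{\gamma_j})} |x - (\psi_l)_j|$, which follows by evaluating any coupling at the atom $(\psi_l)_j$.
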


%To find a good approximation of $\gamma$ in $\calS$, in~\prettyref{algo:gmm_est} we assess each mixing distribution in $\calS$ by comparing its projection with $\widehat{\gamma_\theta}$ for all $\theta\in \calN$, and select one as the final estimator $\hat\gamma$. \prettyref{lem:w1_sup_bound} shows that with high probability, $\widehat{\gamma_\theta}$ are close to $\gamma_\theta$ uniformly for all $\theta$ on the sphere.

%In~\prettyref{algo:gmm_est}, we require the nets $\mc N(\epsilon_{n, k}, \Delta_k, \norm{\cdot}_1)$ and $\mc N(\epsilon_{n, k}, S^{k-1}, \norm{\cdot}_2)$ to be nets of size $(C/\epsilon_{n, k})^{k}$. We cannot use a net of exactly minimal size since this is not actually computable, but we cannot use a net of arbitrary size if we wish to obtain the time guarantee mentioned in Remark~\prettyref{rem:algorithm_time}. For more discussion on generating a small-enough net, see~\prettyref{sec:numerical}.

We conclude this subsection with \prettyref{algo:gmm_est}, which provides a full description of an estimator for $k$-atomic mixing distributions in $k$ dimensions.
The following result shows its optimality under the $W_1$ loss:
\begin{lemma}
\label{lmm:DMM-kdim}	
\added{There is a positive constant $C$} such that the following holds. Let $x_1,\ldots,x_n \iiddistr \gamma * N(0,I_k)$ for some $\gamma \in G_{k,k}$. Then \prettyref{algo:gmm_est} produces an estimator $\hat \gamma \in \calG_{k,k}$ such that, for any $\delta\in(0,\frac{1}{2})$, with probability $1-\delta$, 
\begin{equation}
W_1(\gamma,\hat\gamma) \leq \added{C k^5} n^{-1/(4k-2)} \sqrt{\log \frac{1}{\delta}}.
\label{eq:DMM-kdim}
\end{equation}
\end{lemma}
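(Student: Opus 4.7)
The plan is to combine Lemmas~\ref{lem:w1_dim_reduction}, \ref{lem:w1_sup_bound}, and~\ref{lem:existence_solution} through a standard empirical-risk-minimization argument based on the one-dimensional projections indexed by $\calN$. By construction (\prettyref{algo:gmm_est}), the output $\hat\gamma \in \calS$ minimizes the net-based objective
\[
L(\gamma') \triangleq \max_{\theta \in \calN} W_1\bigl(\gamma'_\theta, \widehat{\gamma_\theta}\bigr), \qquad \gamma' \in \calS,
\]
where $\widehat{\gamma_\theta} = \DMM(\iprod{\theta}{x_1}, \ldots, \iprod{\theta}{x_n})$. Condition on the high-probability event $\calE$ (of probability at least $1 - \delta$, up to a union bound and rescaling of $\delta$) on which both \prettyref{lem:w1_sup_bound} and \prettyref{lem:existence_solution} hold. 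On $\calE$, let $\gamma^* \in \calS$ achieve the bound in \prettyref{lem:existence_solution}, so $W_1(\gamma^*, \gamma) \lesssim_k \epsilon_{n,k}\sqrt{\log(1/\delta)}$, and $\max_{\theta \in S^{k-1}} W_1(\widehat{\gamma_\theta}, \gamma_\theta) \lesssim_k \epsilon_{n,k}\sqrt{\log(1/\delta)}$.

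First I would control the objective at $\gamma^*$ by the triangle inequality together with the trivial bound $\Wsliced_1 \le W_1$:
\[
L(\gamma^*) \le \max_{\theta\in\calN}\bigl[W_1(\gamma^*_\theta,\gamma_\theta) + W_1(\gamma_\theta,\widehat{\gamma_\theta})\bigr] \le W_1(\gamma^*,\gamma) + \max_{\theta\in S^{k-1}} W_1(\widehat{\gamma_\theta},\gamma_\theta) \lesssim_k \epsilon_{n,k}\sqrt{\log(1/\delta)}.
\]
By optimality of $\hat\gamma$ on $\calS$, the same bound holds for $L(\hat\gamma)$. Next I would pass from the net to the full sphere, using that both $\hat\gamma$ and $\gamma$ are supported in a ball of radius $O_k(1)$ (the atoms of $\hat\gamma$ lie in $[-R,R]^k$ by the DMM constraint), which makes $\theta \mapsto \hat\gamma_\theta$ and $\theta \mapsto \gamma_\theta$ Lipschitz under $W_1$ with constant $O_k(1)$. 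Thus for any $\theta \in S^{k-1}$, picking $\theta_0 \in \calN$ with $\|\theta - \theta_0\|_2 \le \epsilon_{n,k}$,
\[
W_1(\hat\gamma_\theta, \gamma_\theta) \le W_1(\hat\gamma_\theta,\hat\gamma_{\theta_0}) + W_1(\hat\gamma_{\theta_0}, \widehat{\gamma_{\theta_0}}) + W_1(\widehat{\gamma_{\theta_0}}, \gamma_{\theta_0}) + W_1(\gamma_{\theta_0},\gamma_\theta),
\]
where the middle two terms are controlled by $L(\hat\gamma)$ and \prettyref{lem:w1_sup_bound}, and the outer two are $O_k(\epsilon_{n,k})$ by the Lipschitz property. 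Hence
\[
\sup_{\theta \in S^{k-1}} W_1(\hat\gamma_\theta, \gamma_\theta) \lesssim_k \epsilon_{n,k}\sqrt{\log(1/\delta)}.
\]
Finally, I would invoke \prettyref{lem:w1_dim_reduction} with $d = k$ to convert this sliced bound back to a full $W_1$ bound, $W_1(\hat\gamma,\gamma) \lesssim_k \sup_{\theta\in S^{k-1}} W_1(\hat\gamma_\theta,\gamma_\theta)$, yielding \prettyref{eq:DMM-kdim} after absorbing all $\poly(k)$ factors into the constant.

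The main obstacle is the bookkeeping of the $k$-dependence and, more subtly, the fact that $\widehat{\gamma_\theta}$ is \emph{not} obviously Lipschitz in $\theta$ (it is the output of a non-smooth optimization/quadrature routine), so one cannot directly net the supremum over the DMM estimates. My argument avoids this by inserting the noiseless $\gamma_\theta$ as a pivot inside the triangle inequality and using its bounded support to provide the Lipschitz estimate; the DMM error is then only ever evaluated uniformly over $\theta$ via \prettyref{lem:w1_sup_bound} rather than discretized by hand. A secondary care point is that $\calS$ and $\gamma^*$ are random (they depend on the sample through $\calA$), so the net-based optimality argument and the concentration bounds must be combined on a single high-probability event, which is handled by applying the two lemmas in the appropriate order on the same sample.
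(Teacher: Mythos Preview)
Your argument is correct and follows essentially the same route as the paper: combine the optimality of $\hat\gamma$ over $\calS$ with Lemmas~\ref{lem:w1_dim_reduction}, \ref{lem:w1_sup_bound}, and~\ref{lem:existence_solution}, using the Lipschitz dependence of $\theta\mapsto\gamma_\theta$ (and $\theta\mapsto\hat\gamma_\theta$) to pass between $\calN$ and $S^{k-1}$. The only cosmetic difference is that the paper first pivots through the good candidate $\gamma'\in\calS$ via $W_1(\hat\gamma,\gamma)\le W_1(\hat\gamma,\gamma')+W_1(\gamma',\gamma)$ and applies \prettyref{lem:w1_dim_reduction} to $W_1(\hat\gamma,\gamma')$, whereas you apply the sliced-Wasserstein comparison directly to $W_1(\hat\gamma,\gamma)$ and insert $\widehat{\gamma_{\theta_0}}$ and $\gamma_{\theta_0}$ as pivots; the ingredients and the resulting bound are the same.
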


\begin{center}
\begin{algorithm}[ht] \label{algo:gmm_est}
        \SetAlgoLined
\caption{Parameter estimation for $k$-GM in $k$ dimensions}
\KwIn{
Dataset $\{ x_i \}_{i \in [n]}$ with each point in $\mb R^k$, order $k$, radius $R$. \\
}
\KwOut{
Estimate $\hat \gamma$ of $k$-atomic distribution in $k$ dimensions.
}
{\bf For $j =1,\ldots,k$:} \\
{
	%Do one-dimensional DMM on the dataset $\{ e_j^{\top}x_i \}_{i\in[n]}$  \;
	%Obtain one-dimensional distribution $\widehat{\gamma_j}$\;
~~~~Compute the marginal estimate $\widehat{\gamma_j}= \DMM(\{e_j^{\top}x_i\}_{i\in[n]})$  \;
}

%{\bf Form candidate distributions:} \\
{
	Form the set $\mc S$ of $k$-atomic candidate distributions on $\mb R^{k}$ as in~\prettyref{eq:candidates} \;

{\bf For each $\theta \in \mc N$:} \\
{
	%Compute the one-dimensional DMM estimate using the data $\{\theta^{\top}x_i\}_{i\in[n]}$  \;
	%Obtain one-dimensional $k$-atomic distribution $\widehat{\gamma_\theta}$ \;
	~~~~Estimate the projection by $\widehat{\gamma_\theta}= \DMM(\{\theta^{\top}x_i\}_{i\in[n]})$  \;
	%Compute the one-dimensional $k$-atomic distribution $\widehat{\gamma_\theta}$ \;
}
{\bf For each candidate distribution $\gamma'\in \calS$ and each direction $\theta \in \mc N$: } \\
{
	~~~~Compute $W_1(\gamma'_\theta, \widehat{\gamma_\theta})$  \;
	Report 
	\begin{equation}
	\hat \gamma = \arg\min_{\gamma'\in\calS}\max_{\theta\in \mc N}W_1(\gamma'_\theta, \widehat{\gamma_\theta}).
	\label{eq:hatgamma-kdim}
	\end{equation}	
}	
}
\end{algorithm} 
\end{center}

\begin{rem} \label{rem:algorithm_time}
The total time complexity to compute the estimator \prettyref{eq:estimator} is
$O(nd^2) + O_k(n^{5/4})$. Indeed, the time complexity of computing the sample covariance matrix is $O(nd^2)$, and the time complexity of performing the eigendecomposition is $O(d^3)$, which is dominated by $O(nd^2)$ since $d \le n$. By \prettyref{eq:netsize}, both $\calW$ and $\calN$ have cardinality at most $(C/\epsilon_{n, k})^{k-1}=O_k(n^{1/4})$.
Each one-dimensional DMM estimate takes $O_k(n)$ time to compute \cite[Theorem 1]{WY18}. Thus computing the one-dimensional estimator $\widehat{\gamma_\theta}$ for all $\theta = e_i$ and $\theta \in \calN$ takes time $O_k(n^{5/4})$. 
Since both $\gamma'_\theta$ and $\widehat{\gamma_\theta}$ are $k$-atomic distributions by definition, their $W_1$ distance can be computed in $O_k(1)$ time. Finally, $|\calA| = k^k$, \added{and to form $S$ we select all sets of $k$ atoms from $\calA$, so $|\mc S| \leq |\calW| {k^k \choose k} = O_k\(n^{1/4}\)$}. Thus searching over $\calS \times \calN$ takes time at most $O_k(n^{1/4}) * O_k(n^{1/4}) = O_k \(n^{1/2}\)$. Therefore, the overall time complexity of~\prettyref{algo:gmm_est} is $O_k(n^{5/4})$.
\end{rem}

\begin{forme}
Note that in general, $|\mc A| = {k^{ld} \choose k}$, where $ld$ is the latent dimension we project to. It's $k$ in the above, but e.g., if we centered first, it would be $k-1$. 
\begin{align*}
|\mc A| &\le k^k. \\
|\calW| &\le \( \frac{C}{\epsilon_{n, k}} \)^{k-1} = C^{k-1} * n^{(k-1)/(4k-2)} \le O_k(n^{1/4}). \\
|\mc S| &= {k^k \choose k} * |\mc W| \le {k^k \choose k} * C^{k-1} * O(n^{1/4}) = O_k(n^{1/4}). \\
|\mc N| &\le \( \frac{C}{\epsilon_{n, k}} \)^{k-1} = C^k * O(n^{1/4}) \le O_k(n^{1/4}).
\end{align*}
Now the DMM algorithm of~\cite{WY18} takes time $O(kn)$. So Algorithm $2$ takes:
\begin{align*}
\text{Step 1:  } & k * O(kn).  \\
\text{Step 2:  } & C^k * O(n^{1/4}) * O(kn) = k * C^k * O(n^{5/4}) = O_k(n^{5/4}). \\
\text{Step 3: } & |\mc S| * N(\epsilon_{n, k}, S^{k-1}, \norm{\cdot}_2) = {k^k \choose k} * C^{k-1} * O(n^{1/4}) * C^k * O(n^{1/4}) = C^{2k-1} * {k^k \choose k} * O(n^{1/2}).
\end{align*}

\begin{itemize}
	\item Step $1$: To compute the $\widehat{\gamma_i}$ for $\mc S_c$.
	\item Sept $2$: To compute the $\widehat{\gamma_\theta}$ for $\theta \in \calN$.
	\item Step $3$: To compute $W_1(\hat \gamma_\theta, \widehat{\gamma_\theta})$ for all $\hat \gamma \in \mc S$ and $\theta \in \calN$.
\end{itemize}
So the full time is:
\begin{align*}
k^2 * O(n) + k * C^k * O(n^{5/4}) + C^{2k-1} * {k^k \choose k} * O(n^{1/2}).
\end{align*}
This is dominated by the middle term.
\end{forme}

\subsection{Proof of \prettyref{thm:mixing_est}} \label{ssec:proof_main}

%We first provide the key supporting lemmas. Proofs of these results can be found in~\prettyref{app:proofs_supporting}. 
%We will use the notation from~\prettyref{algo:projection} and~\prettyref{algo:gmm_est} throughout the proofs. 

The proof is outlined as follows. 
Recall that the estimate $\hat\Gamma$ in \prettyref{eq:estimator} is supported on the subspace spanned by the columns of $\hat V$, whose projection is $\hat\gamma$ in \prettyref{eq:hat-gamma}. Similarly, the projection of the ground truth $\Gamma$ on the space $\hat V$ is denoted by $\gamma = \Gamma_{\hat V}$ in \prettyref{eq:gamma}.
Note that both $\gamma$ and $\hat\gamma$ are $k$-atomic distributions in $k$ dimensions.
Let $\hat H=\hat V\hat V^{\top}$ be the projection matrix onto the space spanned by the columns of $\hat V$.
By the triangle inequality, 
\begin{align}
W_1(\Gamma,\hat \Gamma)
\leq & ~ W_1(\Gamma,\Gamma_{\hat H})+W_1(\Gamma_{\hat H},\hat \Gamma)  \nonumber \\
\leq & ~ W_1(\Gamma,\Gamma_{\hat H})+W_1(\gamma,\hat\gamma).	\label{eq:w1-triangle}
\end{align}
We will upper bound the first term by $(d/n)^{1/4}$ (using Lemmas~\ref{lem:perturbation_psd} and \ref{lem:cov_bound} below) and the second term by $n^{-1/(4k-2)}$ (using the previous \prettyref{lmm:DMM-kdim}).
% Lemmas~\ref{lem:w1_sup_bound} and \ref{lem:existence_solution}).

%
%
%Thus the second term on the right-hand side of \prettyref{eq:w1-triangle} is at most
%\begin{equation}
%\label{eq:w1-proj}
%W_1(\Gamma_{\hat H},\hat \Gamma)\le W_1(\gamma,\hat\gamma),	
%\end{equation}
%where 
%\begin{equation}
%\gamma \triangleq \Gamma_{\hat V}
%\label{eq:gamma}
%\end{equation}
%is the projection of the original $d$-dimensional mixing distribution $\Gamma$ onto $\hat V$.
%Note that both $\gamma$ and $\hat\gamma$ are $k$-atomic distributions in $k$ dimensions.
%%Thus we need a guarantee for \prettyref{algo:gmm_est}

We first control the difference between $\Gamma$ and its projection onto the estimated subspace $\hat V$.
Since we do not impose any lower bound on $\norm{\mu_j}_2$, we cannot directly show the accuracy of $\hat V$ by means of perturbation bounds such as 
the Davis-Kahan theorem~\cite{Davis_Kahan_1970}. 
Instead, the following general lemma bounds the error by the difference of the covariance matrices.
For a related result, see \cite[Corollary 3]{Vempala_Wang_2004}.

%Instead,~\prettyref{lem:perturbation_psd} relates the atoms of $\Gamma$ to these same atoms projected by a noisy projection operator.
\begin{lem} \label{lem:perturbation_psd}
Let $\Gamma=\sum_{j=1}^k w_j\delta_{\mu_j}$ be a $k$-atomic distribution. 
Let $\Sigma = \Expect_{U\sim \Gamma}[UU^\top] = \sum_{j=1}^k w_j\mu_j\mu_j^\top$ with eigenvalues $\lambda_1\ge \dots \ge \lambda_d$. 
Let $\Sigma'$ be a symmetric matrix and \added{$\Pi_r'$} be the projection matrix onto the subspace spanned by the top $r$ eigenvectors of $\Sigma'$.
Then,
% for any $j \in [k]$, 
% \[
% w_j \Norm{\mu_j - H_r' \mu_j}_2^2
% \le \lambda_{r+1}+2\Norm{\Sigma-\Sigma'}_2.
% \]
% Consequently, 
% \nbwu{It seems the factor of $k$ is redundant? The proof bounds the sum over $j$ of the above. If so, please get rid of this and update other places accordingly.} \nb{I think $k$ is necessary because we have for each $j$, $w_j \norm{\mu_j - H \mu_j}_2^2 \le \lambda_{k + 1} + 2 \norm{\Sigma - \hat \Sigma}_2$, and we use $W_2^2 \le \sum_{j \in [k]} w_j \norm{\mu_j - H \mu_j}_2^2$.} 
\[
W_2^2(\Gamma, \Gamma_{\added{\Pi_r'}})\le k\(\lambda_{r+1}+2\Norm{\Sigma-\Sigma'}_2\).
\]
\end{lem}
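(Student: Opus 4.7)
The plan is to reduce the $W_2^2$ bound to a trace expression involving the projection, and then use Weyl's inequality plus the low-rank structure of $\Sigma$ to relate it to $\lambda_{r+1}$ and $\|\Sigma-\Sigma'\|_2$. First, I would use the trivial coupling $(U, \Pi_r' U)$ with $U \sim \Gamma$: since $\Pi_r'$ is an orthogonal projection,
\[
W_2^2(\Gamma,\Gamma_{\Pi_r'}) \leq \Expect_{U\sim\Gamma}\bigl\|(I-\Pi_r')U\bigr\|_2^2 = \trace\bigl((I-\Pi_r')\Sigma(I-\Pi_r')\bigr) = \trace\bigl((I-\Pi_r')\Sigma\bigr),
\]
using cyclicity and $(I-\Pi_r')^2 = I-\Pi_r'$. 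So everything reduces to bounding $\trace((I-\Pi_r')\Sigma) = \trace(\Sigma) - \trace(\Pi_r'\Sigma)$ from above.

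Next I would lower-bound $\trace(\Pi_r'\Sigma)$. Writing $\Pi_r' = \sum_{i=1}^r v_i'(v_i')^\top$ with $v_1',\dots,v_r'$ the top eigenvectors of $\Sigma'$,
\[
\trace(\Pi_r'\Sigma) = \sum_{i=1}^r (v_i')^\top \Sigma' v_i' + \sum_{i=1}^r (v_i')^\top (\Sigma-\Sigma') v_i' \;\geq\; \sum_{i=1}^r \lambda_i(\Sigma') - r\,\|\Sigma-\Sigma'\|_2,
\]
and Weyl's inequality gives $\lambda_i(\Sigma') \geq \lambda_i(\Sigma) - \|\Sigma-\Sigma'\|_2$, so $\trace(\Pi_r'\Sigma) \geq \sum_{i=1}^r \lambda_i - 2r\,\|\Sigma-\Sigma'\|_2$. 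Subtracting from $\trace(\Sigma)=\sum_{i=1}^d \lambda_i$ yields
\[
\trace\bigl((I-\Pi_r')\Sigma\bigr) \;\leq\; \sum_{i>r}\lambda_i + 2r\,\|\Sigma-\Sigma'\|_2.
\]

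Finally I would invoke the fact that $\Sigma = \sum_j w_j\mu_j\mu_j^\top$ has rank at most $k$, hence $\lambda_i = 0$ for $i>k$. For $r \leq k$ this gives $\sum_{i>r}\lambda_i \leq (k-r)\lambda_{r+1}\leq k\lambda_{r+1}$ and $2r \leq 2k$, producing the stated bound $k(\lambda_{r+1}+2\|\Sigma-\Sigma'\|_2)$. The corner case $r>k$ (where the coefficient $2r$ in my intermediate bound is larger than $2k$) I would handle separately by monotonicity: $\Pi_k' \preceq \Pi_r'$ implies $(I-\Pi_r')\preceq (I-\Pi_k')$, so $\trace((I-\Pi_r')\Sigma)\le \trace((I-\Pi_k')\Sigma) \leq 2k\|\Sigma-\Sigma'\|_2$, and $\lambda_{r+1}=0$ then makes the inequality trivial.

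The main obstacle is really the non-obvious step of controlling $\trace((I-\Pi_r')\Sigma)$ without any spectral gap assumption on $\Sigma$; standard Davis--Kahan type perturbation results would require such a gap, and the small atoms $\mu_j$ are allowed to have negligible contributions. The key observation that avoids this is that we only need the \emph{trace} quantity $\trace(\Pi_r'\Sigma)$, which, unlike the individual eigenvectors, admits an unconditional Weyl-based estimate by simply summing $r$ eigenvalue perturbations. Combined with the rank-$k$ structure of $\Sigma$ that collapses $\sum_{i>r}\lambda_i$ into at most $k\lambda_{r+1}$, this yields the dimension-free bound.
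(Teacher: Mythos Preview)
Your proof is correct and takes a genuinely different route from the paper's. Both start with the natural coupling $(U,\Pi_r'U)$ to reduce to bounding $\sum_j w_j\|(I-\Pi_r')\mu_j\|_2^2 = \trace((I-\Pi_r')\Sigma)$, but diverge from there.

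The paper works atom by atom: for each $j$ it picks the unit vector $y_j$ in the range of $I-\Pi_r'$ maximizing $|\mu_j^\top y_j|$, so that $w_j\|(I-\Pi_r')\mu_j\|_2^2 = w_j(\mu_j^\top y_j)^2 \le y_j^\top\Sigma y_j$ (since $\Sigma \succeq w_j\mu_j\mu_j^\top$). Because $y_j$ lies in the orthogonal complement of the top-$r$ eigenspace of $\Sigma'$, one has $y_j^\top\Sigma' y_j \le \lambda_{r+1}(\Sigma')$, and a single application of Weyl gives $y_j^\top\Sigma y_j \le \lambda_{r+1}+2\|\Sigma-\Sigma'\|_2$. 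Summing over the $k$ atoms yields the factor $k$. This argument is uniform in $r$ and needs no case distinction.

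Your trace argument instead lower-bounds $\trace(\Pi_r'\Sigma)$ via $r$ applications of Weyl, arriving at the intermediate inequality $\trace((I-\Pi_r')\Sigma) \le \sum_{i>r}\lambda_i + 2r\|\Sigma-\Sigma'\|_2$. This is actually sharper in the first term ($(k-r)\lambda_{r+1}$ rather than $k\lambda_{r+1}$) but carries $r$ instead of $k$ in the perturbation term, which forces your separate treatment of $r>k$ via the monotonicity $\Pi_k'\preceq\Pi_r'$. Both arguments are valid; yours is a clean global trace calculation, while the paper's is more geometric, avoids the case split, and makes explicit where the factor $k$ (the number of atoms) enters.
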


We will apply \prettyref{lem:perturbation_psd} with $\Sigma'$ being the sample covariance matrix $\hat \Sigma$. 
The following lemma provides the concentration of $\hat \Sigma$ we need to prove the upper bound on the high-dimensional component of the error in~\prettyref{thm:mixing_est}.
\begin{lem} \label{lem:cov_bound}
Let $\Gamma \in \mc G_{k, d}$ and $\Sigma = \Expect_{U\sim \Gamma}[UU^\top]$.
Let $\hat \Sigma = \frac{1}{n}\sum_{i=1}^n X_iX_i^\top - I_d$, where $X_1, \hdots, X_n \iiddistr P_\Gamma$.
Then there exists a positive constant $C$ such that, with probability at least $1-\delta$,
\[
\Norm{\hat \Sigma - \Sigma}_2 \le C\(\sqrt{\frac{d}{n}} + k\sqrt{\frac{\log(k/\delta)}{n}}+\frac{\log(1/\delta)}{n}\).
\]
\end{lem}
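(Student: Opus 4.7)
The plan is to decompose the sample covariance using the latent-class plus Gaussian-noise representation of the mixture and then bound each resulting piece with the appropriate concentration inequality, exploiting throughout that the centers live in a subspace of dimension at most $k$. Introduce independent class labels $Z_i$ with $\Prob[Z_i=j]=w_j$ and noise vectors $W_i \iiddistr N(0,I_d)$ independent of the labels, so that $X_i = \mu_{Z_i} + W_i$ and $\Expect[X_i X_i^\top] = \Sigma + I_d$. Expanding $X_i X_i^\top$ and subtracting $I_d + \Sigma$ yields the clean split
\[
\hat\Sigma - \Sigma = A + B + C,
\]
where $A = \frac{1}{n}\sum_{i=1}^n (\mu_{Z_i}\mu_{Z_i}^\top - \Sigma)$, $B = \frac{1}{n}\sum_{i=1}^n (\mu_{Z_i}W_i^\top + W_i\mu_{Z_i}^\top)$, and $C = \frac{1}{n}\sum_{i=1}^n (W_iW_i^\top - I_d)$. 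Each piece will be controlled separately and the pieces combined by a union bound.

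For the intrinsically low-dimensional piece $A$: writing $\mu_j = V\psi_j$ with $V\in\reals^{d\times k}$ having orthonormal columns gives $A = V A' V^\top$ with $A' \in \reals^{k\times k}$, so $\Norm{A}_2 = \Norm{A'}_2$. The matrix Bernstein inequality applied in $\reals^{k\times k}$ (summands of operator norm at most $2R^2$ and per-summand variance proxy $R^4$) yields $\Norm{A}_2 \lesssim \sqrt{\log(k/\delta)/n} + \log(k/\delta)/n$ with probability at least $1-\delta/3$. For the pure-noise piece $C$, standard concentration for the sample covariance of $N(0, I_d)$---either via Bernstein-type tail bounds for chi-squared sums or the Davidson--Szarek estimate $\Norm{W}_2 \leq \sqrt{n} + \sqrt{d} + \sqrt{2\log(2/\delta)}$ with $W = [W_1,\ldots,W_n]$---gives $\Norm{C}_2 \lesssim \sqrt{d/n} + \sqrt{\log(1/\delta)/n} + \log(1/\delta)/n$ under $n\geq d$, which absorbs any $d/n$ term coming from $\Norm{W}_2^2/n$.

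The main obstacle is the cross term $B$: a naive bound like $\Norm{B}_2 \leq 2\Fnorm{M}$ with $M = \frac{1}{n}\sum\mu_{Z_i}W_i^\top$ would introduce a spurious $\sqrt{d}$ factor unattenuated by $\sqrt{n}$. The fix is to exploit that the column space of $M$ lies in the $k$-dimensional range of $V$. Writing $M = V M'$ with $M' = \frac{1}{n}\sum \psi_{Z_i} W_i^\top \in \reals^{k\times d}$, one has $\Norm{M}_2 = \Norm{M'}_2$. Conditioning on the labels $Z_1,\ldots,Z_n$, for each fixed $\xi \in S^{k-1}$ the vector $\xi^\top M' \in \reals^d$ has iid $N(0, \tau_\xi^2)$ entries, where $\tau_\xi^2 = \frac{1}{n^2}\sum_i \langle \xi, \psi_{Z_i}\rangle^2 \leq R^2/n$. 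Gaussian norm concentration gives $\Norm{\xi^\top M'}_2 \lesssim \tau_\xi (\sqrt{d} + \sqrt{\log(1/\delta)})$, and a standard $\tfrac{1}{2}$-net of $S^{k-1}$ of cardinality at most $5^k$ together with a union bound promote this to $\Norm{B}_2 \leq 2\Norm{M'}_2 \lesssim \sqrt{d/n} + \sqrt{(k + \log(1/\delta))/n}$ with probability at least $1-\delta/3$.

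Assembling the three bounds by union bound and simplifying (using $\sqrt{(k+\log(1/\delta))/n} \leq k\sqrt{\log(k/\delta)/n}$ for $k\geq 2$, and $\sqrt{\log(1/\delta)/n} \leq k\sqrt{\log(k/\delta)/n}$, so that everything matches the stated form) completes the proof. The only step requiring the low-dimensional geometry of the mixture is the cross term $B$; the terms $A$ and $C$ reduce to off-the-shelf matrix concentration inequalities.
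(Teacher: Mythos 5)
Your decomposition into the three pieces $A$, $B$, $C$ is exactly the one the paper uses (with $U_i \equiv \mu_{Z_i}$ and $Z_i \equiv W_i$), and your treatment of the pure-noise piece $C$ is the same standard random-matrix bound. Where you diverge is in the other two pieces, and both of your variants are correct. For $A$, the paper simply observes $\frac{1}{n}\sum_i U_iU_i^\top = \sum_j \hat w_j \mu_j\mu_j^\top$ and applies scalar Hoeffding to each $\hat w_j - w_j$ with a union bound over $j\in[k]$, giving $\lesssim R^2 k\sqrt{\log(k/\delta)/n}$; your matrix-Bernstein-in-$\reals^{k\times k}$ argument gets a slightly cleaner $\lesssim R^2(\sqrt{\log(k/\delta)/n}+\log(k/\delta)/n)$, which is of course subsumed by the stated bound. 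The genuinely different choice is in the cross term $B$: the paper nets over the \emph{ambient} sphere $S^{d-1}$ with a $2^{cd}$-cardinality net, then for each fixed $\theta$ bounds the scalar $\frac{1}{n}\sum_i(\theta^\top U_i)(\theta^\top Z_i)$ by a one-dimensional Gaussian tail (conditionally on the $U_i$), paying the $\sqrt{d}$ in the union bound; you instead exploit that $M = \frac{1}{n}\sum U_i Z_i^\top$ has column space inside $\mathrm{range}(V)$, reduce to $M' \in \reals^{k\times d}$, net only over $S^{k-1}$ (size $5^k$), and pay the $\sqrt{d}$ in the $\chi^2$-type bound for $\|\xi^\top M'\|_2$. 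The two arguments distribute the $\sqrt{d}$ differently (net size vs.\ per-event tail) but land on the same $\sqrt{d/n} + \sqrt{(k + \log(1/\delta))/n}$, and your final simplification to the stated form is valid. Both are fine; the paper's version of $B$ is a bit more elementary (no chi-square norm concentration needed), while yours makes the role of the $k$-dimensional center subspace more transparent and unifies the treatment of $A$ and $B$ under the same low-rank reduction.
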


\begin{proof} [\bf{Proof of~\prettyref{thm:mixing_est}}]
We first show that the estimator~\prettyref{eq:estimator} achieves the tail bound stated in \prettyref{eq:mixing_est_tail_bound}, which, after integration, implies the average risk bound in~\prettyref{eq:mixing_est_avg_risk}.
% , and show the average risk of \prettyref{eq:mixing_est_lb} in \prettyref{cor:tail_to_average}.
%Recall the triangle inequality in \prettyref{eq:w1-triangle}.
To bound the first term in \prettyref{eq:w1-triangle}, 
note that the rank of $\Sigma=\Expect_{U\sim\Gamma}[UU^\top]$ is at most $k$. 
Furthermore, 
the top $k$ left singular vectors of $[X_1,\ldots,X_n]$ coincide with 
the top $k$ eigenvectors of $\hat \Sigma = \frac{1}{n}\sum_{i=1}^n X_iX_i^\top - I_d$.
Applying Lemmas \ref{lem:perturbation_psd} and \ref{lem:cov_bound} yields that, with probability $1-\delta$,
\begin{equation}
\label{eq:w1-ub-high-d}
W_1(\Gamma,\Gamma_{\hat H})
\le \sqrt{2Ck} \(  \(\frac{d}{n}\)^{1/4}+\pth{\frac{k^2\log(k/\delta)}{n}}^{1/4}+\sqrt{\frac{\log(1/\delta)}{n}}\),
\end{equation}
where we used the fact that $W_1(\Gamma, \Gamma') \le W_2(\Gamma, \Gamma')$ by the Cauchy-Schwarz inequality. 
To upper bound the second term in \prettyref{eq:w1-triangle}, 
recall that $\hat V$ was obtained from $\{X_1,\ldots,X_{n}\}$ and hence is independent of $\{X_{n+1},\ldots,X_{2n}\}$.
Thus conditioned on $\hat V$, 
\[
x_i=\hat V^\top X_{i+n} \iiddistr \gamma * N(0,I_k), \quad i=1,\ldots,n.
\]
Let $\hat{\gamma}$ be obtained from \prettyref{algo:gmm_est} with input $x_1,\ldots,x_n$. 
By \prettyref{lmm:DMM-kdim}, with probability $1-\delta$, 
\begin{equation}
W_1(\gamma,\hat\gamma) \leq \added{Ck^5} n^{-1/(4k-2)} \sqrt{\log \frac{1}{\delta}}.
\label{eq:w1-ub-low-d}
\end{equation}
% \nbwu{Pengkun: double check the following sentence given the changes.}
Note that $(k^2\log(k/\delta)/n)^{1/4}+(\log(1/\delta)/n)^{1/2}$ in \prettyref{eq:w1-ub-high-d} is dominated by the right-hand side of \prettyref{eq:w1-ub-low-d}.
%$\epsilon= \added{C k^5} n^{-1/(4k-2)} \sqrt{\log \frac{1}{\delta}}$. 
The desired \prettyref{eq:mixing_est_tail_bound} follows from combining \prettyref{eq:w1-triangle}, 
\prettyref{eq:w1-ub-high-d}, and \prettyref{eq:w1-ub-low-d}.

%The average risk bound in~\prettyref{eq:mixing_est_avg_risk} immediately follows from~\prettyref{eq:mixing_est_tail_bound} by the fact that $\mb E Y = \int_0^\infty \mb P \{ Y > t \} \, dt$ for a non-negative random variable $Y$, and therefore
%\[
%\mb E W_1 (\hat \Gamma, \Gamma)
%\le C_k\( \(\frac{d}{n} \)^{1/4} + 
%%\int_0^\infty e^{-(nt^{4k-2})^{1/(2k-1)}} \, dt \)
%\int_0^\infty e^{-n^{1/(2k-1)} t^2} \, dt \)
%= C_k\( \(\frac{d}{n} \)^{1/4} + \frac{\sqrt{\pi}}{2}n^{-\frac{1}{4k-2}}\).
%\]
Finally, {we prove the lower bound in~\prettyref{eq:mixing_est_avg_risk}. For any subset $\mc G\subseteq\mc G_{k, d}$, we have}
\begin{align}
\label{eq:lb_split}
\inf_{\hat\Gamma}\sup_{\Gamma\in\mc G_{k,d}}\Expect W_1(\hat\Gamma,\Gamma)
\ge \inf_{\hat\Gamma}\sup_{\Gamma\in\mc G}\Expect W_1(\hat\Gamma,\Gamma)
\ge \frac{1}{2}\inf_{\hat\Gamma\in\mc G}\sup_{\Gamma\in\mc G}\Expect W_1(\hat\Gamma,\Gamma),
\end{align}
{where the second inequality follows from the triangle inequality for $W_1$.
%For any estimator $\hat\Gamma$ (not necessarily in $\mc G$), let $\tilde\Gamma = \arg\min_{\tilde\Gamma\in \mc G}W_1(\tilde\Gamma,\hat\Gamma)$.
%Since $\Gamma\in\mc G$, by definition $W_1(\tilde\Gamma,\hat\Gamma)\le W_1(\Gamma,\hat\Gamma)$.
%Then, by the triangle inequality, $W_1(\tilde\Gamma,\Gamma)\le W_1(\tilde\Gamma,\hat\Gamma)+W_1(\Gamma,\hat\Gamma)\le 2W_1(\hat\Gamma,\Gamma)$.
%Taking an expectation and then a supremum over $\Gamma\in\mc G$ on both sides, we have $\sup_{\Gamma\in\mc G}\Expect W_1(\tilde\Gamma,\Gamma)\le 2\sup_{\Gamma\in\mc G}\Expect W_1(\hat\Gamma,\Gamma)$.
%The second inequality in~\prettyref{eq:lb_split} follows.}
To obtain the lower bound in~\prettyref{eq:mixing_est_avg_risk}, we apply the $\Omega((d/n)^{1/4}\wedge 1)$ lower bound in~\cite[Theorem 10]{Wu_Zhou_2019}
for $d$-dimensional symmetric $2$-GM (by taking $\calG$ to the mixtures of the form \prettyref{eq:sym2GM}) and the $\Omega(n^{-1/(4k-2)})$ lower bound in~\cite[Proposition 7]{WY18} for $1$-dimensional $k$-GM (by taking $\calG$ to be the set of mixing distributions all but the first coordinates of which are zero).}
\end{proof}

\section{Density estimation} \label{sec:density_estimation}

In this section we prove the density estimation guarantee of \prettyref{thm:density_est} for finite Gaussian mixtures. 
The lower bound simply follows from the minimax quadratic risk of the Gaussian location model (corresponding to $k=1$), since
$H^2(N(\theta,I_d),N(\theta',I_d)) = 2 -2e^{\|\theta-\theta'\|_2^2/8} \asymp \|\theta-\theta'\|^2$ when $\theta,\theta'\in B(0,R)$.
Thus, we focus on the attainability of the parametric rate of density estimation. 
Departing from the prevailing approach of maximum likelihood, we aim to apply the estimator of Le Cam and Birg\'e which requires bounding the local entropy of Hellinger balls for $k$-GMs. This is given by the following lemma.

\begin{lem}[Local entropy of $k$-GM] \label{lem:hellinger_local_entropy}
For any $\Gamma_0 \in \mathcal G_{k, d}$, let $\mathcal P_{\epsilon}(\Gamma_0) = \{P_{\Gamma} : \Gamma \in \mathcal G_{k, d}, H(P_{\Gamma},P_{\Gamma_0})\le \epsilon\}$. Then, for any $\delta \le \epsilon/2$,
\begin{equation}
N(\delta, \mathcal P_{\epsilon}(\Gamma_0), H) \le \added{\(\frac{\epsilon}{\delta}\)^{c(dk^4 + (2k)^{2k+2})},}
%(\epsilon /\delta )^{c\cdot d},
\end{equation}
where the constant $c$ only depends only on $R$.
\end{lem}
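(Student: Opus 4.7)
Plan for \prettyref{lem:hellinger_local_entropy}. The proof will hinge on the moment-tensor--Hellinger comparison~\prettyref{eq:H-M-intro}, namely $H^2(P_\Gamma, P_{\Gamma'}) \asymp_k \|\bfM_{2k-1}(\Gamma) - \bfM_{2k-1}(\Gamma')\|_F^2$. Following the ``good-parametrization'' template sketched in the introduction, I would use the lower half of this comparison to localize the moment tensor $\bfM_{2k-1}(\Gamma)$ of any $\Gamma$ with $P_\Gamma \in \mc P_\epsilon(\Gamma_0)$ into a Frobenius $O_k(\epsilon)$-ball around $\bfM_{2k-1}(\Gamma_0)$. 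It then suffices to construct an $O_k(\delta)$-cover of this ball intersected with the moment variety of $k$-atomic distributions, and push it back through the upper half of the comparison to obtain a $\delta$-Hellinger cover of $\mc P_\epsilon(\Gamma_0)$.

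The key structural fact is that moment tensors of $k$-atomic distributions have low rank and therefore live on a variety of effective dimension at most $kd+k-1$, even though the ambient space has dimension $d^{\Theta(k)}$. I would parametrize $\Gamma$ by $(\mu,w) \in \Theta \triangleq B(0,R)^k \times \Delta^{k-1}$ and work with the polynomial moment map $\Phi(\mu,w) = \bfM_{2k-1}\!\left(\sum_j w_j \delta_{\mu_j}\right)$, which has degree at most $2k-1$ and is Lipschitz with constant $L_k$ depending only on $R$ and $k$. Pushing forward a $(\delta/L_k)$-covering of $\Theta$ produces a $\delta$-covering of $\Phi(\Theta)$ in Frobenius norm of global cardinality $(C_k/\delta)^{kd+k-1}$.

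The main technical work is the passage from global to local entropy at the required $(\epsilon/\delta)$-scale. For this I would establish an inverse-Lipschitz (stability) property of $\Phi$ modulo permutation of components, so that $\Phi^{-1}(B_{O_k(\epsilon)}(\bfM_{2k-1}(\Gamma_0)))$ is contained in a union of $\poly(k)$ parameter balls of radius $O_k(\epsilon)$, each of which requires $(\epsilon/\delta)^{O(kd+\poly(k))}$ sub-balls of radius $\delta/L_k$ to cover. The multivariate moment stability can be deduced from the one-dimensional Vandermonde/Prony-type bounds implicit in~\cite{HK2015,WY18} by projecting onto $O(k)$ generic directions, in the spirit of the sliced-Wasserstein argument of~\prettyref{sec:kto1}; combining these projection-wise stability bounds reconstructs the tensor stability up to $k$-dependent factors.

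The main obstacle is establishing the inverse-Lipschitz bound uniformly over $\calG_{k,d}$, including at degenerate configurations where atoms of $\Gamma_0$ coalesce or some weights vanish, since the map $\Phi$ ceases to be a local diffeomorphism there. I would address this by stratifying $\Theta$ according to the coincidence pattern of $\{\mu_j\}$ (at most $k!$ such strata) and treating each stratum separately; the enumeration over strata, permutations, and projection schemes is what produces the combinatorial $(2k)^{2k+2}$ term in the exponent, while the $dk^4$ term reflects the $(kd)$-dimensional parametric complexity amplified by Lipschitz and inverse-Lipschitz constants that scale polynomially in $k$.
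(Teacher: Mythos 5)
Your plan starts from the same place as the paper — the two-sided moment-tensor--Hellinger comparison of \prettyref{thm:H-M} — and correctly identifies the reduction to covering an $O_k(\epsilon)$-Frobenius ball in the space of degree-$(2k-1)$ moment tensors of $k$-atomic measures. From there, however, you diverge onto a route that has a genuine gap.

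You propose to invert the moment map $\Phi(\mu,w)=\bfM_{2k-1}(\sum_j w_j\delta_{\mu_j})$, claiming that $\Phi^{-1}$ of an $O_k(\epsilon)$-ball is contained in a union of $\poly(k)$ parameter balls of radius $O_k(\epsilon)$, and then to cover these at scale $\delta/L_k$. This inverse-Lipschitz claim is false at degenerate configurations, which is precisely where the local entropy bound must still hold uniformly. The paper's own $k=2$ example makes this concrete: for $\Gamma_0=\delta_0$ (i.e.\ $\theta_0=0$) in the symmetric $2$-GM, $H(P_\theta,P_0)\asymp\theta^2$, so $\{P_\theta\in\calP_\epsilon\}$ corresponds to $|\theta|\lesssim\sqrt\epsilon$, a parameter ball of radius $\sqrt\epsilon$, not $O(\epsilon)$. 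More generally, the paper's moment comparison only gives $W_1\lesssim_k (\text{moment difference})^{1/(2k-1)}$, so the parameter-space preimage can have radius as large as $\epsilon^{1/(2k-1)}$. Your proposed fix — stratifying by coincidence pattern of atoms and ``treating each stratum separately'' — is where the real difficulty lives, and it is not worked out; on each stratum the relevant parameter-ball radius and the grid spacing would both have to be chosen adaptively to the stratum (and to $\epsilon$), and it is not clear the pieces recombine to give the stated $(\epsilon/\delta)^{O_k(d)}$ bound.

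The paper avoids this issue entirely: it never inverts $\Phi$. Instead it covers the \emph{difference set} $\{\Delta_\ell: \|\Delta_\ell\|_F\le\epsilon/c_k',\ \rank_s(\Delta_\ell)\le 2k\}$ directly, using the Tucker decomposition (\prettyref{lem:N-tensor-low-rank}). The key observation there is that although the rank-one coefficients $a_i$ in $\Delta_\ell=\sum a_i v_i^{\otimes\ell}$ may be unbounded due to cancellation, the Tucker core tensor $\alpha$ satisfies $\|\alpha\|_F=\|\Delta_\ell\|_F\le\epsilon/c_k'$, so one can cover the (orthonormal frame, core tensor) pair separately. This yields the exponent $dr + r^\ell$ with $r=2k$, and summing over $\ell\le 2k-1$ produces the $dk^4+(2k)^{2k+2}$ in the statement — so the $(2k)^{2k+2}$ term comes from the ambient dimension of the core tensor space, not from enumeration of strata/permutations/projections as you suggest. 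To repair your argument you would essentially have to re-derive this Tucker-based covering, or else carry out a genuinely multi-scale stratified analysis that the current sketch does not supply.
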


\prettyref{lem:hellinger_local_entropy} shows that any $\epsilon$-Hellinger ball $\mathcal P_{\epsilon}(\Gamma_0)$ in the space of $k$-GMs
can be covered by at most $(\frac{C\epsilon}{\delta})^{Cd}$ $\delta$-Hellinger balls for some $C=C(k,R)$. This result is uniform in $\Gamma_0$ and depends optimally on $d$ but the dependency on the number of components $k$ is highly suboptimal.
\prettyref{lem:hellinger_local_entropy}  should be compared with the local entropy bound in \cite{gassiat2014local} obtained using a different approach than ours based on moment tensor.
Specifically, \cite[Example 3.4]{gassiat2014local} shows that for Gaussian location mixtures, the local bracketing entropy centered at $P_{\Gamma_0}$ is bounded by 
$N_{[]}(\delta, \mathcal P_{\epsilon}(\Gamma_0), H) \leq (\frac{C'\epsilon}{\delta})^{20kd}$, for some constant $C'$ depending on $P_{\Gamma_0}$ and $R$.
This result yields optimal dependency on both $d$ and $k$ but lacks uniformity in the center of the Hellinger neighborhood (which is needed for applying the theory of Le Cam and Birg\'e).

%Given the local entropy bound in \prettyref{lem:hellinger_local_entropy}, the parametric rate of $O_k(\frac{d}{n})$ in the squared Hellinger loss follows from the construction of Le Cam and Birg\'e (see~\cite{LeCam73,Birge83,birge1986estimating}; see also \cite[Lec.~18]{it-stats} for a self-contained exposition).
 %\prettyref{lem:hellinger_local_entropy} is proved in Sections \ref{sec:moment-tensor} and \ref{sec:localentropy}.

Given the local entropy bound in \prettyref{lem:hellinger_local_entropy}, the upper bound $O_k(\frac{d}{n})$  in the squared Hellinger loss in \prettyref{thm:density_est}
 immediately follows by invoking the Le Cam-Birg\'e construction \cite[Theorem 3.1]{birge1986estimating}; see also \cite[Lec.~18]{it-stats} for a self-contained exposition). For a high-probability bound that leads to \prettyref{eq:density_est-highprob}, see, e.g.,~\cite[Theorem 18.3]{it-stats}.
%follows from the construction of Le Cam and Birg\'e (see~\cite{LeCam73,Birge83,birge1986estimating}; see also \cite[Lec.~18]{it-stats} for a self-contained exposition).
%From this the upper bound in \prettyref{thm:density_est} immediately follows by invoking the Le Cam-Birg\'e construction (\cite[Theorem 3.1]{birge1986estimating}; for the high-probability bound \prettyref{eq:density_est-highprob} see e.g.,~\cite[Theorem 18.3]{it-stats}).

Before proceeding to the proof of \prettyref{lem:hellinger_local_entropy}, we note that the Le Cam-Birg\'e construction, based on (exponentially many) pairwise tests, does not lead to a computationally efficient scheme for density estimation. 
This problem is much more challenging than estimating the mixing distribution, for which we have already obtained a polynomial-time optimal estimator in \prettyref{sec:mixing_estimation}. (In fact, we show in~\prettyref{sec:de-connection}, estimation of the mixing distribution can be reduced to proper density estimation both statistically and computationally.)
Finding a computationally efficient proper density estimate that attains the parametric rate in \prettyref{thm:density_est} for arbitrary $k$, or even within logarithmic factors thereof, is open. \prettyref{ssec:density_est_efficient} presents some partial progress on this front: We show that the estimator in \prettyref{sec:mixing_estimation} with slight modifications achieves the optimal rate of $O(\sqrt{d/n})$ for $2$-GMs and the rate of $O((d/n)^{1/4})$ for general $k$-GMs; the latter result slightly improves (by logarithmic factors only) the state of the art in~\cite{Acharya_etal_2014}, but is still suboptimal. 
%We hypothesize that our proposed algorithm does indeed achieve the optimal rate for all $k$-GM's, and proving this is the subject of current work.

%In~\prettyref{sec:de-connection}, we moreover show that a proper density estimator yields an estimator of the mixing distribution that achieves the sharp minimax rate stated in~\prettyref{thm:mixing_est}. This estimator, however, is computationally suboptimal compared to the one that achieves~\prettyref{eq:mixing_est_tail_bound}, since its time complexity is the same as that of the density estimator.

Both the construction of the Hellinger covering for \prettyref{lem:hellinger_local_entropy} and the analysis of density estimation in \prettyref{ssec:density_est_efficient} rely
on the notion of moment tensors, which we now introduce.

\subsection{Moment tensors and information geometry of Gaussian mixtures}
\label{sec:moment-tensor}

We recall some basics of tensors; for a comprehensive review, see~\cite{kolda2009tensor}. The \emph{rank} of an order-$\ell$ tensor $T\in(\reals^d)^{\otimes \ell}$ is defined as the minimum $r$ such that $T$ can be written the sum of $r$ rank-one tensors, namely \cite{Kruskal1977}:
\begin{equation}
\rank(T)\triangleq\min \sth{r:T=\sum_{i=1}^r  \alpha_i u^{(1)}_i\otimes  \dots \otimes u^{(\ell)}_i, \quad u^{(j)}_i \in \reals^d, \alpha_i \in \reals },
%, i\in[r],j\in[\ell]
\end{equation}
We will also use the \emph{symmetric rank} \cite{CGLM2008}:
\begin{equation}
\rank_s(T)\triangleq\min \sth{ r:T=\sum_{i=1}^r \alpha_i u_i^{\otimes \ell}, \quad u_i \in \reals^d, \alpha_i \in \reals }.
\end{equation}
An order-$\ell$ tensor $T$ is \emph{symmetric} if $T_{j_1,\ldots, j_{\ell}} = T_{j_{\pi(1)},\ldots,j_{\pi(\ell)}}$ for all $j_1,\ldots,j_\ell \in [d]$ and all permutations $\pi$ on $[\ell]$.
% The Frobenius norm is defined as $\Norm{T}_F=\sqrt{\inner{T,T}}$.
% For two rank-1 tenors $x=u\otimes v \otimes \dots \otimes w$ 
The Frobenius norm of a tensor $T$ is defined as $\norm{T}_F \triangleq \sqrt{\inprod{T}{T}}$, where the tensor inner product is defined as $\Iprod{S}{T}=\sum_{j_1,\ldots,j_\ell\in[d]} S_{j_1,\ldots, j_{\ell}}T_{j_1,\ldots, j_{\ell}}$.
The spectral norm (operator norm) of a tensor $T$ is defined as 
\begin{equation}
\norm{T} \triangleq \max\{ \inprod{T}{u_1\otimes u_2 \otimes \dots \otimes u_{\ell}} : \norm{u_i}= 1, i =1,\dots,\ell \}.
\label{eq:Top}
\end{equation}
Denote the set of $d$-dimensional order-$\ell$ symmetric tensors by $\symmetric_{\ell}(\reals^d)$. 
For a symmetric tensor, the following result attributed to Banach (\cite{Banach1938,friedland2018nuclear}) is crucial for the present paper:
\begin{equation}
	\label{eq:op-symmetric}
	\norm{T} = \max\{ |\inprod{T}{u^{\otimes \ell}}|: \norm{u} = 1 \}.
\end{equation}
% \nb{I think you missed abs value. Also, I think $\Norm{u}= 1$ is wlog, so I suggest we put equality for both here and above.}
For $T\in \symmetric_{\ell}(\reals^d)$, if $\rank_s(T)\le r$, then the spectral norm can be bounded by the Frobenius norm as follows \cite{Qi2011}:\footnote{The weaker bound $\norm{T} \geq r^{-\ell/2} \norm{T}_F$, which suffices for the purpose of this paper, takes less effort to show. Indeed, in view of the Tucker decomposition \prettyref{eq:tucker}, combining~\prettyref{eq:Top} with \prettyref{eq:tucker-norm} yields that $\norm{T} \geq \max_{\bfj \in [r]^\ell} |\alpha_\bfj| \geq r^{-\ell/2} \norm{\alpha}_F = r^{-\ell/2} \norm{T}_F$. %\nbwu{Pengkun please double check.}
}
\begin{equation}
\label{eq:T-norms}	
\frac{1}{\sqrt{r^{\ell-1}}}\norm{T}_F \le \norm{T} \le \norm{T}_F.
\end{equation}

For any $d$-dimensional random vector $U$, its order-$\ell$ \emph{moment tensor} is 
\begin{equation}
M_{\ell}(U) \triangleq \mathbb E[\underbrace{U\otimes \cdots \otimes U}_{\text{$\ell$ times}}],
\label{eq:mtensor}
\end{equation}
which, by definition, is a symmetric tensor; in particular, $M_1(U)=\mathbb E[U]$ and $M_2(U-\mathbb E [U])$ are the mean and the covariance matrix of $U$, respectively. Given a multi-index $\bfj = (j_1,\ldots,j_d)\in \mathbb Z^d_+$, the $\bfj$th (multivariate) moment of $U$
% \nb{I recommend we use bold $\bfj$ for multiindex to be clear.}
\begin{equation}
m_{\bfj}(U) = \mathbb E[U_{j_1} \cdots U_{j_d}]
\end{equation}
is the $\bfj$th entry of the moment tensor $M_{|\bfj|}(U)$, with $|\bfj| \triangleq j_1+\ldots j_d$.
Since moments are functionals of the underlying distribution, we also use the notation $M_\ell(\Gamma)=M_\ell(U)$ where $U \sim \Gamma$.
An important observation is that the moment of the projection of a random vector can be expressed in terms of the moment tensor as follows:
for any $u\in\reals^d$,
\[
m_\ell(\Iprod{X}{u}) = \Expect[\Iprod{X}{u}^\ell]=\Expect[\Iprod{X^{\otimes \ell}}{u^{\otimes \ell}}]=\Iprod{M_\ell(X)}{u^{\otimes \ell}}.
\]
Consequently, the difference between two moment tensors measured in the spectral norm 
is equal to the maximal moment difference of their projections. Indeed, thanks to \prettyref{eq:op-symmetric},
\begin{equation}
\label{eq:mtensor-opnorm}
\|M_\ell(X)-M_\ell(Y)\| = \sup_{\|u\|=1} |m_\ell(\Iprod{X}{u})-m_\ell(\Iprod{Y}{u})|.
\end{equation}

Furthermore, if $U$ is a discrete random variable with a few atoms, then its moment tensor has low rank.
%; this observation was the basis of previous work on applying a tensor decomposition to learning Gaussian mixtures (see~\cite{AGHKT2014}). 
Specifically, if $U$ is distributed according to some $k$-atomic distribution $\Gamma = \sum_{i=1}^k w_i\delta_{\mu_i}$, then  
\begin{equation}
M_\ell(\Gamma) = \sum_{i=1}^k w_i\mu_i^{\otimes \ell},
\label{eq:mtensor-k}
\end{equation}
whose symmetric rank is at most $k$.

%Let $\mathcal G_k$ denote the set of $k$-atomic distributions supported on $[-1,1]^d$.
%% Let $\calG_k=\{U\in\reals^d: \Norm{U}\le 1, U\text{ is $k$-atomic}\}$ denote the set of $k$-atomic bounded random variables \nb{I highly recommend we switch to the language of distributions} in $\reals^d$.
%For any $\nu \sim \sum_{i=1}^k w_i\delta_{\mu_i}\in \mathcal G_k$, let $P_{\nu} = \nu*N(0,I_d)=\sum_i w_i N(\mu_i,I_d)$, which is an order-$k$ Gaussian mixture. 
%% $U+Z_d$, where $Z_d\sim N(0,I_d)$.

The following result gives a characterization of statistical distances (squared Hellinger, KL, or $\chi^2$-divergence) between $k$-GMs in terms of the moment tensors up to \emph{dimension-independent} constant factors. 
Note that the upper bound in one dimension has been established in \cite{WY18} (by combining Lemma 9 and 10 therein).
\begin{theorem}[Moment characterization of statistical distances]
\label{thm:H-M}
%For any $k\in\naturals$ and $M>0$, there exist constants $c$ and $C$ depending on $k$ and $M$, such that 
For any pair of $k$-atomic distributions $\Gamma, \Gamma'$ supported on the ball $B(0,R)$ in $\reals^d$, 
for any $D \in \{H^2,\KL,\chi^2\}$,
\begin{equation}
\label{eq:H-M}
\added{(Ck)^{-4k}} \max_{\ell \le 2k-1} \Fnorm{M_{\ell}(\Gamma)-M_{\ell}(\Gamma')}^2
\le D(P_\Gamma, P_{\Gamma'}) \le \added{C e^{36k^2}} \max_{\ell \le 2k-1} \Fnorm{M_{\ell}(\Gamma)-M_{\ell}(\Gamma')}^2.
\end{equation}
where the constant $C$ may depend on $R$ but not $k$ or $d$.
\end{theorem}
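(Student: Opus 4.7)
The plan is to prove the two sides of \prettyref{eq:H-M} separately. Since $H^2 \le \KL \le \chi^2$, it suffices to establish the lower bound $\max_{\ell \le 2k-1}\Fnorm{\Delta M_\ell}^2 \lesssim_k H^2(P_\Gamma, P_{\Gamma'})$ (yielding the LHS of \prettyref{eq:H-M} for all three choices of $D$) and the upper bound $\chi^2(P_\Gamma\|P_{\Gamma'}) \lesssim_k \max_{\ell \le 2k-1}\Fnorm{\Delta M_\ell}^2$ (yielding the RHS), where $\Delta M_\ell \triangleq M_\ell(\Gamma) - M_\ell(\Gamma')$. The unifying idea is to exploit the low-rank structure of the moment tensors of $k$-atomic distributions and reduce both directions to the one-dimensional case already established in \cite{WY18}.

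For the lower bound, observe that $M_\ell(\Gamma)$ and $M_\ell(\Gamma')$ each have symmetric rank at most $k$ by \prettyref{eq:mtensor-k}, so $\Delta M_\ell$ has symmetric rank at most $2k$. Combining \prettyref{eq:T-norms} (with $r = 2k$) and the identity \prettyref{eq:mtensor-opnorm} gives
\[
\Fnorm{\Delta M_\ell}^2 \le (2k)^{\ell-1}\,\Norm{\Delta M_\ell}^2 = (2k)^{\ell-1} \sup_{u \in S^{d-1}} |m_\ell(\Gamma_u) - m_\ell(\Gamma'_u)|^2.
\]
For each $u \in S^{d-1}$, the projections $\Gamma_u, \Gamma'_u$ are $k$-atomic on $[-R,R]$, so applying the one-dimensional case of \prettyref{eq:H-M} yields $|m_\ell(\Gamma_u) - m_\ell(\Gamma'_u)|^2 \lesssim_k H^2(P_{\Gamma_u}, P_{\Gamma'_u})$. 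The data-processing inequality for the deterministic map $x \mapsto u^\top x$ (under which $P_\Gamma$ pushes forward to $P_{\Gamma_u}$) then gives $H^2(P_{\Gamma_u}, P_{\Gamma'_u}) \le H^2(P_\Gamma, P_{\Gamma'})$, and maximizing over $\ell \le 2k-1$ completes the bound.

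For the upper bound, the first step is to reduce the dimension. The atoms of $\Gamma \cup \Gamma'$ span a subspace $V \subset \reals^d$ of dimension at most $2k$; in an orthonormal basis adapted to $V$, both mixtures factor as $P_\Gamma = P_{\Gamma_V} \otimes N(0, I_{d-2k})$ and $P_{\Gamma'} = P_{\Gamma'_V} \otimes N(0, I_{d-2k})$. Tensorization of $\chi^2$ across the shared Gaussian factor, together with invariance of $\Fnorm{M_\ell(\cdot)}$ under the orthogonal change of basis, reduces the problem to $d \le 2k$. In the reduced dimension, using that $\Gamma'$ must carry at least one weight $\ge 1/k$ gives the pointwise bound $p_{\Gamma'}(x) \ge k^{-1}\phi_d(x)\,e^{-R\Norm{x} - R^2/2}$, whence
\[
\chi^2(P_\Gamma\|P_{\Gamma'}) \le k\,e^{R^2/2}\int \frac{(p_\Gamma(x) - p_{\Gamma'}(x))^2}{\phi_d(x)}\,e^{R\Norm{x}}\,dx.
\]
I will estimate this weighted integral by switching the reference to a dilated Gaussian $\phi_d^{(\sigma)}$ with $\sigma^2 \in (1/2, 1)$, chosen so that $e^{R\Norm{x}}/\phi_d(x) \lesssim_{k,R} 1/\phi_d^{(\sigma)}(x)$ uniformly, and then expanding in the Hermite basis in the spirit of the clean identity $\int (p_\Gamma - p_{\Gamma'})^2/\phi_d\,dx = \sum_\ell \Fnorm{\Delta M_\ell}^2/\ell!$. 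This produces a series of the form $\sum_\ell c_\ell \Fnorm{\Delta M_\ell}^2$ with $c_\ell$ of subfactorial growth.

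To control the tail $\ell \ge 2k$ of the resulting series, I invoke a multivariate moment-extrapolation argument: the same chain $\Fnorm{\Delta M_\ell}^2 \le (2k)^{\ell-1}\sup_u |m_\ell(\Gamma_u) - m_\ell(\Gamma'_u)|^2$, combined with the one-dimensional moment-extrapolation inequality from \cite{WY18} (a consequence of Lagrange interpolation against the atoms of $\Gamma_u$), yields $\Fnorm{\Delta M_\ell}^2 \lesssim_{k, \ell, R} \max_{\ell' \le 2k-1}\Fnorm{\Delta M_{\ell'}}^2$. The main obstacle I anticipate is preserving the explicit constant $e^{36k^2}$ in \prettyref{eq:H-M}: the naive series coefficients would grow factorially without the dilated-Gaussian change of measure, and the $\ell$-dependence of the extrapolation constants has to be absorbed into the summation while keeping the final constant at most $e^{O(k^2)}$, which requires carefully tracking how the $R$-, $k$-, and $\sigma$-dependent factors compound across the three steps.
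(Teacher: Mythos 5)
Your lower bound argument and the dimension-reduction step for the upper bound both match the paper's proof essentially verbatim: reduce to one dimension via data processing and \prettyref{eq:mtensor-opnorm}, bound the Frobenius norm by the operator norm using the low-rank tensor comparison \prettyref{eq:T-norms}, and factor out a shared $N(0,I_{d-2k})$ via $\chi^2$ tensorization. The genuine departure (and where a gap appears) is in the pointwise density lower bound and what follows.

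The paper does not use $p_{\Gamma'}(x)\ge k^{-1}\phi_d(x)e^{-R\|x\|-R^2/2}$. Instead it first \emph{centers} so that $\Expect[B']=0$ and then applies Jensen to get $p_{\nu'}(x)\ge \phi_{2k}(x)e^{-R^2/2}$ with \emph{no} $e^{R\|x\|}$ factor; the residual shift is absorbed afterwards via an elementary centering lemma for moment tensors (\prettyref{lmm:moment-centering}). This keeps the reference measure equal to the standard Gaussian, so the Hermite expansion is orthogonal and the series coefficients are literally the entries $m_\bfj(B)-m_\bfj(B')$ of the moment tensor differences, giving the clean identity $\int(p_\nu-p_{\nu'})^2/\phi_{2k}=\sum_\bfj(m_\bfj(B)-m_\bfj(B'))^2/\bfj!$. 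Your route of keeping the $e^{R\|x\|}$ factor and absorbing it into a dilated Gaussian $\phi_d^{(\sigma)}$ breaks exactly this feature: once the reference density has variance $\sigma^2\neq 1$, the orthogonal expansion of the unit-variance mixture density $p_\Gamma$ has coefficients that are \emph{not} the raw moments $\Expect[U^{\otimes\ell}]$ but generalized moments of the form $\Expect[P_\ell(U)e^{-cU^2}]$ (cf.\ the expansion used to prove \prettyref{lem:l2_moments_lb}, where exactly these $\Expect[U^je^{-U^2/4}]$ appear). Your claim that the dilated expansion ``produces a series of the form $\sum_\ell c_\ell\Fnorm{\Delta M_\ell}^2$'' is therefore unjustified as stated; you would still need a reverse interpolation step converting generalized moments to ordinary moment differences, in $d$ dimensions, with constants at most $e^{O(k^2)}$. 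That is a nontrivial extra argument you have not supplied, and it is precisely the work that the paper's centering trick makes unnecessary. Once the reference is the standard Gaussian, the paper controls the tail $\ell\ge 2k$ of the series via the multivariate moment-interpolation bound \prettyref{lmm:moment-interpolation} (itself reduced to the one-dimensional \cite[Lemma 10]{WY18} via \prettyref{eq:mtensor-opnorm}), which gives the $e^{36k^2}$ constant after using $(|\bfj|)!\le \bfj!(2k)^{|\bfj|}$. I would replace your dilated-Gaussian step with the centering argument; it is both shorter and what actually delivers the claimed constant.
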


%Interpolation of moment tensors
To prove \prettyref{thm:H-M} we need a few auxiliary lemmas. The following lemma bounds the difference of higher-order moment tensors of $k$-atomic distributions using those of the first $2k-1$ moment tensors. The one-dimensional version was shown in \cite[Lemma 10]{WY18} using polynomial interpolation techniques; however, it is hard to extend this proof to multiple dimensions as multivariate polynomial interpolation (on arbitrary points) is much less well-understood. Fortunately, this difficulty can be sidestepped by exploiting the relationship between moment tensor norms and projections in \prettyref{eq:mtensor-opnorm}.
%It is known that any $k$-atomic distribution can be identified using its moments up to degree $2k-1$; this result holds in arbitrary dimension. 
%Furthermore, using polynomial interpolation, it is shown in \cite[Lemma 10]{WY18} that in one dimension, the difference of higher order moments can be bounded using those of the first $2k-1$ moments; however, it is difficult to extend this proof to multiple dimensions as multivariate polynomial interpolation (on arbitrary points) is much less well-understood. 
%Nevertheless, it is possible to sidestep this difficulty by exploiting the relationship between moment tensor norms and projections. The following lemma bounds the difference of higher-order moment tensors using lower-order ones:
\begin{lemma}
\label{lmm:moment-interpolation}	
	Let $U,U'$ be $k$-atomic random variables in $\reals^d$. Then for any $j \geq 2k$,
	\[
	\|M_j(U)-M_j(U')\| \leq 3^j \max_{\ell\in[2k-1]} \|M_\ell(U)-M_\ell(U')\|.
	\]
\end{lemma}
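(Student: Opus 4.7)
The plan is to reduce the multivariate statement to its one-dimensional counterpart (the case $d = 1$, already established in \cite{WY18} via univariate Lagrange interpolation) by exploiting the identity \prettyref{eq:mtensor-opnorm} between the tensor spectral norm and projected moments. This bypasses the need for multivariate polynomial interpolation on arbitrary point configurations, which is precisely the obstruction noted just before the lemma.

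First, apply \prettyref{eq:mtensor-opnorm} to both sides. On the left,
\begin{equation*}
\|M_j(U) - M_j(U')\| = \sup_{u \in S^{d-1}} \bigl| m_j(\langle U, u\rangle) - m_j(\langle U', u\rangle) \bigr|,
\end{equation*}
and, for each $\ell \in [2k-1]$, an analogous identity expresses $\|M_\ell(U) - M_\ell(U')\|$ as a supremum of one-dimensional moment differences.

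Second, fix $u \in S^{d-1}$ and note that the projected random variables $\langle U, u\rangle$ and $\langle U', u\rangle$ are $k$-atomic on the real line (supported in $[-R, R]$ when the original atoms lie in $B(0, R)$). The one-dimensional version of the present lemma, i.e.\ Lemma~10 of \cite{WY18}, then gives
\begin{equation*}
\bigl| m_j(\langle U, u\rangle) - m_j(\langle U', u\rangle) \bigr| \le 3^j \max_{\ell \in [2k-1]} \bigl| m_\ell(\langle U, u\rangle) - m_\ell(\langle U', u\rangle) \bigr|.
\end{equation*}

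Third, for each $\ell$ and $u$, bound $|m_\ell(\langle U, u\rangle) - m_\ell(\langle U', u\rangle)| \le \|M_\ell(U) - M_\ell(U')\|$ by \prettyref{eq:mtensor-opnorm}; substituting this uniform bound into the previous inequality and taking the supremum over $u$ on the left-hand side yields the desired estimate.

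The main (mild) obstacle is a bookkeeping check that one may commute $\sup_u$ with $\max_\ell$ in the last step; this is fine because the upper bound $\|M_\ell(U) - M_\ell(U')\|$ is uniform in $u$ and may be substituted inside the supremum before taking the $\max$ over $\ell$. The conceptual content of the lemma lies entirely in the projection identity \prettyref{eq:mtensor-opnorm}, which reduces the multidimensional problem to the already-solved univariate case.
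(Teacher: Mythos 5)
Your proof is correct and is essentially the same as the paper's: both reduce to the one-dimensional case by writing the tensor spectral norm as a supremum of projected moment differences via \prettyref{eq:mtensor-opnorm}, apply \cite[Lemma 10]{WY18} to each fixed projection, and then take the supremum over directions (the paper commutes $\sup_u$ with $\max_\ell$ at the last step, whereas you bound each projected moment difference by $\|M_\ell(U)-M_\ell(U')\|$ before taking $\sup_u$; these are the same bookkeeping). The paper's version is just a three-line chain of (in)equalities encoding exactly this argument.
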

\begin{proof}
	\begin{align*}
	\|M_j(U)-M_j(U')\| 
	\stepa{=} & ~ \sup_{\|v\|=1} |m_j(\Iprod{U}{v})-m_j(\Iprod{U'}{v})|  \\
	\stepb{\leq} & ~ 3^j \sup_{\|v\|=1} \max_{\ell\in[2k-1]}|m_\ell(\Iprod{U}{v})-m_\ell(\Iprod{U'}{v})|  \\
	\stepc{=} & ~ 3^j \max_{\ell\in[2k-1]}\|M_\ell(U)-M_\ell(U')\|,
	\end{align*}
	where (a) and (c) follow from \prettyref{eq:mtensor-opnorm}, and (b) follows from \cite[Lemma 10]{WY18}.
\end{proof}

The lower bound part of \prettyref{thm:H-M} can be reduced to the one-dimensional case, which is covered by the following lemma. The proof relies on Newton interpolating polynomials and is deferred till \prettyref{sec:l2_moments_lb}.
\begin{lem} \label{lem:l2_moments_lb}
Let $\gamma, \gamma'$ be $k$-atomic distributions supported on $[-R, R]$. 
%Let $p, p$ be the densities associated with the distributions $P_{\Gamma'} = \gamma * N(0, 1)$ and $P_{\gamma_0} = \gamma_0 * N(0, 1)$, respectively. Then there are constants $C_{k,1}, C_{k, 2}$ such that for any $r \in [2k-1]$, 
Then for any $(2k-1)$-times differentiable test function $h$,
\begin{align}
H(\gamma*N(0,1), \gamma' *N(0,1)) \ge c \left| \int h d\gamma - \int h d\gamma'  \right|,
%|m_r(\gamma) - m_r(\gamma_0)| 
\label{eq:hellinger_moments}
\end{align}
where $c$ is some constant depending only on $k$, $R$, and $\max_{0\leq i\leq 2k-1} \|h^{(i)}\|_{L_\infty([-R,R])}$. \added{In the particular case where $h(x) = x^i$ for $i \in [2k-1], c \ge (Ck)^{-k}$ for a constant $C$ depending only on $R$.}
\end{lem}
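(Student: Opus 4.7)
The plan is to first reduce to polynomial test functions via Newton interpolation, then bound the resulting moment differences in terms of Hellinger distance via the Hermite polynomial identity $\Expect[H_r(\mu+Z)]=\mu^r$ for $Z\sim N(0,1)$.

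I would let $S=\supp(\gamma)\cup\supp(\gamma')$, which contains at most $2k$ points in $[-R,R]$, and let $p$ be the Newton interpolating polynomial of degree at most $2k-1$ agreeing with $h$ on $S$. The standard divided-difference bound $|[x_1,\ldots,x_i]h|\le \|h^{(i-1)}\|_{L_\infty([-R,R])}/(i-1)!$ combined with expanding $\prod_j(x-x_j)$ in monomials shows that $p(x)=\sum_{r=0}^{2k-1}a_r x^r$ satisfies $\max_r |a_r|\le C_1(k,R)\max_{0\le i\le 2k-1}\|h^{(i)}\|_{L_\infty([-R,R])}$. Since $\gamma$ and $\gamma'$ are both supported in $S$,
\[
\int h\,d\gamma-\int h\,d\gamma' = \sum_{r=0}^{2k-1} a_r(m_r(\gamma)-m_r(\gamma')).
\]
In the special monomial case $h(x)=x^i$ with $i\in[2k-1]$, no interpolation is needed and $a_r=\Indc\{r=i\}$.

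Setting $\tilde p(y)=\sum_{r=0}^{2k-1}a_r H_r(y)$, the Hermite identity yields $\Expect_P[\tilde p(Y)]-\Expect_Q[\tilde p(Y)] = \sum_r a_r(m_r(\gamma)-m_r(\gamma'))=\int h\,d\gamma-\int h\,d\gamma'$, where $P=\gamma*N(0,1)$ and $Q=\gamma'*N(0,1)$. Writing $f(p-q)=f(\sqrt p-\sqrt q)(\sqrt p+\sqrt q)$ and applying Cauchy--Schwarz yields the standard inequality
\[
|\Expect_P f-\Expect_Q f|^2 \le 2 H^2(P,Q)(\Expect_P[f^2]+\Expect_Q[f^2]),
\]
so it remains to bound $\Expect_P[\tilde p^2]$ and $\Expect_Q[\tilde p^2]$. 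The translation formula $H_n(\mu+Z)=\sum_{i=0}^n \binom{n}{i}\mu^{n-i}H_i(Z)$ together with Hermite orthogonality $\Expect[H_i(Z)H_j(Z)]=i!\,\delta_{ij}$ produces $\Expect[H_n(\mu+Z)^2]=\sum_i \binom{n}{i}^2 i!\,\mu^{2(n-i)}\le n!(1+R^2)^n$ for $|\mu|\le R$, so for $n\le 2k-1$ we obtain $\Expect_P[H_n^2]\le (2k-1)!(1+R^2)^{2k-1}$, with the same bound under $Q$.

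Collecting everything, the monomial case $h(x)=x^i$ yields a single Hermite term, and using $(2k-1)!\le (2k)^{2k}$ delivers $c\ge (Ck)^{-k}$. For general $h$, the extra factor $C_1(k,R)\max_i \|h^{(i)}\|_\infty$ from the Newton coefficients combined with a triangle-inequality sum over $r\in\{0,\ldots,2k-1\}$ gives the stated constant. The main obstacle is achieving the tight $(Ck)^{-k}$ scaling in the monomial case: a naive bound via $\|H_i\|_\infty$ or via the monomial coefficients of $H_i$ on $[-R,R]$ would introduce a factor of order $k!$ rather than $k^k$. The Hermite-orthogonality shortcut above is precisely what avoids this loss; the rest is routine polynomial and Gaussian-moment bookkeeping.
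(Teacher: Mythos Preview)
Your proof is correct but follows a genuinely different route from the paper.  The paper first bounds $H^2(P_\gamma,P_{\gamma'})\gtrsim \|p_\gamma-p_{\gamma'}\|_2^2$, expands the $L^2$-distance in an orthonormal Hermite basis to obtain the exact identity $\|p_\gamma-p_{\gamma'}\|_2^2=\sum_{j\ge 0}c_j(\Expect[U^je^{-U^2/4}]-\Expect[U'^je^{-U'^2/4}])^2$, and then interpolates $\tilde h(y)=h(y)e^{y^2/4}$ (rather than $h$ itself) by a polynomial on the $\le 2k$ atoms; the bulk of the work is controlling the Newton coefficients of this interpolant, which in turn requires bounding the derivatives of $e^{y^2/4}$.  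Your approach instead interpolates $h$ directly, rewrites the resulting moment difference as $\Expect_P[\tilde p]-\Expect_Q[\tilde p]$ via the Hermite identity, and then applies the elementary Cauchy--Schwarz bound $|\Expect_Pf-\Expect_Qf|^2\le 2H^2(P,Q)(\Expect_Pf^2+\Expect_Qf^2)$, controlling the second moments by the translation formula.  Your path is shorter and avoids both the $L^2$ orthogonal expansion and the derivative bounds on $e^{y^2/4}$; the paper's expansion, on the other hand, is an exact Parseval identity that transparently shows \emph{every} exponential moment difference is dominated by the $L^2$ distance, which can be useful elsewhere.  For the specific monomial constant $(Ck)^{-k}$, both routes land on essentially the same $\sqrt{(2k-1)!}\,C^k$ factor, and your Hermite-orthogonality shortcut is exactly the right way to avoid the naive $k!$ loss.
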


\begin{proof}[Proof of \prettyref{thm:H-M}]
Since
\begin{align}
H^2(P, Q) \le \KL(P \| Q) \le \chi^2(P \| Q ),
\label{eq:divcompare}
\end{align}
(see, e.g., \cite[Section 2.4.1]{Tsybakov09}), %\prettyref{eq:divcompare}, 
it suffices to prove the lower bound for $H^2$ and the upper bound for $\chi^2$.

 Let $U\sim \Gamma$ and $U'\sim \Gamma'$,  $X\sim P_\Gamma=\Gamma * N(0,I_d)$ and $X' \sim P_{\Gamma'}=\Gamma'*N(0,I_d)$. 
% For any $\theta\in S^{d-1}$, 
% let $\Gamma_\theta = \calL(\Iprod{\theta}{U})$ and $\Gamma'_\theta = \calL(\Iprod{\theta}{U'})$, both of which are $k$-atomic distributions in one dimension.
Then $\Iprod{\theta}{X} \sim P_{\Gamma_\theta}$ and $\Iprod{\theta}{X'} \sim P_{\Gamma_\theta'}$. By the data processing inequality, 
\begin{equation}
H(P_{\Gamma},P_{\Gamma'})\ge\sup_{\theta\in S^{d-1}}H(P_{\Gamma_\theta},P_{\Gamma'_\theta}).
\end{equation}
Applying~\prettyref{lem:l2_moments_lb} to all monomials of degree at most $2k-1$, we obtain
 \begin{equation}
 H(P_{\Gamma},P_{\Gamma'})\ge (Ck)^{-k}  \sup_{\theta \in S^{d-1}}\max_{\ell \le 2k-1}|m_{\ell}(\Iprod{\theta}{U})-m_{\ell}(\Iprod{\theta}{U'})|
 = (Ck)^{-k} \max_{ \ell \le 2k-1} \norm{M_{\ell}(U)-M_{\ell}(U')},
 \end{equation}
for some constant $C$, where the last equality is due to \prettyref{eq:mtensor-opnorm}.
Thus the desired lower bound for Hellinger follows from the tensor norm comparison in \prettyref{eq:T-norms}.

%Since for $X\sim P_{\Gamma}$ we have $V^\top X \sim P_{V^\top U}$, and for $\tilde X\sim P_{V^\top U}$ we have $V\tilde X+V_\perp Z_{d-2k}\sim P_{\Gamma}$. Then, by the data processing inequality, we have $\chi^2(P_{\Gamma} \| P_{\Gamma'} ) =\chi^2(P_{V^\top U} \| P_{V^\top U'} )$.  Next we bound the moment differences:
 %\[
 %|m_\bfj(V^\top U)-m_\bfj(V^\top U')|
 %\le \Norm{M_{|\bfj|}(V^\top U)-M_{|\bfj|}(V^\top U')}_F
 %\le (2k)^{{|\bfj|}}\Norm{M_{|\bfj|}(V^\top U)-M_{|\bfj|}(V^\top U')}.
 %\]
 %By the one-dimensional result \cite{},
 %\[
 %\Norm{M_{|\bfj|}(V^\top U)-M_{|\bfj|}(V^\top U')}
 %\le \Norm{M_{|\bfj|}(U)-M_{|\bfj|}(U')}
 %\le 3^\ell \max_{\ell \le 2k-1} \Norm{M_\ell(U)-M_\ell(U')}.
 %\]
 %Therefore,
 %\[
 %\chi^2(P_{V^\top U} \| P_{V^\top U'} ) \le C \max_{\ell \le 2k-1} \Norm{M_\ell(\nu)-M_\ell(\nu')}^2\sum_{\bfj}\frac{(18k)^{|\bfj|}}{\bfj!}=C e^{36k^2}\max_{\ell \le 2k-1} \Norm{M_\ell(\nu)-M_\ell(\nu')}^2.
 %\]

To show the upper bound for $\chi^2$, we first reduce the dimension from $d$ to $2k$. 
Without loss of generality, assume that $d \geq 2k$ (for otherwise we can skip this step).
Since both $U$ and $U'$ are $k$-atomic, the collection of atoms of $U$ and $U'$ lie in some subspace spanned by the orthonormal basis $\{v_1,\dots,v_{2k}\}$. Let $V=[v_1,\dots,v_{2k}]$ and let $V_\perp = [v_{2k+1},\dots, v_d]$ consist of orthonormal basis of the complement, so that $[V,V_\perp]$ is an orthogonal matrix. 
Write $X=U+Z$, where $Z\sim N(0,I_d)$ is independent of $U$. 
Then $V^\top X = V^\top U + V^\top Z \sim \nu * N(0, I_{2k}) = P_\nu$, where $\nu=\calL(V^\top U)$ is a $k$-atomic distribution on $\reals^{2k}$. Furthermore, $V_\perp^\top X = V_\perp^\top Z \sim N(0,I_{d-2k})$ and is independent of $V^\top X$.
Similarly, 
$(V^\top X',V_\perp^\top X') \sim P_{\nu'} \otimes N(0,I_{d-2k})$, where $\nu'=\calL(V^\top U')$.
Therefore, 
\begin{align*}
\chi^2(P_\Gamma \| P_{\Gamma'}) 
= & ~  \chi^2(\calL(V^\top X,V_\perp^\top X) \| \calL(V^\top X',V_\perp^\top X')) = 
\chi^2(P_{\nu} \otimes N(0,I_{d-2k}) \| P_{\nu'} \otimes N(0,I_{d-2k})) \\
= & ~ 	\chi^2(P_{\nu} \| P_{\nu'} ).
\end{align*}
For notational convenience, let $B=V^\top U\sim\nu$ and $B'=V^\top U'\sim\nu'$.

To bound $\chi^2(P_{\nu} \| P_{\nu'} )$, we first assume that $\Expect[B']=0$.
%By following the argument in \cite[Lemma 9]{WY18}, 
%we show that
%\begin{equation}
%%\chi^2(P_{V^\top U} \| P_{V^\top U'} ) \lesssim \sum_{\bfj\in\integers_+^{2k}}\frac{(m_\bfj(V^\top U)-m_\bfj(V^\top U'))^2}{\bfj!}.
%\chi^2(P_{\Gamma} \| P_{\Gamma'} ) \lesssim \sum_{\bfj\in\integers_+^{d}}\frac{(m_\bfj(U)-m_\bfj(U'))^2}{\bfj!}.
%\label{eq:chi-ub}
%\end{equation}
For each multi-index $\bfj=(j_1,\ldots,j_d)\in\integers_+^{2k}$, define the $\bfj$th Hermite polynomial as 
\begin{equation}
H_\bfj(x) = \prod_{i=1}^{2k} H_{j_i}(x_i), \quad x \in \reals^{2k}
\label{eq:Hbfj}
\end{equation}
which is a degree-$|\bfj|$ polynomial in $x$. Furthermore, the following orthogonality property is inherited from that of univariate Hermite polynomials:
for $Z\sim N(0,I_{2k})$,
\begin{equation}
\Expect[H_{\bfj}(Z) H_{\bfj'}(Z)]=\bfj! \indc{\bfj=\bfj'}.
\label{eq:hermite-orthod}
\end{equation}
		Recall the exponential generating function of Hermite polynomials (see~\cite[22.9.17]{AS64}): 
		for $x,b\in\reals$, $\phi(x-b)=\phi(x)\sum_{j\ge 0}H_j(x)\frac{b^j}{j!}$. It is straightforward to obtain the multivariate extension of this result:
		\[
		\phi_{2k}(x-b)=\phi_{2k}(x) \sum_{\bfj\in\integers_+^{2k}} \frac{H_\bfj(x)}{\bfj!} \prod_{i=1}^{2k} b_i^{j_i}, \quad x,b\in\reals^{2k}.
		\]
		Integrating $b$ over $B\sim\nu$, we obtain the following expansion of the density of $P_\nu$:
		\[
		p_\nu(x) = \Expect[\phi_{2k}(x-B)] = \phi_{2k}(x) \sum_{\bfj\in\integers_+^{2k}}  \frac{H_\bfj(x)}{\bfj!} \underbrace{\Expect\qth{\prod_{i=1}^{2k} B_i^{j_i}}}_{m_\bfj(B)} .
		\]
		Similarly, 
		$p_{\nu'}(x) =\phi_{2k}(x) \sum_{\bfj\in\integers_+^{2k}}  \frac{1}{\bfj!} m_\bfj(B') H_\bfj(x)$.
		Furthermore,
		 by the assumption that $\Expect[B']=0$ and $\|B'\|\leq \|U'\| \leq R$ almost surely, Jensen's inequality yields 
    \[
        p_{\nu'}(x) =\phi_{2k}(x)\Expect[\exp(\Iprod{B'}{x}-\|B'\|^2/2)] \ge \phi_{2k}(x)\exp(-R^2/2).
    \]
    Consequently, 
		\begin{align*}
		\chi^2(P_{\nu}\|P_{\nu'})
		\leq & ~ e^{R^2/2} \int_{\reals^{2k}} dx \frac{(p_{\nu}(x)-p_{\nu'}(x))^2}{\phi_{2k}(x)} \\
		\stepa{=} & ~ e^{\frac{R^2}{2}} \sum_{\bfj\in\integers_+^{2k}}\frac{(m_\bfj(B)-m_\bfj(B'))^2}{\bfj!} \\
		\stepb{\leq} & ~ e^{\frac{R^2}{2}} \sum_{\ell\geq 1} \frac{\Fnorm{M_\ell(B)-M_\ell(B')}^2}{\ell!} (2k)^{\ell} \\
		\stepc{\leq} & ~ e^{\frac{R^2}{2}} e^{2k}  \max_{\ell\in[2k-1]} \Fnorm{M_\ell(B)-M_\ell(B')}^2 +
		e^{\frac{R^2}{2}} \sum_{\ell\geq 2k} \frac{(4k^2)^{\ell}}{\ell!} \norm{M_\ell(B)-M_\ell(B')}^2 \\
		\stepd{\leq} & ~ e^{\frac{R^2}{2}} \Bigg(e^{2k} + \underbrace{\sum_{\ell\geq 2k} \frac{(36k^2)^{\ell}}{\ell!}}_{\leq e^{36k^2}}  \Bigg)
				\max_{\ell\in[2k-1]} \Fnorm{M_\ell(B)-M_\ell(B')}^2,
		\end{align*}
		where (a) follows from the orthogonality relation \prettyref{eq:hermite-orthod}; (b) is by the fact that $(|\bfj|)! \leq \bfj! (2k)^{|\bfj|}$ for any $\bfj\in\integers_+^{2k}$;
		(c) follows from the tensor norm comparison inequality \prettyref{eq:T-norms}, since the symmetric rank of $M_\ell(B)-M_\ell(B')$ is at most $2k$ for all $\ell$;
		(d) follows from \prettyref{lmm:moment-interpolation}.

Finally, if $\Expect[B'] \neq 0$, by the shift-invariance of $\chi^2$-divergence, applying the following simple lemma to $\mu=\Expect[B']$ (which satisfies $\|\mu\|\leq R$) yields the desired upper bound.
 \end{proof}

\begin{lemma}
\label{lmm:moment-centering}		
	For any random vectors $X$ and $Y$ and any deterministic $\mu\in\reals^d$,
	\[
	\|M_\ell(X-\mu)-M_\ell(Y-\mu)\| \leq \sum_{k=0}^{\ell} \binom{\ell}{k} \|M_k(X)-M_k(Y)\| \|\mu\|^{\ell-k}
	\]	
\end{lemma}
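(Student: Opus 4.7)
The plan is to use the Banach identity \prettyref{eq:op-symmetric}, which reduces the spectral norm of a symmetric tensor to a scalar optimization problem over the unit sphere. Since $M_\ell(X-\mu)-M_\ell(Y-\mu)$ is symmetric, we can write
\[
\|M_\ell(X-\mu)-M_\ell(Y-\mu)\| = \max_{\|u\|=1} \bigl|\Iprod{M_\ell(X-\mu)-M_\ell(Y-\mu)}{u^{\otimes \ell}}\bigr|,
\]
so it suffices to bound the inner product uniformly in $u \in S^{d-1}$.

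First, I would fix $u$ with $\|u\|=1$ and compute
\[
\Iprod{M_\ell(X-\mu)}{u^{\otimes \ell}} = \Expect\bigl[\Iprod{X-\mu}{u}^\ell\bigr] = \Expect\bigl[(\Iprod{X}{u}-\Iprod{\mu}{u})^\ell\bigr],
\]
and expand by the scalar binomial theorem:
\[
\Iprod{M_\ell(X-\mu)}{u^{\otimes \ell}} = \sum_{k=0}^\ell \binom{\ell}{k} (-\Iprod{\mu}{u})^{\ell-k} \Iprod{M_k(X)}{u^{\otimes k}}.
\]
The analogous identity holds for $Y$, so subtracting the two gives
\[
\Iprod{M_\ell(X-\mu)-M_\ell(Y-\mu)}{u^{\otimes \ell}} = \sum_{k=0}^\ell \binom{\ell}{k} (-\Iprod{\mu}{u})^{\ell-k} \Iprod{M_k(X)-M_k(Y)}{u^{\otimes k}}.
\]

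Finally, I would apply the triangle inequality together with the bounds $|\Iprod{\mu}{u}| \le \|\mu\|$ (Cauchy-Schwarz) and $|\Iprod{M_k(X)-M_k(Y)}{u^{\otimes k}}| \le \|M_k(X)-M_k(Y)\|$ (the Banach identity applied to each symmetric difference tensor). Taking the maximum over $u \in S^{d-1}$ then yields the claimed estimate. There is no real obstacle here; the proof is a one-line computation once one recognizes that the symmetry of the moment tensors allows one to reduce to the one-dimensional binomial expansion via \prettyref{eq:op-symmetric}.
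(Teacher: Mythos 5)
Your proof is correct and takes essentially the same approach as the paper: reduce the spectral norm to a supremum over one-dimensional projections (via the Banach identity, i.e., \prettyref{eq:op-symmetric} or equivalently \prettyref{eq:mtensor-opnorm}), expand the scalar binomial, and apply the triangle inequality with Cauchy–Schwarz. The paper just expresses the projected moments as $m_k(\Iprod{X}{u})$ rather than $\Iprod{M_k(X)}{u^{\otimes k}}$, but these are identical quantities.
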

\begin{proof}
Using \prettyref{eq:mtensor-opnorm} and binomial expansion, we have: 
	\begin{align*}
	\|M_\ell(X-\mu)-M_\ell(Y-\mu)\| 
	= & ~ \sup_{\|u\|=1} |m_\ell(\Iprod{X}{u}-\Iprod{\mu}{u})-m_\ell(\Iprod{Y}{u}-\Iprod{\mu}{u})|. \\
	\leq & ~ 	\sup_{\|u\|=1} \sum_{k=0}^{\ell} \binom{\ell}{k} |m_k(\Iprod{X}{u})-m_k(\Iprod{Y}{u})| |\Iprod{\mu}{u}|^{\ell-k}  \\
	\leq & ~ 	\sum_{k=0}^{\ell} \binom{\ell}{k} \|M_k(X)-M_k(Y)\| \|\mu\|^{\ell-k}
	\end{align*}
	where in the step we used the Cauchy-Schwarz inequality.
\end{proof}

%If~\prettyref{lem:chi_square_moments_ub} works, we have the first line of the following. Then the next three lines follow from definition and~\prettyref{eq:T-norms}.
%\begin{align*}
%\chi^2 \(P_{\Gamma} ||  P_{\Gamma'} \) &\le \sup_{\theta \in S^{d-1}, \ell \in [2k-1]} | m_\ell(\theta^{\top} U) - m_\ell(\theta^{\top} U')|^2 \\
%&= \sup_{\theta \in S^{d-1}, \ell \in [2k-1]} \inprod{M_\ell(U) - M_\ell(U')}{\theta^{\otimes \ell}} \\
%&= \max_{\ell \in [2k-1]} \norm{M_\ell(U) - M_\ell(U')}^2 \\
%&\le \max_{\ell \in [2k-1]} \norm{M_\ell(U) - M_\ell(U')}_F^2.
%\end{align*}

\subsection{Local entropy of Hellinger balls}
\label{sec:localentropy}

Before presenting the proof of \prettyref{lem:hellinger_local_entropy}, we discuss the connection and distinction between our approach and the existing literature on the metric entropy of mixture densities \cite{Ghosal_VdV_2001,Ho_Nguyen_2016_annals,Ho_Nguyen_2016_ejs,Zhang_2009,maugis2011non,Bontemps_Gadat_2014,Saha_2017} and clarify the role of the moment tensors. 
Both the previous work and the current paper bound the statistical difference between mixtures in terms of moment differences (through either Taylor expansion or orthogonal expansion). For example, the seminal work \cite{Ghosal_VdV_2001} bounds the global entropy of nonparametric Gaussian mixtures in one dimension by first constructing a finite mixture approximation via moment matching, then discretizing the weights and atoms.
%, as carried out in \cite[Lemma 3.1 and Theorem 3.1]{Ghosal_VdV_2001}, respectively.
The crucial difference is that in the present paper we directly work with moment parametrization as opposed to the natural parametrization (atoms and weights). As mentioned in \prettyref{sec:main}, to eliminate the unnecessary logarithmic factors and obtain the exact parametric rate in high dimensions, it is crucial to obtain a tight control of the \emph{local} entropy as opposed to the global entropy, which relies on a good parametrization that bounds the Hellinger distance from both above and below -- see \prettyref{eq:good-param}. This \emph{two-sided bound} is satisfied by the moment tensor reparametrization, thanks to \prettyref{thm:H-M}, but not the natural parametrization. Therefore, to construct a good local covering, we do so in the moment space, by leveraging the low-rank structure of moment tensors.

\begin{proof}[Proof of~\prettyref{lem:hellinger_local_entropy}]
Recall from \prettyref{sec:notation} that $N(\epsilon,A, \rho)$ the $\epsilon$-covering number of the set $A$ with respect to the metric $\rho$, i.e., the minimum cardinality of an $\epsilon$-covering set $A_{\epsilon}$ such that, for any $v \in A$, there exists $\tilde v \in A_{\epsilon}$ with $\rho(v, \tilde v)<\epsilon$. 

Let $\mathcal M_{\epsilon} = \{M(\Gamma)\colon P_{\Gamma} \in \mathcal P_{\epsilon}\}$, where 
$\calP_\epsilon$ is the Hellinger neighborhood of $P_{\Gamma_0}$, 
$M(\Gamma)=(M_1(\Gamma),\dots,M_{2k-1}(\Gamma))$ consists of the moment tensors of $\Gamma$ up to degree $2k-1$. \added{Let $c_k'=\sqrt{c_k}$ and $C_k'=\sqrt{C_k}$ where $c_k \triangleq (Ck)^{-4k}$ and $C_k \triangleq C'e^{36k^2}$ are the constants from~\prettyref{thm:H-M}.}
To obtain a $\delta$-covering of $\calP_\epsilon$, we first show that it suffices to construct a $\frac{\delta}{2C_k'}$-covering of the moment space $\calM_\epsilon$ with respect to the distance $\rho(M,M')\triangleq \max_{\ell\le 2k-1} \norm{M_\ell-M_\ell'}_F$ and thus
% It follows from~\prettyref{thm:H-M} that
\begin{equation}
N(\delta, \mathcal P_{\epsilon}, H) \le N(\delta/(2C_k'), \mathcal M_{\epsilon}, \rho). \label{eq:hellinger_moments_cover}
\end{equation}
To this end, let $\calN$ be the optimal $\frac{\delta}{2C_k'}$-covering of $\calM_\epsilon$ with respect to $\rho$, and we show that $\calN'=\{P_\Gamma: \Gamma= \argmin_{\Gamma':P_{\Gamma'}\in\calP_\epsilon} \rho(M(\Gamma'), M), M\in\calN\}$ is a $\delta$-covering of $\calP_\epsilon$. For any $P_\Gamma\in\calP_\epsilon$, by the covering property of $\calN$, there exists a tensor $M\in\calN$ such that $\rho(M,M(\Gamma))<\frac{\delta}{2C_k'}$. 
By the definition of $\calN'$, there exists $P_{\tilde\Gamma}\in \calN'$ such that $\rho(M(\tilde\Gamma),M)<\frac{\delta}{2C_k'}$. 
Therefore, $\rho(M(\tilde\Gamma),M(\Gamma))<\frac{\delta}{C_k'}$ and thus $H(P_{\tilde\Gamma},P_\Gamma)<\delta$ by~\prettyref{thm:H-M}.
% \footnote{
% 	Alternatively, the change of variables step can be proved by requiring the $\delta$-covering to be a subset of $\calP_\epsilon$. 
% }
% where $\rho(M,M')\triangleq \max_{\ell\le 2k-1} \norm{M_\ell-M_\ell'}_F$. 

Next we bound the right side of \prettyref{eq:hellinger_moments_cover}. 
Since $\Gamma_0,\Gamma$ are both $k$-atomic, it follows from \prettyref{thm:H-M} that
\begin{equation}
\mathcal M_{\epsilon} \subseteq M(\Gamma_0) + \{\Delta: \norm{\Delta_\ell}_F \le \epsilon/c_k', \rank_s(\Delta_\ell)\le 2k, \forall \ell \le 2k-1\},
\end{equation}
where $\Delta=(\Delta_1,\dots,\Delta_{2k-1})$ and $\Delta_{\ell} \in \symmetric_{\ell}(\reals^d)$.
Let $\mathcal D_{\ell} = \{\Delta_{\ell} \in \symmetric_{\ell}(\reals^d): \norm{\Delta_{\ell}}_F\le \epsilon/c_k', \rank_s(\Delta_\ell)\le 2k\}$, and $\mathcal D = \mathcal D_1 \times \dots \times \mathcal D_{2k-1}$ be their Cartesian product.
By monotonicity,
\begin{equation}
N(\delta/(2C_k'), \mathcal M_{\epsilon}, \rho) \le N(\delta/(2C_k'), \mathcal D, \rho)
\le \prod_{\ell=1}^{2k-1}N(\delta/(2C_k'), \mathcal D_{\ell}, \norm{\cdot}_F).
\end{equation}
\added{By \prettyref{lem:N-tensor-low-rank} next,}
\begin{equation}
N(\delta/(2C_k'), \mathcal M_{\epsilon}, \rho) \le \prod_{\ell=1}^{2k-1} \( \frac{\added{c} C_k' \ell \epsilon}{2 c_k'\delta} \)^{2dk} \( \frac{\added{c} C_k' \epsilon}{2 c_k' \delta} \)^{(2k)^\ell},
\end{equation}
for some absolute constant $c$. So we obtain, for constants $\tilde C,\tilde C'$ that does not depend on $d$ or $k$, that
\begin{align*}
N(\delta/(2C_k'), \mathcal M_{\epsilon}, \rho) &\le \( \frac{{\tilde C} k^{4k} e^{36k^2} k \epsilon}{\delta} \)^{4dk^2} \( \frac{{\tilde C} k^{4k} e^{36k^2} \epsilon}{\delta} \)^{(2k)^{2k}} \\
&\le \( \frac{\tilde C' k^{4k+1} e^{36k^2} \epsilon}{\delta} \)^{4dk^2 + (2k)^{2k}}.
\end{align*}
%since we can swallow the $k$ into the $C'$ in front of $e^{36k^2}$.
\end{proof}

\begin{lem} \label{lem:N-tensor-low-rank}
Let $\mathcal T = \{ T \in \symmetric_{\ell}(\reals^d): \norm{T}_F \le \epsilon, \rank_s(T) \le r\}$.
Then, for any $\delta\le \epsilon/2$,
\begin{equation}
N(\delta, \mathcal T, \norm{\cdot}_F) \le \({\frac{\added{c} \ell \epsilon}{\delta}}\)^{dr}\({\frac{\added{c}\epsilon}{\delta}}\)^{r^{\ell}},
\end{equation}
for some absolute constant \added{$c$}.
\end{lem}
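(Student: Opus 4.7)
The plan is to exploit the low symmetric-rank structure via a Tucker-type decomposition with an orthonormal basis and then cover the two resulting pieces separately. Given any $T \in \mathcal T$, write $T = \sum_{i=1}^{r} \beta_i u_i^{\otimes \ell}$ using a symmetric rank-$r$ decomposition, let $V = [v_1, \ldots, v_r] \in \reals^{d \times r}$ consist of orthonormal columns spanning $\Span\{u_1, \ldots, u_r\}$ (padding with vectors from the orthogonal complement if necessary), and expand each $u_i$ in this basis. This yields
\[
T = \sum_{\bfj \in [r]^\ell} \alpha_{\bfj} \, v_{j_1} \otimes \cdots \otimes v_{j_\ell}
\]
with $\alpha \in \symmetric_\ell(\reals^r)$. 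Since $v_1,\ldots, v_r$ are orthonormal, the rank-one tensors $\{v_{j_1}\otimes\cdots\otimes v_{j_\ell}\}_{\bfj}$ are orthonormal in the Frobenius inner product, so $\|\alpha\|_F = \|T\|_F \leq \epsilon$. Thus every $T\in \mathcal T$ admits a Tucker representation $T = \alpha \times_1 V \times_2 \cdots \times_\ell V$ with $V$ in the Stiefel manifold $\calV_r(\reals^d)$ and $\|\alpha\|_F \leq \epsilon$.

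I would then build a product covering on the pair $(\alpha, V)$. For $\alpha$, viewed inside $\reals^{r^\ell}$, a standard volume argument produces a $\delta_\alpha$-Frobenius covering of size at most $(3\epsilon/\delta_\alpha)^{r^\ell}$. For $V$, since $\calV_r(\reals^d)$ is contained in the unit ball of $(\reals^{d\times r}, \|\cdot\|_2)$, another volume argument in dimension $dr$ applied to this ambient convex body yields a $\delta_V$-operator-norm covering of size at most $(3/\delta_V)^{dr}$. Let $\tilde V$ and $\tilde\alpha$ be the nearest elements in these coverings, and set $\tilde T = \tilde\alpha \times_1 \tilde V \times_2 \cdots \times_\ell \tilde V$.

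The key perturbation estimate is
\[
\|T - \tilde T\|_F \;\leq\; \|\alpha - \tilde\alpha\|_F + \ell\, \epsilon\, \|V - \tilde V\|_2 \;\leq\; \delta_\alpha + \ell\epsilon\, \delta_V,
\]
which follows from telescoping $T - \tilde T$ into one piece that swaps $\alpha$ for $\tilde\alpha$ with $V$ kept fixed in every mode, plus $\ell$ further pieces that swap $V$ for $\tilde V$ one mode at a time. The first piece has Frobenius norm exactly $\|\alpha - \tilde\alpha\|_F$ by the isometry property of orthonormal-column Tucker factors. Each remaining piece is bounded using the Tucker contraction inequality $\|A \times_1 M_1 \cdots \times_\ell M_\ell\|_F \leq \|A\|_F \prod_m \|M_m\|_2$; since all factors except the perturbed one are orthonormal and $\|\tilde\alpha\|_F \leq \epsilon$, each piece contributes at most $\epsilon \|V - \tilde V\|_2$. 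Choosing $\delta_\alpha = \delta/2$ and $\delta_V = \delta/(2\ell\epsilon)$ yields $\|T - \tilde T\|_F \leq \delta$ with a cover of cardinality at most $(6\epsilon/\delta)^{r^\ell}\cdot (6\ell\epsilon/\delta)^{dr}$, matching the claimed bound up to an absolute constant.

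The main obstacle is to control the Stiefel-manifold covering in operator norm so that the exponent is exactly $dr$ with no $\sqrt{r}$ penalty; embedding $\calV_r(\reals^d)$ into the ambient operator-norm unit ball in $\reals^{d\times r}$ and invoking the standard volumetric estimate (which holds for any norm on a finite-dimensional space) circumvents the loss that a naive per-column covering in Frobenius norm would incur, and aligns the final exponents with the target bound.
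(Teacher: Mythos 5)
Your argument is correct and follows the paper's overall architecture --- a Tucker decomposition $T = \alpha\times_1 V\cdots\times_\ell V$ with $V$ an orthonormal frame spanning the atoms and $\|\alpha\|_F = \|T\|_F$, then a product covering of $(\alpha,V)$ and a perturbation estimate --- but your perturbation step is a genuine, and in fact tighter, variant. The paper covers the frame $(u_1,\ldots,u_r)$ in max-column norm and bounds the frame contribution by a double triangle inequality, $\|\sum_{\bfj}\alpha_{\bfj}(u_{\bfj}-\tilde u_{\bfj})\|_F \le \sum_{\bfj}|\alpha_{\bfj}|\,\|u_{\bfj}-\tilde u_{\bfj}\|_F$; the final displayed step asserts this is $\le (\delta/2\epsilon)\|\alpha\|_F$, but the quantity actually controlled is $(\delta/2\epsilon)\|\alpha\|_1$, and $\|\alpha\|_1$ can exceed $\|\alpha\|_F$ by a factor of order $r^{\ell/2}$ that is not an absolute constant. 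Your version instead covers $V$ directly in operator norm and combines the multilinear contraction bound $\|A\times_1 M_1\cdots\times_\ell M_\ell\|_F \le \|A\|_F\prod_m\|M_m\|_2$ with the isometry $\|(\alpha-\tilde\alpha)\times_1 V\cdots\times_\ell V\|_F = \|\alpha-\tilde\alpha\|_F$, giving the tight estimate $\|T-\tilde T\|_F \le \|\alpha-\tilde\alpha\|_F + \ell\epsilon\|V-\tilde V\|_2$ with no spurious powers of $r$; since covering the Stiefel manifold inside the $dr$-dimensional operator-norm unit ball also has cardinality $(c/\eta)^{dr}$, the exponents match the lemma exactly. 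The net effect is that your perturbation estimate delivers the stated bound with a genuinely absolute constant, whereas the paper's written estimate only gives a constant that is $\poly(r)$ in the base of the $dr$ exponent --- inconsequential for the downstream use in the local entropy lemma, where $r=2k$ and the final constants are allowed to depend on $k$, but strictly stronger than what the paper's own proof establishes.
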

\begin{proof}
For any $T\in \mathcal T$, $\rank_s(T)\le r$. Thus
$T= \sum_{i=1}^r a_i v_i^{\otimes \ell}$ for some $a_i \in \reals$ and $v_i \in S^{d-1}$. 
Furthermore, $\norm{T}_F \leq \epsilon$. Ideally, if the coefficients satisfied $|a_i| \leq \epsilon$ for all $i$, then we could cover the $r$-dimensional $\epsilon$-hypercube with an $\frac{\epsilon}{2}$-covering, which, combined with a $\frac{1}{2}$-covering of the unit sphere that covers the unit vectors $v_i$'s, constitutes a desired covering for the tensor.
Unfortunately the coefficients $a_i$'s need not be small due to the possible cancellation between the rank-one components (consider the counterexample of $0=v^{\otimes \ell}-v^{\otimes \ell}$). Next, to construct the desired covering we turn to the Tucker decomposition of the tensor $T$. 

Let $u=(u_1,\dots,u_r)$ be an orthonormal basis for the subspace spanned by $(v_1,\dots,v_r)$. In particular, let $v_i=\sum_{j=1}^r b_{ij} u_j$. Then 
\begin{equation}
T = \sum_{\bfj=(j_1,\dots,j_\ell)\in [r]^\ell} \alpha_{\bfj} \underbrace{u_{j_1}\otimes \dots \otimes u_{j_\ell}}_{\triangleq u_{\bfj}},
\label{eq:tucker}
\end{equation}
where $\alpha_{\bfj} = \sum_{i=1}^r a_i b_{ij_1} \cdots b_{ij_\ell}$. In tensor notation, 
$T$ admits the following \emph{Tucker decomposition}
\begin{equation}
T = \alpha \times_1 U \cdots \times_\ell U 
\end{equation}
where the symmetric tensor $\alpha=(\alpha_\bfj)\in \symmetric_{\ell}(\reals^r)$ is called the core tensor and $U$ is a $r \times d$ matrix whose rows are given by $u_1,\ldots,u_r$.

Due to the orthonormality of $(u_1,\dots,u_r)$, we have for any $\bfj,\bfj' \in[r]^\ell$,
\begin{equation}
\inprod{u_{\bfj}}{u_{\bfj'}} = \prod_{i=1}^\ell \inprod{u_{j_i}}{u_{j'_i}} = \indc{\bfj=\bfj'}.
\end{equation}
Hence we conclude from \prettyref{eq:tucker} that 
\begin{equation}
\norm{\alpha}_F = \norm{T}_F.
\label{eq:tucker-norm}
\end{equation}
In particular $\norm{\alpha}_F \le \epsilon$. Therefore,
\begin{equation}
\mathcal T \subseteq \mathcal T' \triangleq \sth{ T= \sum_{\bfj\in [r]^{\ell}} \alpha_\bfj u_{j_1}\otimes \dots \otimes u_{j_{\ell}}: \norm{\alpha}_F\le \epsilon,\inprod{u_i}{u_j} = \indc{i=j} }.
\end{equation}

Let $\tilde A$ be a $\frac{\delta}{2}$-covering of $\{\alpha\in \symmetric_\ell(\reals^r): \norm{\alpha}_F\le \epsilon\}$ under $\norm{\cdot}_F$ of size $(\frac{\added{c}\epsilon}{\delta})^{r^{\ell}}$ for some absolute constant $c$;
let $\tilde B$ be a $\frac{\delta}{2\ell \epsilon}$-covering of $\{(u_1,\dots,u_r): \inprod{u_i}{u_j}=\indc{i=j}\}$ under the maximum of column norms of size $(\frac{\added{c}\ell \epsilon}{\delta})^{dr}$. Let $\tilde{\mathcal T'} = \{\sum_{\bfj\in [r]^{\ell}} \tilde \alpha_\bfj \tilde u_{j_1} \otimes \dots \otimes \tilde u_{j_{\ell}}: \tilde \alpha \in \tilde A, \tilde u \in \tilde B\}$.
Next we verify the covering property.

For any $T \in \mathcal T'$, there exists $\tilde T \in \tilde{\mathcal T'}$ such that $\norm{\alpha-\tilde\alpha}_F\le \frac{\delta}{2}$ and $\max_{i\le r} \norm{u_i-\tilde u_i}\le \frac{\delta}{2\ell \epsilon}$.
Then, by the triangle inequality,
\begin{equation}
\norm{T-\tilde T}_F 
\le \sum_\bfj |\alpha_\bfj| \norm{u_{j_1}\otimes \dots \otimes u_{j_\ell}-\tilde u_{j_1}\otimes \dots \otimes \tilde u_{j_\ell} }_F
+ \norm{\sum_\bfj (\alpha_\bfj-\tilde \alpha_\bfj) \tilde u_{j_1}\otimes \dots \otimes \tilde u_{j_\ell} }_F.
%{\sum_\bfj |\alpha_\bfj-\tilde \alpha_\bfj| \norm{\tilde u_{j_1}\otimes \dots \otimes \tilde u_{j_\ell} }_F}
\end{equation}
The second term is at most $\norm{\alpha-\tilde \alpha}_F \le \delta/2$. For the first term, it follows from the triangle inequality that
\begin{equation}
\norm{u_{j_1}\otimes \dots \otimes u_{j_\ell}-\tilde u_{j_1}\otimes \dots \otimes \tilde u_{j_\ell}}_F
\le \sum_{i=1}^{\ell} \norm{u_{j_1}\otimes \dots \otimes (u_{j_i}-\tilde u_{j_i})\otimes \dots \otimes \tilde u_{j_\ell}}_F
\le \frac{\delta}{2 \epsilon}.
\end{equation}
Therefore, the first term is at most $\frac{\delta}{2 \epsilon} \norm{\alpha}_F\le \delta/2$.
\end{proof}

\subsection{Efficient proper density estimation} \label{ssec:density_est_efficient}
%The Le Cam-Birg\'e estimator is computationally intractable as it uses an exponential number of pairwise comparisons. 
%In this subsection, we analyze the theoretical guarantee of density estimation using the procedure for mixing distribution estimation in \prettyref{sec:mixing_estimation}.

To remedy the computational intractability of the Le Cam-Birg\'e estimator, in this subsection we adapt the procedure for mixing distribution estimation in \prettyref{sec:mixing_estimation} for density estimation.
Let $\hat \Gamma$ be the estimated mixing distribution as defined in \eqref{eq:estimator}, with the following modifications:
\begin{itemize}
	\item The grid size in \prettyref{eq:gridsize-mixing} is adjusted to
 to $\epsilon_n=n^{-1/2}$.
As such, in the set of $k$-atomic candidate distributions in \prettyref{eq:candidates}, $\calW$ denotes an $(\epsilon_n,\norm{\cdot}_1)$-covering of the probability simplex $\Delta^{k-1}$, and $\calA$ denotes an $(\epsilon_n,\norm{\cdot}_2)$-covering the $k$-dimensional ball $\{x\in\reals^k:\norm{x}_2\le R\}$.

\item In the determination of the best mixing distribution in \prettyref{eq:hatgamma-kdim}, instead of comparing the Wasserstein distance, we directly compare the projected moments on the directions over an $(\epsilon_n,\norm{\cdot}_2)$-covering $\calN$ of $S^{k-1}$:
\[
\hat\gamma=\arg\min_{\gamma'\in \calS}\max_{\theta\in\calN}
\max_{r\in[2k-1]}|m_r(\gamma_\theta')-m_r(\widehat{\gamma_\theta})|,
\]
where $\gamma'_\theta$ denotes the projection of $\gamma'$ onto the direction $\theta$ (recall \prettyref{eq:gamma_theta}). 
\end{itemize}
We then report $P_{\hat \Gamma}=\hat \Gamma * N(0,I_d)$ as a proper density estimate. 
By a similar analysis to \prettyref{rem:algorithm_time}, using those finer grids, the run time of the procedure increases to $n^{O(k)}$.
The next theorem provides a theoretical guarantee for this estimator in general $k$-GM model.
%the corresponding density $P_{\hat\Gamma}$.

\begin{theorem} \label{thm:density_est_efficient}
There exists an absolute constant $C$ such that, with probability $1-\delta$,
\[
H(P_{\Gamma},P_{\hat\Gamma})
\le C\sqrt{k}\pth{\frac{d+k^2\log(k/\delta)}{n}}^{1/4}+ \frac{(e^{Ck}\log(1/\delta))^{\frac{2k-1}{2}}}{\sqrt{n}} .
\]
\end{theorem}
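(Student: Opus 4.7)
The plan is to route the Hellinger distance through the moment-tensor parametrization. By \prettyref{thm:H-M},
\[
H(P_\Gamma, P_{\hat\Gamma}) \leq \sqrt{C} \, e^{18k^2} \max_{\ell \leq 2k-1} \norm{M_\ell(\Gamma) - M_\ell(\hat\Gamma)}_F,
\]
so it suffices to bound the moment-tensor error. I would decompose via $\Gamma_{\hat H}$, the projection of $\Gamma$ onto the learned PCA subspace $\hat V$ (as in \prettyref{eq:estimator}):
\[
\norm{M_\ell(\Gamma) - M_\ell(\hat\Gamma)}_F \leq \norm{M_\ell(\Gamma) - M_\ell(\Gamma_{\hat H})}_F + \norm{M_\ell(\Gamma_{\hat H}) - M_\ell(\hat\Gamma)}_F,
\]
and control the two pieces separately.

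For the PCA component, Lemmas~\ref{lem:perturbation_psd} and~\ref{lem:cov_bound} (as used in the proof of \prettyref{thm:mixing_est}) give $W_1(\Gamma, \Gamma_{\hat H}) \lesssim \sqrt{k}\bigl((d/n)^{1/4} + (k^2 \log(k/\delta)/n)^{1/4} + \sqrt{\log(1/\delta)/n}\bigr)$ with probability $1-\delta$. Since both measures are supported in $B(0,R)$, every monomial $x \mapsto \Iprod{u}{x}^\ell$ with $\norm{u}=1$ is $\ell R^{\ell-1}$-Lipschitz on the support, so Kantorovich--Rubinstein duality together with \prettyref{eq:mtensor-opnorm} yields $\norm{M_\ell(\Gamma) - M_\ell(\Gamma_{\hat H})} \lesssim_k W_1(\Gamma, \Gamma_{\hat H})$. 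Passing from operator to Frobenius norm on the symmetric-rank-at-most-$2k$ difference via \prettyref{eq:T-norms} costs only a $k^{O(\ell)}$ factor, and reproduces the first term $C\sqrt{k}\bigl((d + k^2\log(k/\delta))/n\bigr)^{1/4}$ of the stated bound.

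For the low-dimensional error I would work in the reduced picture $x_i \iiddistr \gamma * N(0, I_k)$ and use \prettyref{eq:mtensor-opnorm} in the other direction: $\norm{M_\ell(\gamma) - M_\ell(\hat\gamma)} = \sup_{\theta \in S^{k-1}} |m_\ell(\gamma_\theta) - m_\ell(\hat\gamma_\theta)|$. For each $\theta \in \calN$ and $r \leq 2k-1$, the unbiased Hermite estimator $\tilde m_r(\theta) = \frac{1}{n}\sum_i H_r(\Iprod{\theta}{x_i})$ is an average of a degree-$r$ polynomial in the shifted Gaussians $\Iprod{\theta}{x_i} \sim N(\mu_\theta, 1)$ with $|\mu_\theta| \leq R$. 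Polynomial-of-Gaussian concentration (sub-Weibull with shape $2/r$) gives $|\tilde m_r(\theta) - m_r(\gamma_\theta)| \lesssim e^{O(k)}(\log(1/\delta))^{r/2}/\sqrt{n}$, which at $r = 2k-1$ matches the second term in the bound. The DMM projection step can only reduce the distance to the truth (\cite[Lemma~3.3]{WY18}), and a union bound (or a chaining argument analogous to \prettyref{lem:w1_sup_bound}) over $|\calN| = n^{O(k)}$ directions and $r \leq 2k-1$ preserves the rate. Because the grid $\calS$ is $O(1/\sqrt{n})$-dense in $\calG_{k,k}$ under $W_1$ (and hence under projected-moment differences), there exists $\tilde\gamma \in \calS$ whose projected moments match those of $\gamma$ to the same order, so the min-max optimum value is controlled, and by the triangle inequality so is $\max_{\theta \in \calN,\, r \leq 2k-1} |m_r(\hat\gamma_\theta) - m_r(\gamma_\theta)|$. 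Extending from the net $\calN$ to the full sphere $S^{k-1}$ is routine since $\theta \mapsto m_r(\gamma_\theta)$ is $O(R^r)$-Lipschitz for bounded distributions; the Frobenius-to-operator conversion via \prettyref{eq:T-norms} closes the loop.

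The main obstacle will be tracking the $k$-dependent constants, especially the $(2k-1)/2$ exponent on $\log(1/\delta)$: this exponent reflects the heavy-tailed (sub-Weibull) concentration of the high-degree Hermite polynomial $H_{2k-1}$ applied to a shifted Gaussian, and one must verify that the union bound over the exponentially-large net $\calN$ does not introduce a spurious $\log n$ factor, while the $e^{O(k^2)}$ prefactor from \prettyref{thm:H-M} is absorbed into the $e^{Ck}$ term inside the parenthesis (noting that $(e^{Ck})^{(2k-1)/2} = e^{O(k^2)}$) without an additional exponential blow-up in $k$.
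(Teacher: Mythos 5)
Your approach has a genuine gap in the $k$-dependence of the constant on the first term, which stems from the order in which you decompose. You apply \prettyref{thm:H-M} to the full pair $(\Gamma,\hat\Gamma)$ first, which places the $\sqrt{Ce^{36k^2}}=e^{O(k^2)}$ upper-bound constant in front of \emph{everything}, and only then split the moment tensors through $\Gamma_{\hat H}$. When you subsequently convert $\|M_\ell(\Gamma)-M_\ell(\Gamma_{\hat H})\|_F \lesssim_k W_1(\Gamma,\Gamma_{\hat H})\lesssim\sqrt{k}(d/n)^{1/4}$, the first term of your bound is $e^{O(k^2)}\cdot k^{O(k)}\cdot\sqrt{k}\,(d/n)^{1/4}$, not the stated $C\sqrt{k}\,(d/n)^{1/4}$ with $C$ an absolute constant. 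Your closing remark that the $e^{O(k^2)}$ prefactor can be ``absorbed into the $e^{Ck}$ term'' only works for the $1/\sqrt{n}$ term; there is no $e^{O(k^2)}$ budget sitting in front of the $(d/n)^{1/4}$ term to absorb it.

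The paper avoids this by applying the Hellinger triangle inequality \emph{first}, $H(P_\Gamma,P_{\hat\Gamma})\le H(P_\Gamma,P_{\Gamma_{\hat H}})+H(P_\gamma,P_{\hat\gamma})$, and then handling the two pieces by completely different chains. For the PCA error it never touches \prettyref{thm:H-M}: it uses
\[
H^2(P_\Gamma,P_{\Gamma_{\hat H}})\le \KL(P_\Gamma\|P_{\Gamma_{\hat H}})\le \tfrac{1}{2}W_2^2(\Gamma,\Gamma_{\hat H})\le k\|\Sigma-\hat\Sigma\|_2,
\]
where the middle inequality is the transport bound on KL from convexity (cited to \cite[Remark 5]{polyanskiy2015dissipation}) and the last is \prettyref{lem:perturbation_psd}. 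This yields $H(P_\Gamma,P_{\Gamma_{\hat H}})\lesssim\sqrt{k}(d/n)^{1/4}$ with an absolute constant. \prettyref{thm:H-M} is invoked only for the low-dimensional term $H(P_\gamma,P_{\hat\gamma})$, where the $e^{O(k^2)}$ multiplier correctly lands on the $1/\sqrt{n}$ piece. Your analysis of the low-dimensional side (chaining over $\calN$, the grid $\calS$, and the sub-Weibull concentration of the Hermite process) is in the right spirit and matches the paper's; the fix is to swap your first-term route for the KL/$W_2$ shortcut and apply \prettyref{thm:H-M} only after projecting to $k$ dimensions.
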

\begin{proof}
Recall the notation $\Sigma, \hat \Sigma, \hat V, \hat H, \gamma$ defined in \prettyref{ssec:proof_main}.
% Recall that $\hat H = \hat V\hat V^\top$ defined in \prettyref{ssec:proof_main} is the projection matrix onto the estimated subspace.
By the triangle inequality, 
\begin{equation}
\label{eq:H-tri-ub}
H(P_{\Gamma},P_{\hat\Gamma})
\le H(P_{\Gamma},P_{\Gamma_{\hat H}})
+H(P_{\Gamma_{\hat H}},P_{\hat\Gamma})
\le H(P_{\Gamma},P_{\Gamma_{\hat H}})
+H(\gamma,\hat\gamma).
\end{equation}
For the first term, we have
\begin{equation}
\label{eq:H_sq_cov}
H^2(P_{\Gamma},P_{\Gamma_{\hat H}})
\le \KL(P_{\Gamma},P_{\Gamma_{\hat H}})
\stepa{\le} \frac{1}{2}W_2^2(\Gamma, \Gamma_{\hat H})
\stepb{\le} k\|\Sigma-\hat\Sigma \|_2,
\end{equation}
where 
(a) follows from the convexity of the \KL\ divergence (see \cite[Remark 5]{polyanskiy2015dissipation});
(b) applies \prettyref{lem:perturbation_psd}.

Next we analyze the second term in the right-hand side of \prettyref{eq:H-tri-ub} conditioning on $\hat V$.
By \prettyref{thm:H-M}, \prettyref{eq:T-norms}, and \prettyref{eq:mtensor-opnorm}, the Hellinger distance is upper bounded by the difference between the projected moments:
\begin{equation}
\label{eq:H-moments}
H(\gamma,\hat\gamma)
\le e^{Ck^2}\sup_{\theta\in S^{k-1}}\max_{r\in[2k-1]}|m_r(\gamma_\theta)-m_r(\hat\gamma_\theta)|.
\end{equation}
It follows from \prettyref{lem:covering-S} that
\begin{equation}
\label{eq:max-covering}
\min_{\gamma'\in \calS}\max_{\theta\in S^{k-1}}\max_{r\in[2k-1]}|m_r(\gamma_\theta')-m_r(\gamma_\theta)|
\le \frac{2kR^{2k-1}}{\sqrt{n}}.
\end{equation}
By \prettyref{eq:w1_sup_bound2} and \prettyref{eq:w1-hat-bound}, with probability $1-\delta/2$, 
\begin{equation}
\label{eq:max-widehat}
\max_{\theta\in\calN} \max_{r\in[2k-1]} | m_r(\widehat{\gamma_\theta})-m_r(\gamma_\theta)|
\le \frac{(Ck^2\log(k/\delta))^{\frac{2k-1}{2}}}{\sqrt{n}},
\end{equation}
for an absolute constant $C$.
Therefore, by \eqref{eq:max-covering} and \eqref{eq:max-widehat},
\begin{equation}
\label{eq:max-hat-gamma}    
\min_{\gamma'\in \calS}\max_{\theta\in\calN}\max_{r\in[2k-1]}|m_r(\gamma_\theta')-m_r(\widehat{\gamma_\theta})|
\le \frac{(C'k^2\log(k/\delta))^{\frac{2k-1}{2}} {+(C'k)^{4k}}}{\sqrt{n}},
\end{equation}
for an absolute constant $C'\ge C$. 
Note that the minimizer of \eqref{eq:max-hat-gamma} is our estimator $\hat\gamma$.
Consequently, combining \eqref{eq:max-widehat} and \eqref{eq:max-hat-gamma}, we obtain
\[
\max_{\theta\in\calN}\max_{r\in[2k-1]}|m_r(\hat\gamma_\theta)-m_r(\gamma_\theta)|
\le \frac{2\((C'k^2\log(k/\delta))^{\frac{2k-1}{2}}{+(C'k)^{4k}}\)}{\sqrt{n}}.
\]
Then it follows from \prettyref{lem:covering-N} that
\[
\sup_{\theta\in S^{k-1}}\max_{r\in[2k-1]}|m_r(\gamma_\theta)-m_r(\hat\gamma_\theta)|
\le \frac{(C''k^2\log(k/\delta))^{\frac{2k-1}{2}}{+(C''k)^{4k}}}{\sqrt{n}},
\]
for an absolute constant $C''$.
Applying \eqref{eq:H-moments} yields that, with probability $1-\delta/2$,
\begin{equation}
\label{eq:H-gamma}
H(\gamma,\hat\gamma)
\le \frac{(e^{C'''k}\log(1/\delta))^{\frac{2k-1}{2}}}{\sqrt{n}},
\end{equation}
for an absolute constant $C'''$.
We conclude the theorem by applying \prettyref{lem:cov_bound}, \eqref{eq:H_sq_cov}, and \eqref{eq:H-gamma} to \eqref{eq:H-tri-ub}.
\end{proof}

Compared with the optimal parametric rate $O_k(\sqrt{d/n})$ in \prettyref{thm:density_est}, the rate $O_k((d/n)^{1/4})$ in \prettyref{thm:density_est_efficient} is suboptimal. 
It turns out that, for the special case of 2-GMs, we can achieve the optimal rate using the same procedure with an extra centering step.
% Determining whether the procedure is optimal for general $k$ remains an open problem.
Specifically, using the first half of observations $\{X_1,\dots,X_n\}$, we compute the sample mean and covariance matrix by
\[
\hat\mu = \frac{1}{n}\sum_{i=1}^n X_i, \qquad S=\frac{1}{n}\sum_{i=1}^n(X_i-\hat\mu)(X_i-\hat \mu)^\top - I_d.
\]
Let $\hat u\in\reals^d$ be the top eigenvector of the sample covariance matrix $S$.
Then we center and project the second half of the observations by $x_i=\Iprod{\hat u}{X_{i+n}-\hat\mu}$ for $i\in[n]$, which reduces the problem to one dimension. Then we apply the one-dimensional DMM algorithm with $x_1,\dots,x_n$ and obtain $\hat\gamma=\sum_{i=1}^2\hat w_i\delta_{\hat\theta_i}$.
Finally, we report $P_{\hat \Gamma}$ with the mixing distribution 
\[
\hat\Gamma=\sum_{i=1}^2 \hat w_i \delta_{\hat\theta_i\hat u+\hat \mu }.
\]
The next result shows the optimality of $P_{\hat \Gamma}$.
%However, it remains open whether the same procedure is optimal for general $k$.
% Note that, conditioning on $\hat\mu$ and $S$, we have $x_i\iiddistr \tilde\gamma=\sum_{i=1}^2 w_i\delta_{\hat u^\top(\mu_i-\hat\mu)}$.

\begin{theorem} \label{thm:rate_density_2gm}
With probability at least $1-\delta$,
\[
H(P_\Gamma,P_{\hat\Gamma})\lesssim \sqrt{\frac{d+\log(1/\delta)}{n}}.
\]
\end{theorem}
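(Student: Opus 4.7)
The plan is to bypass the lossy chain $H \le W_2 \lesssim (d/n)^{1/4}$ implicit in \prettyref{lem:perturbation_psd}, and instead work directly at the level of moment tensors. Since $k=2$, \prettyref{thm:H-M} reduces the problem to showing
\[
\max_{\ell \le 3} \|M_\ell(\Gamma) - M_\ell(\hat\Gamma)\|_F \lesssim \sqrt{\frac{d+\log(1/\delta)}{n}}
\]
with high probability, as $H^2(P_\Gamma,P_{\hat\Gamma})$ is controlled by the left-hand side squared. I would introduce the intermediate distribution $\Gamma^* = \sum_{j=1}^{2} w_j \delta_{\mu_j^*}$ with $\mu_j^* = \hat\mu + \langle \hat u, \mu_j - \hat\mu \rangle \hat u$, the orthogonal projection of each atom of $\Gamma$ onto the 1D affine line that supports $\hat \Gamma$, and split each moment error by the triangle inequality into a projection error $\|M_\ell(\Gamma) - M_\ell(\Gamma^*)\|_F$ and a 1D estimation error $\|M_\ell(\Gamma^*) - M_\ell(\hat\Gamma)\|_F$.

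For the projection error, write $\mu_j = \mu_j^* + \delta_j$ with $\delta_j = P^\perp(\mu_j - \hat\mu) = a_j P^\perp u + P^\perp(\bar\mu - \hat\mu)$, using the centered decomposition $\mu_j = \bar\mu + a_j u$ with $\sum w_j a_j = 0$ that is specific to $k=2$, where $P^\perp = I - \hat u \hat u^\top$. Expanding $M_\ell$ via the multinomial theorem generates a sum of tensor products, each of which is controlled by one of two scalar quantities: $\|\bar\mu - \hat\mu\| = O(\sqrt{d/n})$ from concentration of the first-half sample mean, and $\|P^\perp \Sigma_c\|_F = \lambda \sin\theta$, where $\Sigma_c = \lambda u u^\top$ is the (rank-one) true centered covariance and $\theta$ is the angle between $\hat u$ and $\pm u$. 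The crucial observation, which sidesteps the absent spectral gap, is the dichotomy $\lambda \sin \theta \le \min(\lambda, 2\|\hat\Sigma - \Sigma\|_2) \le 2\|\hat\Sigma - \Sigma\|_2$: Davis--Kahan provides the bound in the large-$\lambda$ regime, while in the small-$\lambda$ regime $\sin\theta$ may be large but $\lambda$ itself is already small. Combined with \prettyref{lem:cov_bound}, this yields $\lambda \sin\theta = O(\sqrt{(d+\log(1/\delta))/n})$, hence every term in the tensor expansion enjoys the same bound.

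For the 1D estimation error, both $\Gamma^*$ and $\hat \Gamma$ are supported on the same affine line, so they can be parametrized by their $t$-coordinates: $\gamma^* = \sum w_j \delta_{\langle \hat u, \mu_j - \hat\mu\rangle}$ and $\hat\gamma = \sum \hat w_j \delta_{\hat\theta_j}$. A multinomial expansion of $M_\ell(\Gamma^*) - M_\ell(\hat\Gamma)$ in $\hat\mu, \hat u$ (fixed by conditioning on the first half of the sample) reduces its Frobenius norm to $\max_{r\le 3}|m_r(\gamma^*) - m_r(\hat\gamma)|$. Conditional on $(\hat\mu, \hat u)$, the projections $x_i = \langle \hat u, X_{i+n} - \hat\mu\rangle$ are i.i.d.\ from $\gamma^* * N(0,1)$ thanks to sample splitting, so the empirical Hermite moments concentrate at rate $\sqrt{\log(1/\delta)/n}$ as in \cite[Lemma 9]{WY18}; because the DMM projection onto the convex moment cone is non-expansive toward any valid moment vector, the same rate transfers to the moments of $\hat\gamma$. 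Inserting both parts into \prettyref{thm:H-M} delivers the claimed Hellinger bound.

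The main obstacle is the first half of the argument: recovering the extra square root to go from $(d/n)^{1/4}$ to $\sqrt{d/n}$. Replacing the $W_2$-based analysis by moment tensors buys a factor of $\lambda$ (rather than $\sqrt{\lambda}$) on the unwanted perpendicular component of the centered covariance; this $\lambda$ cancels against the $1/\lambda$ in Davis--Kahan and produces the parametric rate regardless of how small the signal $\lambda$ is. This cancellation relies on the centered covariance being rank one, which is particular to $k=2$, and is precisely the reason the optimal efficient algorithm in this section is restricted to binary mixtures.
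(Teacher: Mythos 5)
Your proposal follows essentially the same path as the paper's proof. Your intermediate distribution $\Gamma^*$ (projection onto the line through $\hat\mu$ in direction $\hat u$) is exactly the paper's $\tilde\Gamma$; the key cancellation you identified --- that the moment-tensor difference carries an extra factor of the spectral gap $\lambda$ (or $\sigma = \lambda^2 w_1 w_2$ in the paper's notation), which cancels against the $1/\lambda$ in Davis--Kahan --- is precisely the content of \prettyref{lem:sig-T-cov}, and your observation that this rests on the rank-one structure of the centered covariance (hence is specific to $k=2$) matches the paper's discussion. The 1D estimation part (conditional i.i.d.\ structure after sample splitting, moment concentration, $2\times$-inflation from the projection onto the convex moment cone) is the same argument the paper invokes via \cite[Theorem 3]{WY18}.

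The organizational differences are minor. The paper uses \emph{two} intermediates ($\Gamma'$ projecting onto the line through the \emph{true} mean $\mu$, and $\tilde\Gamma$ through the \emph{sample} mean $\hat\mu$), which isolates the mean-estimation error into a separate Hellinger term bounded directly by $\|\hat\mu-\mu\|_2$ via convexity of $H^2$ for Gaussian location families; you instead absorb $P^\perp(\bar\mu-\hat\mu)$ into the multinomial expansion of $M_\ell(\Gamma)-M_\ell(\Gamma^*)$. The paper also centers before computing moment tensors ($\pi,\pi'$ have zero mean), so that the only surviving terms are the closed-form $\Xi-\Xi'$ and $T-T'$ of \prettyref{lem:sig-T-cov} --- this is cleaner than your uncentered expansion, where you need to track the cancellation $\sum_j w_j a_j=0$ explicitly at each order to eliminate the first-order-in-$a_j$ terms (and thereby avoid the weaker bound $\sqrt{\lambda}\sin\theta$ that a naive telescoping triangle inequality would give). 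Both routes are valid; yours handles all three moment orders uniformly at the cost of more bookkeeping, whereas the paper's centering trick makes the order-by-order computation shorter.
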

\begin{proof}
Let $\Gamma=w_1\delta_{\mu_1}+w_2\delta_{\mu_2}$, where $\|\mu_i\|\leq R, i=1,2$.
Denote the population mean and covariance matrix by $\mu=\Expect_\Gamma[U]$ %=w_1\mu_1+w_2\mu_2$ 
and $\Xi=\Expect_\Gamma(U-\mu)(U-\mu)^\top$, respectively.
We have the following standard results for the sample mean and covariance matrix (see, \eg, \cite[Eq.~(1.1)]{LM2019sub} and \prettyref{lem:cov_bound}):
\begin{equation}
\label{eq:mu-cov}
\Norm{\hat\mu-\mu}_2, ~ \Norm{S-\Xi}_2 \leq  C(R) \sqrt{\frac{d+\log(1/\delta)}{n}},
\end{equation}
with probability at least $1-\delta/2$.
% Sample covariance
% \[
% \Norm{S-\Xi}
% \le \Norm{\hat \Sigma - \Sigma} + \Norm{\hat\mu\hat\mu^\top - \mu\mu^\top}
% \]
% \[
% \Norm{\hat\mu\hat\mu^\top - \mu\mu^\top}
% \le (\Norm{\hat\mu}_2+\Norm{\mu}_2)\Norm{\hat\mu-\mu}_2...
% \]
Let $\Gamma'=\sum_{i=1}^2w_i\delta_{\Iprod{\hat u}{\mu_i-\mu}\hat u+\mu}$, 
and $\tilde\Gamma = \sum_{i=1}^2w_i\delta_{\Iprod{\hat u}{\mu_i-\hat\mu}\hat u+\hat\mu}$.
By the triangle inequality, 
\begin{equation}
\label{eq:H-tri-3terms}
H(P_\Gamma,P_{\hat\Gamma})
\le H(P_\Gamma,P_{\Gamma'}) + H(P_{\Gamma'},P_{\tilde\Gamma}) + H(P_{\tilde\Gamma},P_{\hat\Gamma}).
\end{equation}
We upper bound three terms separately conditioning on $\hat\mu$ and $\hat u$.
For the second term of \eqref{eq:H-tri-3terms}, applying the convexity of the squared Hellinger distance yields that
\begin{align}
H^2(P_{\tilde\Gamma},P_{\Gamma'})
&\le \sum_{i=1}^2 w_i H^2\pth{N(\hat u\hat u^\top (\mu_i-\mu)+\mu,I_d),N(\hat u\hat u^\top (\mu_i-\hat\mu)+\hat\mu,I_d)}\nonumber\\
& = \sum_{i=1}^2 w_i \left(2-2e^{-\frac{\Norm{(I-\hat u\hat u^\top)(\mu-\hat\mu)}_2^2}{8} }\right)
% \le \sum_{i=1}^2 w_i \frac{\Norm{(I-\hat u\hat u^\top)(\mu-\hat\mu)}_2}{2}
\le \Norm{\mu-\hat\mu}_2^2,\label{eq:H-tri-3terms-2}
\end{align}
where we used $e^x\ge 1+x$.
For the third term \eqref{eq:H-tri-3terms}, note that 
conditioned on the $(\hat u,\hat \mu)$,
 $x_i\iiddistr P_{\tilde\gamma}$ where $\tilde\gamma=\sum_{i=1}^2 w_i\delta_{\Iprod{\hat u}{\mu_i-\hat\mu}}$. Note that $\hat\Gamma$ and $\tilde\Gamma$ are supported on the same affine subspace 
$\{\theta\hat u+\hat\mu: \theta\in\reals\}$. Thus
\begin{equation}
\label{eq:H-tri-3terms-3}
H(P_{\hat\Gamma},P_{\tilde\Gamma}) 
= H(P_{\hat\gamma}, P_{\tilde\gamma})
\lesssim \sqrt{\frac{\log(1/\delta)}{n}},
\end{equation}
with probability at least $1-\delta/2$, where the inequality follows from the statistical guarantee of the DMM algorithm in \cite[Theorem 3]{WY18}.
% \prettyref{lem:w1_sup_bound}.
% \nbwu{\prettyref{lem:w1_sup_bound} is $W_1$ guarantee. We need Hellinger. Maybe add it to \prettyref{lem:w1_sup_bound}?}.
% \cite[Theorem 3]{WY18}. \nb{Or from~\prettyref{lem:w1_sup_bound}.}

It remains to upper bound the first term of \eqref{eq:H-tri-3terms}.
Denote the centered version of $\Gamma,\Gamma'$ by $\pi,\pi'$. 
Using $\mu=w_1\mu_1+w_2\mu_2$ and $w_1+w_2=1$, we may write
$\pi=w_1\delta_{\lambda w_2 u} + w_2\delta_{-\lambda w_1 u}$, 
$\pi'=w_1\delta_{\lambda w_2 \hat u\hat u^\top u} + w_2\delta_{-\lambda w_1 \hat u\hat u^\top u}$,
where
$\lambda \triangleq \Norm{\mu_1-\mu_2}_2$, and $u\triangleq \frac{\mu_1-\mu_2}{\Norm{\mu_1-\mu_2}_2}$.
Then
\begin{equation}
\label{eq:H-tri-3terms-1}
H(P_\Gamma,P_{\Gamma'})
= H(P_\pi, P_{\pi'}).
\end{equation}
By \prettyref{thm:H-M}, it is equivalent to upper bound the difference between the first three moment tensors. 
Both $\pi$ and $\pi'$ have zero mean. 
Using $w_2^2 - w_1^2 = w_2 - w_1$, their covariance matrices and the third-order moment tensors are found to be
\begin{align*}
&\Xi  = \Expect_\pi[UU^\top]=\lambda^2 w_1w_2uu^\top,\quad
&&T = \Expect_\pi[U^{\otimes 3}]= \lambda^3 w_1w_2(w_1-w_2)u^{\otimes 3}, \\ 
&\Xi' = \Expect_{\pi'}[UU^\top]=\lambda^2 w_1w_2\Iprod{u}{\hat u}^2\hat u\hat u^\top, \quad
&&T' = \Expect_{\pi'}[U^{\otimes 3}]=\lambda^3 w_1w_2(w_1-w_2) \Iprod{u}{\hat u}^3 \hat u^{\otimes 3}. 
% \nb{\Iprod{u}{\hat u}^3 \hat u^{\otimes 3} ?}
\end{align*}
Applying \prettyref{thm:H-M} and \prettyref{lem:sig-T-cov} below yields that $H(P_\pi, P_{\pi'})\lesssim \Norm{S-\Xi}$.
The proof is completed by combining \eqref{eq:mu-cov} -- \eqref{eq:H-tri-3terms-1}.
\end{proof}

\begin{lemma}
\label{lem:sig-T-cov}
\[
\Norm{\Xi-\tilde\Xi}_F + \Norm{T-T'}_F\lesssim \Norm{S-\Xi}.
\]
\end{lemma}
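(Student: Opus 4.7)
The strategy is to show that both $\|\Xi-\Xi'\|_F$ and $\|T-T'\|_F$ scale as the signal strength $\lambda^2 w_1 w_2$ times $\sin\phi$, where $\phi\triangleq\angle(u,\hat u)$, and that $\sin\phi$ is in turn controlled by $\|S-\Xi\|/(\lambda^2 w_1 w_2)$ via Davis--Kahan. The two factors of signal strength cancel, yielding a bound that is uniform in the possibly tiny quantity $\lambda^2 w_1 w_2$. (I read $\tilde\Xi$ in the statement as $\Xi'$, since that is the quantity produced by the preceding proof.)

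First, setting $\alpha\triangleq\inprod{u}{\hat u}$, one has $\Xi-\Xi'=\lambda^2w_1w_2\,(uu^\top-\alpha^2\hat u\hat u^\top)$. Expanding the Frobenius inner product,
\[
\|uu^\top-\alpha^2\hat u\hat u^\top\|_F^2=1-\alpha^4\le 2(1-\alpha^2)=2\sin^2\phi.
\]
An identical orthogonal-expansion argument for the third-order tensor yields $\|u^{\otimes 3}-\alpha^3\hat u^{\otimes 3}\|_F^2=1-\alpha^6\le 3\sin^2\phi$. Combining these with $|w_1-w_2|\le 1$ and $\lambda=\|\mu_1-\mu_2\|_2\le 2R$ (so $\lambda^3\lesssim \lambda^2$), I obtain
\[
\|\Xi-\Xi'\|_F\le\sqrt{2}\,\lambda^2w_1w_2\sin\phi,\qquad \|T-T'\|_F\lesssim \lambda^2w_1w_2\sin\phi.
\]

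Next I invoke the Davis--Kahan $\sin\theta$ theorem for the rank-one matrix $\Xi$, whose spectrum has a single nonzero eigenvalue $\lambda^2w_1w_2$ with eigenvector $u$; since $\hat u$ is the top eigenvector of $S$, this yields $\sin\phi\le 2\|S-\Xi\|/(\lambda^2w_1w_2)$ whenever the right-hand side is at most one. In the complementary regime $\|S-\Xi\|\ge\lambda^2w_1w_2/2$, the trivial estimate $\sin\phi\le 1$ already gives $\lambda^2w_1w_2\sin\phi\le 2\|S-\Xi\|$. In both cases $\lambda^2w_1w_2\sin\phi\lesssim \|S-\Xi\|$; plugging this into the two displayed inequalities completes the proof.

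The only real subtlety is that the eigengap of $\Xi$ equals the potentially small signal strength $\lambda^2w_1w_2$, so a naive application of Davis--Kahan produces a bound that blows up as this quantity vanishes. The lemma survives because $\Xi-\Xi'$ and $T-T'$ carry the \emph{same} prefactor $\lambda^2w_1w_2$ (or a higher power, which is absorbed into the constant using $\lambda\le 2R$); the small-signal regime therefore forces both sides of the target inequality to shrink in tandem, and the dangerous factor cancels.
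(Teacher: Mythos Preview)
Your proof is correct and follows essentially the same route as the paper: compute $\|\Xi-\Xi'\|_F^2=\sigma^2(1-\alpha^4)$ and $\|T-T'\|_F^2=\lambda^2\sigma^2(w_1-w_2)^2(1-\alpha^6)$ with $\sigma=\lambda^2 w_1w_2$, bound both by $\sigma^2\sin^2\phi$, and then apply Davis--Kahan to get $\sin\phi\lesssim\|S-\Xi\|/\sigma$ so that the factor $\sigma$ cancels. Your explicit split into the two regimes (Davis--Kahan applicable versus $\|S-\Xi\|\ge\sigma/2$) is a touch more careful than the paper's one-line invocation, but the argument is the same.
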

\begin{proof}
Let $\sigma = \lambda^2 w_1 w_2$
%, which is the top eigenvalue of $\Xi$,
% \nb{What? $\sigma^2$ is a scalar...do you just mean $\sigma = \lambda^2 w_1 w_2$? And can we mention here that $\sigma$ is the eigenvalue of $\Xi$.} 
and $\cos\theta = \Iprod{u}{\hat u}$.
Since $\lambda \lesssim 1$, we have
\begin{align*}
&\Norm{\Xi-\tilde\Xi}_F^2=\sigma^2(1-\cos^4\theta)\lesssim \sigma^2 \sin^2\theta,\\
&\Norm{T-T'}_F^2 = \lambda^2\sigma^2(w_1-w_2)^2(1-\cos^6\theta) \lesssim  \sigma^2 \sin^2\theta.
\end{align*}
% \nb{I see the same conclusion if the changes above are made. Although I didn't see exact equality in the first equality. But the same upper bound holds.}
Since $\Xi=\sigma uu^\top$, by the Davis-Kahan theorem \cite{Davis_Kahan_1970}, $\sin \theta \lesssim \frac{\Norm{S-\Xi}}{\sigma}$, completing the proof.
\end{proof}

\subsection{Connection to mixing distribution estimation}
\label{sec:de-connection}

The next result shows that optimal estimation of the mixing distribution can be reduced to that of the mixture density, both statistically and computationally, provided that the density estimate is proper (a valid $k$-GM). Note that this does not mean an optimal density estimate $P_{\hat \Gamma}$ automatically yields an optimal estimator of the mixing distribution $\hat \Gamma$ for \prettyref{thm:mixing_est}. Instead, we rely on an intermediate step that allows us to estimate the appropriate subspace and then perform density estimation in this low-dimensional space. 
%Note that the estimator in~\prettyref{thm:mixing_est_density} is not obtained directly from the proper density estimator of~\prettyref{thm:density_est}, i.e., given $P_{\hat \Gamma}$ satisfying~\prettyref{eq:result_density}, we do not simply use $\hat \Gamma$. We instead rely on an intermediate step that allows us to estimate the appropriate subspace, and we then perform density estimation in that low-dimensional space. Thus this estimator, like that in~\prettyref{thm:mixing_est}, also relies on decomposing the problem into two sub-problems: subspace estimation and low-dimensional mixing distribution estimation.

\begin{thm} \label{thm:mixing_est_density}
Suppose that for each $d\in\naturals$, there exists a proper density estimator $\hat P=\hat P(X_1,\ldots,X_n)$, such that for every  $\Gamma \in \mc G_{k, d}$ and 
$X_1,\ldots,X_n\iiddistr P_\Gamma$,
\begin{align}
\mb E H(\added{\hat P}, P_\Gamma) \le c_k (d/n)^{1/2},
\label{eq:hellinger_bound_dim}
\end{align}
for some constant $c_k$.
Then there is an estimator $\hat \Gamma$ of the mixing distribution $\Gamma$ and \added{a positive constant $C$} such that
\begin{align}
\added{
\mb E W_1(\hat \Gamma, \Gamma) \le  \(  (Ck)^{k/2} \sqrt{c_k} \(\frac{d}{n} \)^{1/4} +  C k^{5} c_k^{\frac{1}{2k-1}} \(\frac{1}{n} \)^{\frac{1}{4k-2}} \). \label{eq:lk_algo_general_bound}} 
\end{align}
% \begin{align*}
% \replaced{}
% {
% \mb E W_1(\hat \Gamma, \Gamma) \le  \(  (2kC_k c_k)^{1/2} \(\frac{d}{n} \)^{1/4} +  C' k^{7/2} (C_k c_k)^{1/2k-1} \(\frac{k}{n} \)^{\frac{1}{4k-2}} \)
% }
% \end{align*}
% where \replaced{$(Ck)^k$}{$C_k$} is the constant $c$ in~\prettyref{lem:l2_moments_lb}.
\end{thm}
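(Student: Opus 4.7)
The plan is to use the given proper density estimator in two stages: first in $d$ dimensions to identify the $k$-dimensional subspace containing the atoms of $\Gamma$, and then again in $k$ dimensions to estimate the mixing distribution accurately within that subspace. This mirrors the two-stage strategy of \prettyref{algo:gmm_est}, except that the PCA step is replaced by a moment-tensor argument driven by the hypothesized density estimator, and the low-dimensional DMM step is replaced by a second invocation of that same estimator.

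Split the sample of size $2n$ into halves. On the first half, apply the proper density estimator to obtain $\tilde\Gamma \in \calG_{k,d}$ with $\Expect H(P_{\tilde\Gamma}, P_\Gamma) \le c_k\sqrt{d/n}$. Let $\hat V\in\reals^{d\times k}$ be an orthonormal basis of the span of the atoms of $\tilde\Gamma$ and $\hat H=\hat V\hat V^\top$ the corresponding projector. By \prettyref{thm:H-M}, $\Expect\|M_2(\Gamma)-M_2(\tilde\Gamma)\|_F \le (Ck)^{2k} c_k\sqrt{d/n}$. Because $M_2(\tilde\Gamma)$ is supported on the column space of $\hat V$ and has rank at most $k$, applying \prettyref{lem:perturbation_psd} with $\Sigma'=M_2(\tilde\Gamma)$ and $r=k$ (so $\lambda_{k+1}(\Sigma') = 0$ and the top-$k$ eigenspace of $\Sigma'$ coincides with the column space of $\hat V$) gives $W_2^2(\Gamma, \Gamma_{\hat H}) \le 2k\|M_2(\Gamma)-M_2(\tilde\Gamma)\|_2 \le 2k\|M_2(\Gamma)-M_2(\tilde\Gamma)\|_F$. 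Taking expectations and applying Jensen's inequality yields $\Expect W_1(\Gamma,\Gamma_{\hat H}) \le \sqrt{2k}\,(Ck)^k\sqrt{c_k}\,(d/n)^{1/4}$, which accounts for the first term of \eqref{eq:lk_algo_general_bound}.

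On the second half, conditionally on $\hat V$ the projections $x_i \triangleq \hat V^\top X_{i+n}$ are i.i.d.\ from $P_\gamma$ where $\gamma = \Gamma_{\hat V} \in \calG_{k,k}$. Apply the same proper density estimator in dimension $k$ to produce $\hat\gamma\in\calG_{k,k}$ with $\Expect[H(P_{\hat\gamma},P_\gamma)\mid\hat V] \le c_k\sqrt{k/n}$. To convert this Hellinger guarantee into a $W_1$ guarantee on $\gamma$, I would chain three ingredients. First, \prettyref{thm:H-M} gives $\max_{\ell\le 2k-1}\|M_\ell(\gamma)-M_\ell(\hat\gamma)\|_F \le (Ck)^{2k} H(P_{\hat\gamma},P_\gamma)$. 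Second, via \prettyref{eq:mtensor-opnorm} this bound transfers to every one-dimensional projection: for each $\theta\in S^{k-1}$ and $r\le 2k-1$, $|m_r(\gamma_\theta)-m_r(\hat\gamma_\theta)|\le \|M_r(\gamma)-M_r(\hat\gamma)\|_F$. Third, the one-dimensional moment-to-Wasserstein bound for $k$-atomic distributions on $[-R,R]$ from~\cite{WY18} yields $W_1(\gamma_\theta,\hat\gamma_\theta) \lesssim_k (\max_{r\le 2k-1}|m_r(\gamma_\theta)-m_r(\hat\gamma_\theta)|)^{1/(2k-1)}$ uniformly in $\theta$. Combining with \prettyref{lem:w1_dim_reduction} (with $d=k$) and using the concavity of $x\mapsto x^{1/(2k-1)}$ for Jensen's inequality produces $\Expect W_1(\gamma,\hat\gamma) \lesssim C k^5 c_k^{1/(2k-1)} n^{-1/(4k-2)}$.

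Finally, define $\hat\Gamma$ as the pushforward $\hat\gamma_{\hat V^\top}$ of $\hat\gamma$ through $\hat V$, so that both $\Gamma_{\hat H}$ and $\hat\Gamma$ are supported on the column space of $\hat V$ and $W_1(\Gamma_{\hat H},\hat\Gamma) = W_1(\gamma,\hat\gamma)$. The triangle inequality $W_1(\Gamma,\hat\Gamma) \le W_1(\Gamma,\Gamma_{\hat H})+W_1(\Gamma_{\hat H},\hat\Gamma)$ combined with the two expected bounds above delivers \eqref{eq:lk_algo_general_bound}. The main obstacle is the second stage: carefully tracking how the Hellinger loss becomes a Wasserstein loss through the moment tensors, and ensuring that the $1/(2k-1)$ exponent from the one-dimensional moment-to-Wasserstein bound is preserved when expectations and the supremum over $\theta\in S^{k-1}$ are interchanged (which is why we pass through the uniform moment bound $\|M_r(\gamma)-M_r(\hat\gamma)\|_F$ rather than fitting each projection separately).
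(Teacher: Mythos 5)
Your proof is correct and follows essentially the same two-stage strategy as the paper: run the density estimator in $d$ dimensions, extract a $k$-dimensional subspace from the estimated mixing distribution, project, run the density estimator again in $k$ dimensions, and control the two resulting error terms via the triangle inequality $W_1(\Gamma,\hat\Gamma) \le W_1(\Gamma,\Gamma_{\hat H}) + W_1(\gamma,\hat\gamma)$. Two small remarks. First, in applying \prettyref{lem:perturbation_psd} the quantity $\lambda_{r+1}$ refers to the $(r+1)$-st eigenvalue of $\Sigma = \Expect_{U\sim\Gamma}[UU^\top]$, not of $\Sigma'=M_2(\tilde\Gamma)$; both vanish at $r=k$ because both matrices have rank at most $k$, so the conclusion stands, but the citation is slightly off. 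Second, you route the moment--Hellinger comparison through the Frobenius-norm bound of \prettyref{thm:H-M}, which gives $\|M_2(\Gamma)-M_2(\tilde\Gamma)\|_F \le (Ck)^{2k}H$, whereas the paper bounds $\|\Sigma-\hat\Sigma\|_2 = \sup_\theta|m_2(\Gamma_\theta)-m_2(\hat\Gamma'_\theta)| \le (Ck)^k H(P_\Gamma,P_{\hat\Gamma'})$ directly from \prettyref{lem:l2_moments_lb} plus the data-processing inequality, sidestepping the Frobenius-vs-spectral loss. Your route therefore yields the first term with constant $\sqrt{2k}(Ck)^k\sqrt{c_k}$ rather than the stated $(Ck)^{k/2}\sqrt{c_k}$; to recover the sharper constant simply bound the spectral norm $\|\Sigma-\hat\Sigma\|_2$ directly via \prettyref{eq:mtensor-opnorm} and \prettyref{lem:l2_moments_lb} as the paper does. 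The same comment applies to your second-stage bound on $|m_r(\gamma_\theta)-m_r(\hat\gamma_\theta)|$, where again passing through \prettyref{eq:mtensor-opnorm} and \prettyref{lem:l2_moments_lb} rather than the Frobenius norm saves a $k^{\Theta(k)}$ factor; this is absorbed by the $1/(2k-1)$ exponent so the stated bound still holds there.
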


\begin{proof} [Proof of~\prettyref{thm:mixing_est_density}]
We first construct the estimator $\hat\Gamma$ using $X_1,\dots,X_{2n} \iiddistr P_\Gamma$.

\added{
Let $\hat P \in \mc P_{k, d}$ be the proper mixture density estimator from $\{X_i\}_{i\le n}$ satisfying}
\begin{equation}
\label{eq:guarantee-d}
\added{\Expect H(\hat P, P_{\Gamma}) \le c_k \sqrt{d/n},}
\end{equation}
\added{for a positive constant $c_k$, as guaranteed by \prettyref{eq:hellinger_bound_dim}. Since $\hat P$ is a proper estimator, it can be written $\hat P = \hat \Gamma' * N(0, I_d)$ for some $\hat \Gamma' \in \mc G_{k, d}$. }

%Let $\hat \Gamma' \in \mc G_{k, d}$ be the estimator from $\{X_i\}_{i\le n}$ and let $\hat P \triangleq \hat \Gamma' * N(0, I_d)$ satisfying 
%\begin{equation} \label{eq:guarantee-d}
%\Expect H(P_{\hat \Gamma'}, P_{\Gamma}) \le c_k \sqrt{d/n},
%\end{equation}
%for a positive constant $c_k$, as guaranteed by \prettyref{eq:hellinger_bound_dim}.

Let $\hat V \in \reals^{d \times k}$ be a matrix whose columns form an orthonormal basis for the space spanned by the atoms of $\hat \Gamma'$, $\hat H=\hat V\hat V^\top$, and $\gamma=\Gamma_{\hat V}$.
Note that conditioned on $\hat V$, $\{ \hat V^{\top} X_i \}_{i = n + 1, \hdots, 2n}$ is an i.i.d.~sample drawn from the $k$-GM $P_{\gamma}$. Invoking \prettyref{eq:hellinger_bound_dim} again, there exists an estimator $\hat \gamma = \sum_{j = 1}^k \hat w_j \delta_{\hat \psi_j} \in \calG_{k,k}$ such that
\begin{equation}
\label{eq:guarantee-k}
\Expect H(P_{\hat\gamma}, P_{\gamma}) \le c_k \sqrt{k/n}.
\end{equation}
We will show that $\hat \Gamma\triangleq\hat \gamma_{\hat V^\top} = \sum_{j = 1}^k \hat w_j \delta_{\hat V \hat \psi_j}$ achieves the desired rate \prettyref{eq:lk_algo_general_bound}. 
Recall from \prettyref{eq:w1-triangle} the risk decomposition:
\begin{equation}
\label{eq:w1-triangle-density}
W_1(\Gamma,\hat \Gamma)
\le W_1(\Gamma,\Gamma_{\hat H})+W_1(\gamma,\hat \gamma).
\end{equation}
Let $\Sigma=\Expect_{U\sim \Gamma}[UU^\top]$ and $\hat\Sigma = \Expect_{U\sim\hat\Gamma'}[UU^\top]$ whose ranks are at most $k$.
Then $\hat H$ is the projection matrix onto the space spanned by the top $k$ eigenvectors of $\hat \Sigma$.
It follows from \prettyref{lem:perturbation_psd} and the Cauchy-Schwarz inequality that $W_1(\Gamma,\Gamma_{\hat H})\le \sqrt{2k\Norm{\Sigma-\hat\Sigma}_2}$. 
% \added{Let $C_k \triangleq (Ck)^k$ be the constant in~\prettyref{lem:l2_moments_lb}.}
By \prettyref{lem:l2_moments_lb} and the data processing inequality of the Hellinger distance,
\[
\Norm{\Sigma-\hat\Sigma}_2
=\sup_{\theta \in S^{d-1}} |m_2(\Gamma_\theta)-m_2(\hat \Gamma_\theta')|
\le C_k\sup_{\theta \in S^{d-1}}H(P_{\Gamma_{\theta}},P_{\hat \Gamma_{\theta}'})
\le C_k H(P_{\Gamma},P_{\hat \Gamma'}),
\]
\added{where $C_k = (Ck)^k$ for a constant $C$.}
Therefore, by \prettyref{eq:guarantee-d}, we obtain that
\begin{equation}
\label{eq:w1-high-d-density}
\Expect W_1(\Gamma,\Gamma_{\hat H})\le \sqrt{2k C_k c_k \(\frac{d}{n} \)^{1/2}}.	
\end{equation}
We condition on $\hat V$ to analyze the second term on the right-hand side of \prettyref{eq:w1-triangle-density}.
By Lemmas \ref{lem:w1_dim_reduction} and \ref{lem:w1_moments_ub}, there is a constant $C'$ such that
\[
W_1(\gamma,\hat \gamma)
\le k^{5/2}\sup_{\theta\in S^{k-1}}W_1(\gamma_{\theta},\hat \gamma_{\theta})
\le \added{C' k^{7/2}} \sup_{\theta\in S^{k-1},r\in[2k-1]}|m_r(\gamma_\theta) - m_r(\hat \gamma_\theta)|^{\frac{1}{2k-1}}.
\]
Again, by \prettyref{lem:l2_moments_lb} and the data processing inequality, for any $\theta\in S^{k-1}$ and $r\in [2k-1]$,
\[
|m_r(\gamma_\theta) - m_r(\hat \gamma_\theta)|
\le C_k H\(P_{\hat \gamma_\theta}, P_{\gamma_\theta}\) 
\le C_k H\(P_{\hat \gamma}, P_{\gamma}\).
\]
Therefore, by \prettyref{eq:guarantee-k}, we obtain that
\begin{equation}
\label{eq:w1-low-d-density}
\Expect W_1(\gamma,\hat \gamma)
\le \added{C' k^{7/2}} \( C_k c_k \(\frac{k}{n} \)^{1/2} \)^{\frac{1}{2k-1}}.
\end{equation}
The conclusion follows by applying \prettyref{eq:w1-high-d-density} and \prettyref{eq:w1-low-d-density} in \prettyref{eq:w1-triangle-density}.
\end{proof}

At the crux of the above proof is the following key inequality for $k$-GMs in $d$ dimensions:
\begin{equation}
W_1(\gamma, \hat \gamma) \lesssim_{k,d} H(P_\gamma, P_{\hat \gamma})^{1/(2k-1)}, \label{eq:hellinger_to_w1}
\end{equation}
which we apply after a dimension reduction step. The proof of~\prettyref{eq:hellinger_to_w1} relies on two crucial facts:
for one-dimensional $k$-atomic distributions $\gamma, \hat \gamma$,
\begin{equation}
W_1(\gamma, \hat \gamma) \lesssim_k \max_{\ell \in [2k-1]} |m_\ell(\gamma) - m_\ell(\hat \gamma)|^{1/(2k-1)}, \label{eq:w1_to_moments}
\end{equation}
and
\begin{equation}
\max_{\ell \in [2k-1]} |m_\ell(\gamma) - m_\ell(\hat \gamma)| \lesssim_k H(P_\gamma, P_{\hat \gamma}). \label{eq:hellinger_to_moments}
\end{equation}
Then \prettyref{eq:hellinger_to_w1} immediately follows from \prettyref{lem:w1_dim_reduction} and \prettyref{eq:mtensor-opnorm}.

Relationships similar to~\prettyref{eq:hellinger_to_w1} are found elsewhere in the literature on mixture models, e.g.,~\cite{Chen_1995,Ho_Nguyen_2016_annals,Ho_Nguyen_2016_ejs,HK2015}, where they are commonly used to translate a density estimation guarantee into one for mixing distributions.
For example, \cite[Proposition 2.2(b)]{Ho_Nguyen_2016_annals} showed the non-uniform bound $W_r^r(\gamma, \hat \gamma) \leq C(\gamma) H(P_\gamma, P_{\hat \gamma})$, where $r$ is a parameter that depends on the level of overfitting in the model; see \cite{Ho_Nguyen_2016_annals,Ho_Nguyen_2016_ejs} for more results for other models such as location-scale Gaussian mixtures. For uniform bound similar to \prettyref{eq:hellinger_to_w1}, 
 \cite[Theorem 6.3]{HK2015} showed $W_{2k-1}^{2k-1}(\gamma, \hat \gamma) \lesssim \norm{p_\gamma - p_{\hat \gamma}}_\infty$ in one dimension.

Conversely, distances between mixtures can also be bounded by transportation distances between mixing distributions, e.g., the middle inequality in 
\prettyref{eq:H_sq_cov} for the KL divergence. 
Total variation inequality of the form $\TV(F * \gamma, F * \hat \gamma) \lesssim W_1(\gamma, \hat \gamma)$ for arbitrary $\gamma,\gamma'$ are shown in
\cite[Proposition 7]{polyanskiy2015dissipation} or \cite[Proposition $5.3$]{Bontemps_Gadat_2014}, provided that $F$ has bounded density.
See also \cite[Theorem 3.2(c)]{Ho_Nguyen_2016_ejs} and \cite[Proposition 2.2(a)]{Ho_Nguyen_2016_annals} for results along this line.

\section{Numerical studies} \label{sec:numerical}

We now present numerical results. We compare the estimator~\prettyref{eq:estimator} to the classical EM algorithm. The EM algorithm is guaranteed only to converge to a local optimum (and very slowly without separation conditions), and its performance depends heavily on the initialization chosen. It moreover accesses all data points on each iteration.
%, and it may require many iterations to converge. 
%These weaknesses mean that even a grid-search based estimator like~\prettyref{eq:estimator} can surpass it in some cases.
 As such, although the estimator~\prettyref{eq:estimator} (henceforth referred to simply as DMM) relies on grid search, it can be competitive with or even surpass the EM algorithm both statistically and computationally in some cases, as our experiments show.

All simulations are run in Python. The DMM algorithm relies on the CVXPY~\cite{Diamond_Boyd_2016} and CVXOPT~\cite{Andersen_etal_2013} packages; see Section $6$ of~\cite{WY18} for more details on the implementation of DMM. We also use the Python Optimal Transport package~\cite{Flamary_2017} to compute the $d$-dimensional $1$-Wasserstein distance. 

In all experiments, we let $\sigma = 1$ and $d = 100$, and we let $n$ range from $10,000$ to $200,000$ in increments of $10,000$. For each model and each value of $n$, we run $10$ repeated experiments; we plot the mean error and standard deviation of the error in the figures. We initialize EM randomly, and our stopping criterion for the EM algorithm is either after $1000$ iterations or once the relative change in log likelihood is below $10^{-6}$. For the dimension reduction step in the computation of~\prettyref{eq:estimator}, we first center our data, then do the projection using the top $k - 1$ singular vectors.
 %of $\hat \Sigma $ in \prettyref{eq:hatSigma}. 
Thus when $k = 2$, we project onto a one-dimensional subspace and only run DMM once, so the grid search of~\prettyref{algo:gmm_est} is never invoked. Sample splitting is used for the estimator~\prettyref{eq:estimator} for purposes of analysis only; in the actual experiments, we do not sample split.

When $k = 3$, we project the data to a $2$-dimensional subspace after centering. In this case, we need to choose $\mc W, \mc N$, the $\epsilon_{n, k}$-nets on the simplex $\Delta^{k-1}$ and on the unit sphere $S^{k-2}$, respectively. Here $\mc W$ is chosen by discretizing the probabilities and 
%Note that the exponent here is $k - 2 = 1$ because of the initial centering and projection to $(k-1)$-dimensional space. 
%We cannot form an exactly minimal $\epsilon_{n, k}$-net on $S^1$, but there are many ways to form a grid on the unit sphere that are good enough for practical purposes and that have size $(C_k/\epsilon_{n,k})^{k-2}$. 
$\mc N$ is formed by by gridding the angles $\alpha \in [-\pi, \pi]$ and using the points $(\cos \alpha, \sin \alpha)$. 
Note that here, $|\mc W| \le (C_1/\epsilon_{n, k})^{k-1}, |\mc N| \le (C_2/\epsilon_{n, k})^{k-2}$. For example, when $n = 10000$, $1/\epsilon_{n, k} \approx  3$. In the experiments in~\prettyref{fig:k3}, we choose $C_1 = 1, C_2 = 2$.

%In our experience, with a coarser grid $\mc N$ our algorithm can still achieve fairly high accuracy in the well-separated models while gaining some speed.
%$\mc W$ needs to be large enough to contain reasonable weights, and a grid of size $9$ in this case is large enough. The size of $\mc N$ has a major impact on the time of~\prettyref{algo:gmm_est}; with $C_2 = 4$, the size is $12$. 

In~\prettyref{fig:k2}, we compare the performance on the symmetric $2$-GM, where the sample is drawn from the distribution $\frac{1}{2} N(\mu, I_d) + \frac{1}{2} N(-\mu, I_d)$. For~\prettyref{fig:k2a}, $\mu = 0$, i.e., the components completely overlap. And for~\prettyref{fig:k2b} and~\prettyref{fig:k2c}, $\mu$ is uniformly drawn from the sphere of radius $1$ and $2$, respectively. In~\prettyref{fig:k2d}, the model is $P_\Gamma = \frac{1}{4} N(\mu, I_d) + \frac{3}{4} N(-\mu, I_d)$ where $\mu$ is drawn from the sphere of radius $2$. 
%That is, the model is the same as the one used in~\prettyref{fig:k2c} except that the weights are uneven. We still run PCA and DMM without any grid search.
Our algorithm and EM perform similarly for the model with overlapping components; our algorithm is more accurate than EM in the model where $\norm{\mu}_2 = 1$, but EM improves as the model components become more separated. There is little difference in the performance of either algorithm in the uneven weights scenario. 
%In terms of running time, computing~\prettyref{eq:estimator} takes about the same time as EM for smaller values of $n$, but EM slows much more as $n$ increases since it accesses all the observations on each iteration. For the largest sample size in the experiments, computing~\prettyref{eq:estimator} is about 6 times faster than EM.

\begin{figure}[ht]
	\centering
	\subfigure[$\mu=0$]%
	{\label{fig:k2a} \includegraphics[width=0.4\columnwidth]{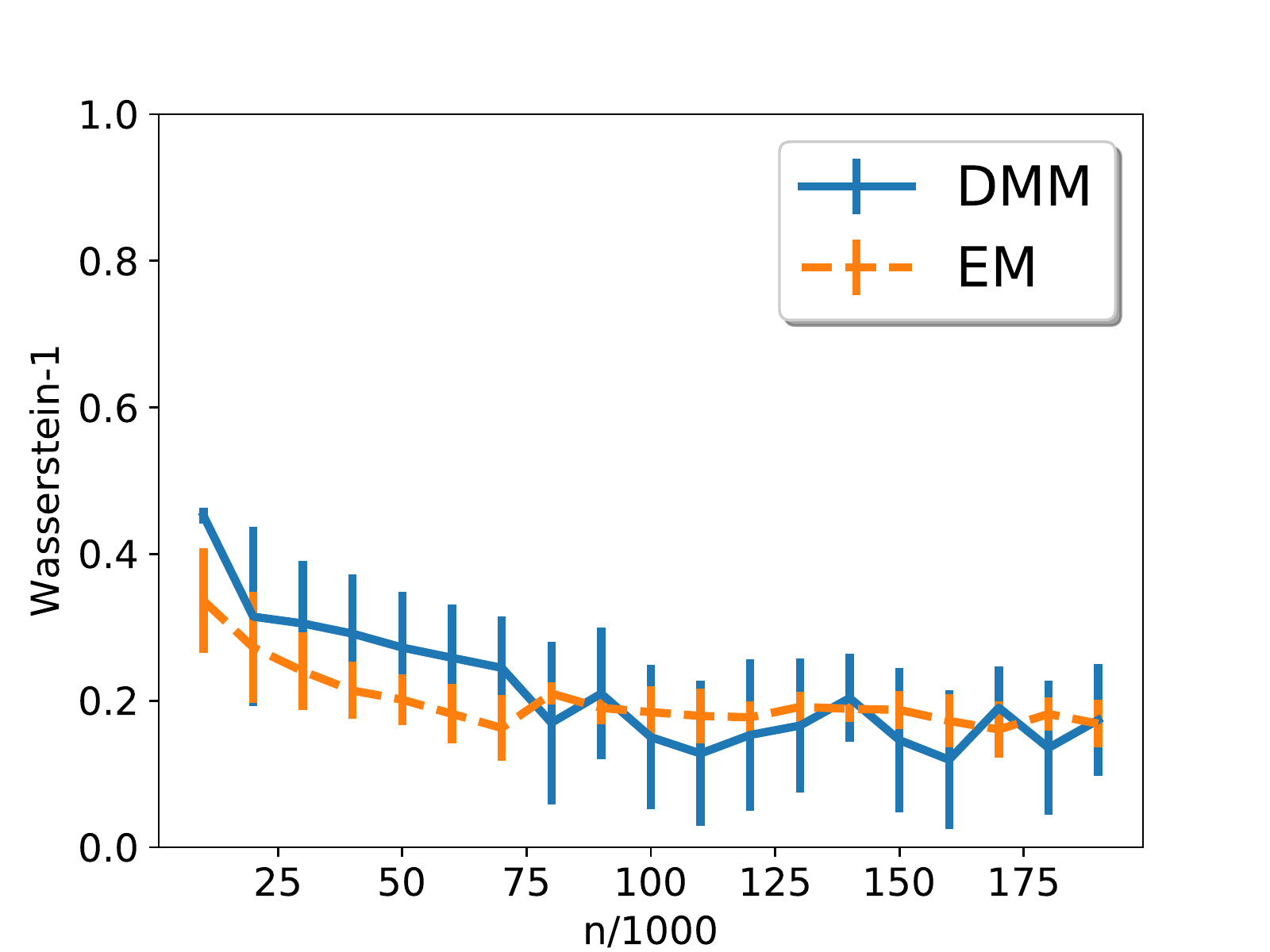}}
	\subfigure[$\|\mu\|=1$]%
	{\label{fig:k2b} \includegraphics[width=0.4\columnwidth]{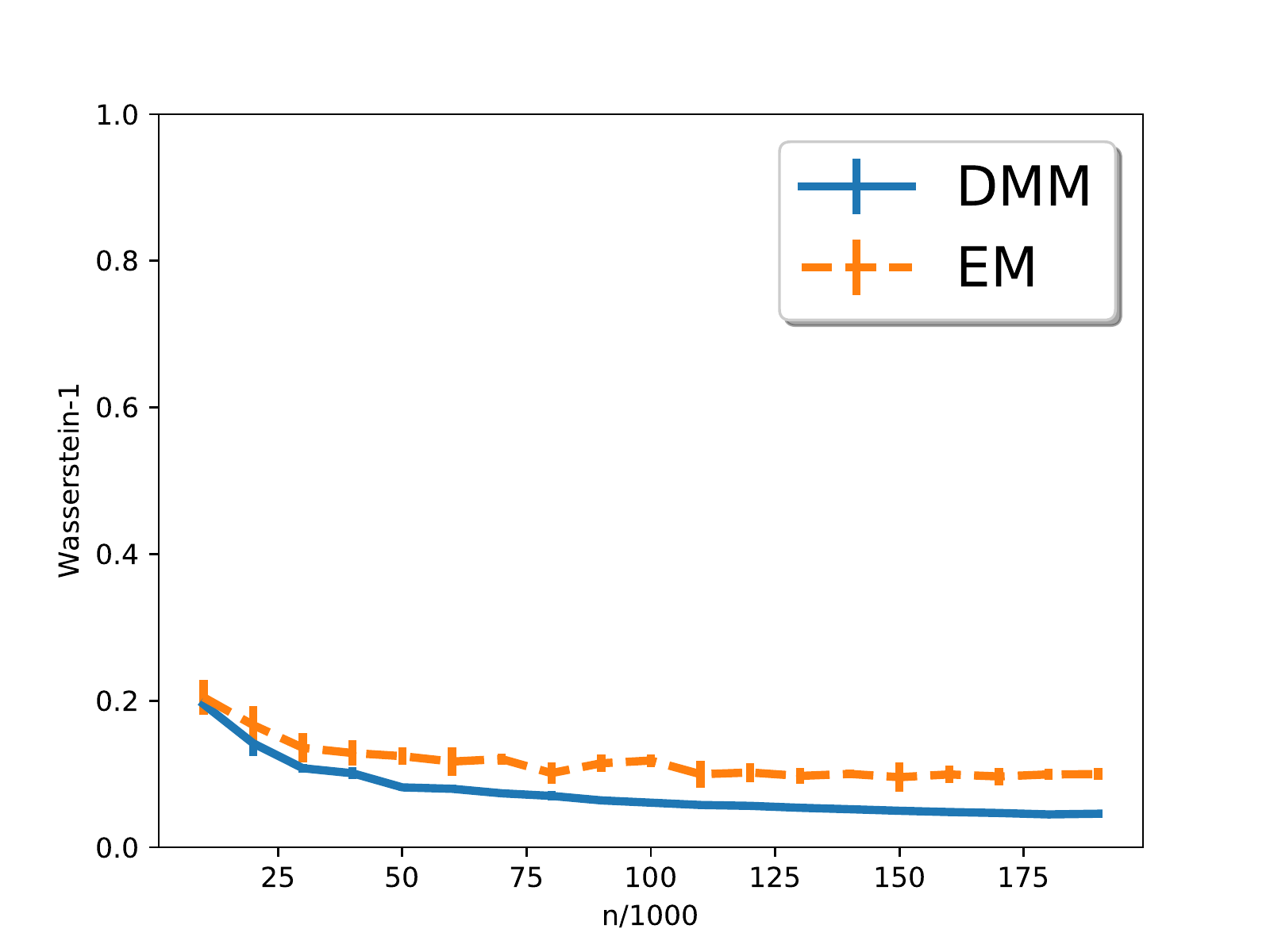}}
	\subfigure[$\|\mu\|=2$]%
	{\label{fig:k2c} \includegraphics[width=0.4\columnwidth]{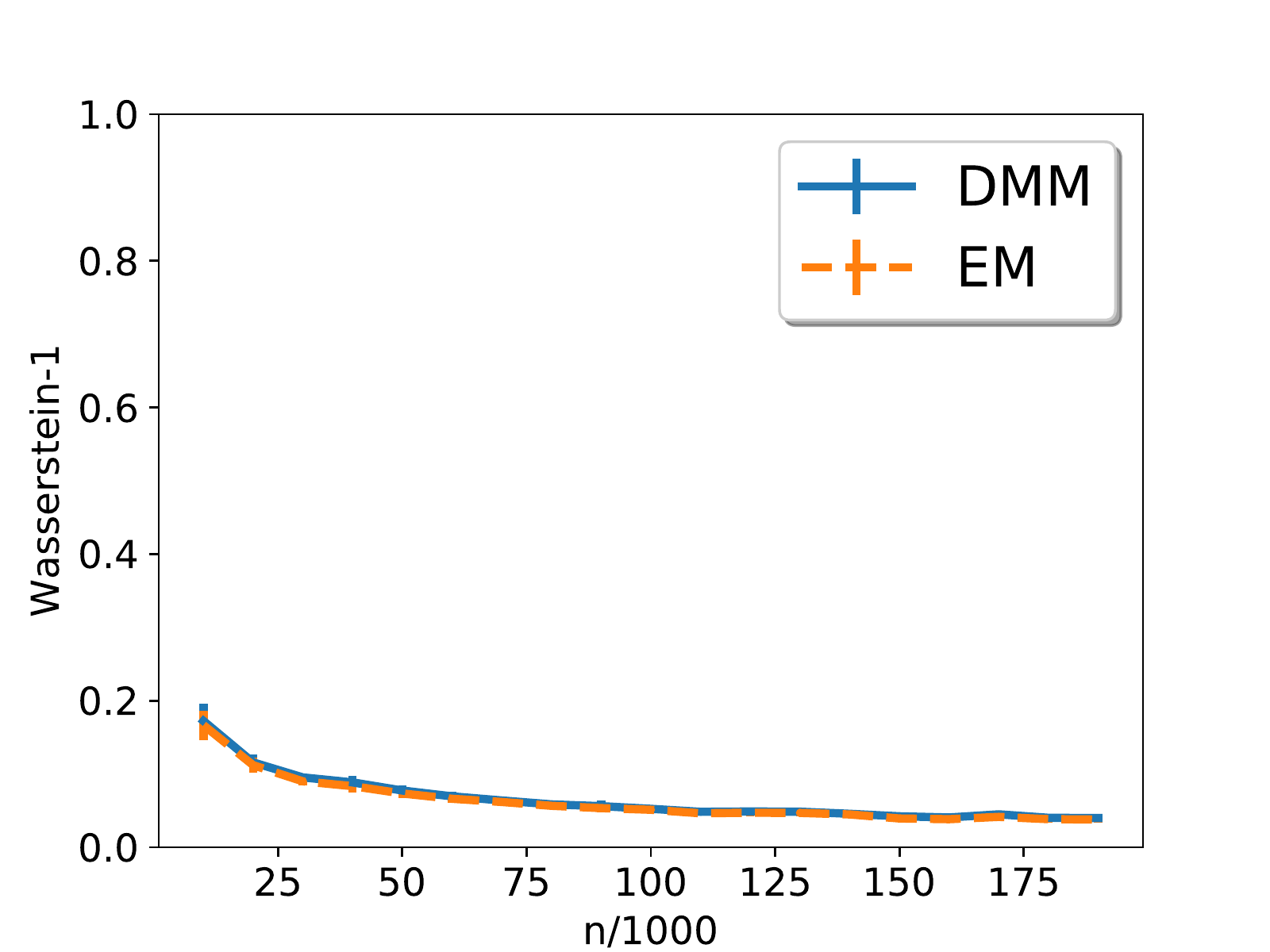}}
	\subfigure[$\|\mu\|=2$]%
	{\label{fig:k2d} \includegraphics[width=0.4\columnwidth]{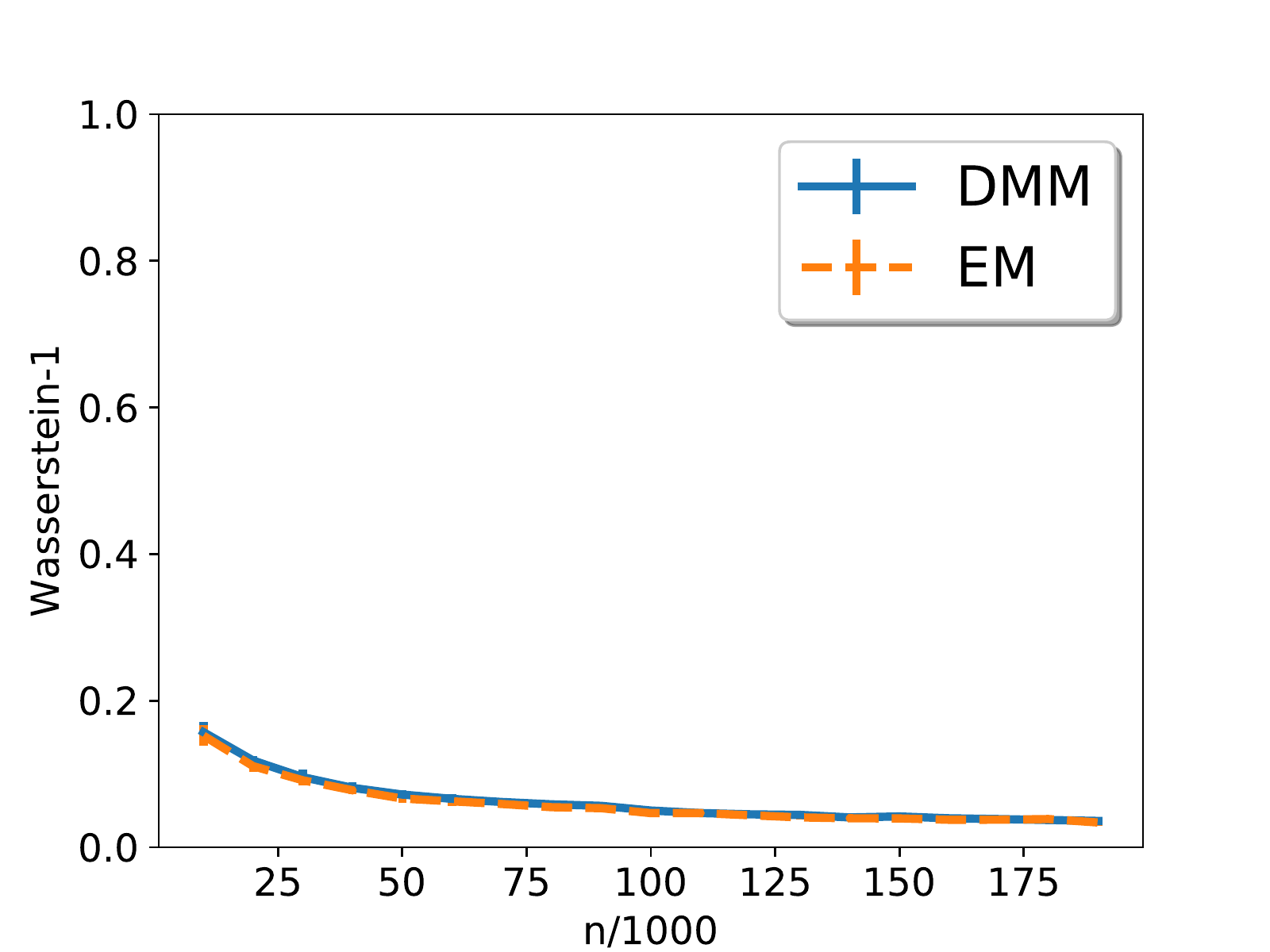}}
	\caption{In the first three figures, $P_\Gamma = \frac{1}{2} N(\mu, I_d) + \frac{1}{2} N(-\mu, I_d)$, for increasing values of $\norm{\mu}_2$. In the final figure, $P_\Gamma = \frac{1}{4} N(\mu, I_d) + \frac{3}{4} N(-\mu, I_d)$ where $\norm{\mu}_2 = 2$.}
	\label{fig:k2}
\end{figure}

In~\prettyref{fig:k3}, we compare the performance on the $3$-GM model $\frac{1}{3} N(\mu, I_d) + \frac{1}{3} N(0, I_d) + \frac{1}{3} N(-\mu, I_d)$ for different values of 
separation $\|\mu\|$.
%. We follow the same pattern of increasing the separation of the components in each experiment. %For~\prettyref{fig:k3a}, $\mu = 0$, i.e., the components completely overlap. And for~\prettyref{fig:k3b} and~\prettyref{fig:k3c}, $\mu$ is uniformly  drawn from the sphere of radius $1$ and $2$, respectively.
In these experiments, we see the opposite phenomenon in terms of the relative performance of our algorithm and EM: the former improves more as the centers become more separated. This seems to be because in, for instance, the case where $\mu = 0$, the error in each coordinate for DMM is fairly high, and this is compounded when we select the two-coordinate final distribution. The performance of our algorithm improves rapidly here because as the model becomes more separated, the errors in each coordinate become very small. Note that since we have made the model more difficult to learn by adding a center at $0$, the errors are higher than for the $k = 2$ example in every experiment for both algorithms. 
%In terms of running time, for $k = 3$, EM is faster than our algorithm for smaller sample size because of the grid search being invoked for~\prettyref{algo:gmm_est}. But EM slows much more rapidly as the sample size increases and is actually slower than our algorithm for large values of $n$.

%The DMM algorithm of this paper should not be adaptive to increasing separation of the model atoms because of the coarseness of the grids used. But we posit that if we were to use finer nets, we might see faster rates in models with well-separated centers. In~\prettyref{fig:k3d}, we let the model be exactly as in~\prettyref{fig:k3c}, but we run our algorithm with a finer weights net, specifically with $C_1 = 2$ intead of $C_1 = 1$. We don't see meaningful improvement of our algorithm's accuracy, which suggests it may not be adaptive. 

\begin{figure}[ht]
	\centering
	\subfigure[$\mu=0$]%
	{\label{fig:k3a} \includegraphics[width=0.4\columnwidth]{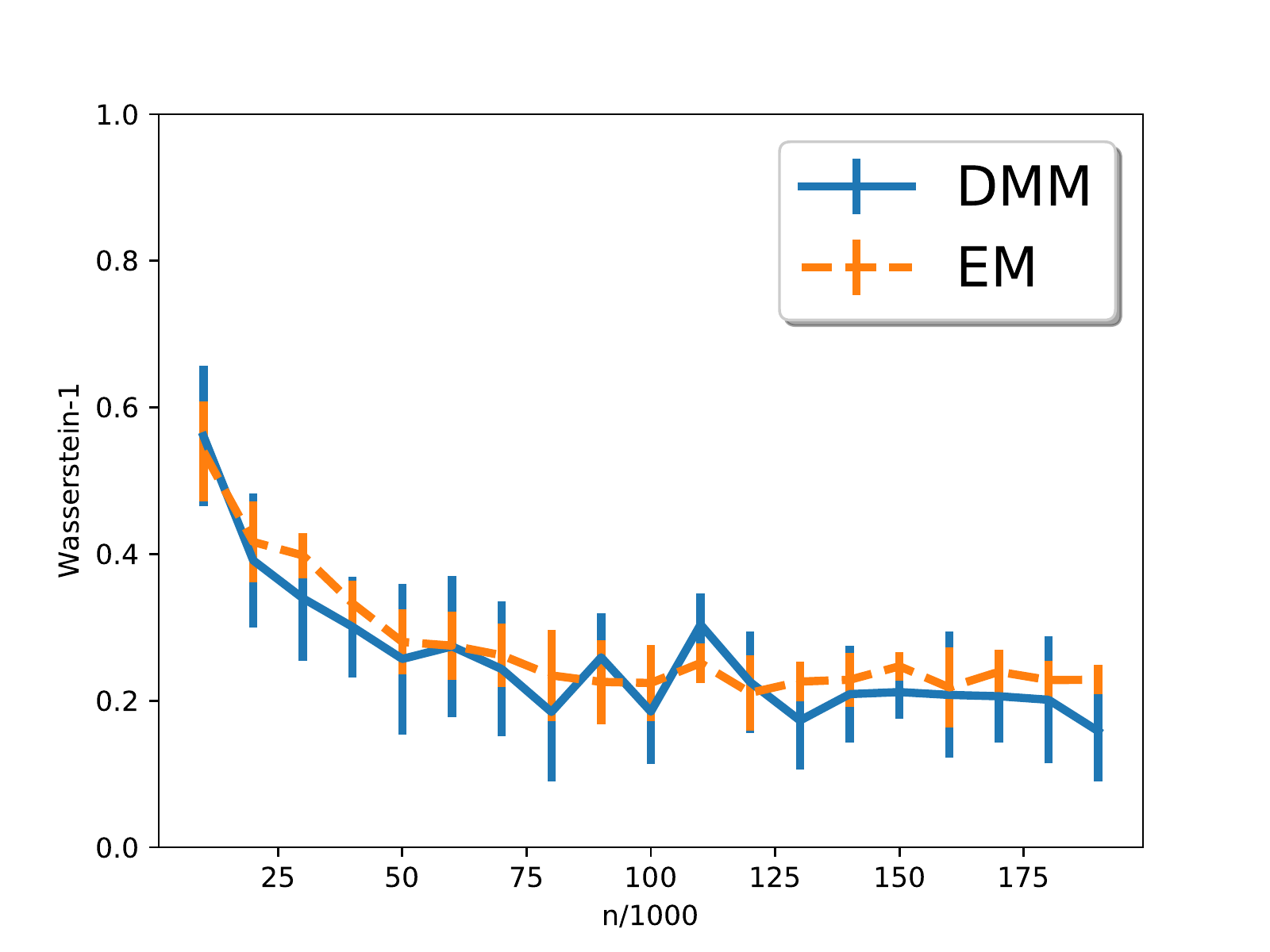}}
	\subfigure[$\|\mu\|=1$]%
	{\label{fig:k3b} \includegraphics[width=0.4\columnwidth]{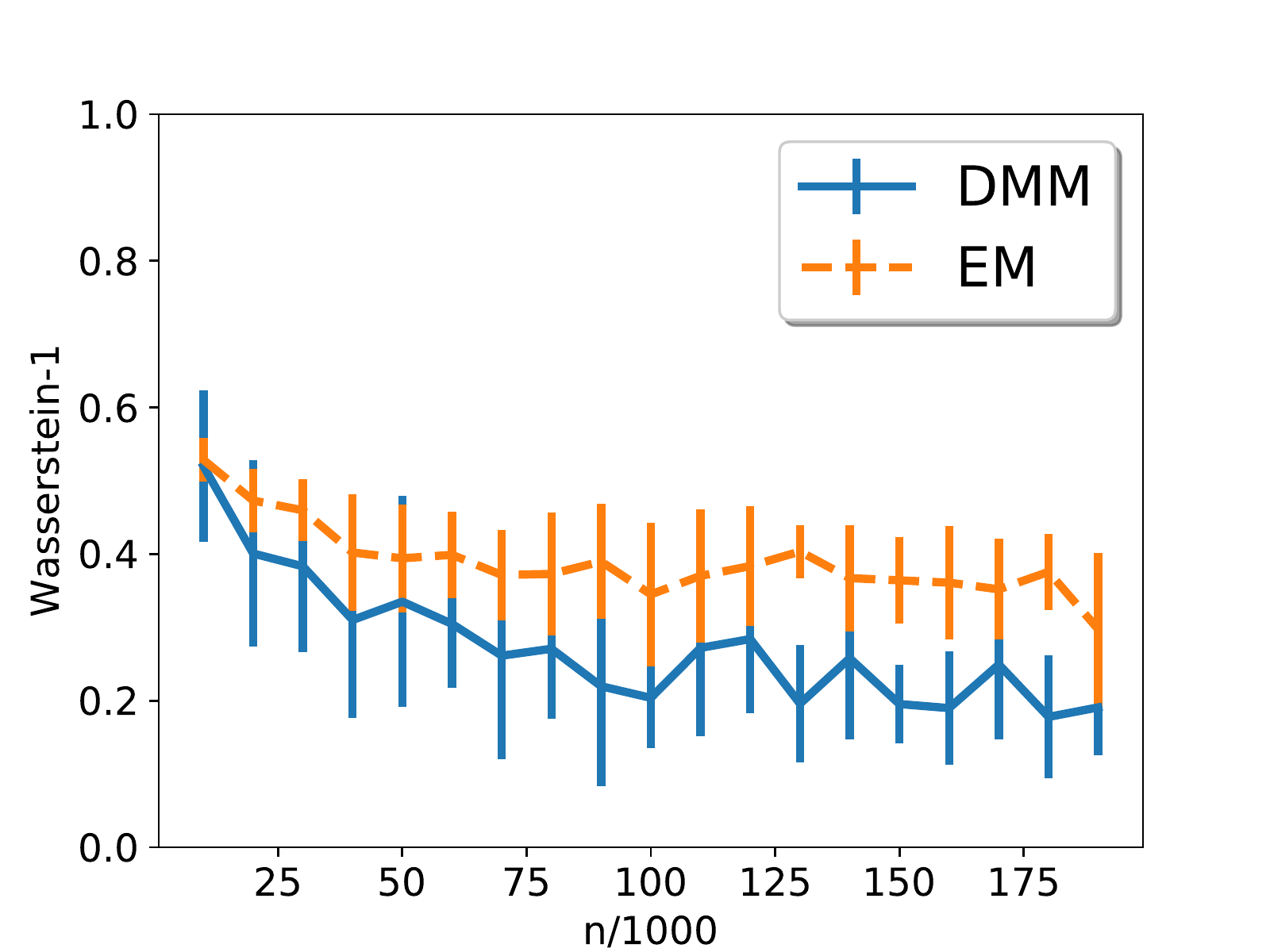}}
	\subfigure[$\|\mu\|=2$]%
	{\label{fig:k3c} \includegraphics[width=0.4\columnwidth]{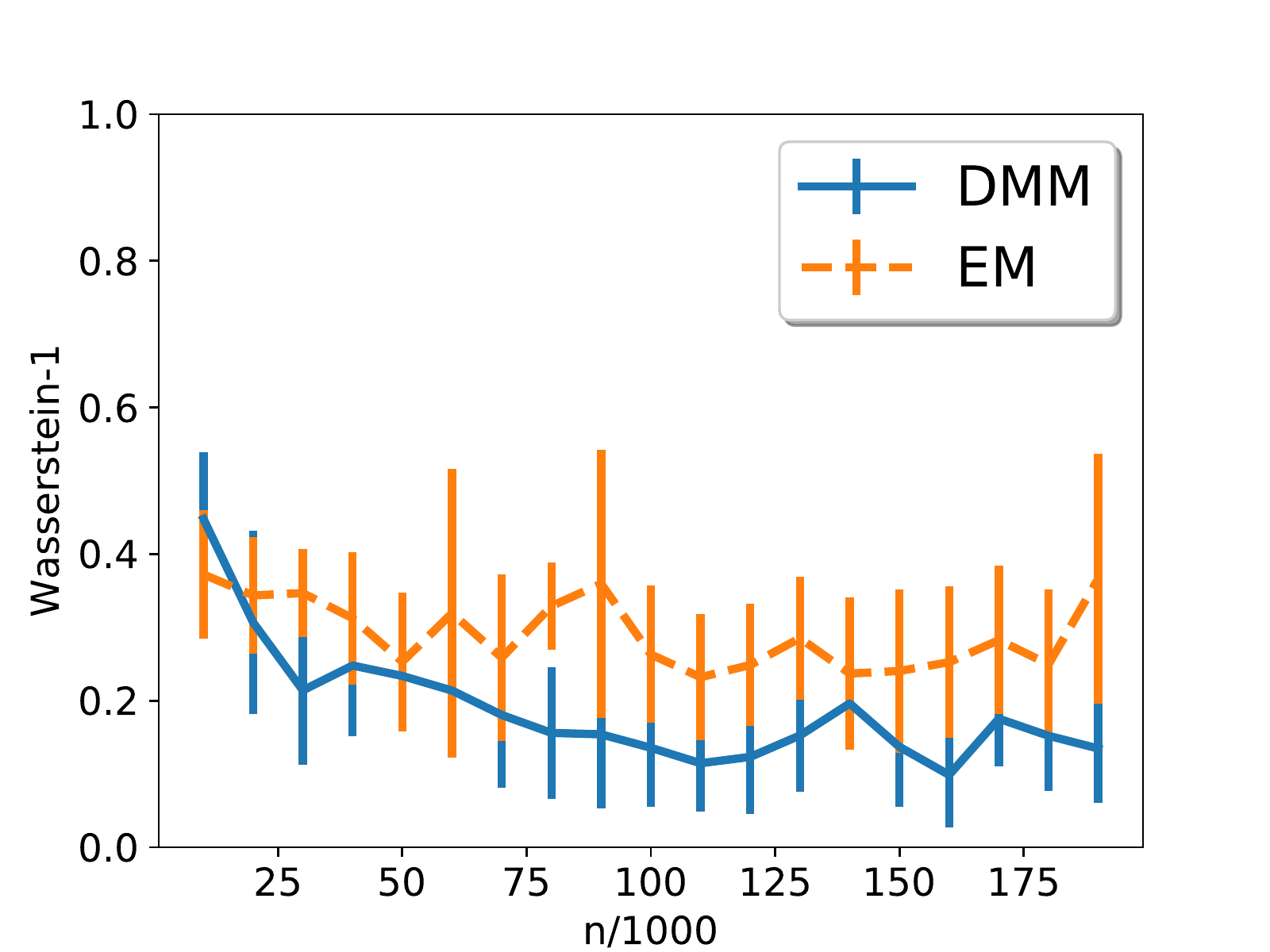}}
	%\subfigure[$\|\mu\|=2, C_1 = 2, C_2 = 4$]%
	%{\label{fig:k3d} \includegraphics[width=0.4\columnwidth]
%{sim_overn_error_k03_k3_d100_factormodel2_weightseven_sigma1_factorweights2_factorthetas4.pdf}}
	\caption{$P_\Gamma = \frac{1}{3} N(\mu, I_d) + \frac{1}{3} N(0, I_d) + \frac{1}{3} N(-\mu, I_d)$ for increasing values of $\norm{\mu}_2$.}
	\label{fig:k3}
\end{figure}

In~\prettyref{fig:misc}, we provide further experiments to explore the adaptivity of the estimator produced by the algorithm in~\prettyref{sec:mixing_estimation}. The settings are the same as in the previous experiments except we choose a finer grid with parameter $C_1 = 2$ instead of $C_1 = 1$, for otherwise the quantization error of the weights is too large. 
%In these experiments, a coarse weights net did not contain the weights close to the truth, so we saw very poor performance of DMM until we allowed the weights net to be somewhat finer.}

\added{In~\prettyref{fig:k02_k3}, we let the true model be exactly as in~\prettyref{fig:k2c}, but we run the algorithm from~\prettyref{sec:mixing_estimation} using $k=3$. As in~\prettyref{fig:k3}, DMM seems to improve more rapidly than EM as $n$ increases. But here, DMM has higher error than EM for small $n$. In~\prettyref{fig:dirichlet}, we let $k=3$ and create a model without the symmetry structures of the models in previous experiments by drawing the atoms uniformly from the unit sphere and the weights from a $\text{Dirichlet}(1, 1, 1)$ distribution. This model is more difficult to learn for both DMM and EM, but DMM still outperforms EM in terms of accuracy.}

\begin{figure}[ht]
	\centering
	\subfigure[$P_\Gamma = \frac{1}{2} N(\mu, I_d) + \frac{1}{2} N(-\mu, I_d)$ with $\norm{\mu}_2 = 2$.] % $k_0=2, k=3$.
	{\label{fig:k02_k3} \includegraphics[width=0.32\columnwidth]{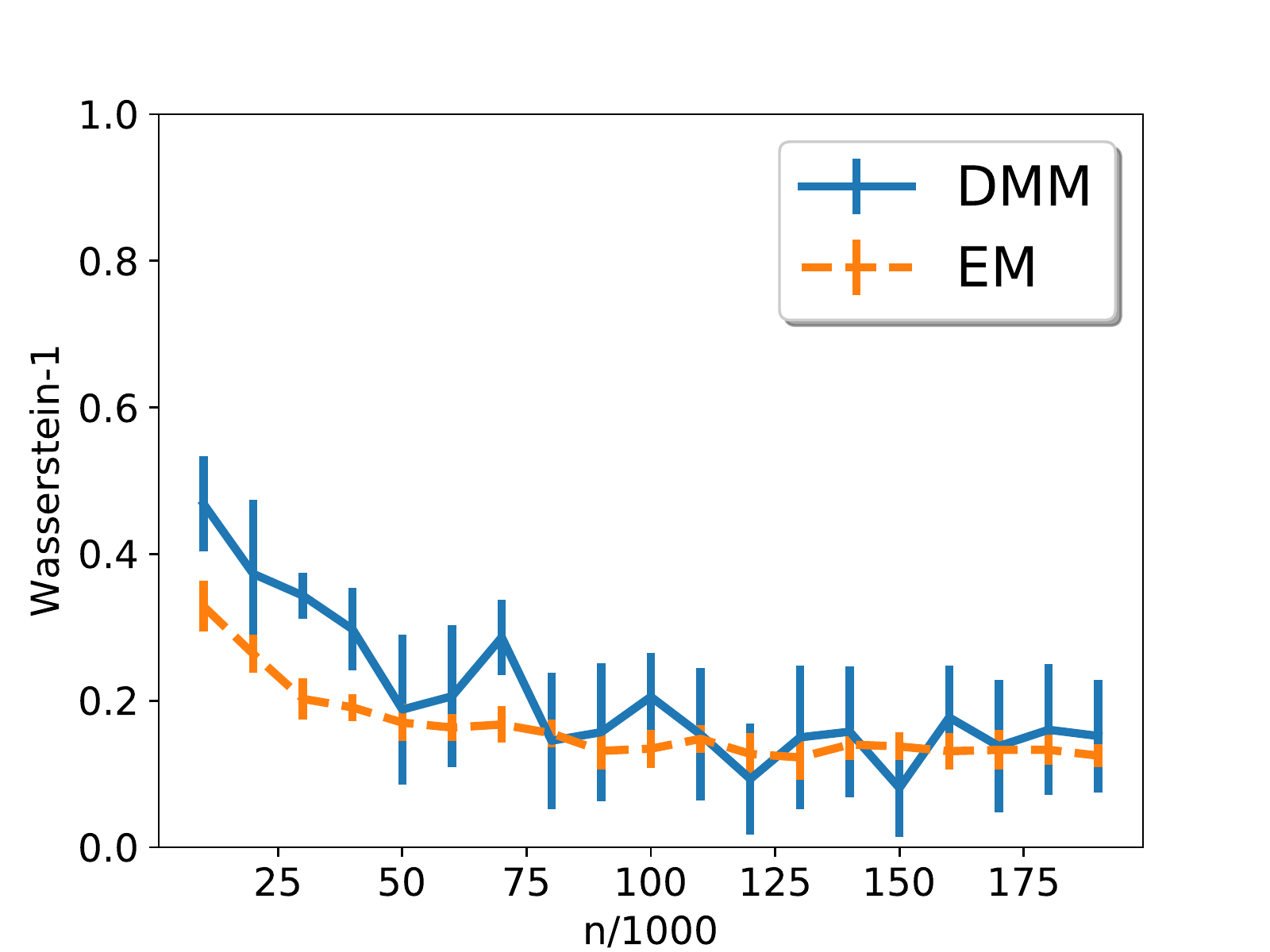}}
	\subfigure[$P_\Gamma = \sum_{j = 1}^3 w_j N(\mu_j, I_d)$ with $\mu_j$'s drawn uniformly from the unit sphere and $(w_1, w_2, w_3) \sim \text{Dirichlet}(1, 1, 1)$.]
	{\label{fig:dirichlet} \includegraphics[width=0.32\columnwidth]{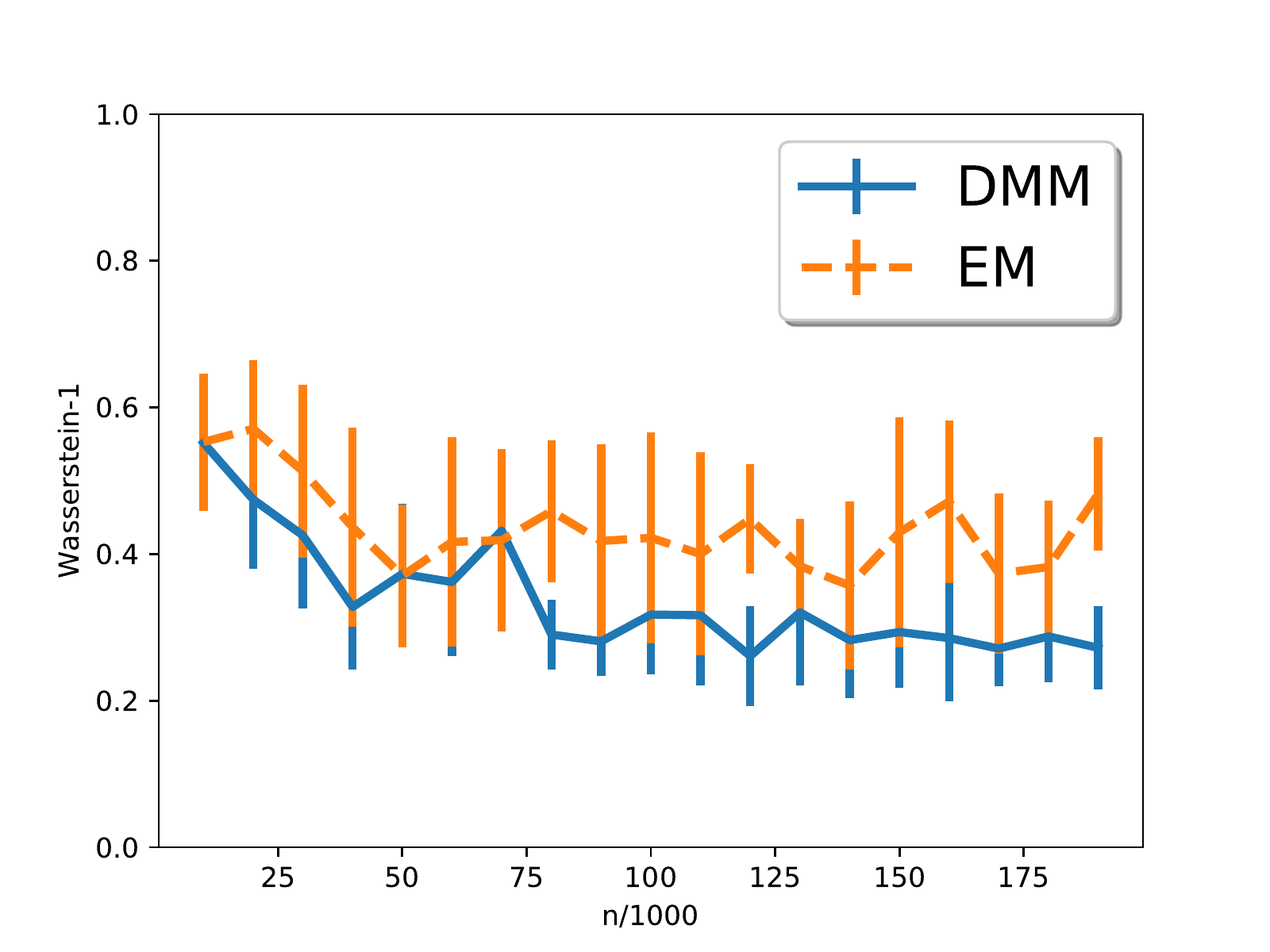}}
	\caption{Adaptivity and asymmetry.}
	\label{fig:misc}
\end{figure}

\added{Finally, we provide a table of the average running time (in seconds) for each experiment. As expected, in the experiments in~\prettyref{fig:k2}, the DMM algorithm is faster than EM. In the experiments in~\prettyref{fig:k3}, DMM manages to run faster on average than EM in the two more separated models,~\prettyref{fig:k3b} and~\prettyref{fig:k3c}. Also unsurprisingly, DMM is slower in the~\prettyref{fig:k3} experiments than those in~\prettyref{fig:k2}, because grid search is invoked in the former. In the over-fitted case in~\prettyref{fig:k02_k3}, DMM is much slower on average than EM, and in fact is slower on average than DMM in any other experiment setup.
%in addition to being less accurate than EM for small samples. 
In \prettyref{fig:dirichlet}, where the model does not have special structure, DMM nonetheless runs on average in time faster than for some of the symmetric models in~\prettyref{fig:k3}, and moreover again improves on the average run time of EM.}
\begin{table}[ht] \label{tab:run_time}
\centering 
\begin{tabular}{rrr} 
 Experiment &       DMM &        EM \\
          \prettyref{fig:k2a} &  0.114407 &  0.678521 \\
          \prettyref{fig:k2b} &  0.121561 &  1.163886 \\
          \prettyref{fig:k2c} &  0.206713 &  0.573640 \\
          \prettyref{fig:k2d} &  0.221704 &  0.818138 \\
          \prettyref{fig:k3a} &  1.118308 &  0.985668 \\
          \prettyref{fig:k3b} &  2.257503 &  2.582501 \\
          \prettyref{fig:k3c} &  2.179928 &  3.576998 \\
          \prettyref{fig:k02_k3} &  3.840112 &  1.350464 \\
          \prettyref{fig:dirichlet} &  1.907299 &  2.546508 \\
\end{tabular}
\end{table}

\section{Discussion} \label{sec:discussion}

%We now discuss connections between this work and classical estimators for Gaussian mixtures. We also touch on limitations of this paper and areas for further research.
%We end the paper with a discussion on the assumptions and limitations of this paper and open problems for further research.

%\paragraph{On assumptions of this paper}
In this paper we focused on the Gaussian location mixture model \prettyref{eq:gmm_model} in high dimensions, where the variance parameter $\sigma^2$ and the number of components $k$ are known, and the centers lie in a ball of bounded radius. Below we discuss weakening these assumptions and other open problems.

\paragraph{Unbounded centers}
While the assumption of bounded support is necessary for estimating the mixing distribution (otherwise the worst-case $W_1$-loss is infinity), it is not needed for density estimation \cite{Acharya_etal_2014,Li_Schmidt_2017,Ashtiani_etal_2018}. In fact, \cite{Acharya_etal_2014} first uses a crude clustering procedure to partition the sample into clusters whose means are close to each other, then zooms into each cluster to perform density estimation. For the lower bound, the worst case occurs when each cluster is equally weighted and highly separated, so that the effective sample size for each component is $n/k$, leading to the lower bound of $\Omega(\frac{kd}{n})$. On the other hand, the density estimation guarantee for NPMLE in~\cite{Ghosal_VdV_2001,Zhang_2009,Saha_2017} relies on assumptions of either compact support or tail bound on the mixing distribution.

\paragraph{Location-scale mixtures}
We have assumed that the covariance of our mixture is known and common across components. There is a large body of work studying general location-scale Gaussian mixtures, see, e.g.,~\cite{Moitra_Valiant_2010,Ho_Nguyen_2016_annals,Ashtiani_etal_2018}. The introduction of the scale parameters makes the problem significantly more difficult. For parameter estimation, the optimal rate remains unknown even in one dimension except for $k=2$ \cite{Hardt_Price_2015}. 
%This is beyond the scope of the present paper but such considerations are important for future research. 
In the special case where all components share the same unknown variance $\sigma^2$, the optimal rate in one dimension is shown in~\cite{WY18} to be $n^{-1/(4k)}$ for estimating the mixing distribution and $n^{-1/(2k)}$ for $\sigma^2$, achieved by Lindsay's estimator \cite{Lindsay_1989}. Modifying the procedure in \prettyref{sec:mixing_estimation} by replacing the DMM subroutine with Lindsay's estimator, this result can be extended to high dimensions as follows (see \prettyref{app:mixing_est_unknownsigma} for details), provided that the unknown covariance matrix is isotropic; otherwise the optimal rate is open.

\begin{thm}[Unknown common variance] \label{thm:mixing_est_unknownsigma}
Assume the setting of \prettyref{thm:mixing_est}, where $P_\Gamma = \Gamma * N(0,\sigma^2 I_d)$ for some unknown $\sigma$ bounded by some absolute constant $C$ and $\Gamma \in \calG_{k,d}$.
 Given $n$ i.i.d.~observations from $P_\Gamma$, there exists an estimator $(\hat\Gamma,\hat\sigma)$ such that
%\footnote{The notation $a \asymp_k b$ means that $c_k a \leq b \leq C_k a$ for some constants $c_k, C_k$ depending only on $k$. }
\begin{equation}
\mb E W_1(\hat \Gamma, \Gamma) 
\lesssim_k \(\frac{d}{n} \)^{1/4} \wedge 1 + n^{-1/(4k)}, \quad
\mb E |\hat\sigma^2-\sigma^2|
\lesssim_k n^{-1/(2k)}.
%\label{eq:mixing_est_avg_risk}
\end{equation}
Furthermore, both rates are minimax optimal.
\end{thm}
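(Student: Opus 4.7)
}

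The plan is to follow the two-stage architecture of \prettyref{thm:mixing_est}, replacing the denoised method of moments subroutine by Lindsay's estimator, which simultaneously estimates a one-dimensional $k$-atomic mixing distribution and an unknown common variance at the rates $n^{-1/(4k)}$ and $n^{-1/(2k)}$ respectively (cf.~\cite{Lindsay_1989} and \cite[Theorem 4]{WY18}). The isotropy assumption on the noise is crucial, because it guarantees that all one-dimensional projections share the same unknown scalar $\sigma^2$, which is exactly the setting in which Lindsay's univariate procedure applies.

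The PCA step from \prettyref{sec:dtok} goes through unchanged. Indeed, $\Expect[X X^\top] = \Sigma + \sigma^2 I_d$ with $\Sigma = \sum_j w_j \mu_j \mu_j^\top$ of rank at most $k$, and adding $\sigma^2 I_d$ does not alter the top-$k$ eigensubspace. Running \prettyref{lem:cov_bound} (whose proof only uses sub-Gaussianity of bounded-center Gaussian mixtures and thus extends to any $\sigma$ bounded by an absolute constant) together with \prettyref{lem:perturbation_psd} applied to $\Sigma$ and $\Sigma' = \hat\Sigma_X - \hat c\, I_d$ for any scalar $\hat c$ (the top-$k$ eigenvectors are translation-invariant) yields
\[
W_1(\Gamma, \Gamma_{\hat H}) \lesssim_k (d/n)^{1/4}
\]
with the same failure probability as before. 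This handles the subspace-reduction component of the risk.

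After projecting the second half of the sample onto $\hat V$, the observations are i.i.d.\ from $\gamma * N(0,\sigma^2 I_k)$ with $\gamma \in \calG_{k,k}$. One would then modify \prettyref{algo:gmm_est} by replacing $\DMM(\cdot)$ with Lindsay's estimator, which returns both $\widehat{\gamma_\theta}$ and $\widehat{\sigma^2_\theta}$ from $\{\iprod{\theta}{x_i}\}_{i\in[n]}$. The analogue of \prettyref{lem:w1_sup_bound} would give
\[
\max_{\theta \in S^{k-1}} W_1(\widehat{\gamma_\theta}, \gamma_\theta) \lesssim_k n^{-1/(4k)} \sqrt{\log(1/\delta)},
\qquad
\max_{\theta \in S^{k-1}} |\widehat{\sigma^2_\theta} - \sigma^2| \lesssim_k n^{-1/(2k)} \sqrt{\log(1/\delta)},
\]
from which \prettyref{lem:w1_dim_reduction} and the candidate-grid construction of \prettyref{eq:candidates} yield, exactly as in \prettyref{lmm:DMM-kdim}, the bound $W_1(\gamma,\hat\gamma) \lesssim_k n^{-1/(4k)}$. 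We set $\hat\sigma^2 = \widehat{\sigma^2_{e_1}}$ (or the average over any finite set of directions), giving the claimed variance rate. Summing the two error contributions via the triangle inequality \prettyref{eq:w1-triangle} and integrating the tail bound produces \prettyref{thm:mixing_est_unknownsigma}'s upper bound.

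For the lower bounds, the split \prettyref{eq:lb_split} applies: the $(d/n)^{1/4}$ term still follows from \cite[Theorem 10]{Wu_Zhou_2019} applied to the symmetric 2-GM subfamily (which is a subfamily of the unknown-$\sigma$ model since $\sigma=1$ is a valid value), while the $n^{-1/(4k)}$ and $n^{-1/(2k)}$ terms follow from the one-dimensional unknown-variance lower bounds of \cite[Proposition 7]{WY18} embedded in $d$ dimensions by supporting $\Gamma$ on a single coordinate axis. The main technical obstacle is to carry out the chaining argument underlying \prettyref{lem:w1_sup_bound} for Lindsay's estimator rather than DMM, because Lindsay's procedure involves a joint optimization over the mixing distribution and $\sigma^2$ and its output does not admit an explicit linear-in-data representation. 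One way around this is to first estimate $\sigma^2$ once (at the optimal rate $n^{-1/(2k)}$) from a sacrificed sample-splitting batch, and then run the known-variance DMM of \prettyref{sec:mixing_estimation} on the remaining sample with this $\hat\sigma^2$ plugged in; a short perturbation argument shows that plugging in an $n^{-1/(2k)}$-accurate variance contributes only lower-order terms to the moment estimates, so the original chaining analysis applies essentially verbatim.
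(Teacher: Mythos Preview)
Your overall architecture---PCA followed by Lindsay's estimator on the projected data, with $\hat\sigma^2$ read off from a single direction---is exactly what the paper does, and your treatment of the subspace step and the lower bounds is correct (though the one-dimensional unknown-variance lower bounds come from \cite[Proposition 9]{WY18}, not Proposition 7).

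Where you diverge from the paper is in the low-dimensional step, and here there is a genuine gap. You flag the chaining argument behind \prettyref{lem:w1_sup_bound} as an obstacle for Lindsay's estimator and propose a workaround: estimate $\sigma^2$ once at rate $n^{-1/(2k)}$, plug it into the known-variance DMM, and argue the resulting perturbation to the moment estimates is lower order. This last claim is false. Using Hermite polynomials at scale $\hat\sigma$ instead of $\sigma$ biases each deconvolved moment by $O(|\hat\sigma^2-\sigma^2|)=O(n^{-1/(2k)})$, which \emph{dominates} the $n^{-1/2}$ statistical error. Feeding this into the moment comparison inequality of \prettyref{lem:w1_moments_ub} gives $W_1 \lesssim_k (n^{-1/(2k)})^{1/(2k-1)} = n^{-1/(2k(2k-1))}$, strictly worse than the claimed $n^{-1/(4k)}$ for every $k\ge 2$.

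The paper sidesteps this entirely by observing that the chaining is not an obstacle at all. The key point is that \prettyref{lem:w1_sup_bound} is really a statement about the \emph{data}, not the estimator: the chaining bounds $\max_{\theta,r}|\tilde m_r(\theta)-m_r(\gamma_\theta)|$ uniformly (eq.~\prettyref{eq:w1_sup_bound2}), and this argument works identically for $r\le 2k$ as for $r\le 2k-1$. Lindsay's estimator, like DMM, comes with a deterministic guarantee that depends only on the accuracy of the first $2k$ empirical moments (via the moment comparison inequality \cite[Proposition 3]{WY18}), so once the moments are controlled uniformly, the $W_1$ and $\sigma^2$ bounds follow for every $\theta$ simultaneously, with exponent $1/(4k)$ replacing $1/(4k-2)$. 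No plug-in or sample-splitting is needed.
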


\paragraph{Number of components}
This work assumes that the parameter $k$ is known and fixed. Since the centers are allowed to overlap arbitrarily, $k$ is effectively an upper bound on the number of components.
If $k$ is allowed to depend on $n$, the optimal $W_1$-rate is 
%one-dimensional version of \prettyref{thm:mixing_est} 
shown in \cite[Theorem 5]{WY18} to be $\Theta(\frac{\log\log n}{\log n})$ provided $k=\Omega(\frac{\log n}{\log\log n})$, including nonparametric mixtures. Extending this result to the high-dimensional setting of \prettyref{thm:mixing_est} is an interesting future direction.

%If $k$ is allowed to depend on $n$, the optimal $W_1$-rate is 
%%one-dimensional version of \prettyref{thm:mixing_est} 
%shown in \cite{WY18} to be $\Theta(n^{-1/(4k-2)})$ and $\Theta(\frac{\log\log n}{\log n})$ 
%for $k=O(\frac{\log n}{\log\log n})$ and $k=\Omega(\frac{\log n}{\log\log n})$, respectively. Extending this result to the high-dimensional setting of \prettyref{thm:mixing_est} is an interesting future direction.

The problem of selecting the mixture order $k$ has been extensively studied.
For instance, many authors have considered likelihood-ratio based tests; however, standard asymptotics for such tests may not hold \cite{Hartigan_1985}. Various workarounds have been considered, including method of moments \cite{Lindsay_1989,dacunha1997estimation}, tests inspired by the EM algorithm \cite{Li_Chen_2010}, quadratic approximation of the log-likelihood ratio \cite{Liu_Shao_2003}, and penalized likelihood \cite{gassiat2012consistent}.
A common practical method is to infer $k$ from an eigengap in the sample covariance matrix. In our setting, this technique is not viable even if the model centers are separated, since the atoms may all lie on a low-dimensional subspace. However, under separation assumptions we may infer a good value of $k$ from the estimated mixing distribution $\hat\Gamma$ of our algorithm.

\paragraph{Efficient algorithms for density estimation}
%As mentioned in \prettyref{sec:related}, for the high-dimensional $k$-GM model, achieving the optimal rate $O(\sqrt{\frac{d}{n}})$ (or the less ambitious goal of outperforming the result $\tilde O((\frac{d}{n})^{1/4})$ of~\cite{Acharya_etal_2014}) with a proper density estimate in polynomial time is unresolved. A notable exception is the special case of symmetric 2-GM \prettyref{eq:sym2GM}, for which spectral algorithms achieve the sharp rate for both parameter estimation and density estimation. Indeed, by \prettyref{thm:H-M}, since both the first and third moment tensors are zero by symmetry, we have
%\begin{equation}
%H(P_\mu,P_{\mu'})^2 \asymp \Fnorm{\mu \mu^\top-\mu' \mu'^\top}^2.
%\label{eq:H-M-sym2GM}
%\end{equation}
%Let $\hat\mu\hat\mu^\top$ be the best rank-one approximation of $\hat\Sigma$ in \prettyref{eq:hatSigma}. Then for $d \leq n$, it is easy to show that (see, e.g.,~\cite[Appendix B]{Wu_Zhou_2019})   $\Expect\Fnorm{\mu\mu^\top-\hat\mu\hat\mu^\top}^2 = O(\frac{d}{n})$ and $\Expect[\min\{\|\hat\mu-\mu\|,\|\hat\mu+\mu\|\}] = O((\frac{d}{n})^{1/4})$. In contrast, if the 2-GM is asymmetric, then it is necessary to use the third-order moment tensor as the model is not identified by the first two moments; however, low-rank tensor approximation is difficult without extra separation assumptions, creating algorithmic challenges for density estimation.

As mentioned in \prettyref{sec:related}, for the high-dimensional $k$-GM model, achieving the optimal rate $O_k(\sqrt{d/n})$ with a proper density estimate in polynomial time is unresolved except for the special case of $k=2$. 
Such a procedure, as described in~\prettyref{ssec:density_est_efficient}, is of method-of-moments type (involving the first three moments); nevertheless, 
thanks to the observation that the one-dimensional subspace of spanned by the centers of a zero-mean $2$-GM can be extracted from the covariance matrix, 
we can reduce the problem to one dimension by projection, thereby sidestepping third-order tensor decomposition which poses computational difficulty.
Unfortunately, this observation breaks down for $k$-GM with $k\geq 3$, as covariance alone does not provide enough information for learning the subspace accurately.
For this reason it is unclear whether the algorithm in \prettyref{ssec:density_est_efficient} is capable to achieve the optimal rate of $\sqrt{d/n}$ and so far we can only prove a rate of $(d/n)^{1/4}$ in \prettyref{thm:density_est_efficient}. Closing this computational gap (or proving its impossibility) is a challenging open question.

\paragraph{Analysis of the MLE}

A natural approach to any estimation problem is the maximum likelihood estimator, which, for the $k$-GM model \prettyref{eq:pGamma}, is defined as $\hat \Gamma_{\MLE} = \argmax_{\Gamma \in \calG_{k,d}} \sum_{i=1}^n \log p_\Gamma(X_i)$. 
%Computational issues aside, 
Although this non-convex optimization is difficult to solve in high dimensions, it is of interest to understand the statistical performance of the MLE and whether it can achieve the optimal rate of density estimation in \prettyref{thm:density_est}.

A rate of convergence for the MLE is typically found by bounding the \emph{bracketing entropy} of the class of square-root densities; see, e.g.,~\cite{VdV_Wellner_1996,VdG_2000}. Given a function class $\mc F$ of real-valued functions on $\reals^d$, its $\epsilon$-bracketing number $N_{[]}(\epsilon)$ is defined as the minimum number of brackets (pairs of functions which differ by $\epsilon$ in $L_2$-norm), such that each $f\in\calF$ is sandwiched between one of such brackets. 
Suppose that the class $\mc F$ is parametrized by $\theta$ in some $D$-dimensional space $\Theta$. For such parametric problems, it is reasonable to expect that the bracketing number of $\mc F$ behaves similarly to the covering number of $\Theta$ as $\(\frac{1}{\epsilon}\)^{O(D)}$ (see, for instance, the discussion on \cite[p.~122]{VdG_2000}). 
%\added{In low-dimensional Gaussian mixtures, these two quantities are often related using the smoothness of the parameterization plus an additional truncation argument (see, e.g.~\cite[Theorem 3.1]{Ghosal_VdV_2001} and \cite[Lemma 2.1]{Ho_Nguyen_2016_annals}).} -- both results are for infinite mixtures!
Such bounds for Gaussian mixtures were obtained in \cite{maugis2011non}. For example, for $d$-dimensional $k$-GMs,~\cite[Proposition B.4]{maugis2011non} yields the following bound for the global bracketing entropy:
\begin{equation}
\log N_{[]}(\epsilon) \lesssim k d \log\frac{C d}{\epsilon}. \label{eq:maugis}
\end{equation}
Using standard result based on bracketing entropy integral (c.f.~e.g.~\cite[Theorem $7.4$]{VdG_2000}), this result leads to the following high-probability bound for the MLE $\hat \Gamma_{\rm ML}$:
\begin{equation}
H(P_{\hat \Gamma_{\rm ML}}, P_\Gamma) \le C \sqrt{\frac{d k \log (d n)}{n}}, \label{eq:mle_nonsharp}
\end{equation}
which has the correct dependency on $k$, but is suboptimal by a logarithmic factor compared to \prettyref{thm:density_est}.
It is for this reason that we turn to the Le Cam-Birg\'e estimator, which relies on bounding the local Hellinger entropy without brackets, in proving \prettyref{thm:density_est}.
Obtaining a local version of the bracketing entropy bound in \prettyref{eq:maugis} and determining the optimality (without the undesirable log factors) of the MLE for high-dimensional GM model remains open.

\paragraph{Adaptivity}
%\nbwu{Mention the worst-case nature of \prettyref{thm:mixing_est}, and say that like in \cite{HK2015,WY18}, the way to go beyond this pessimistic result is to consider ... (I don't think you need to define any notations like you did in Definition 1). I think (please double check):  if we simply change the grid size  from $n^{-\frac{1}{4k-2}}$ to $n^{-1/2}$ in \prettyref{algo:gmm_est}, we automatically get adaptivity, with the price of increasing the time to $n^{...}$. The analysis is analogous by using the improved version of the moment-to-Wasserstein inequality in \prettyref{lem:w1_moments_ub} (see \cite[Prop.~4]{WY18}). Then say something like for ease of exposition, we do not pursue this in details.}
%\nb{Here is a crucial point though, and I've tried to discuss this below. I originally thought our density estimator, OR for instance, as you propose, changing the grid size in~\prettyref{algo:gmm_est} to $n^{-1/2}$ would give an adaptive estimator. I'm guessing that it does, but I do not know how to prove it. This is because of the reliance on sliced Wasserstein distance in the proof. Even if the atoms of $\Gamma$ are well-separated, does it mean the atoms of $\Gamma_\theta$ are where $\theta = \argmax W_1(\Gamma_\theta, \hat \Gamma_\theta)$? I discussed this some below. I think it would be interesting to work on this in the future. Anyway here is the current attempt at this section:}

The rate in~\prettyref{thm:mixing_est} is optimal in the worst-case scenario where the centers of the Gaussian mixture can overlap. 
%It is reasonable to expect that under stronger assumptions, a sharper rate is achievable. Indeed, 
To go beyond this pessimistic result, in one dimension, \cite{HK2015} showed that when the atoms of $\Gamma$ form $k_0$ well-separated (by a constant) clusters (see~\cite[Definition 1]{WY18} for a precise definition), the optimal rate is $n^{-1/(4(k-k_0) +2)}$, interpolating the rate $n^{-1/(4k-2)}$ in the worst case ($k_0=1$) and the parametric rate $n^{-1/2}$ in the best case ($k_0=k$). Furthermore, this can be achieved adaptively by either the minimum distance estimator \cite[Theorem 3.3]{HK2015} or the DMM algorithm \cite[Theorem 2]{WY18}. 

In high dimensions, it is unclear how to extend the adaptive framework in \cite{HK2015}.
For the procedure considered in \prettyref{sec:mixing_estimation}, by~\prettyref{lem:perturbation_psd}, the projection $\hat V$ obtained from PCA preserves the separation of the atoms of $\Gamma$. 
Therefore, in the special case of $k=2$, if we first center the data so that the projection $\gamma$ in \prettyref{eq:gamma} is one-dimensional, then the adaptive guarantee of the DMM algorithm allows us to adapt to the clustering structure of the original high-dimensional mixture; however, if $k>2$, \prettyref{algo:gmm_est} must be invoked to learn the multivariate $\gamma$, and it does not seem possible to obtain an adaptive version of \prettyref{lem:w1_sup_bound}, since some of the projections may have poor separation, e.g. when all the atoms are aligned with the first coordinate vector.

\section*{Acknowledgments}

The authors would like to thank David Pollard for a helpful conversation about the chaining technique in \prettyref{lem:chain_tail_probs} and to Elisabeth Gassiat
for bringing \cite{gassiat2014local,gassiat2012consistent} to our attention.

\appendix
%!TEX root = gmm.tex

\section{Auxiliary lemmas} \label{sec:appendix_auxiliary}

The following moment comparison inequality bound the Wasserstein distance between two univariate $k$-atomic distributions using their moment differences:

\begin{lem}[{\cite[Proposition 1]{WY18}}] \label{lem:w1_moments_ub} 
For any $\gamma, \gamma' \in \mc G_{k, 1}$ supported on $[-R,R]$,
\begin{align*}
W_1(\gamma, \gamma') &\le \added{C \cdot k} \max_{r \in [2k-1]} |m_r(\gamma) - m_r(\gamma')|^{1/(2k - 1)},
\end{align*}
where $C$ depends only on $R$.
%\added{where $C$ is a constant not depending on $k, \gamma,$ or $\gamma'$.}
\end{lem}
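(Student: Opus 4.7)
The plan is to proceed via the Kantorovich--Rubinstein dual formulation of $W_1$ combined with a careful polynomial testing argument, exploiting the fact that the signed measure $\nu \triangleq \gamma - \gamma'$ is supported on at most $2k$ points of $[-R,R]$ and has zero total mass. First I would write
\[
W_1(\gamma, \gamma') \;=\; \sup\Bigl\{\textstyle\int f\, d\nu \;:\; \|f\|_{\mathrm{Lip}} \le 1,\ f(0)=0\Bigr\},
\]
so that the admissible test functions are $1$-Lipschitz on $[-R,R]$ with $|f|\le R$. For any polynomial $p(x)=\sum_{r=1}^{2k-1} p_r x^r$ of degree at most $2k-1$ with zero constant term, the moment identity gives
\[
\int p\, d\nu \;=\; \sum_{r=1}^{2k-1} p_r\,\bigl(m_r(\gamma)-m_r(\gamma')\bigr),
\]
whence $|\int p\, d\nu| \le (2k-1)\,\max_r|p_r|\,\epsilon$ with $\epsilon \triangleq \max_{r\in[2k-1]}|m_r(\gamma)-m_r(\gamma')|$. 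The overall bound then reads $|\int f\, d\nu| \le 2\|f-p\|_\infty + (2k-1)\,\max_r|p_r|\,\epsilon$, so the problem reduces to exhibiting a polynomial $p$ of degree $\le 2k-1$ with a favourable joint estimate on $\|f-p\|_\infty$ and $\max_r|p_r|$.

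The quantitative heart of the argument is a scale-dependent polynomial approximation lemma: for any parameter $\delta>0$ and any $1$-Lipschitz $f$ on $[-R,R]$, there exists a polynomial $p_\delta$ of degree $2k-1$ such that $\|f-p_\delta\|_\infty \lesssim_{k,R}\delta$ and $\max_r|p_{\delta,r}| \lesssim_{k,R}\delta^{-(2k-2)}$. I would construct $p_\delta$ by first mollifying $f$ at scale $\delta$, so that the $j$-th derivative of the mollified function satisfies $|f_\delta^{(j)}|\lesssim \delta^{1-j}$, and then truncating its Chebyshev expansion at degree $2k-1$; the coefficient bound follows from a Markov-type inequality translating the derivative control into a coefficient control after a change of basis. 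Substituting this into the previous display yields $W_1(\gamma,\gamma') \lesssim_{k,R} \delta + \delta^{-(2k-2)}\epsilon$, and optimising in $\delta$ with the choice $\delta \asymp \epsilon^{1/(2k-1)}$ produces the claimed Hölder-type rate $W_1(\gamma,\gamma') \lesssim_k \epsilon^{1/(2k-1)}$.

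\paragraph{Main obstacle.} The delicate step is the simultaneous control of approximation error and coefficient size in the polynomial lemma, and getting the \emph{right} exponents so that the $\delta$-optimization yields exactly $1/(2k-1)$. A naive Jackson-type uniform approximation alone is inadequate, as the irreducible error for a Lipschitz function with a fixed polynomial degree $2k-1$ is $\Theta_k(R)$ and does not shrink with $\epsilon$. One therefore has to exploit the fact that $|\nu|$ is concentrated on $\le 2k$ atoms and that the only failure mode for the moment map on $\mathcal{G}_{k,1}$ is the near-coalescence of atoms, which is precisely what forces the Hölder exponent $1/(2k-1)$ in lower-bound examples (e.g.\ a single atom at $0$ approximated by $k$ clustered atoms chosen so that the first $2k-2$ moments vanish). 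Verifying that the construction handles such clustered supports without losing factors in the balance between $\delta$ and $\delta^{-(2k-2)}\epsilon$ is the main technical hurdle; once that trade-off is correctly calibrated, the dual-plus-moment-matching framework above delivers the lemma with a constant depending only on $k$ and $R$.
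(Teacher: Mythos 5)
Your proof plan has a genuine gap at its core, and it is a gap you almost diagnose yourself. The ``scale-dependent polynomial approximation lemma'' you invoke is self-contradictory: in your ``Main obstacle'' paragraph you correctly note that the best degree-$(2k-1)$ uniform approximation of a $1$-Lipschitz $f$ on $[-R,R]$ has error $\Theta_k(R)$ (e.g.\ for $f(x)=|x|$, by Bernstein's theorem the best degree-$n$ error is $\Theta(1/n)$), yet the lemma asserts $\|f-p_\delta\|_\infty\lesssim_{k,R}\delta$ for all $\delta>0$ at fixed degree $2k-1$, which is impossible. The mollify-then-truncate construction does not repair this: mollification at scale $\delta$ gives $\|f_\delta^{(2k)}\|_\infty\asymp\delta^{1-2k}$, so the Jackson bound on the degree-$(2k-1)$ truncation error is $\lesssim_k \delta^{1-2k}$, which diverges as $\delta\to 0$ rather than shrinking like $\delta$. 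The fix you gesture at --- exploiting that $\nu=\gamma-\gamma'$ is supported on $\le 2k$ atoms --- is exactly where the work lies and is not carried out. The natural move, interpolating $f$ exactly at the $\le 2k$ atoms by a degree-$(2k-1)$ polynomial (zero approximation error), runs into precisely the clustering issue you name: higher-order divided differences of a Lipschitz function on $\delta$-clustered nodes blow up, and perturbing to $\delta$-separated atoms changes each moment by $\Theta_k(\delta)$, which after multiplying by the coefficient bound $\delta^{-(2k-2)}$ leaves a non-vanishing $\delta^{-(2k-3)}$ residual. The trade-off you need does not close within the Lipschitz-duality framework.

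The argument in the cited source [WY18, Prop.~1] is a genuinely different route that avoids both duality and interpolation. It uses the one-dimensional identity $W_1(\gamma,\gamma')=\int_{-R}^R|F_\gamma-F_{\gamma'}|\,dx$: the integrand $g\triangleq F_\gamma-F_{\gamma'}$ is a step function with $\le 2k$ jumps and hence at most $2k-2$ sign changes in $(-R,R)$. Take the monic polynomial $q(x)=\prod_i(x-y_i)$ of degree $m\le 2k-2$ with one root at each sign-change location, so that $gq$ has constant sign. Splitting $[-R,R]$ into $\{|q|\ge\delta^m\}$ and its complement (of measure $\le 2m\delta$), and using $\|g\|_\infty\le 1$, gives $\int|g|\le\delta^{-m}\bigl|\int gq\bigr|+2m\delta$. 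Integrating $\int x^j g\,dx$ by parts converts it into a moment difference of degree $j+1\le 2k-1$ (the boundary terms vanish because $F_\gamma=F_{\gamma'}$ at $\pm R$), yielding $W_1\lesssim_k\delta+\delta^{-(2k-2)}\epsilon$, and the stated rate and $C\cdot k$ constant follow by taking $\delta\asymp\epsilon^{1/(2k-1)}$. The structural advantage over your plan is that the balancing term, $2m\delta\|g\|_\infty$, is controlled by $\|g\|_\infty\le 1$ unconditionally, rather than by a polynomial approximation of a Lipschitz test function, which cannot be made small at bounded degree.
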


\begin{lem}[{Hypercontractivity inequality \cite[Theorem 1.9]{SS2012}}] \label{lem:hypercontractivity} 
Let $Z\sim N(0,I_d)$. Let $g: \mb R^d \rightarrow \mb R$ be a polynomial of degree at most $q$. Then for any $t > 0$, 
\begin{align*}
\mb P \{ |g(Z) - \mb E g(Z)| \ge t \} \le e^2\exp \( - \(\frac{t^2}{C \Var \, g(Z) }\)^{1/q} \),
\end{align*}
where $C$ is a universal constant. 
\end{lem}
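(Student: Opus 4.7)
The plan is to reduce the tail bound to the classical hypercontractive moment estimate for Gaussian polynomials and then to optimize a Markov-type bound over the moment order. Since $f(x) \triangleq g(x) - \Expect g(Z)$ is again a polynomial of degree at most $q$ with $\Expect f(Z) = 0$ and $\Var f(Z) = \Var g(Z) \triangleq \sigma^2$, it suffices to bound $\prob{|f(Z)| \geq t}$. The key ingredient is the classical Gaussian hypercontractive moment estimate: for every polynomial $f$ on $\reals^d$ of degree at most $q$ and every $p \geq 2$,
\[
\Norm{f(Z)}_{L_p} \leq (p-1)^{q/2} \Norm{f(Z)}_{L_2} = (p-1)^{q/2} \sigma.
\]
This is the $L_p$-norm content of Theorem 1.9 in \cite{SS2012}; its standard derivation via Nelson's theorem writes $f = \sum_{k=0}^q f_k$ in Wiener chaos, lets $T_\rho$ denote the Ornstein--Uhlenbeck semigroup with $\rho = 1/\sqrt{p-1}$, and uses the $L_2 \to L_p$ contraction of $T_\rho$ applied to the preimage $\sum_k (p-1)^{k/2} f_k$.

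With the moment bound in hand, Markov's inequality yields, for every $p \geq 2$ and $t > 0$,
\[
\prob{|f(Z)| \geq t} \;\leq\; \frac{\Expect |f(Z)|^p}{t^p} \;\leq\; \left( \frac{(p-1)^{q/2} \sigma}{t} \right)^p.
\]
I would then select $p$ to minimize the right-hand side by setting $p - 1 = (t/(e\sigma))^{2/q}$, which makes the parenthesized factor equal to $e^{-1}$ and yields
\[
\prob{|f(Z)| \geq t} \;\leq\; e^{-p} \;=\; e^{-1} \exp\!\left( -\left( \frac{t^2}{e^2 \sigma^2} \right)^{1/q} \right).
\]
This matches the claimed form with $C = e^2$.

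The only subtlety is the restriction $p \geq 2$, equivalently $t \geq e \sigma$; the optimization above is valid only in that regime. For $t < e \sigma$, however, $(t^2/(e^2 \sigma^2))^{1/q} \leq 1$, so the right-hand side of the target bound is at least $e^2 \cdot e^{-1} = e > 1$, and the trivial estimate $\prob{|f(Z)| \geq t} \leq 1$ already suffices. The $e^2$ prefactor in the statement thus conveniently absorbs both the $e^{-1}$ from the optimization in the large-$t$ regime and this small-$t$ patch. The main obstacle in this plan is not the Markov-plus-optimization step, which is routine, but rather the hypercontractive $L_p$-norm estimate itself; that is the substantive analytic content and is invoked as a black box from \cite{SS2012}.
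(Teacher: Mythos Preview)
The paper does not supply its own proof of this lemma; it is quoted verbatim as \cite[Theorem 1.9]{SS2012} and used as a black box. Your sketch is the standard derivation of such a tail bound from the Gaussian hypercontractive moment inequality $\Norm{f(Z)}_{L_p}\le (p-1)^{q/2}\Norm{f(Z)}_{L_2}$ via Markov's inequality and optimization in $p$, and it is correct: the choice $p-1=(t/(e\sigma))^{2/q}$ makes the base equal to $e^{-1}$, yielding $e^{-p}=e^{-1}\exp(-(t^2/(e^2\sigma^2))^{1/q})$ for $t\ge e\sigma$, and the $e^2$ prefactor absorbs the small-$t$ regime trivially. Since the paper offers no proof to compare against, there is nothing further to contrast; your argument is exactly the textbook route and would be an acceptable justification of the cited statement.
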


\begin{forme}
In~\prettyref{lem:moment_increments_sg}, the bound is:
\begin{align*}
\mb P \{ |f_r(\theta_1) - f_r(\theta_2)| \ge \norm{\theta_1 - \theta_2}_2 \lambda \} \le 2 \exp \(- \frac{\lambda^2}{8r^r R^{2r}} \) + \\
e^2 \exp \(- \( \frac{ \lambda^2}{C\( \sum_{j=0}^{\lfloor{r/2}\rfloor} c_{j, r} \sqrt{(r-2j)^2 2^{2(r-2j)-1} (R^{2(r-2j)} + (ck(r-2j))^{r-2j}}\)^2} \)^{1/r} \).
\end{align*}
\end{forme}

%-------------------------------------------------------------------------------
\begin{lem} \label{lem:moment_increments_sg}
Fix $r \in [2k-1]$. Let $f_r(\theta)$ be the process defined in~\prettyref{eq:empirical_process}. Let $\lambda > 0$. There are positive constants $C, c$ such that, for any $\theta_1,\theta_2\in S^{k-1}$,
\begin{align}
& \mb P \{ |f_r(\theta_1) - f_r(\theta_2)| \ge \norm{\theta_1 - \theta_2}_2 \lambda \} \le \added{C \exp \( - \frac{c \lambda^{2/r}}{kr} \).} \label{eq:increment-ub}\\
& \mb P \{ |f_r(\theta_1)| \ge  \lambda \} \le \added{C \exp \( - \frac{c \lambda^{2/r}}{kr} \).} \label{eq:single-ub}
\end{align}
\end{lem}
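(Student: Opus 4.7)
The plan is to deduce both~\prettyref{eq:increment-ub} and~\prettyref{eq:single-ub} from the Gaussian hypercontractivity inequality (\prettyref{lem:hypercontractivity}) applied conditionally. The key observation is that, writing $x_i = u_i + z_i$ with $u_i \iiddistr \gamma$ (so $\|u_i\|_2 \le R$ almost surely, since $\gamma \in \calG_{k,k}$) and $z_i \iiddistr N(0, I_k)$ independent of $u_i$, the process $f_r(\theta)$ is, conditionally on $u \triangleq (u_1,\ldots,u_n)$, a polynomial of degree $r$ in the $nk$-dimensional Gaussian vector $(z_1,\ldots,z_n)$; the same holds for the increment $f_r(\theta_1) - f_r(\theta_2)$. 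Consequently,~\prettyref{lem:hypercontractivity} applies directly once the conditional variances are controlled.

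The heart of the proof is therefore the variance computation. I would expand $H_r(\theta^\top u_i + \theta^\top z_i)$ via the translation identity $H_r(a + Z) = \sum_{j=0}^r \binom{r}{j} a^{r-j} H_j(Z)$ (which follows from the exponential generating function for the Hermite polynomials) and invoke the Mehler-type orthogonality $\Expect[H_j(\theta_1^\top Z) H_\ell(\theta_2^\top Z)] = j!\,\delta_{j\ell}(\theta_1^\top \theta_2)^j$ for $Z \sim N(0, I_k)$ and unit vectors $\theta_i$. Splitting the increment $H_r(\theta_1^\top u_i + \theta_1^\top z_i) - H_r(\theta_2^\top u_i + \theta_2^\top z_i)$ into a stochastic piece $H_j(\theta_1^\top z_i) - H_j(\theta_2^\top z_i)$ (whose variance is bounded by $j\cdot j!\,\|\theta_1-\theta_2\|_2^2$ via $1 - \rho^j \le j(1-\rho) = \tfrac{j}{2}\|\theta_1-\theta_2\|_2^2$ for unit vectors) and a deterministic piece $(\theta_1^\top u_i)^{r-j} - (\theta_2^\top u_i)^{r-j}$ (bounded by $(r-j) R^{r-j-1}\|\theta_1-\theta_2\|_2$), and recombining these via Cauchy--Schwarz with the binomial weights, I expect to obtain a uniform-in-$u$ bound of the form
\[
\Var\bigl(f_r(\theta_1) - f_r(\theta_2)\bigm| u\bigr) \;\le\; C_{r,R}\,\|\theta_1-\theta_2\|_2^2,
\]
with the appropriate $1/\sqrt{n}$ normalization of $f_r$ implicit in the target inequality. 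An analogous and simpler computation yields $\Var(f_r(\theta)\mid u) \le C_{r,R}$, where $C_{r,R}$ is polynomial in $r$ and $R$.

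With these bounds, applying~\prettyref{lem:hypercontractivity} conditionally on $u$ with $q = r$ and $t = \|\theta_1-\theta_2\|_2\lambda$ causes the $\|\theta_1-\theta_2\|_2^2$ factor in $t^2$ and in the conditional variance bound to cancel, yielding a conditional tail of the form $e^2\exp(-c\lambda^{2/r}/C_{r,R}^{1/r})$; the single-point bound~\prettyref{eq:single-ub} follows in the same way with $t = \lambda$. Since $C_{r,R}^{1/r} \lesssim r$, the stated $\exp(-c\lambda^{2/r}/(kr))$ form is recovered with the factor of $k$ absorbed as slack. Unconditioning is immediate because the bounds are uniform in $u$. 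The main obstacle is the increment variance bound: one has to carefully track the interplay between the factorial growth of $\Var H_j(Z) = j!$ and the binomial coefficients in the translation expansion, and recombine the stochastic and deterministic contributions while preserving the clean $\|\theta_1-\theta_2\|_2^2$ scaling with an $r$-polynomial constant.
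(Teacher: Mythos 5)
Your proposal correctly identifies hypercontractivity (applied conditionally on the mixture labels $u$) as the right tool for the Gaussian part of the randomness, and your variance bookkeeping via the Hermite translation identity and Mehler's formula is a cleaner route to the conditional variance bound than the paper's cruder sum-of-standard-deviations argument. However, there is a genuine gap in the reduction.

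The process $f_r(\theta) = \sqrt{n}(\tilde m_r(\theta) - m_r(\gamma_\theta))$ is centered at the \emph{unconditional} mean $\Expect[\tilde m_r(\theta)]$, which averages over both the Gaussian noise $z$ and the latent labels $u_i \iiddistr \gamma$. Conditionally on $u$, the increment $\Delta \triangleq f_r(\theta_1)-f_r(\theta_2)$ is a degree-$r$ polynomial in the Gaussians, but \prettyref{lem:hypercontractivity} controls $|\Delta - \Expect[\Delta \mid u]|$, i.e.\ the deviation from the \emph{conditional} mean. What you need is a bound on $|\Delta - \Expect[\Delta]|$, and these differ by the term
\begin{equation*}
\Expect[\Delta \mid u] - \Expect[\Delta] \;=\; \frac{1}{\sqrt n}\sum_{i=1}^n\Bigl((\theta_1^\top u_i)^r - (\theta_2^\top u_i)^r\Bigr) \;-\; \sqrt n\bigl(m_r(\gamma_{\theta_1}) - m_r(\gamma_{\theta_2})\bigr),
\end{equation*}
which is \emph{random} (it depends on the draw of $u$). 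Your ``deterministic piece'' analysis is indeed where this term originates, but treating it as deterministic and absorbing it into the conditional variance is incorrect: it has zero variance given $u$, so it never enters the hypercontractivity bound, and ``unconditioning because the bound is uniform in $u$'' only unconditions the deviation from the wrong center.

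The missing ingredient is a separate concentration estimate for $\Expect[\Delta \mid u]$ around $\Expect[\Delta]$. Since $(\theta_1^\top u_i)^r - (\theta_2^\top u_i)^r$ is an i.i.d.\ sequence bounded by $rR^r\|\theta_1-\theta_2\|_2$ (because $\|u_i\|\le R$), this is a routine Hoeffding bound, yielding a sub-Gaussian tail $2\exp(-\lambda^2/(2r^2R^{2r}))$, which dominates the hypercontractivity tail when $r\ge 2$. This is precisely the step the paper carries out before invoking hypercontractivity; once you add it (and combine via a union bound, splitting the threshold between the two pieces), the rest of your argument goes through. The same omission affects your derivation of \prettyref{eq:single-ub}.
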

\begin{proof}
Define $\Delta \triangleq \sqrt{n} \( \tilde m_r\(\theta_1\) - \tilde m_r\(\theta_2\) \)$.
Then, $f_r(\theta_1) - f_r(\theta_2) = \Delta - \mb E \Delta$.
Recall that $\tilde m_r(\theta)=\frac{1}{n}\sum_{i=1}^n H_r(\theta^\top X_i)$, where $X_i=U_i+Z_i$, $U_i \iiddistr \gamma$, and $Z_i \iiddistr N(0, I_k)$. 
Conditioning on $U=(U_1,\dots,U_n)$, we have 
\[
\Expect(\Delta| U)
= \frac{1}{\sqrt{n}} \sum_{i =1}^n \( (\theta_1^{\top}U_i)^r - (\theta_2^{\top}U_i)^r \) .
\]
Now $|(\theta_1^{\top}U_i)^r - (\theta_2^{\top}U_i)^r| \le r R^r \norm{\theta_1 - \theta_2}_2$ since $\norm{U_i}_2 \le R$.
By Hoeffding's inequality, 
\begin{equation}
\label{eq:ub-U}
\mb P \{ |\mb E (\Delta | U) - \mb E \Delta| \ge \norm{\theta_1 - \theta_2}_2 \lambda \} \le 2\exp \( - \frac{\lambda^2}{2 r^2 R^{2r}} \).
\end{equation}
We now condition on $U$ and analyze $|\Delta-\Expect(\Delta| U)|$. Since $\Delta$ is a polynomial of degree $r$ in $Z_1,\dots,Z_n$, by \prettyref{lem:hypercontractivity}, 
\begin{equation}
\label{eq:ub-Z}
\mb P \{ |\Delta-\Expect(\Delta| U)| \ge \norm{\theta_1 - \theta_2}_2 \lambda \added{|U}\} \le e^2 \exp \(-\(\frac{\norm{\theta_1 - \theta_2}_2^2}{C\Var(\Delta|U))}\)^{1/r}\lambda^{2/r} \).
\end{equation}
It remains to upper-bound $\Var \( \Delta | U \)$. We have
\[
\Var \( \Delta | U \)
=\frac{1}{n}\sum_{i=1}^n\Var \( H_r(\theta_1^\top X_i) - H_r(\theta_2^\top X_i) | U_i \).
\]
Since the standard deviation of a sum is no more than the sum of the standard deviations,
\[
\sqrt{\Var  \( H_r(\theta_1^{\top}X_i) - H_r(\theta_2^{\top}X_i) | U_i \)} 
\le \sum_{j = 0}^{\lfloor{r/2}\rfloor} c_{j, r} \sqrt{ \mb E \( \( (\theta_1^{\top}X_i)^{r - 2j} - (\theta_2^{\top}X_i)^{r - 2j} \)^2 | U_i \)}, 
\]
where \added{$c_{j, r} = \frac{r!}{2^jj!(r-2j)!}$.}
% Since $\theta^\top X_i\sim N(\theta^\top U_i,1)$ ...
For any $\ell \le r$, we have $|(\theta_1^{\top}X)^\ell -(\theta_2^{\top}X)^\ell|\le \ell \Norm{X}_2^\ell \Norm{\theta_1-\theta_2}_2 $ and thus
\[
\mb E \( \( (\theta_1^{\top}X_i)^{\ell} - (\theta_2^{\top}X_i)^{\ell} \)^2 | U_i \) 
\le \ell^2 \Norm{\theta_1-\theta_2}_2^2 \mb E (\Norm{X_i}_2^{2\ell}|U_i). 
\]
Since $\norm{X_i}_2^{2\ell} \le 2^{2\ell-1} \(\norm{Z_i}_2^{2\ell} + \norm{U_i}_2^{2\ell}\)$ and $\mb E \norm{Z_i}_2^{2\ell} \le (c k\ell)^\ell$ for a constant $c$,
\[
\mb E \( \( (\theta_1^{\top}X_i)^{\ell} - (\theta_2^{\top}X_i)^{\ell} \)^2 | U_i \) 
\le \norm{\theta_1 - \theta_2}_2^2 \cdot \ell^2 2^{2\ell - 1} \( R^{2\ell} + (ck\ell)^\ell \).
\]
\added{
Note that $c_{j, r} = {r \choose 2j} \frac{(2j)!}{2^j j!} \le {r \choose 2j} 2^j j! \le {r \choose 2j} (2j)^j$. Letting $a_{j, r} = (r-2j)2^{r-2j} \(R^{r-2j} + \sqrt{ck(r-2j)}^{r-2j}\)$, we have}
\begin{align*}
\sum_{j = 0}^{\lfloor{r/2}\rfloor} c_{j, r} \sqrt{ \mb E \( \( (\theta_1^{\top}X_i)^{r - 2j} - (\theta_2^{\top}X_i)^{r - 2j} \)^2 | U_i \)} &\le \norm{\theta_1 - \theta_2}_2 \max_{j \in \{0, \hdots, \lfloor{r/2}\rfloor\}}\( (2j)^j  a_{j, r} \) \sum_{j = 0}^{\lfloor{r/2}\rfloor} {r \choose 2j} \\
&\le \norm{\theta_1 - \theta_2}_2 \max_{j \in \{0, \hdots, \lfloor{r/2}\rfloor\}}\( (2j)^j  a_{j, r} \) (C')^r,
\end{align*}
for a constant $C'$. And there is a constant $C^{''}$ depending on $R$ such that
\begin{align*}
(2j)^j a_{j, r} &\le (C^{''})^r (2j)^j (k (r-2j))^{(r-2j)/2} \\
&\le (C^{''})^r \exp \(\frac{2j}{2} \log (2j) +  \frac{r-2j}{2} \log (k (r-2j)) \) \\
&\le (C^{''})^r \exp \( \frac{r}{2} \log r + \frac{r}{2} \( p \log p +  (1-p) \log k (1-p) \) \),
\end{align*}
where $p = 2j/r \in [0, 1]$. The term $p \log p +  (1-p) \log k (1-p) \le \log k$, so $(2j)^j a_{j, r} \le (C^{''} \sqrt{kr})^r$. Thus $\Var \( \Delta | U \)\le (c' k r)^r \norm{\theta_1 - \theta_2}_2^2$ for a constant $c'$.
%Therefore, \prettyref{eq:ub-Z} holds with probability $1- e^2\exp(-c_k \lambda^{2/r})$. \nb{This doesn't quite make sense;~\prettyref{eq:ub-Z} is already an upper bound on the probability. We could just say: the upper bound on the probability in~\prettyref{eq:ub-Z} can be written as $e^r \exp (-c_k \lambda^{2/r})$, where $c_k = c' kr$ for a constant $c'$.} 
Then \prettyref{eq:increment-ub} follows from~\prettyref{eq:ub-U} and~\prettyref{eq:ub-Z}.

The second inequality,~\prettyref{eq:single-ub}, can be proved by a similar application of Hoeffding's Inequality and~\prettyref{lem:hypercontractivity}. 
\end{proof}

\begin{forme}
I changed the citation of Pollard here. Originally I wanted to just cite his inequality. In Exercise $2$ on page 29, (Section $4.9$, ``Problems,''  he gives the final inequality. But this would require making some assumptions about $\Psi_\alpha$ norms and I felt it was easiest to do things directly. I actually discussed with him. There are expectation chaining bounds in Ledoux and Talagrand but I did not find them very citeable because they are for expectation and also they use the $\Psi_\alpha$ thing. I think we can proceed directly here.
\end{forme}
%-------------------------------------------------------------------------------

%adapted from the proof of Inequality $(31)$, in $4.7.1$, ``Tail chaining via packing'', page $20$, of~\cite{Pollard_mini}.]
The following lemma is adapted from \cite[Section 4.7.1]{Pollard_mini}.
\begin{lem}
\label{lem:chain_tail_probs}
Let $\Theta$ be a finite subset of a metric space with metric $\rho$. Let $f(\theta)$ be a random process indexed by $\theta \in \Theta$. Suppose that for $\alpha>0$, and for $\lambda > 0$, we have
\begin{align}
\label{eq:sg_assump}
\mb P \{ |f(\theta_1) - f(\theta_2)| \ge \rho(\theta_1, \theta_2) \lambda \} 
\le C_\alpha \exp \(- c_\alpha \lambda^{\alpha} \), \quad \forall~\theta_1, \theta_2 \in \Theta.
\end{align}
Let $\theta_0\in\Theta$ be a fixed point and $\epsilon_0=\max_{x,y\in\Theta}\rho(x,y)$.
Then \added{there is a constant $C$} such that with probability $1-C_\alpha \exp (- c_\alpha t^{\alpha})$,
\begin{align*}
\added{
\max_{\theta\in\Theta}|f(\theta)-f(\theta_0)|
\le C \cdot 2^{1/\alpha} \int_0^{\epsilon_0/2}\( t+ \( \frac{1}{c_\alpha} \log \frac{\epsilon_0|\calM(r,\Theta,\rho)|}{r} \)^{1/\alpha} \) dr.} 
% \\
% \deleted{\max_{\theta\in\Theta}|f(\theta)-f(\theta_0)|
% \le C \cdot 2^{1/\alpha} \int_0^{\epsilon_0/2}\( t+\log^{1/\alpha} \frac{\epsilon_0|\calM(r,\Theta,\rho)|}{r}  \) dr.}
\end{align*}
\end{lem}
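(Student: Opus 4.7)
The plan is to apply the standard chaining argument at geometric scales together with a union bound over the tail hypothesis~\prettyref{eq:sg_assump}. First, for $j \geq 0$ set $\epsilon_j \triangleq \epsilon_0 \cdot 2^{-j}$, and let $\Theta_j \subseteq \Theta$ be a maximal $\epsilon_j$-packing of $\Theta$ under $\rho$, so that $|\Theta_j| \leq M_j \triangleq |\calM(\epsilon_j, \Theta, \rho)|$; by maximality, $\Theta_j$ is also an $\epsilon_j$-net. Take $\Theta_0 = \{\theta_0\}$, and for each $\theta \in \Theta$ let $\pi_j(\theta) \in \Theta_j$ denote a nearest point. Since $\Theta$ is finite, $\pi_j(\theta) = \theta$ for all sufficiently large $j$, giving the telescoping identity
\[
f(\theta) - f(\theta_0) = \sum_{j \geq 0} \left( f(\pi_{j+1}(\theta)) - f(\pi_j(\theta)) \right),
\]
which has only finitely many nonzero terms for each $\theta$.

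By the triangle inequality $\rho(\pi_j(\theta), \pi_{j+1}(\theta)) \leq \epsilon_j + \epsilon_{j+1} \leq 2\epsilon_j$, and the number of distinct pairs $(\pi_j(\theta), \pi_{j+1}(\theta))$ over $\theta \in \Theta$ is at most $M_j M_{j+1} \leq M_{j+1}^2$. Applying~\prettyref{eq:sg_assump} and a union bound, for any $\lambda_j > 0$,
\[
\mb P \left\{ \max_{\theta \in \Theta} |f(\pi_{j+1}(\theta)) - f(\pi_j(\theta))| > 2\epsilon_j \lambda_j \right\} \leq C_\alpha M_{j+1}^2 \exp(-c_\alpha \lambda_j^\alpha).
\]
I would then choose $\lambda_j^\alpha = t^\alpha + c_\alpha^{-1}\left((j+1)\log 2 + 2\log M_{j+1}\right)$, so that the $j$th failure probability is at most $C_\alpha \cdot 2^{-(j+1)} \exp(-c_\alpha t^\alpha)$ and the total failure probability over all scales is at most $C_\alpha \exp(-c_\alpha t^\alpha)$, as required. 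The elementary inequality $(a+b)^{1/\alpha} \leq 2^{1/\alpha}(a^{1/\alpha} + b^{1/\alpha})$, valid for every $\alpha > 0$, then yields
\[
\lambda_j \leq 2^{1/\alpha}\left(t + \left(\frac{(j+1)\log 2 + 2\log M_{j+1}}{c_\alpha}\right)^{1/\alpha}\right).
\]

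On the good event, $\max_\theta |f(\theta) - f(\theta_0)| \leq \sum_{j \geq 0} 2\epsilon_j \lambda_j$. To recover the stated integral, I would use the identity $\epsilon_j = 4(\epsilon_{j+1} - \epsilon_{j+2})$ together with the pointwise inequalities $|\calM(r, \Theta, \rho)| \geq M_{j+1}$ and $\log(\epsilon_0/r) \geq (j+1)\log 2$ valid for $r \in (\epsilon_{j+2}, \epsilon_{j+1})$, which bound each summand by a constant multiple of
\[
\int_{\epsilon_{j+2}}^{\epsilon_{j+1}} \left(t + \left(\frac{1}{c_\alpha}\log \frac{\epsilon_0 |\calM(r, \Theta, \rho)|}{r}\right)^{1/\alpha}\right) dr.
\]
Summing over $j$ collapses the intervals $(\epsilon_{j+2}, \epsilon_{j+1})$ into $(0, \epsilon_0/2)$ and yields the claimed bound. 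The only nontrivial bookkeeping is to ensure that the several $2^{1/\alpha}$ factors (from the inequality above, from the identification of $\lambda_j$, and from comparing $M_{j+1}$ with $|\calM(r,\Theta,\rho)|$) combine into a single clean constant of the form $C \cdot 2^{1/\alpha}$ as in~\cite[Section~4.7.1]{Pollard_mini}; everything else is a routine chaining computation.
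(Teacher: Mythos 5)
Your proposal follows essentially the same chaining-with-union-bound strategy as the paper's proof, and the integral bound it arrives at has the right shape. The one place where the bookkeeping genuinely diverges from the paper is in the construction of the chain: the paper builds \emph{nested} maximal packings $\Theta_0\subseteq\Theta_1\subseteq\cdots\subseteq\Theta_m=\Theta$ and defines the link $\ell_i:\Theta_{i+1}\to\Theta_i$ as a function of the \emph{current} level, so that at level $i$ there are only $N_{i+1}=|\Theta_{i+1}|$ increments to union-bound over and hence a $\log N_{i+1}$ inside $\lambda_i$. You instead pick $\pi_j(\theta)$ and $\pi_{j+1}(\theta)$ as independent nearest-point maps from $\Theta$, which forces a union bound over $M_j M_{j+1}\le M_{j+1}^2$ pairs, yielding $2\log M_{j+1}$ inside $\lambda_j$. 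That extra factor of $2$ sits inside a $(\cdot)^{1/\alpha}$ and, after the comparison with $\log(\epsilon_0 M(r)/r)$ on each dyadic shell, comes out as an additional $2^{1/\alpha}$; combined with the one you already incur from $(a+b)^{1/\alpha}\le 2^{1/\alpha}(a^{1/\alpha}+b^{1/\alpha})$, your constant is $C\cdot 2^{2/\alpha}$ rather than the stated $C\cdot 2^{1/\alpha}$. This is harmless downstream (in \prettyref{lem:w1_sup_bound} the constant is $k$-dependent anyway, with $1/\alpha\le k-1/2$), but as written the final line "combine into a single clean constant of the form $C\cdot 2^{1/\alpha}$" does not quite go through. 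The fix is exactly the paper's: either nest the packings and define the chain as a Markov map, or, equivalently, redefine $\pi_j(\theta)$ recursively as the nearest point in $\Theta_j$ to $\pi_{j+1}(\theta)$ rather than to $\theta$; then the union bound at level $j$ has $M_{j+1}$ terms and the extra $2^{1/\alpha}$ disappears. Aside from that detail, the telescoping, the choice $\lambda_j^\alpha=t^\alpha+c_\alpha^{-1}(\cdots)$ so the tail masses form a geometric series, the elementary inequality for $(a+b)^{1/\alpha}$, the monotonicity comparison of $M_{j+1}$ and $(j+1)\log 2$ with $M(r)$ and $\log(\epsilon_0/r)$ on $r\in(\epsilon_{j+2},\epsilon_{j+1})$, and the collapse of the dyadic sum to $\int_0^{\epsilon_0/2}$ are all exactly the paper's steps.
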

\begin{proof}
We construct an increasing sequence of approximating subsets by maximal packing. 
Let $\Theta_0=\{\theta_0\}$.
For $i=0,1,2,\dots$, 
%we enlarge $\Theta_{i}$ to a maximal subset $\Theta_{i+1}\subseteq \Theta$ of points that are $\epsilon_{i+1}$-separated, where $\epsilon_{i+1}=\epsilon_i/2$.
let 
$\Theta_{i+1}$  be a maximal subset of $\Theta$ containing $\Theta_{i}$ that constitutes an $\epsilon_{i+1}$-packing, where $\epsilon_{i+1}=\epsilon_i/2$.
Since $\Theta$ is finite, the procedure stops after a finite number of iterations, resulting in $\Theta_0\subseteq \Theta_1\subseteq\dots\subseteq\Theta_m=\Theta$.
By definition, 
\[
N_i\triangleq |\Theta_{i}|\le M(\epsilon_i,\Theta,\rho).
\]
For $i=0,\dots,m-1$, 
define a sequence of mappings $\ell_i:\Theta_{i+1}\to \Theta_i$ by $\ell_i(t)\triangleq \arg\min_{s\in \Theta_i}\rho(t,s)$ (with ties broken arbitrarily). Then,
\[
\max_{\theta\in \Theta}|f(\theta)-f(\theta_0)|
\le \sum_{i=0}^{m-1}\max_{s\in \Theta_{i+1}}|f(s)-f(\ell_i(s))|.
\]
Since $\Theta_i$ is a maximal $\epsilon_i$-packing, we have $\rho(t,\ell_i(t))\le \epsilon_i$ for all $t\in \Theta_{i+1}$.
By the assumption \prettyref{eq:sg_assump} and a union bound, with probability $1-\sum_{i=0}^{m-1}N_{i+1}C_\alpha \exp (- c_\alpha \lambda_i^{\alpha} )$,
\begin{equation}
\label{eq:chaining-general-ub}
\max_{\theta\in \Theta}|f(\theta)-f(\theta_0)|
\le \sum_{i=0}^{m-1}\lambda_i\epsilon_i.
\end{equation}
Set $\lambda_i=( t^{\alpha} + \frac{1}{c_\alpha}\log ( 2^{i+1}N_{i+1} ) )^{1/\alpha}$. 
Note that $\epsilon_{i+1}=\epsilon_0 2^{-(i+1)}$.
Then $\lambda_i\le 2^{1/\alpha} F(\epsilon_{i+1})$, where \added{$F(r)\triangleq t+\(\frac{\log \( \epsilon_0 M(r,\Theta,\rho)/r \)}{c_\alpha}\)^{1/\alpha}$} is a decreasing function for $r\leq \epsilon_0$.
\begin{forme}
We can see this from the convexity of the function $g(x) = x^{1/\alpha}$ for $x > 0$ when $\alpha \le 1$. Then
\begin{align*}
(x + y)^{1/\alpha} \le 2^{1/\alpha - 1} \( x^{1/\alpha} + y^{1/\alpha}\).
\end{align*}
Our $\alpha$ is $2/r$ for $r \in [2k-1]$. If $r = 1$, then $\alpha = 2$ and $g(x) = x^{1/\alpha}$ is not convex. But the relationship $\sqrt{x+y} \le \sqrt{x} + \sqrt{y}$ holds anyway. We should just make a statement for general $\alpha$ because it should always work.
\end{forme}
By \prettyref{eq:chaining-general-ub}, \added{there are constants $C', C$} such that with probability $1-C_\alpha \exp (- c_\alpha t^{\alpha})$,
\[
\max_{\theta\in \Theta}|f(\theta)-f(\theta_0)|
\le \added{2^{1/\alpha}} \sum_{i=0}^{m-1} F(\epsilon_{i+1})\epsilon_i
\le \added{C' \cdot 2^{1/\alpha}} \int_{\epsilon_{m+1}}^{\epsilon_1}F(r) \, dr
\le \added{C \cdot 2^{1/\alpha}} \int_{0}^{\epsilon_1}F(r) \, dr.\qedhere
\]
\end{proof}

We now show some properties of the $\epsilon$-coverings used in the main results. 
In the following, let 
$\calN$ be an $(\epsilon,\Norm{\cdot}_2)$-covering of the unit sphere $S^{k-1}$.
Recall the set $\calS$ of distributions in \prettyref{eq:candidates}, where 
$\calW$ is an $(\epsilon,\Norm{\cdot}_1)$-covering of the probability simplex $\Delta^{k-1}$,
$\calA$ is an $(\epsilon,\Norm{\cdot}_2)$-covering of the ball $\{x\in\reals^k:\norm{x}_2\le R\}$.
\begin{lemma}
\label{lem:covering-S}
For any $\gamma\in \calG_{k,k}$,
\[
\min_{\gamma'\in \calS} \max_{\theta\in S^{k-1}}\max_{r\in[\ell]}|m_r(\gamma'_\theta)-m_r(\gamma_\theta) |
\le (R^\ell + \ell R^{\ell-1})\epsilon.
\]
\end{lemma}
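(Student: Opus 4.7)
The plan is constructive: given any $\gamma = \sum_{j=1}^{k} w_j \delta_{\psi_j} \in \calG_{k,k}$, I will exhibit an element $\gamma' \in \calS$ that achieves the claimed bound. By the covering property of $\calW$, there exists $(w_1', \ldots, w_k') \in \calW$ with $\|w - w'\|_1 \le \epsilon$; by the covering property of $\calA$, for each $j \in [k]$ there exists $\psi_j' \in \calA$ with $\|\psi_j - \psi_j'\|_2 \le \epsilon$. Then $\gamma' = \sum_{j=1}^{k} w_j' \delta_{\psi_j'}$ lies in $\calS$ by definition.

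Fix any $\theta \in S^{k-1}$ and any $r \in [\ell]$. Since $\gamma_\theta = \sum_j w_j \delta_{\theta^\top \psi_j}$ (and analogously for $\gamma'_\theta$), I would use the telescoping decomposition
\[
m_r(\gamma_\theta) - m_r(\gamma'_\theta) = \sum_{j} w_j \bigl[(\theta^\top \psi_j)^r - (\theta^\top \psi_j')^r\bigr] + \sum_{j} (w_j - w_j')\,(\theta^\top \psi_j')^r.
\]
For the second sum, Cauchy--Schwarz gives $|\theta^\top \psi_j'| \le \|\psi_j'\|_2 \le R$, so its absolute value is at most $R^r \|w - w'\|_1 \le R^r \epsilon$. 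For the first sum, I would apply the elementary mean-value bound $|a^r - b^r| \le r \max(|a|,|b|)^{r-1} |a-b|$ with $a = \theta^\top \psi_j$, $b = \theta^\top \psi_j'$, noting that $\max(|a|,|b|) \le R$ and $|a-b| \le \|\psi_j - \psi_j'\|_2 \le \epsilon$; summing against the probability weights $w_j$ yields a contribution of at most $r R^{r-1} \epsilon$. Combining the two pieces gives $|m_r(\gamma_\theta) - m_r(\gamma'_\theta)| \le (R^r + r R^{r-1})\epsilon$, which is monotone nondecreasing in $r$ (under the standing normalization $R \geq 1$) and hence dominated by $(R^\ell + \ell R^{\ell-1})\epsilon$ uniformly over $r \in [\ell]$ and $\theta \in S^{k-1}$.

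There is no real obstacle; the proof is a routine triangle-inequality calculation once the correct intermediate term $\sum_j w_j (\theta^\top \psi_j')^r$ is inserted to separate the ``atom-perturbation'' and ``weight-perturbation'' errors. The only mild point worth flagging is that the two covering radii (for $\calW$ under $\|\cdot\|_1$ and for $\calA$ under $\|\cdot\|_2$) are both taken to be the same $\epsilon$, which is what makes the final bound clean; any mismatch would simply propagate through the same argument with separate factors.
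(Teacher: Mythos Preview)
Your proposal is correct and follows essentially the same argument as the paper: both construct $\gamma'$ by rounding the weights to $\calW$ and the atoms to $\calA$, then split $m_r(\gamma_\theta)-m_r(\gamma'_\theta)$ into a weight-perturbation term (bounded by $R^r\epsilon$) and an atom-perturbation term (bounded via $|a^r-b^r|\le r\max(|a|,|b|)^{r-1}|a-b|$ by $rR^{r-1}\epsilon$). Your explicit remark that passing from $(R^r+rR^{r-1})\epsilon$ to $(R^\ell+\ell R^{\ell-1})\epsilon$ uses monotonicity in $r$ (hence tacitly $R\ge 1$) is a fair observation that the paper leaves implicit.
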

\begin{proof}
Suppose $\gamma=\sum_{j=1}^k w_j \delta_{\mu_j}$.
By the definition of the $\epsilon$-covering, there exists $(w_1',\dots,w_k')\in \calW, \mu_j'\in \calA$ such that $\sum_{j=1}^k|w_j-w_j'|<\epsilon_n$, and $\Norm{\mu_j-\mu_j'}_2<\epsilon_n$ for all $j\in[k]$.
Then, for any $\theta\in S^{k-1}$ and $r\in [\ell]$,
\begin{align*}
\abs{\sum_{j=1}^{k} w_j'(\theta^\top \mu_j')^r - w_j(\theta^\top \mu_j)^r }
&\le \sum_{j=1}^{k}|w_j'-w_j||\theta^\top \mu_j'|^r +  \sum_{j=1}^{k} w_j|(\theta^\top \mu_j')^r- (\theta^\top \mu_j)^r|\\
&\le R^r \epsilon + rR^{r-1} \epsilon,
\end{align*}
where we used $\frac{|a^r-b^r|}{|a-b|}\le r|a\vee b|^{r-1}$.
\end{proof}

\begin{lemma}
\label{lem:covering-N}
For any $\gamma,\gamma'\in\calG_{k,k}$,
\[
\sup_{\theta\in S^{k-1}} \max_{r\in[\ell]}|m_r(\gamma_\theta)-m_r(\gamma_\theta')|
\le \max_{\theta\in \calN}  \max_{r\in[\ell]}|m_r(\gamma_\theta)-m_r(\gamma_\theta')| + 2\ell R^{\ell} \epsilon.
\]
\end{lemma}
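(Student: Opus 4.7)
The plan is a standard ``triangle inequality across the net'' argument. Fix $\theta \in S^{k-1}$ and $r\in[\ell]$. By the covering property of $\calN$, there exists $\theta^\star\in\calN$ with $\|\theta-\theta^\star\|_2\le\epsilon$. Inserting $\gamma_{\theta^\star}$ and $\gamma'_{\theta^\star}$ gives
\[
|m_r(\gamma_\theta)-m_r(\gamma'_\theta)|
\le \underbrace{|m_r(\gamma_\theta)-m_r(\gamma_{\theta^\star})|}_{(\mathrm{I})}
+ \underbrace{|m_r(\gamma_{\theta^\star})-m_r(\gamma'_{\theta^\star})|}_{(\mathrm{II})}
+ \underbrace{|m_r(\gamma'_{\theta^\star})-m_r(\gamma'_\theta)|}_{(\mathrm{III})}.
\]
Term $(\mathrm{II})$ is at most $\max_{\vartheta\in\calN}\max_{r\in[\ell]}|m_r(\gamma_\vartheta)-m_r(\gamma'_\vartheta)|$ by definition.

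For $(\mathrm{I})$ and $(\mathrm{III})$, I would use the representation $m_r(\gamma_\theta)=\Expect_{U\sim\gamma}[(\theta^\top U)^r]$ together with the elementary estimate $|a^r-b^r|\le r\,(|a|\vee|b|)^{r-1}|a-b|$. Since $\gamma,\gamma'\in\calG_{k,k}$ are supported on $\{u:\|u\|_2\le R\}$, Cauchy--Schwarz yields $|\theta^\top U|,|\theta^{\star\top}U|\le R$ and $|\theta^\top U-\theta^{\star\top}U|\le\|\theta-\theta^\star\|_2\|U\|_2\le R\epsilon$; taking expectations,
\[
|m_r(\gamma_\theta)-m_r(\gamma_{\theta^\star})|
\le r R^{r-1}\cdot R\epsilon
\le \ell R^{\ell}\epsilon,
\]
with the same bound for $(\mathrm{III})$ applied to $\gamma'$. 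Summing the three contributions and taking the supremum over $\theta\in S^{k-1}$ and maximum over $r\in[\ell]$ yields the claim.

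There is no substantive obstacle here; the only mild care needed is to absorb the factor $rR^{r-1}\cdot R$ into $\ell R^{\ell}$ uniformly in $r\in[\ell]$ (this uses $R\ge 1$, which is the regime of interest; otherwise one writes $\ell(R\vee 1)^{\ell}$), and to make sure the same $\epsilon$-covering net $\calN$ is used for both $\gamma$ and $\gamma'$ so that $(\mathrm{II})$ is in fact captured by a single max over $\calN$.
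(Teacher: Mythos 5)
Your proof is correct and follows essentially the same route as the paper: fix $\theta$, pick the nearest net point $\theta^\star$, split by the triangle inequality into the net term plus two discretization terms, and control the latter with $|a^r-b^r|\le r(|a|\vee|b|)^{r-1}|a-b|$ together with $|\theta^\top U|\le R$ and $|(\theta-\theta^\star)^\top U|\le R\epsilon$. Writing the bound via $\Expect_{U\sim\gamma}$ rather than the paper's explicit sum over atoms is a cosmetic difference, and your remark about needing $R\ge1$ to absorb $rR^r$ into $\ell R^\ell$ is a fair observation that applies equally to the paper's own derivation.
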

\begin{proof}
By the definition of the $\epsilon$-covering, for any $\theta\in S^{k-1}$, there exists $\tilde \theta\in\calN$ such that $\Norm{\theta - \tilde \theta}_2<\epsilon$.
Then, by the triangle inequality, for any $r\in[\ell]$,
\begin{equation}
\label{eq:app-covering-S}
|m_r(\gamma_\theta)-m_r(\gamma_\theta')|
\le |m_r(\gamma_\theta)-m_r(\gamma_{\tilde\theta})|
+|m_r(\gamma_{\tilde\theta})-m_r(\gamma_{\tilde\theta}')|
+|m_r(\gamma_{\tilde\theta}')-m_r(\gamma_\theta')|.
\end{equation}
Suppose $\gamma=\sum_{j=1}^k w_j \delta_{\mu_j}$.
% and $\gamma'=\sum_{j=1}^k w_j' \delta_{\mu_j'}$.
Then the first term is upper bounded by
\[
\abs{\sum_{j=1}^{k} w_j(\theta^\top \mu_j)^r - w_j(\tilde\theta^\top \mu_j)^r }
\le \sum_{j=1}^{k} w_j \abs{(\theta^\top \mu_j)^r - (\tilde\theta^\top \mu_j)^r}
\le rR^{r}\epsilon,
\]
where we used $\frac{|a^r-b^r|}{|a-b|}\le r|a\vee b|^{r-1}$.
The same upper bound also holds for the third term of \prettyref{eq:app-covering-S}.
Therefore,
\[
\max_{r\in[\ell]}|m_r(\gamma_\theta)-m_r(\gamma_\theta')|
\le 2\ell R^{\ell}\epsilon + \max_{\theta\in \calN}  \max_{r\in[\ell]}|m_r(\gamma_\theta)-m_r(\gamma_\theta')|.
\]
The conclusion follows by taking the supremum over $\theta\in S^{k-1}$.
\end{proof}

% }

\section{Proof of Lemmas \lowercase{\ref{lem:w1_dim_reduction}--\ref{lem:cov_bound}}} \label{app:proofs_supporting}

%In this subsection we prove Lemmas \ref{lem:w1_dim_reduction}--\ref{lem:cov_bound}.
In this subsection we prove the supporting lemmas for \prettyref{sec:mixing_estimation}.

\begin{proof}[Proof of \prettyref{lem:w1_dim_reduction}]
For the lower bound, simply note that for any $\theta\in S^{d-1}$, $\norm{U - U'}_2 \ge |\theta^\top U - \theta^\top U'|$ by the Cauchy-Schwartz inequality. Taking expectations on both sides with respect to the optimal $W_1$-coupling $\calL(U,U')$ of $\Gamma$ and $\Gamma'$ yields the lower bound.

For the upper bound, we show that there exists $\theta\in S^{d-1}$ that satisfies the following properties:
\begin{enumerate}
	\item The projection $y \mapsto \theta^\top y$ is injective on $\supp(\Gamma) \cup \supp(\Gamma')$;
	
	\item 
For all $y \in \supp(\Gamma)$ and $y'\in\supp(\Gamma')$, we have
\begin{equation}
\label{eq:exist-theta}
%\norm{U - U'}_2 \le k^2\sqrt{d} |\theta^\top U - \theta^\top U'|
\norm{y-y'}_2 \le k^2\sqrt{d} |\theta^\top y - \theta^\top y'|.
\end{equation}
\end{enumerate}
%holds almost surely for any coupling $U\sim \Gamma$ and $U'\sim \Gamma'$.
This can be done by a simple probabilistic argument.
Let $\theta$ be drawn from the uniform distribution on $S^{d-1}$, which fulfills the first property with probability one.
Next, for any fixed $x\in\reals^d$, we have
\begin{align}
\mb P \{ |\theta^\top x | < t\Norm{x}_2 \} 
= \frac{2\pi^{\frac{d-2}{2}}/\added{G}(\frac{d-2}{2})}{2\pi^{\frac{d-1}{2}}/\added{G}(\frac{d-1}{2})} \int_{-t}^t \(1 - u^2 \)^{(d-3)/2} \ du <  t\sqrt{d},
\label{eq:randomsphere}
\end{align}
where $G(t) \triangleq \int_0^\infty x^{t-1} e^{-x} \, dx$ denotes the Gamma function for positive real $t$.
% \begin{forme}
% This uses the fact that $\Gamma(p/2 - 1/2 + 1/2) \approx \Gamma(p/2 - 1/2) (p/2 - 1/2)^{1/2} \approx \sqrt{p}$. 
% \end{forme}
Let $\calX=\{y-y':y\in \supp(\Gamma), y'\in\supp(\Gamma')\}$, whose cardinality is at most $k^2$.
By a union bound, 
\[
\mb P \{ \exists \, x \in \calX \text{ s.t. } |\theta^\top x| < t \norm{x}_2 \} < k^2 t \sqrt{d}, 
\]
and thus
\[
\mb P \{ |\theta^\top x|\ge t\Norm{x}_2,~\forall~x\in\calX \}>1-k^2t\sqrt{d}.
\]
% $|\theta^\top x|\ge t\Norm{x}_2$ for all $x\in \calX$ with probability strictly bigger than $1-k^2t\sqrt{p}$.
This probability is strictly positive for $t = 1 / (k^2 \sqrt{d})$. Thus, there exists $\theta \in S^{d-1}$ such that \prettyref{eq:exist-theta} holds.
Since $\iprod{\theta}{\cdot}$ is injective on the support of $\Gamma$ and $\Gamma'$, denote its inverse by $g: \reals\to \supp(\Gamma) \cup \supp(\Gamma')$. 
Then any coupling of the pushforward measures $\Gamma_\theta$ and $\Gamma'_{\theta}$ gives rise to a coupling of $\Gamma$ and $\Gamma'$ in the sense that if $\calL(V,V')$ is a coupling of $\Gamma_\theta$ and $\Gamma_{\theta'}$ then $\calL(g(V),g(V'))$ is a coupling of $\Gamma$ and $\Gamma'$. By \prettyref{eq:exist-theta}, we have
\[
\norm{g(V)-g(V')}_2 \le k^2\sqrt{d} |V-V'|.
\]
Taking expectations of both sides 
with respect to  $\calL(V,V')$ being the optimal $W_1$-coupling of $\Gamma_\theta$ and $\Gamma'_\theta$ yields the desired upper bound.
\end{proof}

Next we prove~\prettyref{lem:w1_sup_bound}. Note that a simple union bound here would lead to a rate of $(\log n/n)^{1/(4k-2)}$. To remove the unnecessary logarithmic factors, we use the chaining technique (see the general result in~\prettyref{lem:chain_tail_probs}), which entails proving the concentration of the increments of a certain empirical process. 

\begin{forme}
 Here are some notes on the first sentence in the proof of~\prettyref{lem:w1_sup_bound}. The continuity of $\theta \mapsto W_1(\widehat{\gamma_\theta}, \gamma_\theta)$ is necessary so we can say the sup on the compact set $S^{k-1}$ is achieved, i.e., is a max. Note that $W_1(\widehat{\gamma_\theta}, \gamma_\theta)$ is continuous in $\theta$ by the same argument as used to show a metric on a metric space $\mc X$ is continuous on the space $\mc X \times \mc X$ (equipped with the right closeness in paired points). It just uses the triangle inequality of the absolute value on the reals. That is, let $\epsilon > 0$. Pick $\delta = \epsilon/2R$. Let $\norm{\theta_1 - \theta_2}_2 \le \delta$. 
\begin{align*}
|W_1(\widehat{\gamma_{\theta_1}}, \gamma_{\theta_1}) - W_1(\widehat{\gamma_{\theta_2}}, \gamma_{\theta_2})| &\le | W_1(\widehat{\gamma_{\theta_1}}, \gamma_{\theta_1}) - W_1(\widehat{\gamma_{\theta_1}}, \gamma_{\theta_2})| + |W_1(\widehat{\gamma_{\theta_1}}, \gamma_{\theta_2}) - W_1(\widehat{\gamma_{\theta_2}}, \gamma_{\theta_2})| \\
&\le W_1(\gamma_{\theta_1}, \gamma_{\theta_2}) + W_1(\widehat{\gamma_{\theta_1}}, \widehat{\gamma_{\theta_2}}) \\
&\le 2R\norm{\theta_1 - \theta_2}_2,
\end{align*}
where the last line is just by the property of $W_1$ used below.

For the monotone convergence theorem below, here is the argument. The sequence is 
\begin{align*}
0 \le \bone \{ \max_{\theta \in \Theta_n} f(\theta) > t \} \le \bone \{ \max_{\theta \in \Theta_{n+1}} f(\theta) > t \} \le \hdots \le 1
\end{align*}
And we have pointwise convergence of the indicator functions: $\bone \{ \max_{\theta \in \Theta_n} f(\theta) > t \} \rightarrow_{n \rightarrow \infty} \bone \{\sup_{\theta \in S^{k-1}} f(\theta) > t \}$. Therefore,
\begin{align*}
\mb P \{ \max_{\theta \in \Theta_n} f(\theta) > t \} \rightarrow \mb P \{\sup_{\theta \in S^{k-1}} f(\theta) > t \}.
\end{align*}
Our sequence of $\Theta_n$'s here should be a sequence of finite subsets of $S^{k-1}$. See page $2$ of David Pollard's chaining chapter for details.
\end{forme}

\begin{proof} [Proof of \prettyref{lem:w1_sup_bound}]
By the continuity of $\theta \mapsto W_1(\widehat{\gamma_\theta}, \gamma_\theta)$ and the monotone convergence theorem, it suffices to show that \added{there exists a constant $C$} such that, for any finite subset $\Theta \subset S^{k-1}$, 
\begin{align}
\prob{\max_{\theta \in \Theta} W_1(\widehat{\gamma_\theta}, \gamma_\theta) \le \added{C k^{7/2}} n^{-1/(4k-2)} \sqrt{\log \frac{1}{\delta}}} \geq 1-\delta.
%\( \frac{\(\log(1/\delta)\)^{2k-1}}{n}\)^{1/(4k-2)}.
\label{eq:w1_sup_bound1}
\end{align}
% Throughout the proof, $C_k$ stands for a constant depending only on $k$ whose value may vary from line to line.

Recall that $X_1, \hdots, X_n \iiddistr P_\gamma$, where $\gamma\in \calG_{k,k}$. Define the empirical process
\begin{align*}
\tilde m_r(\theta) \triangleq \frac{1}{n} \sum_{i =1}^n H_r(\theta^{\top}X_i),
\end{align*}
where $H_r$ is the degree-$r$ Hermite polynomial defined in~\prettyref{eq:hermites_def}. 
Define the centered random process indexed by $\theta$:
\begin{align}
f_r(\theta) &\triangleq \sqrt{n} \( \tilde m_r(\theta) - \mb E \tilde m_r(\theta)\) = \sqrt{n} \( \tilde m_r(\theta) - m_r(\gamma_\theta)\). \label{eq:empirical_process}
\end{align}
Let $r \in [2k-1]$. By \prettyref{lem:moment_increments_sg}, there are positive constants $C, c$ such that $\mb P \{ |f_r(\theta_1) - f_r(\theta_2)| \ge \norm{\theta_1 - \theta_2}_2 \lambda \} \le C\exp \( - c \lambda^{2/r} /kr \)$. 
So we can apply \prettyref{lem:chain_tail_probs} with $\Theta \subseteq S^{k-1}$, $\rho(\theta_1, \theta_2) = \norm{\theta_1 - \theta_2}_2$, $\epsilon_0=2$, $\alpha = 2/r$, \added{$C_\alpha = C$, and $c_\alpha = c/kr$}. Note that the maximal $\epsilon$-packing of $\Theta$ has size $ M(\epsilon, S^{k-1}, \norm{\cdot}_2) \le \(4/\epsilon\)^{k}$. Fix $\theta_0 \in \Theta$. Then, by~\prettyref{lem:chain_tail_probs}, with probability $1 - \added{C\exp(-c t^{2/r}/kr)}$,
\[
\max_{\theta\in\Theta}|f_r(\theta)-f_r(\theta_0)|
\le \added{C' 2^{r/2}} \pth{t+\int_0^1\(\added{\frac{k^2r}{c}} \log(1/u)\)^{r/2} \, du }
= \added{C' 2^{r/2}} \pth{t+ \added{\(\frac{k^2r}{c}\)^{r/2}} \added{G}\pth{1+\frac{r}{2}}},
\]
where $G(\cdot)$ denotes the Gamma function defined after \prettyref{eq:randomsphere}.
\added{Since $r \le 2k-1$ and $G(1 + (2k-1)/2) \le G(1 + k) \le O( (k/e)^k)$, we obtain that with probability at least $1 - C \exp \( -c t^{2/(2k-1)}/k^2\)$, }
\[
\added{\max_{\theta\in\Theta}|f_r(\theta)-f_r(\theta_0)| \le \added{(C'')^k} (t + k^{4k}),}
\]
\added{for a constant $C''$.} And by~\prettyref{eq:single-ub} in \prettyref{lem:moment_increments_sg}, $|f_r(\theta_0)|\le t$ with probability $1- C\exp(-c t^{2/r}/kr)$. Therefore, with probability $1-\frac{\delta}{2k-1}$,
% \nbwu{Pengkun: here need to assume $\delta \leq \delta_k$; otherwise below is not true.}
\[
\max_{\theta\in\Theta}|\tilde m_r(\theta) -  m_r(\gamma_\theta)|
=\frac{1}{\sqrt{n}}\max_{\theta\in\Theta}|f_r(\theta)|
%\le C_k \frac{(\log(C_k/\delta))^{\frac{2k-1}{2}}}{\sqrt{n}}.
\added{\le (C'')^k  \frac{\(\frac{k^2}{c} \log \(\frac{C (2k-1)}{\delta}\)\)^{\frac{2k-1}{2}} + k^{4k}}{\sqrt{n}} .}
\]
We take a union bound over $r\in[2k-1]$ and obtain that, with probability $1 - \delta$, 
\begin{equation}
\max_{\theta\in\Theta, r\in[2k-1]}|\tilde m_r(\theta) - m_r(\gamma_\theta)|
\added{\le (C'')^k \frac{\(\frac{k^2}{c} \log \(\frac{C (2k-1)}{\delta}\)\)^{\frac{2k-1}{2}} + k^{4k}}{\sqrt{n}} .}
%\le C_k \frac{(\log(C_k/\delta))^{\frac{2k-1}{2}}}{\sqrt{n}}.
\label{eq:w1_sup_bound2}
\end{equation}
Recall that for each $\theta$, the DMM estimator results in a $k$-atomic distribution $\widehat{\gamma_\theta}$, such that $m_r(\widehat{\gamma_\theta}) = \hat m_r(\theta)$ for all $r=1,\ldots,2k-1$, where 
$(\hat m_1(\theta), \ldots, \hat m_{2k-1}(\theta))$ is the Euclidean projection of $\tilde m(\theta)=(\tilde m_1(\theta),\dots,\tilde m_{2k-1}(\theta))$ onto the moment space $\calM_{2k-1}$ (see \prettyref{eq:momentspace}).
Thus, 
\begin{equation}
\label{eq:w1-hat-bound}
\max_{r\in[2k-1]}|m_r(\widehat{\gamma_\theta}) - m_r(\gamma_\theta)|
\le 2 \sqrt{2k-1}\max_{r\in[2k-1]}|\tilde m_r(\theta) - m_r(\gamma_\theta)|.
\end{equation}
By the moment comparison inequality in \prettyref{lem:w1_moments_ub}, we have
\begin{align} \label{eq:moments_conversion}
W_1(\widehat{\gamma_\theta},\gamma_\theta)
\added{\lesssim k} \max_{r\in[2k-1]}|\tilde m_r(\theta) - m_r(\gamma_\theta)|^{1/(2k-1)}.
\end{align}
Finally, maximizing both sides over $\theta \in \Theta$ and applying \prettyref{eq:w1_sup_bound2} yields the desired \prettyref{eq:w1_sup_bound1}. 
\end{proof}

\begin{proof} [Proof of~\prettyref{lem:existence_solution}]
% By \cite[Theorem 1]{WY18} and a union bound, 
By \prettyref{lem:w1_sup_bound}, 
\added{there is a positive constant $C$} such that, for any $\delta\in(0,\frac{1}{2})$, with probability $1 - \delta$, 
\[
W_1(\widehat{\gamma_i}, \gamma_i) \le \epsilon \triangleq \added{C k^{7/2}} n^{-1/(4k-2)} \sqrt{\log \frac{1}{\delta}}
% \( \frac{\log(1/\delta)}{n} \)^{1/(4k-2)}
, \quad \forall~i \in [k].
\]
% \nbwu{Pengkun: there is a problem with this. In GM paper we got the above good tail by doing the median trick. Since now the main thing is to do the chaining analysis, we probably should not do that. Thus, we cannot call \cite[Theorem 1]{WY18}. Instead, we should simply call \prettyref{lem:w1_sup_bound}, to get 
% \[
% W_1(\widehat{\gamma_i}, \gamma_i) \le \epsilon \triangleq C_k n^{-1/(4k-2)} \sqrt{\log \frac{1}{\delta}}, \quad \forall~i \in [k].
% \]
% You need to update the statement and check its propagation.

% I noticed this because the statement of \prettyref{lem:w1_sup_bound} and \prettyref{lem:existence_solution} are not the same (the latter is better somehow). This is suspicious. There is indeed a problem. I think both result should be symmetric and look like the current \prettyref{lem:w1_sup_bound}.
% }

Let $\gamma=\sum_{j=1}^k w_j\delta_{\mu_j}$. Fix $j\in[k]$. For any $i\in [k]$, by definition of $W_1$ distance in \prettyref{eq:wq-def}, 
\[
w_j \cdot \min_{x \in \supp(\widehat{\gamma_i})}  | x-e_i^\top \mu_j | \le W_1(\widehat{\gamma_i}, \gamma_i).
\]
Thus there exists $ \mu_{ji} \in \supp(\widehat{\gamma_i})$ such that
\[
w_j| \mu_{ji} - e_i^\top \mu_j|\le W_1(\widehat{\gamma_i}, \gamma_i)\le \epsilon.
\]
Let $\mu_j' =(\mu_{j1},\dots,\mu_{jk})^\top\in \calA$. Then
\[
w_{j} \norm{\mu'_j - \mu_{j} }_2
\le \sqrt{k}w_{j} \norm{\mu'_j - \mu_{j} }_{\infty} 
\le \sqrt{k}\epsilon.
\]
Since $\calW$ is an $n^{-\frac{1}{4k-2}}$-covering of the probability simplex with respect to $\|\cdot\|_1$, there exists a weights vector $w' =(w'_1,\dots,w'_k)\in \calW$ such that $\norm{w' - w}_1 \le \epsilon$.

Consider the distributions $\gamma' \triangleq \sum_{j=1}^k w_j' \delta_{\mu_j'}\in \calS$ and 
$\gamma'' \triangleq \sum_{j=1}^k w_j\delta_{\mu_j'}$.
Note that  $\gamma$ and $\gamma''$ have the same weights. Using their natural coupling we have $W_1(\gamma,\gamma'') \leq \sum_{j=1}^k w_j\Norm{\mu_j-\mu_j'}_2$.
Note that $\gamma''$ and $\gamma'$ have the same support. 
Using the total variation coupling (see~\cite[Theorem 4]{Gibbs_Su_2002}) of their weights $w$ and $w'$ (and the fact that total variation equals half of the $\ell_1$-distance), 
we have $W_1(\gamma'',\gamma') \leq R\norm{w' - w}_1$.
Thus,
\[
W_1(\gamma,\gamma') \le \sum_{j=1}^k w_j\Norm{\mu_j- \mu_j'}_2
+ R\norm{w' - w}_1
\added{\le k^{3/2} \epsilon + R \epsilon = C (k^{3/2} + R) k^{7/2} n^{-1/(4k-2)} \sqrt{\log \frac{1}{\delta}}.} 
%\le C_k\epsilon.\qedhere
\]
 %and the upper bound of $W_1(\gamma'',\gamma')$ follows from the coupling characterizations of Wasserstein and total variation (see, \eg, \cite[Theorem 4]{Gibbs_Su_2002}).
% by Lemmas~\ref{lem:w1_same_weights} and~\ref{lem:w1_same_support}.
\end{proof}

\begin{proof}[Proof of \prettyref{lmm:DMM-kdim}]
	The proof parallels the simple analysis of the estimator $\hat \gamma^{\circ}$ in \prettyref{eq:ideal-estimator-bound}. 
	Throughout this proof we use the abbreviation $\epsilon \equiv \epsilon_{n,k}$.
	Fix an arbitrary $\gamma'\in\calS$.	
	Then $W_1(\hat\gamma,\gamma) \leq W_1(\hat\gamma,\gamma') + W_1(\gamma',\gamma)$.
	Furthermore,	
	\begin{align*}
W_1(\hat\gamma,\gamma')
\stepa{\lesssim_k} & ~ \Wsliced_1(\hat\gamma,\gamma') = \sup_{\theta \in S^{k-1}} W_1(\hat\gamma_\theta,\gamma'_\theta) \\
\stepb{\leq} & ~ 	\max_{\theta \in \calN} W_1(\hat\gamma_\theta,\gamma'_\theta) + 2 R \sqrt{k} \epsilon \\
\leq & ~ 	\max_{\theta \in \calN} W_1(\widehat{\gamma_\theta},\gamma'_\theta) + \max_{\theta \in \calN} W_1(\widehat{\gamma_\theta},\hat\gamma_\theta) + 2 R \sqrt{k} \epsilon \\
\stepc{\leq} & ~ 	2 \max_{\theta \in \calN} W_1(\widehat{\gamma_\theta},\gamma'_\theta) + 2 R \sqrt{k} \epsilon \\ 
\stepd{\leq} & ~ 	2 \sup_{\theta \in S^{k-1}} W_1(\widehat{\gamma_\theta},\gamma_\theta) + 2 W_1(\gamma',\gamma) + 2 R \sqrt{k} \epsilon,
\end{align*}
where 
(a) is due to from the upper bound in \prettyref{lem:w1_dim_reduction} (with $d=k$);
(b) is by the following argument: 
Recall that $\calN$ is an $(\epsilon,\|\cdot\|_2)$-covering of the unit sphere, so that for any $\theta \in S^{k-1}$, $\|\theta-u\|\leq \epsilon$ for some $u\in \calN$. Since by definition, each estimated marginal $\hat{\gamma_j}$ is supported on $[-R,R]$ and hence any $\gamma'\in\calS$ is supported on the hypercube $[-R,R]^k$. Consequently, $W_1(\gamma_{u}, \gamma_\theta) \leq \sqrt{k} R \|\theta-u\|_2$ by Cauchy-Schwarz and the natural coupling between $\gamma_{u}$ and $\gamma_\theta$;
(c) follows from the optimality of $\hat \gamma$ -- see \prettyref{eq:hatgamma-kdim}; 
(d) uses the lower bound in \prettyref{lem:w1_dim_reduction}.

In summary, by the arbitrariness of $\gamma'\in\calS$, we obtained the following deterministic bound:
\[
%W_1(\hat\gamma, \gamma) \lesssim_{k,R} \sup_{\theta\in S^{k-1}} W_1(\widehat{\gamma_\theta},\gamma_\theta)  + \min_{\gamma'\in\calS} W_1(\gamma,\gamma') + \epsilon.
W_1(\hat\gamma, \gamma) \leq 2 \sup_{\theta\in S^{k-1}} W_1(\widehat{\gamma_\theta},\gamma_\theta)  + 3 \min_{\gamma'\in\calS} W_1(\gamma,\gamma') + 2 R \sqrt{k} \epsilon.
\]
The first and the second terms are bounded in probability by \prettyref{lem:w1_sup_bound} and \prettyref{lem:existence_solution}, respectively, completing the proof of the lemma.
\end{proof}
%The comment that $\hat \gamma_j \in [-R, R] is true because the DMM algorithm takes as input the known bound R and uses it as part of the algorithm and guarantees that the resulting estimator has centers in [-R, R]; see the moment space described in (14) of~\cite{WY18} for how it is used. In our implementation, R = 10 actually.

It remains to show Lemmas \ref{lem:perturbation_psd} and \ref{lem:cov_bound} on subspace estimation.

\begin{proof} [Proof of~\prettyref{lem:perturbation_psd}]
Let $\mc V_r'^{\perp}$ be the subspace of $\mb R^d$ that is orthogonal to the space spanned by the top $r$ eigenvectors of $\Sigma'$, and let $y_j = \argmax_{x \in \mc V_r'^{\perp} \cap S^{d-1}} |\mu_j^{\top}x|$. 
Then \added{$\Norm{\mu_j - \Pi_r' \mu_j}_2^2 = (\mu_j^{\top}y_j)^2$}.  
Furthermore, for each $j$, $y_j^{\top} w_j \mu_j \mu_j^{\top} y_j \le y_j^{\top} (\sum_{\ell = 1}^{k} w_\ell \mu_\ell \mu_\ell^{\top} ) y_j = y_j^{\top} \Sigma y_j$. It remains to bound the latter.
Let $\lambda_1'\ge \hdots \ge  \lambda_d'$ be the sorted eigenvalues of $ \Sigma'$.
Now
\begin{align*}
|y_j^{\top}\Sigma y_j | &\le |y_j^{\top}( \Sigma - \Sigma' ) y_j | +  | y_j^{\top}\Sigma' y_j|  \\
&\le \Norm{\Sigma - \Sigma'}_2 + \lambda_{r+1}'  &\text{ since } y_j \in \mc V_r'^{\perp} \\
&\le 2 \Norm{\Sigma - \Sigma'}_2 + \lambda_{r+1},
\end{align*}
where the last step follows from Weyl's inequality \cite{Horn_Johnson_1991}.
Consequently, by the natural coupling between $\Gamma$ and \added{$\Gamma_{\Pi_r'}$}, 
\[
W_2^2(\Gamma, \added{\Gamma_{\Pi_r'}})\le \sum_{j=1}^k w_j \Norm{\mu_j - \added{\Pi_r'}\mu_j}_2^2
\le k(\lambda_{r+1}+2 \Norm{\Sigma - \Sigma'}_2).\qedhere
\]
\end{proof}

% We now show that for any $j \in [k]$, 
% \[
% w_j \Norm{\mu_j - H_r' \mu_j}_2^2
% \le \lambda_{r+1}+2\Norm{\Sigma-\Sigma'}_2.
% \]

% Let $\mc V_r'^{\perp}$ be the subspace of $\mb R^d$ that is orthogonal to the space spanned by the top $r$ eigenvectors of $\Sigma'$, and let $y = \argmax_{x \in \mc V_r'^{\perp} \cap S^{d-1}} |\mu_j^{\top}x|$. 
% Then $\Norm{\mu_j - H_r' \mu_j}_2^2 = (\mu_j^{\top}y)^2$.  
% And $y^{\top} w_j \mu_j \mu_j^{\top} y \le y^{\top} (\sum_{i = 1}^{k} w_i \mu_i \mu_i^{\top} ) y = y^{\top} \Sigma y$. \begin{forme} Since all are psd. \end{forme} It remains to bound the latter.
% Let $\lambda_1'\ge \hdots \ge  \lambda_d'$ be the sorted eigenvalues of $ \Sigma'$.
% Now
% \begin{align*}
% |y^{\top}\Sigma y | &\le |y^{\top}( \Sigma - \Sigma' ) y | +  | y^{\top}\Sigma' y|  \\
% &\le \Norm{\Sigma - \Sigma'}_2 + \lambda_{r+1}'  &\text{ since } y \in \mc V_r'^{\perp} \\
% &\le 2 \Norm{\Sigma - \Sigma'}_2 + \lambda_{r+1},
% \end{align*}
% where the last step follows from Weyl's inequality \cite{Horn_Johnson_1991}.

\begin{proof}[Proof of~\prettyref{lem:cov_bound}]
Write $X_i = U_i + Z_i$ where $U_i \iiddistr \Gamma$ and $Z_i \iiddistr N(0, I_d)$ for $i = 1, \hdots, n$. Then
\[
\hat \Sigma -\Sigma=\(\frac{1}{n} \sum_{i =1}^n U_i U_i^{\top}-\Sigma\) 
+ \(\frac{1}{n} \sum_{i =1}^n Z_i Z_i^{\top} - I_d\) 
+ \(\frac{1}{n} \sum_{i =1}^n  U_i  Z_i^{\top} + Z_i U_i^{\top}\) .
\]
We upper bound the spectral norms of three terms separately.
For the first term, let $\Gamma=\sum_{j=1}^kw_j\delta_{\mu_j}$ and $\hat w_j = \frac{1}{n} \sum_{i = 1}^n \bone \{ U_i = \mu_j \}$. Then $\frac{1}{n} \sum_{i =1}^n U_i U_i^{\top}=\sum_{j=1}^k\hat w_j\mu_j \mu_j^{\top}$. Therefore, by Hoeffding's inequality and the union bound, with probability $1-2ke^{-2t^2}$,
\begin{equation} \label{eq:ub-uu}
\norm{\frac{1}{n} \sum_{i =1}^n U_i U_i^{\top}-\Sigma}_2 \le R^2 \sum_{j = 1}^k |\hat w_j - w_j| \le \frac{R^2k t}{\sqrt{n}}.
\end{equation}
For the second term, by standard results in random matrix theory (see, e.g.,~\cite[Corollary 5.35]{Vershynin_2012_tutorial}), and since $d < n$, there exists a positive constant $C$ such that, with probability at least $1 - e^{-t^2}$, 
\begin{equation}
\label{eq:ub-zz}
\norm{\frac{1}{n} \sum_{i=1}^n Z_i Z_i^{\top} - I_d}_2 \le C\(\sqrt{\frac{d}{n}}+\frac{t}{\sqrt{n}}+\frac{t^2}{n}\).
\end{equation}

To bound the third term, let $A=\frac{1}{n} \sum_{i =1}^n  U_i  Z_i^{\top} + Z_i U_i^{\top}$, and $\calN$ be an $\frac{1}{4}$-covering of $S^{d-1}$ of size $2^{cd}$ for an absolute constant $c$. Then
\[
\Norm{A}_2 = \max_{\theta \in S^{d-1}} |\theta^\top A \theta| \le 2\max_{\theta\in \mc N}|\theta^\top A \theta|
=4\max_{\theta\in \calN}\abs{\frac{1}{n}\sum_{i=1}^n (\theta^\top U_i) (\theta^\top Z_i)}.
\]
For fixed $\theta\in S^{d-1}$, conditioning on $U_i$, we have $\sum_{i=1}^n (\theta^\top U_i) (\theta^\top Z_i)\sim N(0,\sum_{i=1}^n(\theta^\top U_i)^2)$.
Since $\sum_{i=1}^n(\theta^\top U_i)^2\le nR^2$, we have
\[
\Prob\sth{\abs{\frac{1}{n}\sum_{i=1}^n (\theta^\top U_i) (\theta^\top Z_i)}>\frac{R\tau}{\sqrt{n}}}
\le\Prob\sth{|Z_1|\ge \tau}
\le 2 e^{-\frac{\tau^2}{2}}.
\]
Therefore, by a union bound, with probability $1-2e^{cd-\frac{\tau^2}{2}}$
\[
\max_{\theta\in \calN}\abs{\frac{1}{n}\sum_{i=1}^n (\theta^\top U_i) (\theta^\top Z_i)}
\le \frac{R\tau}{\sqrt{n}}.
\]
By taking $\tau=C(\sqrt{d}+t)$ for some absolute constant $C$, we obtain that with probability $1-e^{-t^2}$,
\begin{equation*}
\norm{\frac{1}{n} \sum_{i =1}^n  U_i  Z_i^{\top} + Z_i U_i^{\top}}_2
\le C R\( \sqrt{\frac{d}{n}}+ \frac{t}{\sqrt{n}}\).\qedhere
\end{equation*}
\end{proof}

\section{Proof of Lemma \lowercase{\ref{lem:l2_moments_lb}}}
	\label{sec:l2_moments_lb}

%\prettyref{lem:l2_moments_lb} provides a tighter relationship between mixture density estimation error in Hellinger distance and mixing distribution moment estimation than the one provided by Lemma 11 of~\cite{WY18}, allowing for the sharp connection between Hellinger distance and $1$-Wasserstein distance provided in~\prettyref{lem:k_to_one_dim}. In~\prettyref{lem:l2_moments_lb}, we mention as a byproduct of the result that we obtain a lower bound on the Hellinger distance as well. This relationship between the Hellinger and $L_2$ distances is not necessarily sharp, since it relies on a simple upper bound on the densities in question. For high dimensions, the upper bound on the densities could be large. However, we will deal with $k$-dimensional densities, so this upper bound is good enough. 

We start by recalling the basics of polynomial interpolation in one dimension. 
For any function $h$ and any set of $m+1$ distinct points $\{x_0,\ldots,x_m\}$, there exists a unique polynomial $P$ of degree at most $m$, such that
$P(x_i)=h(x_i)$ for $i=0,\ldots,m$.
For our purpose, it is convenient to express $P$ in the Newton form (as opposed to the more common Lagrange form):
\begin{equation} \label{eq:newton}
\begin{aligned}
P(x) = \sum_{j = 0}^m a_j \prod_{i = 0}^{j-1} (x - x_i), 
\end{aligned}
\end{equation}
where the coefficients are given by the finite differences of $h$, namely, $a_j = h[x_0, \hdots, x_j]$, which in turn are defined recursively via:
\begin{align*}
h[x_i] &= h(x_i) \\
h[x_i, \hdots, x_{i+r}] &= \frac{h[x_{i+1}, \hdots, x_{i+r}] - h[x_i, \hdots, x_{i+r-1}]}{x_{i+r} - x_i}.
\end{align*}

\begin{proof}[Proof of \prettyref{lem:l2_moments_lb}]
Let $U\sim\gamma$ and $U'\sim\gamma'$.
Note that $p_{\gamma}, p_{\gamma'}$ are bounded above by $\frac{1}{\sqrt{2\pi}}$, so we have 
\begin{align*}
H^2(p_{\gamma}, p_{\gamma'}) = \int \( \frac{p_{\gamma} - p_{\gamma'}}{\sqrt{p_{\gamma}} + \sqrt{p_{\gamma'}}} \)^2 \ge \frac{\sqrt{2\pi}}{4} \|p_{\gamma}- p_{\gamma'}\|_2^2.
\end{align*}
Thus to show \prettyref{eq:hellinger_moments}, it suffices to show there is a positive constant $c$ such that
\begin{align}
\|p_{\gamma}-p_{\gamma'}\|_2 \ge c |\Expect[h(U)]-\Expect[h(U')]|. \label{eq:l2_moments}
\end{align}

Next, by suitable orthogonal expansion we can express $\|p_{\gamma}-p_{\gamma'}\|_2$ in terms of ``generalized moments'' of the mixing distributions (see~\cite[Sec.~VI]{Wu_Verdu_2010}).
Let $\alpha_j(y) = \sqrt{\frac{\sqrt{2} \phi(\sqrt{2}y)}{j!}} H_j(\sqrt{2} y)$. Then $\{\alpha_j: j \in\integers_+\}$ form an orthonormal basis on $L^2(\reals, \diff y)$ in view of \prettyref{eq:hermite-orthod}. 
Since $p_{\gamma}$ is square integrable, we have the orthogonal expansion
$p_\gamma(y) = \sum_{j\geq 0} a_j(\gamma) \alpha_j(y)$, with coefficient
\[
a_j(\gamma)=\Iprod{\alpha_j}{p_\gamma}= \Expect[\alpha_j(U+Z)] = \Expect[(\alpha_j * \phi)(U)] = \frac{1}{2^{\frac{j+1}{2}} 	\pi ^{\frac{1}{4}} \sqrt{j!}}\Expect[U^j e^{-\frac{U^2}{4}}]
\]
where the last equality follows from the fact that \cite[7.374.6, p.~803]{GR}
\[
	(\phi * \alpha_j) (y) = \frac{1}{2^{\frac{j+1}{2}} 	\pi ^{\frac{1}{4}} \sqrt{j!}} y^j e^{-\frac{y^2}{4}}.
\]
Therefore
\begin{align}
\|p_{\gamma}- p_{\gamma'}\|_2^2 &= \sum_{j \ge 0} \frac{1}{j! 2^{j+1} \sqrt{\pi} } \( \Expect[U^j e^{-U^2/4}] - \Expect[U'^j e^{-U'^2/4}] \)^2. \label{eq:l2_equals}
\end{align}
In particular, for each $j\geq 0$,
\begin{align}
|\Expect[U^j e^{-U^2/4}] - \Expect[U'^j e^{-U'^2/4}]| \leq  
\sqrt{j! 2^{j+1} \sqrt{\pi}} \|p_{\gamma}- p_{\gamma'}\|_2.
 \label{eq:l2_expmoments}
\end{align}

% Fix $r \in \integers_+$.
In view of \prettyref{eq:l2_expmoments}, to bound the difference %of the $r$th moments
% $|m_r(U)-m_r(U')|= |\Expect[U^j] - \Expect[U'^j]|$ 
$|\Expect[h(U)] - \Expect[h(U')]|$
by means of $\|p_{\gamma}- p_{\gamma'}\|_2$, our strategy is to interpolate $h(y)$ by linear combinations of
$\{y^j e^{-y^2/4}: j=0,\ldots,2k-1\}$ on all the atoms of $U$ and $U'$, a total of at most $2k$ points.
Clearly, this is equivalent to the standard polynomial interpolation of $\tilde h(y) \triangleq  h(y) e^{y^2/4}$ by a degree-($2k-1$) polynomial. 
%Write $h(y) = \tilde h(y) e^{-y^2/4}$, where $\tilde h(y) = h(y) e^{y^2/4}$. Then do a polynomial interpolation of $\tilde h$.
Specifically, let $T \triangleq  \{ t_1, \hdots, t_{2k}\}$ denote the set of atoms of $\gamma$ and $\gamma'$. 
By assumption, $T \subset [-R,R]$.
Denote the interpolating polynomial of $\tilde h$ on $T$ by $P(y) = \sum_{j=0}^{2k-1}  b_j y^j$.
Then
\begin{align}
|\mb E[h(U)] - \mb E[h(U')] | 
&= |\Expect[P(U)e^{-U^2/4}\added{]} - \Expect[P(U')e^{-U'^2/4}\added{]}  | \\
&\le \sum_{j=0}^{2k-1} |b_j| | \Expect[U^j e^{-U^2/4}] - \Expect[U'^j e^{-U'^2/4}] |  \\
&\le \|p_{\gamma}- p_{\gamma'}\|_2 \sum_{j=0}^{2k-1} |b_j| \sqrt{j! 2^{j+1} \sqrt{\pi}}. \label{eq:h_ub}
\end{align}
It remains to bound the coefficient $b_j$ independently of the set $T$. \added{In the notation of~\prettyref{lmm:interp}, with $m = 2k-1$,}
\begin{align*}
P(x) = \sum_{j = 0}^m \tilde h[x_0, \hdots, x_j] \prod_{i = 0}^{j-1} (x - x_i) \le \sum_{j = 0}^m \frac{1}{j!} \sup_{|\xi|\leq R}|\tilde h^{(j)}(\xi)| \sum_{i = 0}^j c_{ij} x^i  \le C_1 \sum_{j = 0}^m \frac{1}{j!} \sup_{|\xi| \leq R}|\tilde h^{(j)}(\xi)| \sum_{i = 0}^j x^i,
\end{align*}
where $C_1 \le (1+R)^m$.
Thus $|b_j| \le C_1 \sum_{i = 0}^j \frac{1}{i!} \sup_{|\xi| \le R} |\tilde h^{(i)}(\xi)|$. 

\added{In the specific case of $\tilde h(y) = h(y) e^{y^2/4}$, by the Leibniz rule \cite[0.42, p.~22]{GR}, $\tilde h^{(i)}(y) = \sum_{l = 0}^i {i \choose l} h^{(i-l)}(y) (e^{y^2/4})^{(l)}$. Furthermore}
\begin{align*}
(e^{y^2/4})^{(l)} 
=  \frac{e^{y^2/4}}{2^{l}}\sum_{i = 0}^{\lfloor{l/2}\rfloor} {l \choose 2i}\frac{(2i)!}{i!}y^{l-2i}
\le \frac{e^{R^2/4}}{2^{l}}l^{l/2}\sum_{i = 0}^{\lfloor{l/2}\rfloor} {l \choose 2i} R^{l-2i}
\le \frac{e^{R^2/4}}{2^{l}}l^{l/2}(1+R)^l
% \frac{e^{y^2/4}}{2^{l/2} (\sqrt{-1})^l} H_l\(\frac{y \sqrt{-1}}{\sqrt{2}} \) &= \frac{e^{y^2/4}}{2^{l/2}} H_l\(\frac{y}{\sqrt{2}}\) \\
% &\le \frac{e^{y^2/4}}{2^{l/2}} \sum_{i = 0}^{\lfloor{l/2}\rfloor}  \frac{(2i)!}{2^i i!} {l \choose 2i} \(\frac{R}{\sqrt{2}}\)^{l-2i} &\text{ using~\prettyref{eq:hermites_def}} \\
% %&\le \frac{e^{R^2/4}}{2^{l/2}} \sum_{i = 0}^{\lfloor{l/2}\rfloor} i^i  {l \choose 2i} \(\frac{R}{\sqrt{2}}\)^{l-2i} \\
% &\le C' \frac{e^{R^2/4}}{2^{l/2}} \(\frac{l}{2}\)^{l/2} \sum_{i = 0}^{\lfloor{l/2}\rfloor} {l \choose 2i} \(\frac{R}{\sqrt{2}}\)^{l-2i} \\
% &\le C' \frac{e^{R^2/4}}{2^{l/2}} \(\frac{l}{2}\)^{l/2} (1 + R/\sqrt{2})^l \\
\le (C' \sqrt{l})^l.
\end{align*}
Here and below, $C',C'',C'''$  are constants depending only on $R$.
% changes from line to line and 
%may depend only on $R$. 
In the special case of $h(y) = y^r$, $h^{(m)}(y) = \frac{r!}{(r-m)!} y^{r-m} \bone\{m \le r \}$. Thus
\begin{align*}
|b_j| 
&\le C_1 \sum_{i = 0}^j \sum_{l = 0}^i \frac{1}{i!} {i \choose l} \frac{r!}{(r-(i-l))!} R^{r-(i-l)} \bone \{i-l \le r\} (C' \sqrt{l})^l \\
&= C_1 \sum_{i = 0}^j \sum_{l = 0}^i \frac{(C' \sqrt{l})^l}{l!} {r \choose i-l} R^{r-(i-l)} \bone \{i-l \le r\} \\
%&\le C_1 \sum_{i = 0}^j \sum_{l = 0}^i \frac{(C' \sqrt{j})^l}{l!}  {r \choose i-l} R^{r-(i-l)} \bone \{i-l \le r\} \\
&\le C_1 C''\sum_{i = 0}^j \sum_{l = 0}^{i\wedge r}{r \choose l} R^{r-l} \\
&\le C_1 C'' j (1+R)^r,
% &\le C_1 e^{C' \sqrt{j}} \sum_{i = 0}^j (j+1-i){r \choose i} R^{r-i} \bone \{i \le r\} \\
% &\le C_1 (C')^j (1+R)^r,
\end{align*}
Since $C_1 \le (1+R)^m, |b_j| \le C'j (1+R)^{r+m} \le C'j (1+R)^{4k}$. Plugging into~\prettyref{eq:h_ub},
 % and using Stirling's upper bound,}
\begin{align*}
|\mb E[h(U)] - \mb E[h(U')] | 
%&\le \|p_{\gamma} - p_{\gamma'}\|_2 (1+R)^{4k} \sum_{j=0}^{2k-1} \sqrt{(C'k)^{2k}}\\
% &\le \|p_{\gamma} - p_{\gamma'}\|_2 (1+R)^{4k} \sum_{j=0}^{2k-1} C'j \sqrt{j^{j+1/2} \(\frac{2}{e}\)^j 2e\sqrt{\pi}} \\
% &\le \|p_{\gamma} - p_{\gamma'}\|_2 (1+R)^{4k} \sum_{j=0}^{2k-1} (C'\sqrt{j})^j  \\
&\le \|p_{\gamma} - p_{\gamma'}\|_2 (C''' k)^k.
\end{align*}
%\added{where again $C'$ changes from line to line and may depend only on $R$.}
\end{proof}

\begin{lemma}
\label{lmm:interp}	
	Let $h$ be an $m$-times differentiable function on the interval $[-R,R]$, whose derivatives are bounded by
	$ |h^{(i)}(x)| \leq M$ for all $0\leq i\leq m$ and all $x\in[-R,R]$. 
	Then for any $m\geq 1$ and $R>0$, there exists a positive constant $C=C(m,R,M)$, such that the following holds.
	For any set of distinct nodes $T=\{x_0,\ldots,x_m\} \subset [-R,R]$, denote by $P(x)=\sum_{j=0}^{m} b_j x^j$ 
	the unique interpolating polynomial of degree at most $m$ of $h$ on $T$.
	Then $\max_{0\leq j \leq m} |b_j| \leq C$.	
\end{lemma}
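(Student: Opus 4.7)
The plan is to exploit the Newton form of the interpolating polynomial, namely
\[
P(x) = \sum_{j=0}^m h[x_0,\ldots,x_j] \prod_{i=0}^{j-1}(x-x_i),
\]
and bound the coefficient of each power of $x$ after expansion. Since the divided differences are themselves bounded in a dimension-free way, and each product expands into a polynomial with coefficients given by elementary symmetric functions of the nodes, the conclusion reduces to combining these two ingredients.

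First, I would invoke the classical mean-value representation of divided differences: for each $j \le m$, there exists $\xi \in [-R,R]$ such that
\[
h[x_0,\ldots,x_j] = \frac{h^{(j)}(\xi)}{j!},
\]
which gives $|h[x_0,\ldots,x_j]| \le M/j!$ uniformly in the choice of nodes. Next, expand $\prod_{i=0}^{j-1}(x-x_i) = \sum_{i=0}^{j} (-1)^{j-i} e_{j-i}(x_0,\ldots,x_{j-1}) x^i$, where $e_k$ denotes the $k$th elementary symmetric polynomial. Since $|x_i| \le R$, each elementary symmetric polynomial satisfies $|e_{j-i}(x_0,\ldots,x_{j-1})| \le \binom{j}{j-i} R^{j-i}$.

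Combining these two estimates, the coefficient $b_j$ of $x^j$ in $P$ is a finite linear combination of the divided differences $\{h[x_0,\ldots,x_r]\}_{r \ge j}$ with weights that are at most $\binom{r}{j} R^{r-j}$ in absolute value. Summing over $r = j, \ldots, m$, I obtain
\[
|b_j| \le \sum_{r=j}^m \frac{M}{r!} \binom{r}{j} R^{r-j} \le M \sum_{r=0}^{m} \frac{(1+R)^r}{r!} \le M e^{1+R},
\]
up to constants depending only on $m, R, M$. Taking $C = C(m,R,M)$ to be the resulting bound (independent of the choice of nodes in $[-R,R]$) proves the claim.

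I do not anticipate a serious obstacle: the main point is the uniform-in-nodes control of the Newton coefficients, which is provided by the mean-value form of divided differences. The only bookkeeping is ensuring that the elementary symmetric polynomials are bounded in terms of $R$ alone, which follows immediately from $|x_i| \le R$. The resulting constant $C$ depends on $m, R, M$, exactly as required by the statement of the lemma.
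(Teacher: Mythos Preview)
Your proposal is correct and follows essentially the same approach as the paper: write $P$ in Newton form, bound the divided differences by $M/j!$ via the mean-value theorem, and bound the coefficients of the expanded product $\prod_{i=0}^{j-1}(x-x_i)$ using $|x_i|\le R$ (the paper's $|c_{ij}|\le\binom{j}{i}R^{j-i}$ is exactly your elementary-symmetric-polynomial bound). Your summation $|b_j|\le\sum_{r=j}^m \frac{M}{r!}\binom{r}{j}R^{r-j}\le M e^{1+R}$ is in fact a slightly cleaner packaging than the paper's cruder $|c_{ij}|\le(1+R)^m$, but the argument is the same.
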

\begin{proof}
Express $P$ in the Newton form~\prettyref{eq:newton}:
\begin{align*}
P(y) &= \sum_{j = 0}^{m} h[x_0, \hdots, x_j] \prod_{i = 0}^{j-1} (y - x_i)
\end{align*}
By the intermediate value theorem, finite differences can be bounded by derivatives as follows: (c.f.~\cite[(2.1.4.3)]{Stoer_Bulirsch_2002})
\[
|h[x_0, \hdots, x_j]| \leq \frac{1}{j!} \sup_{|\xi|\leq R}|h^{(j)}(\xi)|.
\]
Let $\prod_{i = 0}^{j-1} (y - x_i)  = \sum_{i = 0}^{j} c_{ij} y^i$. Since $|x_i|\leq R$, 
\added{$|c_{ij}| \leq {j \choose i} R^{j-i} \le (1+R)^j \le (1+R)^m$}
% {$|c_{ij}| \leq C_1=C_1(R,m)$} 
for all $i,j$. This completes the proof.
\end{proof}

\section{Proof of Theorem~\lowercase{\ref{thm:mixing_est_unknownsigma}}}
\label{app:mixing_est_unknownsigma}

The proof is completely analogous to that of \prettyref{thm:mixing_est}. So we only point out the major differences.
First of all, let us define the estimator $(\hat\Gamma,\hat\sigma^2)$.
Let $\hat\Gamma$ be obtained from the procedure in \prettyref{sec:dtok}, except that
in \prettyref{algo:gmm_est} we change the grid size in \prettyref{eq:gridsize-mixing} to $n^{-1/(4k)}$ and 
replace each use of the DMM estimator with Lindsay's estimator \cite{Lindsay_1989} for $k$-GM with an unknown common variance (see also \cite[Algorithm 2]{WY18}). Finally, $\hat\sigma^2$ can be obtained by applying Lindsay's estimator to the first coordinate.

The proof of the upper bound follows that of \prettyref{thm:mixing_est} in \prettyref{ssec:proof_main}. Specifically, 
\begin{itemize}
	\item For the subspace estimation error, Lemmas \ref{lem:perturbation_psd}--\ref{lem:cov_bound} continue to hold with $\hat \Sigma$ defined as $\hat \Sigma  = \frac{1}{n}\sum_{i=1}^n X_iX_i^\top - \sigma^2 I_d$, due to the crucial observation that the top $k$ left singular vectors of $[X_1,\ldots,X_n]$ coincide with 
the top $k$ eigenvectors of this $\hat \Sigma$, so that the estimation error of $\sigma^2$ does not contribute to that of the subspace.
(This fact crucially relies on the isotropic assumption.)

%, where $\sigma^2$ is the actual variance parameter.
\item For the estimation error of $k$-dimensional mixing distribution post-projection, 
the conclusions of Lemmas \ref{lem:w1_sup_bound}--\ref{lmm:DMM-kdim} remain valid with 
all $O_k(n^{-1/(4k-2)})$ replaced by $O_k(n^{-1/(4k)})$.
This is because the same bound in \prettyref{eq:w1_sup_bound2} applies to the first $2k$ moments and all projections, 
and the theoretical analysis of Lindsay's estimator in \cite[Theorem 1]{WY18} only relies on the maximal difference of the first $2k$ moments; see \cite[Sec.~4.2]{WY18}, in particular, the moment comparison inequality in Proposition 3 therein.
\end{itemize}

Finally, the minimax lower bound $\Omega((d/n)^{1/4})$ follows from \prettyref{thm:mixing_est} and $\Omega(n^{-1/{4k}})$ follows from \cite[Proposition 9]{WY18} for one dimension; the latter result also contains the lower bound $\Omega(n^{-1/{2k}})$ for estimating $\sigma^2$.

\begin{forme}

\subsection{Why we can't imitate proof of~\cite{WY18} exactly in doing concentration of Hermite process}

In the DMM estimator of~\citet{Wu_Yang_2019}, which is what we rely on in Algorithm~\prettyref{algo:gmm_est}, the first $2k-1$ Hermite empirical processes are used to form a system of equations that are solved to obtain the mixing distribution estimate. The analysis of this estimate relies on the analysis of the empirical Hermites and their closeness to the true moments of $\tilde \gamma_0$. Accordingly, define the centered random process in $\theta$:
\begin{align}
f_r(\theta) &=  \sqrt{n} \( \tilde m_r(\theta^{\top}x) - \mb E_{y, z} \tilde m_r(\theta^{\top}x)\).  \label{eq:empirical_process}
\end{align}
Note that the sub-Gaussian type concentration in~\citet{Wu_Yang_2019} was proved using a variance bound and then the so-called ``median trick'' and Chebyshev. The median trick works as follows. We split the data into $T$ samples, $\{x_t \}_{t \in [T]}$. We run the algorithm on each sample and take our estimator to be the median of these estimators; we use facts about the concentration of the binomial distribution to show concentration of the resulting median estimator. For the chaining technique, we need to have sub-Gaussian type concentration of the increments $|f_r(\theta_1, x) - f_r(\theta_2, x)|$. We can indeed apply the median trick to the increments $|f_r(\theta_1, x_t) - f_r(\theta_2, x_t)|$ and obtain sub-Gaussian type concentration of the increments. But $med_{t \in [T]} |f_r(\theta_1, x_t) - f_r(\theta_2, x_t)| \ne med(f_r(\theta_1, x_t)) - med(f_r(\theta_2, x_t))$. That is, obtaining sub-Gaussian type concentration of the median of the increments would not result in a statement about concentration of the actual process $med_{t \in [T]} f_r(\theta, x_t)$ that we use in our estimation. We instead rely on some concentration results for polynomials from the literature.

We also sharpen the results of~\cite{WY18} by using~\prettyref{lem:hypercontractivity}. We do a similar variance calculation to~\cite{WY18}, which leads to a variance that looks something like $(2k)^k$. So their density estimator was with rate like $k^{k/2} n^{-1/2}$. Their mixing estimator had $k^{3/2} n^{-1/(4k-2)}$; this is because the $k^k$ gets hit by the $1/(4k-2)$. 

But in Hypercontractivity, we then obtain something like a concentration of:
\begin{align*}
\exp \( - \frac{\lambda^2}{Var g}\)^{1/(2k-1)} \approx \exp \( - \frac{n\lambda^2}{k^k}\)^{1/(2k-1)} = \exp \( - \frac{n\lambda^2)^{1/(2k-1)}}{k} \).
\end{align*}
So this sharpens that $k^k$. However, then we do get with probability $1 - \delta$, the concentration of $\frac{k\log (1/\delta)^{2k-1}}{n}$, which is less sharp in $\delta$. But in fact in $W_1$ distance, that we basically have a factor of $k^{1/(4k-2)}$, which is very small.

Note that our way of density estimation doesn't use this stuff; it just uses the MLE, which again sharpens~	\cite{WY18} since we only get $\sqrt{(2k-1)/n}$ where the $2k-1$ is the number of parameters; it comes from the bracketing number.

\end{forme}

\bibliographystyle{alpha}
\bibliography{strings,mine,refs}

\newcommand{\etalchar}[1]{$^{#1}$}
\def\noopsort#1{}
\begin{thebibliography}{ABDH{\etalchar{+}}18}

\bibitem[ABDH{\etalchar{+}}18]{Ashtiani_etal_2018}
Hassan Ashtiani, Shai Ben-David, Nicholas J.~A. Harvey, Christopher Liaw, Abbas
  Mehrabian, and Yaniv Plan.
\newblock Near-optimal sample complexity bounds for robust learning of gaussian
  mixtures via compression schemes.
\newblock {\em Neural Information Processing Systems}, 2018.

\bibitem[ADV13]{Andersen_etal_2013}
Martin~S Andersen, Joachim Dahl, and Lieven Vandenberghe.
\newblock Cvxopt: A python package for convex optimization.
\newblock 2013.

\bibitem[AHK12]{Anandkumar_etal_2012}
Anima Anandkumar, Daniel~J. Hsu, and Sham~M. Kakade.
\newblock A method of moments for mixture models and hidden {M}arkov models.
\newblock In {\em Conference on Learning Theory}, 2012.

\bibitem[AJOS14]{Acharya_etal_2014}
Jayadev Acharya, Ashkan Jafarpour, Alon Orlitksy, and Ananda~Theertha Suresh.
\newblock Near-optimal sample estimators for spherical gaussian mixtures.
\newblock In {\em Neural Information Processing Systems}, pages 1395--1403.
  Curran Associates, Inc., 2014.

\bibitem[AK05]{Arora_Kannan_2005}
Sanjeev Arora and Ravi Kannan.
\newblock Learning mixtures of separated nonspherical gaussians.
\newblock {\em Annals of Applied Probability}, 15(1A):69--92, 2005.

\bibitem[AM10]{Achlioptas_McSherry}
Dimitris Achlioptas and Frank Mcsherry.
\newblock On spectral learning of mixtures of distributions, 2010.

\bibitem[AS64]{AS64}
Milton Abramowitz and Irene~A Stegun.
\newblock {\em Handbook of mathematical functions: with formulas, graphs, and
  mathematical tables}.
\newblock Courier Corporation, 1964.

\bibitem[Ban38]{Banach1938}
Stefan Banach.
\newblock {\"U}ber homogene polynome in ($l^2$).
\newblock {\em Studia Mathematica}, 7(1):36--44, 0 1938.

\bibitem[BG14]{Bontemps_Gadat_2014}
Dominique Bontemps and S{\'e}bastien Gadat.
\newblock Bayesian methods for the shape invariant model.
\newblock {\em Electron. J. Statist.}, 8(1):1522--1568, 2014.

\bibitem[BG21]{Bayraktar_Guo_2019}
Erhan Bayraktar and Gaoyue Guo.
\newblock {Strong equivalence between metrics of {W}asserstein type}.
\newblock {\em Electronic Communications in Probability}, 26:1 -- 13, 2021.

\bibitem[Bir83]{Birge83}
L.~Birg{\'e}.
\newblock Approximation dans les espaces m{\'e}triques et th{\'e}orie de
  l'estimation.
\newblock {\em Z. f{\"u}r Wahrscheinlichkeitstheorie und Verw. Geb.},
  65(2):181--237, 1983.

\bibitem[Bir86]{birge1986estimating}
Lucien Birg{\'e}.
\newblock On estimating a density using hellinger distance and some other
  strange facts.
\newblock {\em Probability theory and related fields}, 71(2):271--291, 1986.

\bibitem[BRW17]{bandeira2017optimal}
Afonso~S Bandeira, Philippe Rigollet, and Jonathan Weed.
\newblock Optimal rates of estimation for multi-reference alignment.
\newblock {\em arXiv preprint arXiv:1702.08546}, 2017.

\bibitem[BS09]{Belkin_Sinha_2009}
Mikhail Belkin and Kaushik Sinha.
\newblock Learning gaussian mixtures with arbitrary separation, 2009.

\bibitem[CGLM08]{CGLM2008}
Pierre Comon, Gene Golub, Lek-Heng Lim, and Bernard Mourrain.
\newblock Symmetric tensors and symmetric tensor rank.
\newblock {\em SIAM Journal on Matrix Analysis and Applications},
  30(3):1254--1279, 2008.

\bibitem[Che95]{Chen_1995}
Jiahua Chen.
\newblock Optimal rate of convergence for finite mixture models.
\newblock {\em The Annals of Statistics}, 23(1):221--233, 1995.

\bibitem[Das99]{Dasgupta_1999}
Sanjoy Dasgupta.
\newblock Learning mixtures of gaussians.
\newblock In {\em Proceedings of the 40th Annual Symposium on Foundations of
  Computer Science}, page 634, USA, 1999. IEEE Computer Society.

\bibitem[DB16]{Diamond_Boyd_2016}
Steven Diamond and Stephen Boyd.
\newblock Cvxpy: A python-embedded modeling language for convex optimization.
\newblock {\em Journal of Machine Learning Research}, 17(83):1--5, 2016.

\bibitem[DCG97]{dacunha1997estimation}
Didier Dacunha-Castelle and Elisabeth Gassiat.
\newblock The estimation of the order of a mixture model.
\newblock {\em Bernoulli}, 3(3):279--299, 1997.

\bibitem[DHS{\etalchar{+}}19]{Deshpande_etal_2019}
Ishan Deshpande, Yuan-Ting Hu, Ruoyu Sun, Ayis Pyrros, Nasir Siddiqui, Sanmi
  Koyejo, Zhizhen Zhao, David Forsyth, and Alexander Schwing.
\newblock Max-sliced {W}asserstein distance and its use for gans.
\newblock {\em arXiv:1904.05877}, 2019.

\bibitem[DK70]{Davis_Kahan_1970}
Chandler Davis and W.~M. Kahan.
\newblock The rotation of eigenvectors by a peturbation.
\newblock {\em SIAM Journal of Numerical Analysis}, 7, 1970.

\bibitem[FC17]{Flamary_2017}
R\'{e}mi Flamary and Nicolas Courty.
\newblock Pot: Python optimal transport library.
\newblock 2017.

\bibitem[FL18]{friedland2018nuclear}
Shmuel Friedland and Lek-Heng Lim.
\newblock Nuclear norm of higher-order tensors.
\newblock {\em Mathematics of Computation}, 87(311):1255--1281, 2018.

\bibitem[FSO06]{Feldman_etal_2006}
Jon Feldman, Rocco~A. Servedio, and Ryan O'Donnell.
\newblock Pac learning axis-aligned mixtures of gaussians with no separation
  assumption.
\newblock In G{\'a}bor Lugosi and Hans~Ulrich Simon, editors, {\em Learning
  Theory}, pages 20--34, Berlin, Heidelberg, 2006. Springer Berlin Heidelberg.

\bibitem[GR07]{GR}
I.~S. Gradshteyn and I.~M. Ryzhik.
\newblock {\em {Table of Integrals Series and Products}}.
\newblock Academic, New York, NY, seventh edition, 2007.

\bibitem[GS02]{Gibbs_Su_2002}
Alison~L. Gibbs and Francis~Edward Su.
\newblock On choosing and bounding probability metrics.
\newblock {\em International Statistical Review}, 70(3):419--435, 2002.

\bibitem[GvdV01]{Ghosal_VdV_2001}
Subhashis Ghosal and Aad~W. van~der Vaart.
\newblock Entropies and rates of convergence for maximum likelihood and bayes
  estimation for mixtures of normal densities.
\newblock {\em Annals of Statistics}, pages 1233--1263, 2001.

\bibitem[GvH12]{gassiat2012consistent}
Elisabeth Gassiat and Ramon van Handel.
\newblock Consistent order estimation and minimal penalties.
\newblock {\em IEEE Transactions on Information Theory}, 59(2):1115--1128,
  2012.

\bibitem[GvH14]{gassiat2014local}
Elisabeth Gassiat and Ramon van Handel.
\newblock The local geometry of finite mixtures.
\newblock {\em Transactions of the American Mathematical Society},
  366(2):1047--1072, 2014.

\bibitem[GW00]{Genovese_Wasserman_2000}
Christopher~R. Genovese and Larry Wasserman.
\newblock Rates of convergence for the gaussian mixture sieve.
\newblock {\em Annals of Statistics}, 28(4):1105--1127, 2000.

\bibitem[Han82]{Hansen1982}
Lars~Peter Hansen.
\newblock Large sample properties of generalized method of moments estimators.
\newblock {\em Econometrica: Journal of the Econometric Society}, pages
  1029--1054, 1982.

\bibitem[Har85]{Hartigan_1985}
J.~A. Hartigan.
\newblock A failure of likelihood asymptotics for normal mixtures.
\newblock In {\em Proceedings of the Berkeley conference in honor of Jerzy
  Neyman and Jack Kiefer}, volume~2, pages 807--810, 1985.

\bibitem[HJ91]{Horn_Johnson_1991}
Toger~A. Horn and Charles~R. Johnson.
\newblock {\em Topics in Matrix Analysis}.
\newblock Cambridge, 1 edition, 1991.

\bibitem[HK13]{Hsu_Kakade_2013}
Daniel Hsu and Sham~M. Kakade.
\newblock Learning mixtures of spherical gaussians: moment methods and spectral
  decompositions.
\newblock {\em Fourth Innovations in Theoretical Computer Science}, 2013.

\bibitem[HK18]{HK2015}
Philippe Heinrich and Jonas Kahn.
\newblock Strong identifiability and optimal minimax rates for finite mixture
  estimation.
\newblock {\em The Annals of Statistics}, 46(6A):2844--2870, 2018.

\bibitem[HL18]{Hopkins_Li_2018}
Samuel~B. Hopkins and Jerry Li.
\newblock Mixture models, robustness, and sum of squares proofs.
\newblock In {\em Proceedings of the 50th Annual ACM SIGACT Symposium on Theory
  of Computing}, STOC 2018, pages 1021--1034, New York, NY, USA, 2018.
  Association for Computing Machinery.

\bibitem[HN16a]{Ho_Nguyen_2016_annals}
Nhat Ho and XuanLong Nguyen.
\newblock Convergence rates of parameter estimation for some weakly
  identifiable finite mixtures.
\newblock {\em Annals of Statistics}, 44(6):2726--2755, 12 2016.

\bibitem[HN16b]{Ho_Nguyen_2016_ejs}
Nhat Ho and XuanLong Nguyen.
\newblock On strong identifiability and convergence rates of parameter
  estimation in finite mixtures.
\newblock {\em Electronic Journal of Statistics}, 10:271--307, 2016.

\bibitem[HP15a]{HP15}
Moritz Hardt and Eric Price.
\newblock Tight bounds for learning a mixture of two gaussians.
\newblock In {\em Proceedings of the Forty-Seventh Annual ACM on Symposium on
  Theory of Computing}, pages 753--760. ACM, 2015.

\bibitem[HP15b]{Hardt_Price_2015}
Mortiz Hardt and Eric Price.
\newblock Tight bounds for learning a mixture of two gaussians.
\newblock In {\em Proceedings of the forty-seventh annual ACM symposium on
  theory of computing}, pages 753--760, 2015.

\bibitem[Ibr01]{Ibragimov_2001}
I.~Ibragimov.
\newblock {\em Estimation of analytic functions}, volume Volume 36 of {\em
  Lecture Notes--Monograph Series}, pages 359--383.
\newblock Institute of Mathematical Statistics, Beachwood, OH, 2001.

\bibitem[KB09]{kolda2009tensor}
Tamara~G Kolda and Brett~W Bader.
\newblock Tensor decompositions and applications.
\newblock {\em SIAM review}, 51(3):455--500, 2009.

\bibitem[Kim14]{Kim_2014}
Arlene K.~H. Kim.
\newblock Minimax bounds for estimation of normal mixtures.
\newblock {\em Bernoulli}, 20(4):1802--1818, 2014.

\bibitem[KM14]{koenker2014convex}
Roger Koenker and Ivan Mizera.
\newblock Convex optimization, shape constraints, compound decisions, and
  empirical bayes rules.
\newblock {\em Journal of the American Statistical Association},
  109(506):674--685, 2014.

\bibitem[KMV10]{Kalai_Moitra_Valiant_2010}
Adam~Tauman Kalai, Ankur Moitra, and Gregory Valiant.
\newblock Efficiently learning mixtures of two gaussians.
\newblock In {\em Proceedings of the forty-second ACM symposium on theory of
  computing}, pages 553--562, 2010.

\bibitem[Kru77]{Kruskal1977}
Joseph~B. Kruskal.
\newblock Three-way arrays: rank and uniqueness of trilinear decompositions,
  with application to arithmetic complexity and statistics.
\newblock 1977.

\bibitem[KSV05]{Kannan_Salmasian_2005}
Ravindran Kannan, Hadi Salmasian, and Santosh Vempala.
\newblock The spectral method for general mixture models.
\newblock In {\em 18th Annual Conference on Learning Theory (COLT}, pages
  444--457, 2005.

\bibitem[LC73]{LeCam73}
L.~Le~Cam.
\newblock Convergence of estimates under dimensionality restrictions.
\newblock {\em The Annals of Statistics}, 1(1):38 -- 53, 1973.

\bibitem[LC10]{Li_Chen_2010}
Pengfei Li and Jiahua Chen.
\newblock Testing the order of a finite mixture.
\newblock {\em Journal of the American Statistical Association}, 105(491),
  2010.

\bibitem[Lin89]{Lindsay_1989}
Bruce~G. Lindsay.
\newblock Moment matrices: applications in mixtures.
\newblock {\em Annals of Statistics}, 17(2):722--740, 1989.

\bibitem[LM19]{LM2019sub}
G{\'a}bor Lugosi and Shahar Mendelson.
\newblock Sub-{G}aussian estimators of the mean of a random vector.
\newblock {\em Annals of Statistics}, 47(2):783--794, 2019.

\bibitem[LS03]{Liu_Shao_2003}
Xin Liu and Yongzhao Shao.
\newblock Asymptotics for likelihood ratio tests under loss of identifiability.
\newblock {\em Annals of Statistics}, 31(3):807--832, 2003.

\bibitem[LS17]{Li_Schmidt_2017}
Jerry Li and Ludwig Schmidt.
\newblock Robust and proper learning for mixtures of gaussians via systems of
  polynomial inequalities.
\newblock In Satyen Kale and Ohad Shamir, editors, {\em Proceedings of the 2017
  Conference on Learning Theory}, volume~65 of {\em Proceedings of Machine
  Learning Research}, pages 1302--1382, Amsterdam, Netherlands, 07--10 Jul
  2017. PMLR.

\bibitem[LZZ19]{Loffler_etal_2019}
Matthias L{\"o}ffler, Anderson~Y. Zhang, and Harrison~H. Zhou.
\newblock Optimality of spectral clustering for gaussian mixture model.
\newblock {\em to appear in The Annals of Statistics}, 2019.
\newblock arXiv:1911.00538.

\bibitem[MM11]{maugis2011non}
Cathy Maugis and Bertrand Michel.
\newblock A non asymptotic penalized criterion for {G}aussian mixture model
  selection.
\newblock {\em ESAIM: Probability and Statistics}, 15:41--68, 2011.

\bibitem[MV10]{Moitra_Valiant_2010}
Ankur Moitra and Gregory Valiant.
\newblock Settling the polynomial learnability of mixtures of gaussians.
\newblock In {\em Proceedings of the 2010 IEEE 51st Annual Symposium on
  Foundations of Computer Science}, pages 93--102, 2010.

\bibitem[Ngu13]{Nguyen2013}
XuanLong Nguyen.
\newblock Convergence of latent mixing measures in finite and infinite mixture
  models.
\newblock {\em The Annals of Statistics}, 41(1):370--400, 2013.

\bibitem[NWR19]{Niles-Weed_Rigollet_2019}
Jonathan Niles-Weed and Philippe Rigollet.
\newblock Estimation of {W}asserstein distances in the spiked transport model.
\newblock {\em arXiv:1909.07513}, 2019.

\bibitem[PC19]{Paty_Cuturi_2019}
Fran{\c{c}}ois-Pierre Paty and Marco Cuturi.
\newblock Subspace robust {W}asserstein distances.
\newblock In {\em Proceedings of the 36th International Conference on Machine
  Learning}, volume~97, pages 5072--5081. PMLR, Jun 2019.

\bibitem[Pol16]{Pollard_mini}
David Pollard.
\newblock Lecture notes on empirical processes.
\newblock 2016.
\newblock \url{http://www.stat.yale.edu/~pollard/Books/Mini/Chaining.pdf}.

\bibitem[PW15]{polyanskiy2015dissipation}
Yury Polyanskiy and Yihong Wu.
\newblock Dissipation of information in channels with input constraints.
\newblock {\em IEEE Transactions on Information Theory}, 62(1):35--55, 2015.

\bibitem[Qi11]{Qi2011}
Liqun Qi.
\newblock The best rank-one approximation ratio of a tensor space.
\newblock {\em SIAM journal on matrix analysis and applications},
  32(2):430--442, 2011.

\bibitem[RPDB11]{rabin2011wasserstein}
Julien Rabin, Gabriel Peyr{\'e}, Julie Delon, and Marc Bernot.
\newblock Wasserstein barycenter and its application to texture mixing.
\newblock In {\em International Conference on Scale Space and Variational
  Methods in Computer Vision}, pages 435--446. Springer, 2011.

\bibitem[RV09]{RV09}
M.~Rudelson and R.~Vershynin.
\newblock Smallest singular value of a random rectangular matrix.
\newblock {\em Communications on Pure and Applied Mathematics}, 62:1707--1739,
  2009.

\bibitem[SB02]{Stoer_Bulirsch_2002}
J.~Stoer and R.~Bulirsch.
\newblock {\em Introduction to Numerical Analysis}.
\newblock Springer-Verlag, New York, NY, 3rd edition, 2002.

\bibitem[SG20]{Saha_2017}
Sujayam Saha and Adityanand Guntuboyina.
\newblock On the nonparametric maximum likelihood estimator for gaussian
  location mixture densities with application to gaussian denoising.
\newblock {\em Annals of Statistics}, 48(2):738--762, 2020.

\bibitem[SS12]{SS2012}
Warren Schudy and Maxim Sviridenko.
\newblock Concentration and moment inequalities for polynomials of independent
  random variables.
\newblock In {\em Proceedings of the twenty-third annual ACM-SIAM symposium on
  Discrete Algorithms}, pages 437--446. Society for Industrial and Applied
  Mathematics, 2012.

\bibitem[ST43]{ST1943}
James~Alexander Shohat and Jacob~David Tamarkin.
\newblock {\em The problem of moments}.
\newblock Number~1. American Mathematical Soc., 1943.

\bibitem[Tsy09]{Tsybakov09}
A.B. Tsybakov.
\newblock {\em Introduction to Nonparametric Estimation}.
\newblock Springer Verlag, New York, NY, 2009.

\bibitem[vdG00]{VdG_2000}
Sara van~de Geer.
\newblock {\em Empirical Processes in {M}-Estimation}.
\newblock Cambridge University Press, 2000.

\bibitem[vdV02]{vdv02}
Aad van~der Vaart.
\newblock The statistical work of {L}ucien {L}e {C}am.
\newblock {\em The Annals of Statistics}, pages 631--682, 2002.

\bibitem[vdVW96]{VdV_Wellner_1996}
Aad~W. van~der Vaart and Jon~A. Wellner.
\newblock {\em Weak Convergence and Empirical Processes}.
\newblock Springer Verlag New York, Inc., 1996.

\bibitem[Ver12]{Vershynin_2012_tutorial}
Roman Vershynin.
\newblock Introduction to the non-asymptotic analysis of random matrices.
\newblock In Y.~Eldar and G.~Kutyniok, editors, {\em Compressed Sensing, Theory
  and Applications}, pages 210--268. Cambridge University Press, 2012.

\bibitem[Vil03]{villani.topics}
C.~Villani.
\newblock {\em {Topics in optimal transportation}}.
\newblock American Mathematical Society, Providence, RI, 2003.

\bibitem[VW04]{Vempala_Wang_2004}
Santosh Vempala and Grant Wang.
\newblock A spectral algorithm for learning mixture models.
\newblock {\em J. Comput. Syst. Sci}, page 2004, 2004.

\bibitem[Wu17]{it-stats}
Yihong Wu.
\newblock {Lecture notes on information-theoretic methods for high-dimensional
  statistics}.
\newblock Aug 2017.
\newblock \url{http://www.stat.yale.edu/~yw562/teaching/598/it-stats.pdf}.

\bibitem[WV10]{Wu_Verdu_2010}
Yihong {Wu} and Sergio {Verd{\'u}}.
\newblock The impact of constellation cardinality on gaussian channel capacity.
\newblock In {\em 2010 48th Annual Allerton Conference on Communication,
  Control, and Computing (Allerton)}, pages 620--628, Sep. 2010.

\bibitem[WY20]{WY18}
Yihong Wu and Pengkun Yang.
\newblock Optimal estimation of {G}aussian mixtures via denoised method of
  moments.
\newblock {\em The Annals of Statistics}, 48(4):1981--2007, 2020.

\bibitem[WZ19]{Wu_Zhou_2019}
Yihong Wu and Harrison~H. Zhou.
\newblock Randomly initialized {EM} algorithm for two-component {G}aussian
  mixture achieves near optimality in ${O}(\sqrt{n})$ iterations.
\newblock {\em arXiv:1908.10935}, Aug 2019.

\bibitem[Zha09]{Zhang_2009}
Cun-Hui Zhang.
\newblock Generalized maximum likelihood estimation of normal mixture
  densities.
\newblock {\em Statistica Sinica}, 19:1297--1318, 2009.

\end{thebibliography}

\end{document}